\documentclass[a4paper,leqno,final]{amsart}

\synctex=1
\usepackage{ifdraft}

\ifdraft{\usepackage[draft]{showkeys}}{\usepackage[final]{showkeys}}

\usepackage{fontenc}
\usepackage[utf8]{inputenc}

\usepackage{amsmath,amsfonts,mathscinet,eucal,amsthm,amssymb,bm,extarrows,upgreek,tensor,mathrsfs,textcomp,comment,mathtools,bbm,braket}

\usepackage[shortlabels]{enumitem}

\usepackage[usenames,dvipsnames]{xcolor} \usepackage{tikz}
\usetikzlibrary{patterns,matrix,through,arrows,decorations.pathreplacing,decorations.markings,snakes,shadows,shapes.geometric,positioning,calc,backgrounds,fit}

\def \myweightstyle {}
\def \myweightx {0.09}
\def \myweighty {0.2}
\newcommand{\mydrawdown}[1]%
  {\draw [\myweightstyle] #1++(0,-\myweighty) -- ++(\myweightx,\myweighty);%
    \draw [\myweightstyle] #1++(0,-\myweighty) -- ++(-\myweightx,\myweighty);}
\newcommand{\mydrawup}[1]%
  {\draw [\myweightstyle] #1 -- ++(-\myweightx,-\myweighty);%
    \draw [\myweightstyle] #1 -- ++(\myweightx,-\myweighty);}

\tikzset{anchorbase/.style={baseline={([yshift=-0.5ex]current bounding box.center)}},
  int/.style={thick},
  cross line/.style={preaction={draw=white,line width=6pt,-}},
  wall/.style={thin,double,blue},
  middlearrow/.style={postaction=decorate,decoration={markings,mark=at
    position .55 with {\arrow{stealth};}}},
  middlearrowrev/.style={postaction=decorate,decoration={markings,mark=at
    position .55 with {\arrowreversed{stealth};}}},
  ev/.style={shape=rectangle, draw}
}

\usepackage{todonotes} 

\usepackage[bbgreekl]{mathbbol}

\DeclareSymbolFontAlphabet{\mathbb}{AMSb}
\DeclareSymbolFontAlphabet{\mathbbol}{bbold}
\DeclareMathAlphabet{\mathpzc}{OT1}{pzc}{m}{it}

\DeclareSymbolFont{usualmathcal}{OMS}{cmsy}{m}{n}
\DeclareSymbolFontAlphabet{\mathucal}{usualmathcal}

\setcounter{secnumdepth}{2}
\numberwithin{equation}{section}

\newtheoremstyle{myplain} {6pt plus 6pt minus 2pt}
{6pt plus 6pt minus 2pt}
{\itshape}
{}
{\bfseries}
{.}
{.5em}
{}

\theoremstyle{myplain}
\newtheorem{theorem}{Theorem}[section]
\newtheorem*{theorem*}{Theorem}
\newtheorem{lemma}[theorem]{Lemma}
\newtheorem{prop}[theorem]{Proposition}
\newtheorem{corollary}[theorem]{Corollary}

\newtheoremstyle{mydefinition} {6pt plus 6pt minus 2pt}
{6pt plus 6pt minus 2pt}
{\itshape}
{}
{\bfseries}
{.}
{.5em}
{}

\theoremstyle{mydefinition}
\newtheorem{definition}[theorem]{Definition}

\newtheoremstyle{myexample} {6pt plus 6pt minus 2pt}
{6pt plus 6pt minus 2pt}
{}
{}
{\scshape}
{.}
{.5em}
{}

\theoremstyle{myexample}
\newtheorem{example}[theorem]{Example}

\newtheoremstyle{myremark} {6pt plus 6pt minus 2pt}
{6pt plus 6pt minus 2pt}
{}
{}
{\scshape}
{.}
{.5em}
{}

\theoremstyle{myremark}
\newtheorem{remark}[theorem]{Remark}

\newcommand{\N}{\mathbb{N}}
\newcommand{\Z}{\mathbb{Z}}

\newcommand{\C}{\mathbb{C}}

\newcommand{\V}{{\mathbb{V}}}
\newcommand{\K}{\mathbb{K}}
\newcommand{\gl}{\mathfrak{gl}}
\newcommand{\catO}{\mathcal{O}}

\newcommand{\suchthat}{\,|\,} 
\newcommand{\mapto}{\rightarrow}
\newcommand{\surto}{\twoheadrightarrow}
\newcommand{\into}{\hookrightarrow}
\newcommand{\len}{\ell}

\newcommand{\blank}{\mathord{-}}

\DeclareMathOperator{\grdim}{gr\,dim}
\DeclareMathOperator{\Hom}{Hom}

\DeclareMathOperator{\End}{End}

\DeclareMathOperator{\Res}{Res}

\DeclareMathOperator{\rad}{rad}

\DeclareMathOperator{\Ann}{Ann}
\newcommand{\id}{\mathrm{id}}
\newcommand{\Id}{\mathrm{Id}}
\newcommand{\abs}[1]{\left|#1\right|}

\newcommand{\gmod}[1]{#1\mathrm{-gmod}}

\newcommand{\rgmod}[1]{\mathrm{gmod-}#1}
\newcommand{\lmod}[1]{#1\mathrm{-mod}}
\newcommand{\rmod}[1]{\mathrm{mod-}#1}

\renewcommand{\epsilon}{\varepsilon}

\renewcommand{\phi}{\varphi}

\newcommand{\up}{{\mathord\wedge}}
\newcommand{\down}{{\mathord\vee}}
\newcommand{\cross}{{\mathord\times}}

\newcommand{\calA}{{\mathcal{A}}}

\newcommand{\calE}{{\mathcal{E}}}
\newcommand{\calF}{{\mathcal{F}}}

\newcommand{\calP}{{\mathcal{P}}}
\newcommand{\calQ}{{\mathcal{Q}}}

\newcommand{\calS}{{\mathcal{S}}}

\newcommand{\sfC}{{\mathsf{C}}}

\newcommand{\sfW}{{\mathsf{W}}}

\newcommand{\sfZ}{{\mathsf{Z}}}

\newcommand{\scrB}{{\mathscr{B}}}

\newcommand{\frakb}{{\mathfrak{b}}}

\newcommand{\frakg}{{\mathfrak{g}}}
\newcommand{\frakh}{{\mathfrak{h}}}
\newcommand{\fraki}{{\mathfrak{i}}}
\newcommand{\frakj}{{\mathfrak{j}}}

\newcommand{\frakm}{{\mathfrak{m}}}
\newcommand{\frakn}{{\mathfrak{n}}}

\newcommand{\frakp}{{\mathfrak{p}}}
\newcommand{\frakq}{{\mathfrak{q}}}

\newcommand{\frakz}{{\mathfrak{z}}}

\newcommand{\bbS}{{\mathbb{S}}}

\newcommand{\bbV}{{\mathbb{V}}}

\newcommand{\bfE}{{\mathbf{E}}}
\newcommand{\bfF}{{\mathbf{F}}}

\newcommand{\bfH}{{\mathbf{H}}}

\newcommand{\bfL}{{\mathbf{L}}}

\newcommand{\bfV}{{\mathbf{V}}}

\newcommand{\frakQ}{{\mathfrak{Q}}}

\newcommand{\bolda}{{\boldsymbol{a}}}
\newcommand{\boldb}{{\boldsymbol{b}}}

\newcommand{\boldh}{{\boldsymbol{h}}}
\newcommand{\boldi}{{\boldsymbol{i}}}
\newcommand{\boldj}{{\boldsymbol{j}}}

\newcommand{\boldx}{{\boldsymbol{x}}}

\newcommand{\boldz}{{\boldsymbol{z}}}

\newcommand{\leadingterm}{\textsc{lt}}

\DeclareMathOperator{\height}{ht}
\newcommand{\op}{{\mathrm{op}}}
\newcommand{\ucalH}{{\mathucal H}}

\newcommand{\ucalZ}{{\mathucal Z}}

\newcommand{\compn}{{\mathbbol{n}}}

\newcommand{\short}{{\mathrm{short}}}

\renewcommand{\epsilon}{\varepsilon}

\DeclareMathOperator{\Add}{Add}

\newcommand{\funcV}{{\mathbb{V}}}

\newcommand{\pres}{\text{-}\mathrm{pres}}
\newcommand{\oDelta}{{\overline{\Delta}}}
\newcommand{\catOZ}{\prescript{\Z}{}{\catO}}

\newcommand{\sbar}{|}
\newcommand{\psbar}{\rfloor}

\newcommand{\sumprime}{\sideset{}{^{(v)}} \sum}

\DeclareMathOperator{\defect}{def}

\newcommand{\twoheadlongrightarrow}{\relbar\joinrel\twoheadrightarrow}

\hyphenation{Grothen-dieck}

\usepackage{wrapfig} \usepackage{subcaption}

\usepackage[final=true]{hyperref,bookmark}

\hypersetup{
  colorlinks = false,
  urlcolor = false,
  pdfauthor = {Antonio Sartori},
  pdfkeywords = {Symmetric polynomials, Category O, Soergel modules, Khovanov algebra},
  pdftitle = {A diagram algebra for Soergel modules corresponding to smooth Schubert varieties},
  pdfsubject = {},
  pdfpagemode = UseNone,
  bookmarksopen = true,
  bookmarksopenlevel = 3,
  pdfdisplaydoctitle = true }

\title[A diagram algebra for Soergel modules]{A diagram algebra for Soergel modules corresponding to smooth Schubert varieties}
\author{Antonio Sartori}
\address{Mathematisches Institut\\Endenicher Allee 60\\Universit\"at Bonn\\53115 Bonn, Germany}
\email{sartori@math.uni-bonn.de}
\urladdr{http://www.math.uni-bonn.de/people/sartori}
\date{\today}
\keywords{Diagram algebra, Symmetric polynomials, Category O, Soergel modules, Khovanov algebra.}
\subjclass[2010]{Primary 16W50; Secondary 13F20, 05E05, 17B10}
\thanks{This work has been supported by the Graduiertenkolleg 1150, funded by the Deutsche Forschungsgemeinschaft.}

\begin{document}

\begin{abstract}
  Using combinatorial properties of symmetric polynomials, we compute
  explicitly the Soergel modules for some permutations 
  whose corresponding Schubert varieties are rationally smooth. We
  build from them diagram algebras whose module categories are
  equivalent to the subquotient categories of the BGG category
  $\catO(\gl_n)$ which show up in categorification of
  $\gl(1|1)$--representations.
  We construct diagrammatically the graded cellular
  structure and the properly stratified structure of these
  algebras.
\end{abstract}

\maketitle

\setcounter{tocdepth}{1}
\tableofcontents

\section{Introduction}
\label{sec:introduction}

The BGG category $\catO(\frakg)$, introduced in \cite{MR0407097}, is a fundamental tool for studying the representation theory of
a reductive Lie algebra $\frakg$. Its combinatorial properties are quite
surprising: the Kazhdan-Lusztig conjecture, for example,
relates decomposition numbers in the category $\catO(\frakg)$ with certain
polynomials which appear naturally in the theory of Hecke algebras
\cite{MR560412}.

The explicit structure of this category remained obscure
until the fundamental work of Soergel \cite{MR1029692}, who
constructed a fully faithful functor $\bbV$ from the
additive category of projective objects of $\catO(\frakg)$
to a category of modules (which we call \emph{Soergel
  modules}) over a polynomial ring. In principle, this gives
a way to do explicit computations in the category
$\catO(\frakg)$ (see \cite{MR2017061} for some examples in
small cases); however, determining Soergel modules and their
homomorphisms is not affordable in general. In this paper,
we present such a description for some
subquotient categories of $\catO_0 = \catO_0(\gl_n)$, which
are denoted by $\calQ_k(\compn)$
in \cite{miopaperO}. These are a particular case of the ``generalized
parabolic subcategories'' introduced in \cite{MR1921761} and developed
further in \cite{MR2057398}.  Our interest for these particular
categories is motivated by the fact that they can be used to
categorify tensor powers of the vector representation of $\gl(1|1)$ (see \cite{miopaperO}).

The central role played by category $\catO$ in
categorification makes the ability to do explicit
computations crucial. Khovanov homology \cite{MR1740682},
which is ultimately an $\mathfrak{sl}_2$--categorification
through category $\catO$, has proven very useful in its
application to low dimensional topology, because of its
explicit description.
A diagrammatic approach to understand some parabolic
subcategories of the category $\catO(\gl_n)$ has been
successfully carried out by Brundan and Stroppel: in a
series of four paper (\cite{pre06126156}, \cite{MR2600694},
\cite{MR2781018}, \cite{MR2881300}) they define diagrammatic
algebras (called generalized Khovanov algebras) which are
 isomorphic to the endomorphism rings of projective
generators of the parabolic category $\catO^\frakp(\gl_n)$,
where $\frakp \subseteq \gl_n$ is a maximal parabolic
subalgebra. They are able to interpret diagrammatically
translation functors, and hence compute explicitly
homomorphism spaces between them. Using the machinery they
have constructed, they can connect the category
$\catO(\gl_n)$ to the category of finite dimensional
representations of the super Lie algebra $\gl(r|s)$; they
also find interesting connections to representations of
the walled Brauer algebra \cite{MR2955190}.

The goal of this paper is to develop a similar approach in
the case of the categories $\calQ_k(\compn)$. Our main
result is a diagrammatic/combinatorial definition of graded
algebras $A_{n,k}$ whose module categories are equivalent to
$\calQ_k(\compn)$.

\bigskip The main tools of our construction are Soergel
modules and combinatorics of symmetric polynomials. Soergel
modules for the symmetric group $\bbS_n$ (or for $\gl_n$)
are modules $\sfC_z$ over the polynomial ring in $n$
variables $R=\C[x_1,\ldots,x_n]$, and are labeled by
permutations $z \in \bbS_n$. By definition, they are the
indecomposable direct summands of modules of the type
\begin{equation}
  \label{eq:25}
  B \otimes_{B^{s_{i_1}}} B \otimes \cdots \otimes B \otimes_{B^{s_{i_r}}} B \otimes_B \C
\end{equation}
where $B= R/(R_+)^{\bbS_n}R$ is the ring of the coinvariants and
$B^{s}$ are the invariants under a simple reflection $s \in
\bbS_n$. In particular, if we choose a reduced expression
$s_{i_r}\cdots s_{i_1}$ for $z$, then $\sfC_z$ is the indecomposable
direct summand of \eqref{eq:25} containing $1 \otimes \cdots \otimes
1$.

Although they could seem apparently harmless, identifying explicitly
the direct summands in \eqref{eq:25} is quite tricky, even in some
small examples. Describing homomorphism spaces between such summands
is in general hard. A significant simplification occurs when the
Soergel module $\sfC_z$ happens to be \emph{cyclic}: in this case it
is enough to determine the annihilator of $\sfC_z$, which is the same
as the annihilator of \eqref{eq:25}. Moreover, in order to describe the
homomorphisms between two cyclic $R$--modules it is enough to study the
quotient ideal between the corresponding annihilators.

The condition of a Soergel module being cyclic arises naturally since
it is equivalent to the rational smoothness of the corresponding
Schubert variety in the full flag variety (cf.\ \cite[Appendix]{MR560412});
an easy combinatorial criterion for determining which Schubert
varieties are rationally smooth is given in \cite{MR1934291}. It is
quite surprising that all the Soergel modules needed for understanding
the categories $\calQ_k(\compn)$
satisfy this condition.

We point out that some work has been done to understand Soergel
\emph{bi}modules \cite{MR1173115}, which are the equivariant version
of Soergel modules and are obtained dropping the last $\otimes_B \C$
in \eqref{eq:25} (see \cite{MR2844932}, \cite{2009arXiv0902.4700E};
we would also like to mention that Soergel bimodules have been prove
useful for the recent algebraic proof \cite{2012arXiv1212.0791E} of the
Kazhdan-Lusztig conjecture). On the other side, as far as we know,
this is the first attempt to describe explicitly Soergel modules. We
believe that Soergel modules are in some sense more difficult than
Soergel bimodules because of the lack of symmetry of \eqref{eq:25}.

Despite of the deep background, the computations required along the
paper are elementary, although quite involved. In order to keep the paper
readable also for non-experts, we decided to start the paper with combinatorial properties of symmetric polynomials and postpone the introduction
of category $\catO$ after the definition of the diagram algebra.

\subsubsection*{Overview of the paper}
\label{sec:overview-paper}

Let us now explain in more detail the structure and the results of the paper. We start in Section \ref{soerg:sec:symmetric-polynomials} by studying
quotients of the polynomial ring $R$ modulo ideals $I_\boldb$ generated
by some complete symmetric polynomials in a subset of the variables.
More precisely, if $\boldb=(b_1,\ldots,b_n)$ is a non-increasing sequence
of positive integers, then we let
\begin{equation}
  \label{eq:26}
  I_\boldb = (h_{b_1}(x_1),h_{b_2}(x_1,x_2),\ldots,h_{b_n}(x_1,\ldots,x_n)),
\end{equation}
where the $h_i$'s are complete symmetric polynomials of degree $i$ in
the variables indicated. This definition will prove very helpful to
describe the Soergel modules we are interested in. To study such
ideals, we will use the powerful machinery of Groebner bases, which we
will briefly review.

In Section \ref{soerg:sec:some-canon-bases} we introduce the Hecke algebra and define the Kazhdan-Lusztig polynomials. Then we compute explicitly some Kazhdan-Lusztig basis elements, which we will use later to determine the dimension of the corresponding Soergel modules.

In Section \ref{soerg:sec:soergel-modules} we briefly recall the
definition of Soergel modules and then compute explicitly some of
them. In particular, we compute all Soergel modules $\sfC_{w_k z}$
where $z$ is in the set $D$ of shortest coset representative for
$(\bbS_k \times \bbS_{n-k})\backslash \bbS_n$ and $w_k$ is the longest
element of $\bbS_k$. Our strategy is the following: first, we show
that the Soergel module $\sfC_{w_k z}$ is cyclic. Then for every such
$z$ we choose a clever reduced expression $s_{i_r} \cdots s_{i_1}$ and
we determine some partial symmetric polynomials which lie in the
annihilator of the module \eqref{eq:25}. We deduce that they are
enough to generate the whole annihilator by comparing the dimensions,
using the results we collected in the previous sections. Our first
main result is:
\begin{theorem}[see Theorem \ref{soerg:thm:1}]
  \label{thm:4}
  For each $z \in D$ there is an associated $\boldb$--sequence $\boldb^z$ such that the Soergel module $\sfC_{w_k z}$ is isomorphic to $R/I_{\boldb^z}$.
\end{theorem}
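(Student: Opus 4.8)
The plan is to follow the three-step strategy announced in the overview.

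\emph{Step 1 (cyclicity).} For every $z \in D$ one first checks that $\sfC_{w_k z}$ is cyclic. Since cyclicity of $\sfC_w$ is equivalent to rational smoothness of the corresponding Schubert variety, and the latter is governed by the combinatorial criterion of \cite{MR1934291}, this reduces to a finite check on the (explicit) one-line notation of $w_k z$. Cyclicity then gives $\sfC_{w_k z} = R\cdot v$, where $v$ is the image of $1 \otimes \cdots \otimes 1$ in \eqref{eq:25}, so $\sfC_{w_k z} \cong R/\Ann(v)$; and since $\sfC_{w_k z}$ is the summand of \eqref{eq:25} containing $v$, the ideal $\Ann(v)$ can be computed directly inside the Bott--Samelson module \eqref{eq:25}.

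\emph{Step 2 (relations).} For each $z \in D$ I would read off the candidate sequence $\boldb^z$ from the combinatorics of $z$ --- at position $i$, the degree $b^z_i$ being the ``complete-symmetric degree'' forced by the shape attached to $z$ --- and choose a reduced expression $s_{i_r}\cdots s_{i_1}$ of $w_k z$ adapted to that shape. The heart of the argument is to show that every generator $h_{b^z_i}(x_1,\ldots,x_i)$ of $I_{\boldb^z}$ kills $v$. This is done by ``pushing'' the partial complete-symmetric polynomial through the tensor factors of \eqref{eq:25} from left to right: using that $B$ is free of rank two over $B^{s}$, together with the recursion for complete symmetric polynomials and the divided-difference (Demazure) identities, a polynomial that is symmetric in the variables relevant to a given factor slides across the corresponding $\otimes_{B^s}$, while the final $\otimes_B \C$ kills everything of positive degree in the coinvariants; the reduced word is chosen so that, step by step, $h_{b^z_i}(x_1,\ldots,x_i)$ acquires exactly the symmetry it needs and is eventually absorbed. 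This yields $I_{\boldb^z} \subseteq \Ann(v)$, hence a surjection $R/I_{\boldb^z} \twoheadrightarrow \sfC_{w_k z}$.

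\emph{Step 3 (dimension count).} It remains to promote this surjection to an isomorphism, for which it suffices to match graded dimensions. On one side, the graded dimension of $\sfC_{w_k z}$ is determined by Soergel's formula from the Kazhdan--Lusztig basis element computed in Section~\ref{soerg:sec:some-canon-bases}; since $w_k z$ is rationally smooth the relevant Kazhdan--Lusztig polynomials are trivial, so this graded dimension is the Poincar\'e polynomial of the Schubert variety. On the other side, the Gr\"obner-basis machinery of Section~\ref{soerg:sec:symmetric-polynomials} produces a monomial basis of $R/I_{\boldb^z}$: the sequence $\boldb^z$ is designed precisely so that the leading terms of the $h_{b^z_i}(x_1,\ldots,x_i)$ form a Gr\"obner basis with a transparent staircase of standard monomials, whose generating function one writes down directly. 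Since the two polynomials agree by construction, the surjection above is an isomorphism; it respects the grading because both sides are generated in degree zero and $I_{\boldb^z}$ is homogeneous.

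The main obstacle is Step 2: pinning down the correct $\boldb^z$, choosing the adapted reduced expression, and carrying out the bookkeeping that shows each $h_{b^z_i}(x_1,\ldots,x_i)$ really slides through \eqref{eq:25} to annihilate $v$. Everything else is essentially forced --- the Gr\"obner description in Step 3 is only as clean as the choice of $\boldb^z$, but once that choice is made correctly the staircase, and hence the dimension comparison, is routine, the pattern check in Step 1 is mechanical, and the passage from surjection to isomorphism is immediate.
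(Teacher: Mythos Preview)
Your three-step strategy is exactly the paper's, and Steps~2 and~3 match the actual arguments (Lemma~\ref{soerg:lem:ann-of-C-1} and Proposition~\ref{soerg:prop:ann-of-C} for the annihilator inclusion, Proposition~\ref{soerg:prop:1} together with Lemma~\ref{soerg:lem:dim-of-special-soergel-modules} for the dimension match). The only real divergence is in Step~1. You propose to establish cyclicity by invoking the pattern-avoidance criterion for rational smoothness from \cite{MR1934291}. The paper instead extracts cyclicity directly from the explicit canonical basis computation of Proposition~\ref{soerg:prop:3}: once one sees that $H_e$ occurs in $C_{w_k z}$ with coefficient exactly $v^{\ell(w_k z)}$, Lemma~\ref{lem:3} (a category~$\catO$ argument going through Verma multiplicities and \cite[Lemma~7.3]{MR2017061}) gives cyclicity. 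The paper's route is more economical because the same canonical basis formula simultaneously feeds Step~1 and the dimension count in Step~3, so nothing extra needs to be checked; your route would work too, but you would still need to verify the pattern criterion uniformly for all $w_k z$ with $z\in D$ (your phrase ``finite check'' undersells this), and you would then be computing the KL element anyway for Step~3. One small caution: in Step~3 you appeal to graded dimensions and Poincar\'e polynomials, but the paper only needs the ungraded dimension equality $\dim_\C \sfC_{w_k z}=b^z_1\cdots b^z_n$ to promote the surjection to an isomorphism, which is all that is required.
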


Our next main result is the explicit description of the spaces of
homomorphisms between these Soergel modules modulo the morphisms which
factor through some ``wrong'' Soergel modules (see Theorem
\ref{soerg:thm:2}). This is motivated by the fact that the categories
$\calQ_k(\compn)$
are \emph{subquotient categories} of
$\catO_0$: hence they are described by algebras which are the
endomorphism rings of some projective modules of $\catO_0$
modulo morphisms which factor through some ``wrong'' projective
modules (see Section \ref{sec:category-cato} for the details).

To make the statement of Theorem \ref{soerg:thm:2} clear and usable,
in Section \ref{sec:diagr-algebra-calq_k} we introduce diagrams which
describe the corresponding homomorphism spaces. Putting together all
the homomorphism spaces and translating into our diagram language we
obtain diagram algebras $A_{n,k}$. These algebras resemble the
generalized Khovanov algebras of \cite{pre06126156}. Recall that the
generalized Khovanov algebras categorify tensor powers of
$\mathfrak{sl}_2$--representations, and the diagrams which build these
algebras are the same diagrams of \cite{MR1446615} describing
$\mathfrak{sl}_2$--intertwiners. Unsurprisingly, the same happens for
our algebras $A_{n,k}$: the diagram which we use here are essentially
the same diagrams which we introduce in the related paper
\cite{miopaperO} for describing $\gl(1|1)$--intertwiners; and indeed
the resulting algebras categorify tensor powers of
$\gl(1|1)$--representations.

We remark that the algebras $A_{n,k}$ are \emph{graded}, the grading coming from the one on polynomial rings.
Since the algebras $A_{n,k}$ describe categories
which come from category $\catO$, we could deduce from abstract Lie
theory that they enjoy very nice properties, such as
\emph{cellularity} \cite{MR1376244} and \emph{proper stratifiedness}
\cite{MR1921761} (which is a generalization of
quasi-hereditariness). In the second part of Section
\ref{sec:diagr-algebra-calq_k} we show how these properties can be
proved independently,
by constructing explicitly
cellular, standard and projective modules using our diagrams, and we
obtain:

\begin{theorem}[see Prop.~\ref{soerg:prop:10} and Theorem \ref{soerg:thm:6}]
  \label{thm:7}
  For all $n \in \N$, $0 \leq k \leq n$ the algebra $A_{n,k}$ is graded cellular and properly stratified.
\end{theorem}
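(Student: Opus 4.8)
The plan is to prove the two halves of the statement separately --- graded cellularity, which is the content of Proposition~\ref{soerg:prop:10}, and proper stratifiedness, which is Theorem~\ref{soerg:thm:6} --- working throughout with the explicit diagram basis of $A_{n,k}$ set up in Section~\ref{sec:diagr-algebra-calq_k}, following the strategy of Brundan and Stroppel \cite{pre06126156} for the generalized Khovanov algebras. The combinatorial skeleton is a poset $\Lambda_{n,k}$ of weights --- the decorated $\up/\down$--sequences of length $n$ with exactly $k$ symbols of one type, equivalently the coset representatives $z \in D$ --- together with, for each $\lambda \in \Lambda_{n,k}$, a set of compatible ``half-diagrams'' (cup diagrams below, cap diagrams above). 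A homogeneous basis of $A_{n,k}$ is then given by the glued ``circle diagrams'' $c^{S,T}_\lambda$ obtained by stacking a cap diagram $S$, the weight $\lambda$, and a cup diagram $T$, the grading being the one transported from the polynomial rings via the identifications $\sfC_{w_k z} \cong R/I_{\boldb^z}$ of Theorem~\ref{thm:4}, so that the degree of $c^{S,T}_\lambda$ is recorded by the dots/orientations on its cups and caps.

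For cellularity the anti-involution $\ast$ is the vertical flip of a diagram, turning cups into caps and back; it visibly fixes each $e_\lambda$, preserves degree, and sends $c^{S,T}_\lambda$ to $c^{T,S}_\lambda$. The heart of the matter is the product rule: one shows that $c^{S,T}_\lambda \cdot c^{U,V}_\mu$ vanishes unless the middle sections of $T$ and $U$ match, and that otherwise it equals $\sum_{W} r^{T,U}_{W}\, c^{S,W}_\mu$ plus a $\Z$--linear combination of diagrams $c^{S',W'}_{\nu}$ with $\nu$ strictly above $\lambda$ in $\Lambda_{n,k}$, where the scalars $r^{T,U}_W \in \Z$ do not depend on $S$. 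This is established by the ``surgery'' procedure: composing a cap diagram with a cup diagram produces a collection of circles, which one removes one at a time, each move being a ``merge'' or a ``split'' contributing a fixed scalar or a grading shift; bookkeeping the outcomes against the Bruhat-type order on $\Lambda_{n,k}$ yields exactly the graded cellular axioms. Proving associativity of the diagram multiplication and this surgery calculus is the technical bulk of the argument.

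For proper stratifiedness I would first read off from the cellular datum the cell modules $C(\lambda)$, with basis the cup diagrams from $\lambda$, and identify $C(\lambda)$ (up to grading shift) with the proper standard module $\oDelta(\lambda)$; the bounds $[\oDelta(\lambda):L(\mu)]\neq 0 \Rightarrow \mu \leq \lambda$ and $[\oDelta(\lambda):L(\lambda)]=1$ are immediate from the diagrams. Next I would construct genuine standard modules $\Delta(\lambda)$ on a slightly enlarged diagram space --- allowing the extra dotted cups that reflect the non-semisimplicity of the stratum --- and exhibit, again by explicit surgery, a filtration of $\Delta(\lambda)$ all of whose subquotients are grading shifts of $\oDelta(\lambda)$. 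Then, writing $P(\lambda) = A_{n,k} e_\lambda$ for the indecomposable projective at $\lambda$, I would build a $\Delta$--filtration of $P(\lambda)$ with top $\Delta(\lambda)$ and all other sections of the form $\Delta(\mu)\langle j\rangle$ with $\mu > \lambda$, reading the multiplicities off the graded dimension of $e_\mu A_{n,k} e_\lambda$, which is governed by the Kazhdan--Lusztig and symmetric-polynomial computations of the earlier sections. The statements for injective and (proper) costandard modules follow by transporting everything through the graded duality afforded by $\ast$. Assembling these filtrations gives exactly the axioms of a graded properly stratified algebra.

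The main obstacle is the diagrammatic surgery: proving associativity of $A_{n,k}$ in the diagram presentation and pinning down the precise scalars and grading shifts in the product rule, equivalently verifying that the diagrammatic projectives carry the claimed $\Delta$--filtrations. Once that is in place, the anti-involution, homogeneity, the multiplicity bounds for $\oDelta(\lambda)$, and the injective-side statements via duality are essentially formal. One could alternatively deduce both properties abstractly from the equivalence between $A_{n,k}$--modules and the subquotient category $\calQ_k(\compn)$ of $\catO_0$, but the point of Section~\ref{sec:diagr-algebra-calq_k} is to give the self-contained combinatorial proof.
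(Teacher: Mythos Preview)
Your overall strategy mirrors the paper's, but there are two genuine gaps where you diverge in ways that do not work.

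First, the multiplication in $A_{n,k}$ is \emph{not} defined by a surgery/TQFT calculus, and the paper says so explicitly: ``Since there is not an analogous of such a TQFT in our case, we construct the multiplication in an indirect way using composition of morphisms between Soergel modules'' and later ``Unfortunately, we are not able to describe the multiplication in the algebra $A$ purely in terms of diagrams.'' The product of two basis diagrams is computed by multiplying the associated polynomials $p_{a\lambda^\sigma}\,p_{c\mu^\tau}$ in $R_a$, reducing modulo $I_{\boldb^a}$ to the monomial basis, discarding the illicit terms, and then translating back to diagrams via Proposition~\ref{soerg:prop:6}. Associativity is therefore free (it is composition of $R$--module maps), and the cellular multiplication axiom is Proposition~\ref{soerg:prop:9}, proved by analysing this polynomial reduction (with the technical Lemma~\ref{soerg:lem:25} controlling divisibility under rewriting). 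Your ``main obstacle'' --- proving associativity and the product rule via surgery --- is not something the paper does, and there is no indication such a calculus exists here.

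Second, your cellular poset is just the weights $\Lambda_{n,k}$, but the paper's is the set of \emph{enhanced} weights $\Gamma\times\bbS_k$ with the order~\eqref{soerg:eq:74}; the index sets are $I(\lambda^\sigma)=\{\alpha:\alpha\subset\lambda\}$ and $C^{\lambda^\sigma}_{\alpha,\beta}=(\underline\alpha\,\lambda^\sigma\,\overline\beta)$. Without the $\bbS_k$--factor the cardinalities are wrong by $k!$ and you cannot realise the basis~\eqref{soerg:eq:67} as $\dot\bigcup I(\lambda)\times I(\lambda)$. Correspondingly, the cell modules are the $V(\mu^\tau)$, each isomorphic up to shift to $\oDelta(\mu)$; the standard module $\Delta(\mu)$ is then built not from ``extra dotted cups'' but from all $k!$ enhancements of $\mu$, and Proposition~\ref{soerg:thm:4} gives its filtration by $\oDelta(\mu)$'s. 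The $\Delta$--filtration of $P(\lambda)$ (Proposition~\ref{soerg:thm:3}) and the composition series of $\oDelta(\mu)$ (Proposition~\ref{soerg:thm:5}) are then read off exactly as you say, and Theorem~\ref{soerg:thm:6} assembles them into the properly stratified axioms.
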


In Section \ref{sec:category-cato} we finally explain in detail the
relation with category $\catO$ which underlies all the paper. We recall from \cite{miopaperO} the definition of the categories $\calQ_k(\compn)$
and we state our main result:
\begin{theorem}[see Theorem \ref{soerg:cor:4}]
  \label{thm:5}
  The category $\calQ_k(\compn)$ from \cite{miopaperO} is equivalent to $\rgmod{A_{n,k}}$.
\end{theorem}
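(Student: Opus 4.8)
The plan is to realize $\rgmod{A_{n,k}}$ as the category of finite-dimensional graded modules over the endomorphism algebra of a projective generator of $\calQ_k(\compn)$, and then to quote the standard Morita-type statement: a locally finite abelian category with enough projectives and a projective generator $P$ is equivalent to $\rmod{\End(P)}$ (in the graded setting, $\rgmod{\End(P)}$), so that it suffices to exhibit a graded algebra isomorphism.

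First I would unwind the definition of $\calQ_k(\compn)$ recalled in Section~\ref{sec:category-cato}: it is a subquotient category of the principal block $\catO_0(\gl_n)$, hence is equivalent to $\gmod{E}$ where $E$ is the quotient of $\End_{\catO_0}(P)$ by the ideal of morphisms factoring through the ``wrong'' indecomposable projectives (those outside the prescribed set that cuts out the subquotient), and $P$ is the sum of the indecomposable projective covers of the simples surviving in $\calQ_k(\compn)$. Since this subquotient construction lifts to the graded version of category $\catO$, the problem is reduced to producing a graded isomorphism $E \cong A_{n,k}$ (up to replacing $A_{n,k}$ by $A_{n,k}^{\op}$, according to the left/right module conventions — this is why the statement is phrased with $\rgmod{}$).

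Next I would apply Soergel's combinatorial functor $\bbV$: it is fully faithful on projective objects and compatible with the grading, so $\End_{\catO_0}(P) \cong \End_R(\bbV P)$ with $\bbV P$ a direct sum of Soergel modules $\sfC_w$. The block and parabolic combinatorics underlying $\calQ_k(\compn)$ identify the relevant indices as exactly the permutations $w = w_k z$ with $z \in D$, so that (the relevant summand of) $\bbV P$ is $\bigoplus_{z\in D}\sfC_{w_k z}$, and ``morphisms through wrong projectives'' translates into ``morphisms of Soergel modules factoring through the $\sfC_w$ with $w \notin \{\,w_k z : z \in D\,\}$''. At this point Theorem~\ref{thm:4} (= Theorem~\ref{soerg:thm:1}) gives $\sfC_{w_k z}\cong R/I_{\boldb^z}$, and Theorem~\ref{soerg:thm:2} computes $\Hom_R(\sfC_{w_k z},\sfC_{w_k z'})$ modulo morphisms through wrong Soergel modules, identifying each such space with the span of the diagrams that, by the construction of Section~\ref{sec:diagr-algebra-calq_k}, constitute the $(z,z')$-component of $A_{n,k}$; the grading on $\Hom_R$ induced by the grading on $R$ matches the grading on diagrams. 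It then remains to check that composition of these homomorphisms agrees with the diagrammatic multiplication of $A_{n,k}$: since each $\sfC_{w_k z}$ is cyclic, a homomorphism is determined by the image of the generator, composition amounts to multiplying the corresponding polynomial representatives and reducing modulo the appropriate ideal $I_{\boldb}$, and this is precisely how the product of diagrams was defined. Assembling the components yields the graded algebra isomorphism $E\cong A_{n,k}^{\op}$, hence the equivalence $\calQ_k(\compn)\simeq\rgmod{A_{n,k}}$.

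The main obstacle is the last step: the bookkeeping needed to match composition of Soergel-module homomorphisms with the combinatorially defined multiplication of diagrams, keeping track at each stage of which diagram corresponds to which polynomial representative and of the ``wrong'' summands that must be quotiented out before and after composing. Secondary care is needed to verify that the grading shifts encoded in the diagrams agree with the internal degrees of $\Hom_R$, and that passing to the graded subquotient category is compatible with passing to graded modules (exactness of the quotient and of Soergel's functor in the relevant range), so that the equivalence is an equivalence of \emph{graded} categories and not merely of the underlying ungraded ones.
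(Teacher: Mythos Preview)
Your proposal is correct and follows the paper's approach essentially verbatim: identify $\calQ_k(\compn)=\catOZ^{\frakp,\frakq\pres}_0$ with modules over a quotient of $\End_\catO(P_\frakq)$ (Proposition~\ref{prop:1}), transport this via Soergel's $\bbV$ to $\End_R\big(\bigoplus_{z\in D}\sfC_{w_k z}\big)/\{\text{illicit morphisms}\}$, and then invoke Corollary~\ref{soerg:cor:5} to identify the latter with $A_{n,k}$. The ``main obstacle'' you flag is in fact a non-issue here: the multiplication on $A_{n,k}$ is \emph{defined} (Corollary~\ref{soerg:cor:2}) by transporting composition of Soergel-module homomorphisms through the vector-space isomorphism of Proposition~\ref{soerg:prop:6}, so there is nothing further to check beyond that isomorphism itself.
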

This gives an explicit description of the categories $\calQ_k(\compn)$'s categorifying $\gl(1|1)$ and makes possible to compute examples of that categorification. One of the main application is the computation of the endomorphism rings of the functors $\calE_k$ and $\calF_k$ from \cite{miopaperO} (see Theorem \ref{thm:6}).

We remark that there is a connection between the algebra
$A_{n,k}$ and the cohomology of closed attracting varieties
in the Springer fiber of hook type sitting inside the full
flag variety, see \cite{miophd}. In particular, we
conjecture that it is possible to construct a convolution
product on these cohomology rings as in \cite{MR2914857} so
that one can recover the full algebra $A_{n,k}$ using
geometry.

\subsubsection*{Acknowledgements} The present paper is part of the author's PhD thesis. The author would like to thank his advisor Catharina Stroppel for her help and support. The author would also like to thank an anonymous referee for many valuables comments and suggestions.

\section{Symmetric polynomials}
\label{soerg:sec:symmetric-polynomials}

In this section we are going to study some rings obtained as quotients of a polynomial ring modulo an ideal generated by complete symmetric functions in some subsets of variables. We start recalling some easy standard facts about symmetric polynomials.

We let $R=\C[x_1,\ldots,x_n]$ be a polynomial ring. We consider it as a graded ring with $\deg x_i= 2$ for every $i$.

\subsection{Complete symmetric polynomials}
\label{soerg:sec:compl-symm-polyn}

The \emph{complete symmetric polynomials} are defined as
\begin{equation}
  \label{soerg:eq:1b}
  h_j(x_1,\ldots,x_n) = \sum_{1\leq i_1\leq \cdots \leq
    i_j \leq n} x_{i_1}\cdots x_{i_j}
\end{equation}
for every $j \geq 1$ so that for example
$h_2(x_1,x_2)=x_1^2+x_1x_2+x_2^2$. We set also
$h_0(x_1,\ldots,x_n)=1$, while if $n=0$ (i.e., we have zero
variables), we let $h_i()=0$ for every $i\geq 1$.  The
symmetric group $\bbS_n$ acts on $R$ permuting the
variables, and the polynomials $h_i(x_1,\ldots,x_n)$ are
invariant under this action; in fact, they generate the whole algebra
$R^{\bbS^n}$ of invariant polynomials (see
\cite[Section~6]{MR1464693}).

We will consider complete symmetric polynomials in some subset of the variables of $R$. The following formula
helps us to decompose a complete symmetric polynomials in $k$ variables
as complete symmetric polynomials in $\ell$ and $k-\ell$ variables, for
every $\ell=1,\ldots,k-1$:
\begin{equation}
  \label{soerg:eq:2}
  h_j(x_1,\ldots,x_k) = \sum_{n=0}^{j} h_{n}(x_1,\ldots,x_\ell)
  h_{j-n} (x_{\ell+1},\ldots,x_k).
\end{equation}
Another formula allows us to express a complete symmetric polynomials in $k-1$ variables in terms of complete symmetric polynomials in $k$ variables:
\begin{equation}
  \label{soerg:eq:3}
  h_j(x_1,\ldots,x_{k-1})=h_j(x_1,\ldots,x_k)-x_k h_{j-1}(x_1,\ldots,x_k).
\end{equation}
Both \eqref{soerg:eq:2} and \eqref{soerg:eq:3} can be checked easily by comparing which monomials appear on both sides.

For $1\leq i \leq n-1$ let $R^{s_i}$ be the subring of $R$ consisting
of polynomials invariant under the simple transposition $s_i$. We
recall from \cite{MR0342522} the definition of the classical {\em
  Demazure operator} $\partial_i: R \rightarrow R^{s_i}\langle 2\rangle$, given by
 \begin{equation}
   \partial_i: f \longmapsto
   \frac{f-s_i(f)}{x_{i}-x_{i+1}}.\label{soerg:eq:82}
\end{equation}
The operator $\partial_i$ is linear, vanishes on $R^{s_i}$ and satisfies $ \partial_i (fg) = f \partial_i g$ whenever $f \in R^{s_i}$.
Let also $P_i: R \rightarrow R$ be defined by $P_i(f)
= f-x_{i} \partial_i (f)$. It is easy to show that $P_i$ has also values in $R^{s_i}$. 
The operators $\partial_i$ and $P_i$ can be used to define the decomposition $R \cong R^{s_i} \oplus x_i R^{s_i}$ as a $R^{s_i}$--module, by
\begin{equation}
  \label{soerg:eq:161}
    f  \mapsto P_i f \oplus x_i \partial_i f.
\end{equation}

Demazure operators have the nice property of sending complete symmetric polynomials to other complete symmetric polynomials:
\begin{lemma}
  For all $j \geq 1$ we have
  \begin{equation}
    \label{soerg:eq:5}
    \partial_k h_j (x_1,\ldots,x_k) = h_{j-1} (x_1,x_2,\ldots,x_{k+1}).
  \end{equation}
\end{lemma}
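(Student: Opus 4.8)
The plan is to verify the identity $\partial_k h_j(x_1,\ldots,x_k) = h_{j-1}(x_1,\ldots,x_{k+1})$ by isolating the dependence of $h_j(x_1,\ldots,x_k)$ on the two variables $x_k$ and $x_{k+1}$ that the operator $\partial_k$ actually sees. First I would apply the decomposition formula \eqref{soerg:eq:2} with $\ell = k-1$ (so that the split is into the first $k-1$ variables and the single last variable $x_k$), obtaining
\begin{equation*}
  h_j(x_1,\ldots,x_k) = \sum_{n=0}^{j} h_n(x_1,\ldots,x_{k-1})\, h_{j-n}(x_k) = \sum_{n=0}^{j} h_n(x_1,\ldots,x_{k-1})\, x_k^{j-n},
\end{equation*}
using that $h_{j-n}(x_k) = x_k^{j-n}$ in a single variable. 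Now $h_n(x_1,\ldots,x_{k-1})$ is invariant under $s_k$ (which swaps $x_k$ and $x_{k+1}$), so by the Leibniz-type property $\partial_k(fg) = f\,\partial_k g$ for $f \in R^{s_k}$ we get
\begin{equation*}
  \partial_k h_j(x_1,\ldots,x_k) = \sum_{n=0}^{j} h_n(x_1,\ldots,x_{k-1})\, \partial_k\bigl(x_k^{j-n}\bigr).
\end{equation*}

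The key computation is then $\partial_k(x_k^m) = (x_k^m - x_{k+1}^m)/(x_k - x_{k+1}) = x_k^{m-1} + x_k^{m-2}x_{k+1} + \cdots + x_{k+1}^{m-1} = h_{m-1}(x_k, x_{k+1})$, valid for $m \geq 1$, while the $n = j$ term (where $m = 0$) contributes $\partial_k(1) = 0$. Substituting back,
\begin{equation*}
  \partial_k h_j(x_1,\ldots,x_k) = \sum_{n=0}^{j-1} h_n(x_1,\ldots,x_{k-1})\, h_{j-1-n}(x_k,x_{k+1}).
\end{equation*}
Finally I would recognize the right-hand side as exactly the expansion of $h_{j-1}(x_1,\ldots,x_{k+1})$ given by \eqref{soerg:eq:2}, this time with $j$ replaced by $j-1$, the total variable set $\{x_1,\ldots,x_{k+1}\}$, and $\ell = k-1$ splitting it into $\{x_1,\ldots,x_{k-1}\}$ and $\{x_k,x_{k+1}\}$. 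This closes the argument.

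There is no real obstacle here; the only point requiring a moment's care is the bookkeeping of index ranges — making sure the $m=0$ term drops out cleanly and that the surviving sum over $n$ runs from $0$ to $j-1$ so that it matches \eqref{soerg:eq:2} for $h_{j-1}$ verbatim. Alternatively, one could give a purely monomial-counting proof in the spirit of the remark following \eqref{soerg:eq:3}: write $\partial_k f = \sum_{m \geq 0} x_k^m x_{k+1}^{?}(\cdots)$ directly, but the decomposition-formula approach above is cleaner and reuses \eqref{soerg:eq:2} twice, which I would prefer.
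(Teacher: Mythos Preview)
Your proof is correct. The paper omits its own proof entirely, simply stating ``We omit the proof, which is a straightforward computation,'' so there is nothing to compare against; your argument via two applications of \eqref{soerg:eq:2} and the identity $\partial_k(x_k^m)=h_{m-1}(x_k,x_{k+1})$ is a clean way to carry out that computation.
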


We omit the proof, which is a straightforward computation.

\subsection{Ideals generated by complete symmetric polynomials}
\label{soerg:sec:ideal-gener-compl}

We are going to study quotients rings of $R$ generated by some of the $h_i$'s. Let
\begin{equation}
  \scrB'= \{ \boldb = (b_1,\ldots, b_n) \in \N^n
  \suchthat b_i \geq b_{i+1} \geq b_i -1 \}.\label{soerg:eq:94}
\end{equation}
In other words, $\scrB'$ is the
set of weakly decreasing sequences of positive numbers such
that the difference between two consecutive items is at most one. For
every sequence $\boldb \in \scrB'$ let $I_\boldb \subset R$ be the ideal
generated by
\begin{equation}\label{soerg:eq:7}
  h_{b_1} (x_1) , h_{b_2}(x_1,x_2) , \ldots , h_{b_n}(x_1,\ldots,x_n).
\end{equation}
Set also $R_\boldb=R/I_\boldb$.

We will shortly recall the definition of Groebner basis, which are a useful tool for studying ideals in polynomial rings; for a complete reference see \cite[Chapter~2]{MR2290010}. Let us fix a lexicographic monomial order on $R$ with
\begin{equation}
x_n > x_{n-1} > \cdots > x_1.\label{soerg:eq:162}
\end{equation}
With respect to this ordering, each polynomial $p \in R$ has
a leading term $\leadingterm(p)$. Given an ideal $I
\subseteq R$, let $\leadingterm(I) = \{\leadingterm(p)
\suchthat p \in I\}$ be the set of leading terms of elements
of $I$ and let $\langle\leadingterm(I)\rangle$ be the ideal
they generate.  We recall that a finite subset
$\{p_1,\ldots,p_r\}$ of an ideal $I$ of $R$ is called a
\emph{Groebner basis} if the leading terms of the
$p_1,\ldots,p_r$ generate $\langle\leadingterm(I)\rangle$.  Then
we have:

\begin{lemma}
  \label{soerg:lem:8}
  The polynomials \eqref{soerg:eq:7} are a Groebner basis for $I_\boldb$ with respect to the order \eqref{soerg:eq:162}.
\end{lemma}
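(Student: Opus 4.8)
The plan is to use Buchberger's criterion: it suffices to show that every $S$-polynomial of two of the generators in \eqref{soerg:eq:7} reduces to zero modulo the set \eqref{soerg:eq:7}. Write $g_k = h_{b_k}(x_1,\ldots,x_k)$ for $k=1,\ldots,n$, so that $I_\boldb = (g_1,\ldots,g_n)$. First I would identify the leading term of each $g_k$ under the order \eqref{soerg:eq:162}: since $x_k$ is the largest variable among $x_1,\ldots,x_k$, the leading term of $h_{b_k}(x_1,\ldots,x_k)$ is $x_k^{b_k}$. Hence $\leadingterm(g_k) = x_k^{b_k}$, and for $j<k$ the leading terms $x_j^{b_j}$ and $x_k^{b_k}$ are coprime monomials. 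By the standard criterion (see \cite[Chapter~2]{MR2290010}), an $S$-polynomial of a pair of generators with relatively prime leading terms automatically reduces to zero, provided one knows the remaining $S$-pairs reduce; more precisely, when \emph{all} pairs of leading terms are pairwise coprime the set is automatically a Groebner basis. Since $\leadingterm(g_j)=x_j^{b_j}$ and $\leadingterm(g_k)=x_k^{b_k}$ involve disjoint variables for $j\neq k$, every pair of leading terms is coprime, and Buchberger's first criterion applies directly.

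Concretely, the key step is the computation of the leading terms. To see that $\leadingterm(h_{b_k}(x_1,\ldots,x_k)) = x_k^{b_k}$, note that every monomial $x_{i_1}\cdots x_{i_{b_k}}$ appearing in \eqref{soerg:eq:1b} has total degree $b_k$, and among all degree-$b_k$ monomials in $x_1,\ldots,x_k$ the lexicographically largest (with $x_k$ the top variable) is $x_k^{b_k}$, which does occur in the sum. This is an elementary verification of exactly the same flavour as the checks of \eqref{soerg:eq:2} and \eqref{soerg:eq:3}. Once this is in hand, for $j<k$ the monomials $x_j^{b_j}$ and $x_k^{b_k}$ share no variable, so $\gcd(\leadingterm(g_j),\leadingterm(g_k))=1$.

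I would then invoke the coprimality version of Buchberger's criterion: if $\{p_1,\ldots,p_r\}$ is a set of polynomials such that $\leadingterm(p_i)$ and $\leadingterm(p_j)$ are coprime for all $i\neq j$, then $\{p_1,\ldots,p_r\}$ is a Groebner basis of the ideal it generates. This is a well-known consequence of the fact that the $S$-polynomial of such a pair reduces to $0$ using only those two polynomials (the "product criterion"), combined with an inductive argument over the $S$-pairs. Applying this with $p_k = g_k$ gives that \eqref{soerg:eq:7} is a Groebner basis for $I_\boldb$.

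I do not expect a serious obstacle here; the only point requiring care is the identification of the leading terms, and in particular checking that the order \eqref{soerg:eq:162}, which ranks $x_n$ highest, indeed makes $x_k^{b_k}$ the leading monomial of $g_k$ (rather than some monomial involving $x_k$ to a lower power together with other variables). Once that is settled, the coprimality of the leading terms across different generators is immediate because $g_k$ only involves $x_1,\ldots,x_k$ with its leading term a pure power of $x_k$, and the conclusion follows formally from Buchberger's criterion.
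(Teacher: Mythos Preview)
Your proposal is correct and takes essentially the same approach as the paper: the paper's proof simply cites the coprimality criterion for Groebner bases and notes that the leading monomials of the generators \eqref{soerg:eq:7} are pairwise relatively prime, which is exactly what you verify in detail by identifying $\leadingterm(g_k)=x_k^{b_k}$. Your write-up is more explicit about the leading-term computation, but the underlying argument is identical.
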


\begin{proof}
  By \cite[Theorem~2.9.3 and Proposition 2.9.4]{MR2290010} it is enough
  to check that the leading monomials of the polynomials \eqref{soerg:eq:7}
  are pairwise relatively prime. This is obvious.
\end{proof}

\begin{prop}
  \label{soerg:prop:1}
  Let $\boldb \in \scrB'$. The quotient ring $R_\boldb= R/I_\boldb$ has dimension $b_1
  \cdots b_n$, and a $\C$--basis is given by
  \begin{equation}
    \label{soerg:eq:8}
    \{\boldsymbol{x^j}=x_1^{j_1} \cdots x_n^{j_n} \suchthat 0 \leq j_i < b_i \}.
  \end{equation}
\end{prop}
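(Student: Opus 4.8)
The plan is to use the Groebner basis established in Lemma \ref{soerg:lem:8} together with the standard theory of Groebner bases, as recalled in \cite[Chapter~2]{MR2290010}. Recall that if $\{p_1,\dots,p_r\}$ is a Groebner basis for an ideal $I\subseteq R$ with respect to a fixed monomial order, then the residue classes of the monomials \emph{not} lying in $\langle\leadingterm(I)\rangle$ form a $\C$--basis of $R/I$ (this is sometimes called Macaulay's theorem; see \cite[Chapter~2, \S 2.6, Proposition 1 and Theorem~2.6.4]{MR2290010}). So the entire statement reduces to identifying $\langle\leadingterm(I_\boldb)\rangle$ and counting the monomials outside it.

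First I would compute the leading term of each generator in \eqref{soerg:eq:7}. With respect to the lexicographic order \eqref{soerg:eq:162} in which $x_n>\cdots>x_1$, the largest monomial in $h_{b_i}(x_1,\dots,x_i)=\sum_{1\le j_1\le\cdots\le j_{b_i}\le i}x_{j_1}\cdots x_{j_{b_i}}$ is the one using the largest available variable as often as possible, namely $x_i^{b_i}$; so $\leadingterm\bigl(h_{b_i}(x_1,\dots,x_i)\bigr)=x_i^{b_i}$. (This is exactly the observation already invoked in the proof of Lemma \ref{soerg:lem:8} to see that the leading monomials are pairwise coprime.) By Lemma \ref{soerg:lem:8} these monomials generate $\langle\leadingterm(I_\boldb)\rangle$, so
\[
  \langle\leadingterm(I_\boldb)\rangle = (x_1^{b_1},x_2^{b_2},\dots,x_n^{b_n}).
\]

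Next, a monomial $\boldsymbol{x^j}=x_1^{j_1}\cdots x_n^{j_n}$ lies in this monomial ideal if and only if $x_i^{b_i}\mid \boldsymbol{x^j}$ for some $i$, i.e.\ if and only if $j_i\ge b_i$ for some $i$. Hence the monomials \emph{not} in $\langle\leadingterm(I_\boldb)\rangle$ are precisely those with $0\le j_i<b_i$ for every $i$, which is exactly the set \eqref{soerg:eq:8}. By the cited consequence of the Groebner basis property, the residue classes of these monomials form a $\C$--basis of $R_\boldb$, proving the basis claim. Counting them gives $\dim_\C R_\boldb=\prod_{i=1}^n b_i = b_1\cdots b_n$, which is the dimension claim. (One should note that $\boldb\in\scrB'$ guarantees each $b_i\ge 1$, so no factor is zero and the basis is as stated.)

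I do not anticipate a serious obstacle: the only slightly delicate point is correctly reading off the leading terms of the $h_{b_i}$ under the chosen lexicographic order — one must be careful that the order puts $x_n$ highest, so that in $h_{b_i}(x_1,\dots,x_i)$ the top variable appearing is $x_i$ and the leading monomial is the pure power $x_i^{b_i}$ rather than some mixed monomial. Everything else is a direct application of the structure theory of Groebner bases already cited in the paper.
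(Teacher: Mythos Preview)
Your proof is correct and follows essentially the same approach as the paper: invoke Lemma~\ref{soerg:lem:8} to know the generators form a Groebner basis, then apply the standard Groebner basis fact that the monomials not divisible by any leading term give a $\C$--basis of the quotient. The paper's version is more terse (it cites the division-with-remainder form of this fact rather than Macaulay's theorem), but the content is identical; your explicit identification of $\leadingterm(h_{b_i}(x_1,\dots,x_i))=x_i^{b_i}$ just makes visible a step the paper leaves implicit.
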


\begin{proof}
By the theory of Groebner bases (cf.\ \cite[Proposition~2.6.1]{MR2290010}), any $f \in R$ can be written uniquely as $f=g+r$, with $g \in I_\boldb$ and $r$ such that no term of $r$ is divisible by any of the leading terms of the Groebner basis \eqref{soerg:eq:7}; that is, $r$ is a linear combination of the monomials \eqref{soerg:eq:8}. This means exactly that the monomials \eqref{soerg:eq:8} are a basis of $R_\boldb$.
\end{proof}

\begin{example}
  \label{soerg:ex:3}
  Let $\boldb=(1,\ldots,1)$. Then $x_i = h_1(x_1,\ldots,x_i)-h_1(x_1,\ldots,x_{i-1})$ lies in $I_\boldb$ for each $i$, hence $I_\boldb = (x_1,\ldots,x_n)$ and $R_\boldb \cong \C$ is one dimensional.
\end{example}

\begin{example}
  \label{soerg:ex:4}
  Let $\boldb=(n,n-1,\ldots,1)$. Then it is easy to show that the ideal $I_\boldb$ is the ideal generated by the symmetric polynomials in $n$ variables with zero constant term, and $R_\boldb$ is the ring of the coinvariants $R/(R_+^{\bbS_n})$, isomorphic to the cohomology of the full flag variety of $\C^n$ (see \cite[\textsection{}10.2, Proposition~3]{MR1464693}). As given by Proposition~\ref{soerg:prop:1}, it has dimension $n!$ and it is well-known that a monomial basis is given by
  \begin{equation}
    \label{soerg:eq:163}
    \{ x_1^{j_1} \cdots x_n^{j_n} \suchthat 0 \leq j_i \leq n-i \}.
  \end{equation}
\end{example}

\subsection{Morphisms between quotient rings}
\label{soerg:sec:morph-betw-quot}

Next, we are going to determine all $R$--module homomorphisms between rings $R_\boldb$.

\begin{prop}
  \label{soerg:prop:2}
  Let $\boldb,\boldb' \in \scrB'$, and let $c_i = \max\{b'_i-b_i,0\}$. Then a $\C$--basis of $\Hom_R (R_\boldb, R_{\boldb'})$ is given by
  \begin{equation}
    \label{soerg:eq:15}
    \{ 1 \mapsto x_1^{j_1}\cdots x_n^{j_n} \suchthat c_i \leq j_i < b'_i \}.
  \end{equation}
\end{prop}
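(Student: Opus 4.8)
The plan is to identify $\Hom_R(R_\boldb, R_{\boldb'})$ with a subspace of $R_{\boldb'}$ and then pin down exactly which elements of $R_{\boldb'}$ arise. Since $R_\boldb = R/I_\boldb$ is cyclic, any $R$-module map $\phi\colon R_\boldb \to R_{\boldb'}$ is determined by $\phi(1) = \bar p$ for some $p \in R$, and the only constraint is well-definedness, i.e.\ $I_\boldb \cdot \bar p = 0$ in $R_{\boldb'}$, equivalently $I_\boldb p \subseteq I_{\boldb'}$. So $\Hom_R(R_\boldb,R_{\boldb'}) \cong (I_{\boldb'} : I_\boldb)/I_{\boldb'}$, the image in $R_{\boldb'}$ of the ideal quotient. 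The first step is therefore to compute this ideal quotient. Because $I_\boldb$ is generated by $h_{b_1}(x_1), \ldots, h_{b_n}(x_1,\ldots,x_n)$, I expect $(I_{\boldb'}:I_\boldb)$ to be generated (modulo $I_{\boldb'}$) by the monomial $x_1^{c_1}\cdots x_n^{c_n}$ with $c_i = \max\{b'_i - b_i, 0\}$: the point is that $x_i^{c_i} h_{b_i}(x_1,\ldots,x_i)$ should lie in $I_{\boldb'}$, and one expects this from the degree bookkeeping together with the recursion \eqref{soerg:eq:3} (which lets one rewrite $h$-polynomials in fewer variables in terms of those in more variables, the direction in which $I_{\boldb'}$ is generated). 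Concretely, one checks $x_i^{b'_i - b_i} h_{b_i}(x_1,\ldots,x_i) \in I_{\boldb'}$ when $b'_i \geq b_i$ by expressing $h_{b'_i}(x_1,\ldots,x_i)$ — which lies in $I_{\boldb'}$ — and subtracting off lower-order contributions, using \eqref{soerg:eq:3} to absorb the variables $x_{i+1},\ldots$ that do not appear; when $b'_i \leq b_i$ the generator $h_{b_i}(x_1,\ldots,x_i)$ itself should already be expressible via $h_{b'_i}(x_1,\ldots,x_i) \in I_{\boldb'}$ plus higher $h$'s, so $c_i = 0$ suffices.

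Granting that $x_1^{c_1}\cdots x_n^{c_n} \in (I_{\boldb'}:I_\boldb)$, multiplication by this monomial gives a map $R \to R_{\boldb'}$ killing $I_\boldb$, hence a map $R_\boldb \to R_{\boldb'}$, and composing with multiplication by arbitrary monomials we get the stated family $\{1 \mapsto x_1^{j_1}\cdots x_n^{j_n} : c_i \leq j_i < b'_i\}$ of homomorphisms. These are linearly independent because, by Proposition~\ref{soerg:prop:1}, the monomials $x_1^{j_1}\cdots x_n^{j_n}$ with $j_i < b'_i$ form a $\C$-basis of $R_{\boldb'}$, and our family consists of distinct such monomials (the images $\phi(1)$ are distinct basis vectors). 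So it remains only to show this family spans, i.e.\ that $(I_{\boldb'}:I_\boldb)$ maps onto exactly the span of these monomials in $R_{\boldb'}$.

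For the spanning/upper-bound direction I would argue by a dimension count. We must show $\dim_\C \Hom_R(R_\boldb, R_{\boldb'}) = \prod_i (b'_i - c_i) = \prod_i \min\{b_i, b'_i\}$. One clean way: the functor $\Hom_R(-, R_{\boldb'})$ applied to the resolution data of $R_\boldb$, or more elementarily, observe that $\Hom_R(R_\boldb, R_{\boldb'})$ consists of those $\bar p \in R_{\boldb'}$ annihilated by $I_\boldb$, i.e.\ by each $h_{b_i}(x_1,\ldots,x_i)$. So $\Hom_R(R_\boldb,R_{\boldb'}) = \Ann_{R_{\boldb'}}(I_\boldb)$, and I want its dimension to be $\prod_i \min\{b_i,b'_i\}$. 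Since $R_{\boldb'}$ is a complete intersection (by Lemma~\ref{soerg:lem:8} the generators form a regular sequence — their leading terms $x_i^{b'_i}$ are a regular sequence, so the generators themselves are), it is a Frobenius algebra, and the annihilator of an ideal has dimension $\dim R_{\boldb'} - \dim(I_\boldb R_{\boldb'})$. Now $I_\boldb R_{\boldb'}$ is the ideal of $R_{\boldb'}$ generated by the images of $h_{b_i}(x_1,\ldots,x_i)$; using Groebner bases again, or the explicit relation that $h_{b_i}(x_1,\ldots,x_i) \equiv$ (unit multiple of) $x_i^{b_i} + \text{lower}$ modulo $I_{\boldb'}$'s leading structure, one gets that $R_{\boldb'}/I_\boldb R_{\boldb'} \cong R/(I_\boldb + I_{\boldb'}) = R_{\boldb''}$ where $b''_i = \min\{b_i,b'_i\}$ — indeed $I_\boldb + I_{\boldb'}$ contains both $h_{b_i}(x_1,\ldots,x_i)$ and $h_{b'_i}(x_1,\ldots,x_i)$, and by \eqref{soerg:eq:3} iterated these two generate the same ideal contribution in $x_1,\ldots,x_i$ as the single $h_{\min\{b_i,b'_i\}}(x_1,\ldots,x_i)$ modulo higher-index generators. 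Hence $\dim I_\boldb R_{\boldb'} = \prod b'_i - \prod\min\{b_i,b'_i\}$, so $\dim \Ann_{R_{\boldb'}}(I_\boldb) = \prod \min\{b_i,b'_i\} = \prod(b'_i - c_i)$, matching the count of monomials in \eqref{soerg:eq:15}. Combined with linear independence, this forces \eqref{soerg:eq:15} to be a basis.

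The main obstacle is the interplay between the two ideals $I_\boldb$ and $I_{\boldb'}$: neither the claim $x_1^{c_1}\cdots x_n^{c_n} \in (I_{\boldb'}:I_\boldb)$ nor the identification $R_{\boldb'}/I_\boldb R_{\boldb'} \cong R_{\boldb''}$ is entirely formal, since the generators of $I_\boldb$ and $I_{\boldb'}$ live in different numbers of variables and must be related through the recursions \eqref{soerg:eq:2}–\eqref{soerg:eq:3}. I would handle this by an induction on $n$, peeling off the last variable $x_n$: using the $R^{s_{n-1}}$-type decomposition (or just the division algorithm) to reduce a statement about $h_{b_n}(x_1,\ldots,x_n)$ to statements about $h_{\bullet}(x_1,\ldots,x_{n-1})$, and invoking \eqref{soerg:eq:3} to pass between the two. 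The Groebner-basis structure of Lemma~\ref{soerg:lem:8} should make every such reduction effective, so that the only real work is organizing the induction cleanly rather than any deep new idea.
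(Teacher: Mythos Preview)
Your overall strategy is sound, and for the spanning direction you take a genuinely different route from the paper, so let me compare.

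For the \emph{containment} direction (that $1\mapsto x_1^{c_1}\cdots x_n^{c_n}$ is well-defined), your concrete suggestion of checking $x_i^{b'_i-b_i}h_{b_i}(x_1,\ldots,x_i)\in I_{\boldb'}$ for each $i$ separately does not work in general: e.g.\ for $\boldb=(1,1,1)$, $\boldb'=(3,2,1)$ one has $x_2\,h_1(x_1,x_2)=h_2(x_1,x_2)-x_1^2\notin I_{\boldb'}$ since $b'_1=3$. The full monomial $x_1^{c_1}\cdots x_n^{c_n}$ is really needed. The paper handles this by interpolating a chain $\boldb=\boldb^{(0)},\ldots,\boldb^{(N)}=\boldb'$ in $\scrB'$ with consecutive terms differing in a single coordinate by $1$ (Lemma~\ref{soerg:lem:9}), and checking the one-step case directly via \eqref{soerg:eq:3}; this is cleaner than the induction on $n$ you sketch at the end.

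For the \emph{spanning} direction the two arguments diverge. The paper (Lemma~\ref{soerg:lem:11}) shows directly that any $p$ with $pI_\boldb\subseteq I_{\boldb'}$ is divisible by $x_1^{c_1}\cdots x_n^{c_n}$: one compares leading terms using the Groebner basis of Lemma~\ref{soerg:lem:8} to force $x_1^{c_1}\cdots x_n^{c_n}\mid\leadingterm(p)$, subtracts, and inducts. Your approach via the Frobenius property of $R_{\boldb'}$ and the identification $I_\boldb+I_{\boldb'}=I_{\boldb''}$ with $b''_i=\min\{b_i,b'_i\}$ is correct and more structural: one checks $\boldb''\in\scrB'$ (the min of two sequences in $\scrB'$ stays in $\scrB'$), the inclusion $I_\boldb+I_{\boldb'}\subseteq I_{\boldb''}$ follows from Lemma~\ref{soerg:lem:1}, and the reverse inclusion is immediate since each generator $h_{b''_i}$ is already a generator of $I_\boldb$ or $I_{\boldb'}$. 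That $R_{\boldb'}$ is Frobenius holds because it is an Artinian graded complete intersection (the $n$ generators cut the Krull dimension from $n$ to $0$, hence form a regular sequence); the paper establishes self-duality explicitly in \textsection\ref{soerg:sec:duality-1} anyway. Your argument buys a conceptual explanation of the dimension $\prod_i\min\{b_i,b'_i\}$, while the paper's buys an elementary proof using nothing beyond Groebner combinatorics and gives the divisibility statement $x_1^{c_1}\cdots x_n^{c_n}\mid p$ rather than just a dimension count.
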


The proof consists of several lemmas.

\begin{lemma}
  \label{soerg:lem:1}
  Let $\boldb \in \scrB'$. Then $h_a(x_1,\ldots,x_i) \in I_\boldb$ for every
  $a \geq b_i$.
\end{lemma}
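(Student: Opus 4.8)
The plan is to prove the statement $h_a(x_1,\ldots,x_i) \in I_\boldb$ for all $a \geq b_i$ by a double induction, first on the number of variables $i$ and then on the degree $a$. The base case $i=1$ is immediate: the generator $h_{b_1}(x_1) = x_1^{b_1}$ lies in $I_\boldb$, and for $a \geq b_1$ we have $h_a(x_1) = x_1^a = x_1^{a-b_1} \cdot x_1^{b_1} \in I_\boldb$. For the inductive step I would fix $i$ and assume the claim holds for $i-1$ variables, i.e.\ $h_c(x_1,\ldots,x_{i-1}) \in I_\boldb$ for all $c \geq b_{i-1}$.

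The key tool is the recursion \eqref{soerg:eq:3}, which I would rewrite as
\begin{equation*}
  h_a(x_1,\ldots,x_i) = h_a(x_1,\ldots,x_{i-1}) + x_i h_{a-1}(x_1,\ldots,x_i).
\end{equation*}
Now I argue by induction on $a \geq b_i$. For $a = b_i$ the polynomial $h_{b_i}(x_1,\ldots,x_i)$ is literally one of the generators \eqref{soerg:eq:7}, so it belongs to $I_\boldb$. For the inductive step, suppose $a > b_i$ and that $h_{a-1}(x_1,\ldots,x_i) \in I_\boldb$ (note $a-1 \geq b_i$, so the inductive hypothesis applies). In the displayed identity the second term $x_i h_{a-1}(x_1,\ldots,x_i)$ then lies in $I_\boldb$. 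For the first term, since $\boldb \in \scrB'$ we have $b_{i-1} \leq b_i + 1 \leq a$, hence $a \geq b_{i-1}$, and the outer inductive hypothesis (on the number of variables) gives $h_a(x_1,\ldots,x_{i-1}) \in I_\boldb$. Adding the two terms shows $h_a(x_1,\ldots,x_i) \in I_\boldb$, completing the induction.

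The only subtlety — and the single place where the defining condition of $\scrB'$ is used — is the inequality $b_{i-1} \leq b_i + 1$, which guarantees that once $a \geq b_i$ we automatically have $a \geq b_{i-1}$, so that the induction on the number of variables can be fed with the right degree. I expect no real obstacle here; the main point is simply to set up the nested induction so that both \eqref{soerg:eq:3} and the $\scrB'$-condition are invoked at the correct moment. One should also note the edge case $i=1$ with the convention $h_i() = 0$ is consistent, but since the induction starts at $i=1$ directly this does not actually arise.
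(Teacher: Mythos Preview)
Your proof is correct and uses essentially the same ingredients as the paper's: the recursion \eqref{soerg:eq:3} together with the $\scrB'$ inequality $b_{i-1}\le b_i+1$. The only organizational difference is that the paper runs a single induction on $\ell=a-b_i$ (simultaneously over all $i$), telescoping \eqref{soerg:eq:3} down to the largest $j<i$ with $b_j=b_i+1$, whereas you apply \eqref{soerg:eq:3} one step at a time inside a nested induction on $(i,a)$; both arrive at the same conclusion for the same reason.
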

\begin{proof}
  We prove by induction on $\ell \geq 0$ that $h_{b_i+\ell}
  (x_1,\ldots,x_i) \in I_\boldb$ for every $i=1,\ldots,n$. For $\ell=0$
  the statement follows from the definition. For the
  inductive step, choose an index $i$ and pick $j<i$ maximal such that $b_j=b_i+1$ (or let
  $j=0$ if such an index does not exist) and write using iteratively
  \eqref{soerg:eq:3}:
  \begin{multline*}
    h_{b_i + \ell}(x_1,\ldots,x_i) = h_{b_i+\ell} (x_1,\ldots,x_j) +
    x_{j+1}h_{b_i+\ell-1} (x_1,\ldots,x_{j+1})+\\ + \cdots +
    x_{i-1}h_{b_i+\ell-1}(x_1,\ldots,x_{i-1})+x_i
    h_{b_i+\ell-1}(x_1,\ldots,x_i).
  \end{multline*}
  Since $b_i + \ell = b_j + \ell -1$, the terms on the right all lie in $I_\boldb$ by the inductive hypothesis.
\end{proof}

\begin{lemma}
  \label{soerg:lem:9}
  Let $\boldb=(b_1,\ldots,b_n) \in \scrB'$ and
  \begin{equation}
    \boldb'=(b_1,\ldots,b_{i-1},b_i+1,b_{i+1},\ldots,b_n)\label{soerg:eq:165}
\end{equation}
 for some $i$. Suppose that also $\boldb' \in \scrB'$. Then $I_{\boldb'} \subset I_{\boldb}$ while $x_i I_{\boldb} \subseteq I_{\boldb'}$.
\end{lemma}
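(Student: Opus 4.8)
The plan is to establish the two containments separately, in both cases by reducing to membership statements about complete symmetric polynomials that are already available. Recall that $I_\boldb$ is generated by the sequence $h_{b_1}(x_1),\ldots,h_{b_n}(x_1,\ldots,x_n)$ and $I_{\boldb'}$ by the same list with the $i$-th entry replaced by $h_{b_i+1}(x_1,\ldots,x_i)$; all generators other than the $i$-th coincide.

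For the inclusion $I_{\boldb'}\subseteq I_\boldb$, it suffices to check that the one new generator $h_{b_i+1}(x_1,\ldots,x_i)$ lies in $I_\boldb$, since all the other generators of $I_{\boldb'}$ are generators of $I_\boldb$. But this is exactly the case $a=b_i+1\ge b_i$ of Lemma \ref{soerg:lem:1}, so we are done. (Here one should note that the hypothesis $\boldb'\in\scrB'$, while assumed, is not actually needed for this direction; it is Lemma \ref{soerg:lem:1} for $\boldb$ that does the work.)

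For the inclusion $x_i I_\boldb\subseteq I_{\boldb'}$, again only the $i$-th generator requires attention: for $m\ne i$ the generator $h_{b_m}(x_1,\ldots,x_m)$ already lies in $I_{\boldb'}$, so $x_i$ times it does too. So we must show $x_i\, h_{b_i}(x_1,\ldots,x_i)\in I_{\boldb'}$. The key identity is the special case $j=b_i$, $k=i$ of \eqref{soerg:eq:3}, rearranged as
\begin{equation*}
  x_i\, h_{b_i}(x_1,\ldots,x_i) = h_{b_i+1}(x_1,\ldots,x_i) - h_{b_i+1}(x_1,\ldots,x_{i-1}).
\end{equation*}
The first term on the right is precisely the $i$-th generator of $I_{\boldb'}$. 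For the second term, observe that the $(i-1)$-st entry of $\boldb'$ is $b_{i-1}$ and, since $\boldb'\in\scrB'$, its $i$-th entry $b_i+1$ satisfies $b_i+1\le b_{i-1}$, i.e.\ $b_{i-1}\le b_i+1$; hence $h_{b_i+1}(x_1,\ldots,x_{i-1})\in I_{\boldb'}$ by Lemma \ref{soerg:lem:1} applied to $\boldb'$ (with $a=b_i+1\ge b_{i-1}$). Therefore both terms lie in $I_{\boldb'}$, and so does their difference $x_i h_{b_i}(x_1,\ldots,x_i)$.

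I do not expect a genuine obstacle here: the statement is a bookkeeping exercise once one has \eqref{soerg:eq:3} and Lemma \ref{soerg:lem:1}. The only point requiring a little care is making sure the hypothesis $\boldb'\in\scrB'$ is invoked correctly in the second containment — it is what guarantees $b_{i-1}\le b_i+1$, so that Lemma \ref{soerg:lem:1} for $\boldb'$ applies to the stray term $h_{b_i+1}(x_1,\ldots,x_{i-1})$; without it this term need not lie in $I_{\boldb'}$. (If $i=1$ there is no such term and the identity reads $x_1 h_{b_1}(x_1)=h_{b_1+1}(x_1)$, which lies in $I_{\boldb'}$ trivially.)
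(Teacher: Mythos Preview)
Your proof is correct and follows essentially the same route as the paper: reduce to the single generator that changed, and use the identity \eqref{soerg:eq:3} to split $x_i h_{b_i}(x_1,\ldots,x_i)$ into two complete symmetric polynomials each lying in $I_{\boldb'}$.

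One small slip to fix: in the sentence ``since $\boldb'\in\scrB'$, its $i$-th entry $b_i+1$ satisfies $b_i+1\le b_{i-1}$, i.e.\ $b_{i-1}\le b_i+1$'', the two displayed inequalities are opposite, not equivalent. The first, $b_i+1\le b_{i-1}$, is what $\boldb'\in\scrB'$ (weak decrease) gives you; the second, $b_{i-1}\le b_i+1$, is what you actually need for Lemma~\ref{soerg:lem:1}, and it comes from $\boldb\in\scrB'$ (the bound $b_{i-1}-b_i\le 1$). Together the two inequalities force $b_{i-1}=b_i+1$, which is exactly how the paper phrases it; then $h_{b_i+1}(x_1,\ldots,x_{i-1})=h_{b_{i-1}}(x_1,\ldots,x_{i-1})$ is literally a generator of $I_{\boldb'}$ and no appeal to Lemma~\ref{soerg:lem:1} is even required for that term.
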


\begin{proof}
  It follows directly from Lemma~\ref{soerg:lem:1} that $I_{\boldb'} \subset I_{\boldb}$. For the other assertion, since $h_{b_j} (x_1,\ldots,x_j) \in I_{\boldb'}$ for every $j \neq i$, we only need to prove that $x_i h_{b_i}(x_1,\ldots,x_i) \in I_{\boldb'}$. By \eqref{soerg:eq:3} we have
  \begin{equation}
    \label{soerg:eq:164}
    x_i h_{b_i}(x_1,\ldots,x_i) = h_{b_i+1}(x_1,\ldots,x_i) - h_{b_i + 1}(x_1,\ldots,x_{i-1}).
  \end{equation}
  Since we suppose $\boldb' \in \scrB'$, it follows that $b_{i-1}=b_i +1$, hence the r.h.s. of \eqref{soerg:eq:164} lies in $I_{\boldb'}$.
\end{proof}

We will call two sequences $\boldb,\boldb' \in \scrB'$ that satisfy the hypothesis of Lemma~\ref{soerg:lem:9} (without regarding the order) \emph{near each other}.

\begin{lemma}
  \label{soerg:lem:10}
  Let $\boldb,\boldb' \in \scrB'$ and set $c_i = \max\{b'_i-b_i,0\}$. Then $x_1^{c_1}\cdots x_{n}^{c_n} I_{\boldb} \subseteq I_{\boldb'}$.
\end{lemma}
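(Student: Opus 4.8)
The plan is to reduce the general statement to the single-step case already handled in Lemma~\ref{soerg:lem:9} by connecting $\boldb$ and $\boldb'$ through a chain of sequences in $\scrB'$, each near the next. First I would observe that since $\boldb,\boldb' \in \scrB'$, one can pass from $\boldb$ to $\boldb'$ by repeatedly increasing a single entry by $1$, staying inside $\scrB'$ at every stage: the coordinates where $b'_i > b_i$ are exactly those contributing a nonzero $c_i$, and one raises those coordinates one unit at a time, always choosing at each stage a coordinate whose increment keeps the ``consecutive difference at most one'' condition intact (this is possible because $\boldb' \in \scrB'$ guarantees a consistent target profile — concretely, at each stage pick an index $i$ with current value strictly below $b'_i$ and with $b_{i-1}^{\text{current}} = b_i^{\text{current}}+1$, which exists by a standard extremal argument). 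Each such single step takes a sequence $\boldc$ to a sequence $\boldc'$ that is near it in the sense of Lemma~\ref{soerg:lem:9}.

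Next I would iterate the inclusion $x_i I_{\boldc} \subseteq I_{\boldc'}$ from Lemma~\ref{soerg:lem:9} along this chain. If the chain is $\boldb = \boldc^{(0)}, \boldc^{(1)}, \ldots, \boldc^{(N)} = \boldb'$, where $\boldc^{(t)}$ is obtained from $\boldc^{(t-1)}$ by raising the $i_t$-th coordinate, then composing gives
\begin{equation*}
  x_{i_1} x_{i_2} \cdots x_{i_N} \, I_{\boldb} \subseteq I_{\boldb'}.
\end{equation*}
It remains to check that the monomial $x_{i_1}\cdots x_{i_N}$ equals $x_1^{c_1}\cdots x_n^{c_n}$ up to multiplication by another monomial (which only makes the inclusion weaker, hence still fine): indeed the index $i$ gets raised exactly $b'_i - b_i$ times when $b'_i \geq b_i$ and never when $b'_i < b_i$, so the multiset of raised indices is precisely $\{i \text{ with multiplicity } c_i\}$, giving $x_{i_1}\cdots x_{i_N} = x_1^{c_1}\cdots x_n^{c_n}$ on the nose. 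Therefore $x_1^{c_1}\cdots x_n^{c_n} I_{\boldb} \subseteq I_{\boldb'}$.

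The main obstacle is the bookkeeping in the first step: one must be sure that a valid ordering of single-coordinate increments exists keeping every intermediate sequence in $\scrB'$. The cleanest way is to process coordinates from left to right (smallest index first) and, for a fixed coordinate $i$, raise it all the way from $b_i$ to $b'_i$ before moving on; since $\boldb$ and $\boldb'$ are both weakly decreasing with steps in $\{0,1\}$, and $b'_{i-1} \geq b'_i$, one can verify inductively that after fully adjusting coordinates $1,\ldots,i-1$ the partially-updated sequence still lies in $\scrB'$ and that raising coordinate $i$ one unit at a time never violates the $\scrB'$ conditions — the condition with its right neighbour $i+1$ holds because that neighbour is still at its old value $b_{i+1} \leq b_i \leq$ (current $i$-value), and the condition with its left neighbour $i-1$ holds because that neighbour has already been pushed up to $b'_{i-1} \geq b'_i \geq$ (current $i$-value)$+1$ at the moment the current value is still $< b'_i$, wait — more carefully one checks $b'_{i-1} \geq b'_i$ and $b'_{i-1} \le b'_i + 1$ forces the intermediate values to stay legal. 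This verification is elementary but is the only place where the hypothesis $\boldb' \in \scrB'$ is genuinely used, so it deserves to be spelled out.
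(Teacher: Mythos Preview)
Your overall strategy---build a chain in $\scrB'$ from $\boldb$ to $\boldb'$ with consecutive terms near each other, then iterate Lemma~\ref{soerg:lem:9}---is precisely the paper's approach. There are, however, two genuine gaps in your execution.

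First, you assume one can pass from $\boldb$ to $\boldb'$ by \emph{increments only}. This is false whenever $b'_i < b_i$ for some $i$. The paper's chain has length $N = \sum_i \abs{b_i - b'_i}$ and therefore mixes increments and decrements; at a decrement step one uses the inclusion $I_{\boldb'} \subset I_{\boldb}$ from Lemma~\ref{soerg:lem:9}, which costs no monomial factor, so the accumulated monomial is still $x_1^{c_1}\cdots x_n^{c_n}$. (Your increment-only idea can be salvaged by first climbing to $\boldb'' = (\max\{b_i,b'_i\})_i$, which one checks lies in $\scrB'$, and then descending to $\boldb'$ for free---but this has to be said.)

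Second, and more seriously, your explicit ``left to right, fully raise each coordinate before moving on'' ordering is wrong. Take $\boldb = (1,1)$ and $\boldb' = (3,2)$: raising coordinate~$1$ twice first lands at $(3,1)$, which has consecutive difference~$2$ and lies outside $\scrB'$. Your verification of the right-neighbour condition only checks $b_{i+1} \leq (\text{current value at } i)$, i.e.\ weak decrease, and forgets the ``difference at most one'' half of the definition of $\scrB'$; the hesitant ``wait'' at the end of your argument is well placed. A correct scheme: at each stage, among indices whose current value is strictly below target, pick the smallest $j$ with either $j=n$ or (current value at $j$) $=$ (current value at $j+1$). Such a $j$ always exists: start at the leftmost deficient index and walk right; one checks inductively that every index visited is still deficient, so the walk terminates at a valid~$j$.
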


\begin{proof}
  We can find a sequence $\boldb = \boldb^{(0)},\boldb^{(1)},\ldots, \boldb^{(N)}=\boldb'$ with $\boldb^{(k)} \in \scrB'$ for each $k$ and $N= \sum_i \abs{b_i -b_i'}$ such that $\boldb^{(i)}$ and $\boldb^{(i+1)}$ are near each other. Then the claim follows applying iteratively Lemma~\ref{soerg:lem:9}.
\end{proof}

\begin{lemma}
  \label{soerg:lem:11}
  Let $\boldb,\boldb' \in \scrB'$. Let $c_i = \max\{b'_i-b_i,0\}$. Suppose $p \in R$ is such that  $p I_{\boldb} \subseteq I_{\boldb'}$. Then $x_1^{c_1}\cdots x_{n}^{c_n} \mid p$.
\end{lemma}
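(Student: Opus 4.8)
I want to show that $p I_{\boldb} \subseteq I_{\boldb'}$ forces the monomial $x_1^{c_1}\cdots x_n^{c_n}$ to divide $p$, where $c_i = \max\{b_i'-b_i, 0\}$. The natural strategy is to work one variable at a time, peeling off the power $x_i^{c_i}$ for the indices $i$ where $c_i > 0$. Since the leading-term order \eqref{soerg:eq:162} puts $x_n$ highest, the cleanest induction is on $n$ (or, equivalently, on the variable $x_n$ first). The key observation is that $I_{\boldb}$ always contains an element whose behaviour in the variable $x_n$ is "as simple as possible", namely $h_{b_n}(x_1,\ldots,x_n)$, which as a polynomial in $x_n$ has leading term $x_n^{b_n}$; similarly $I_{\boldb'}$ contains $h_{b_n'}(x_1,\ldots,x_n)$ with leading term $x_n^{b_n'}$. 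By Lemma~\ref{soerg:prop:1} (really Proposition~\ref{soerg:prop:1} and the Groebner basis of Lemma~\ref{soerg:lem:8}) the quotient $R_{\boldb}$ has $x_n$-degree bounded by $b_n - 1$, and likewise $R_{\boldb'}$ by $b_n' - 1$.

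\textbf{Reducing to the last variable.} First I would treat the case where $\boldb$ and $\boldb'$ differ only in the last slot, say $b_n' = b_n + c_n$ with $c_n \geq 1$ and $b_i' = b_i$ for $i < n$ (the general case will be assembled afterwards by combining with Lemma~\ref{soerg:lem:10} and an induction). Reduce $p$ modulo $I_{\boldb}$ using the Groebner basis \eqref{soerg:eq:7}: write $p = q + r$ with $q \in I_{\boldb}$ and $r$ a $\C$-linear combination of the monomials \eqref{soerg:eq:8} for $\boldb$; in particular $r$ has $x_n$-degree at most $b_n - 1$, and $p I_{\boldb} \subseteq I_{\boldb'}$ becomes $r I_{\boldb} \subseteq I_{\boldb'}$ since $q I_{\boldb} \subseteq I_{\boldb} \cdot I_{\boldb}$ and $I_{\boldb} I_{\boldb} \subseteq I_{\boldb}$... no — I need $q I_{\boldb} \subseteq I_{\boldb'}$, which is not automatic. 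Instead: since $I_{\boldb'} \subseteq I_{\boldb}$ by Lemma~\ref{soerg:lem:9}, and $q \in I_{\boldb}$, the product $q \cdot h_{b_n}(x_1,\ldots,x_n)$ lies in $I_{\boldb}$ but I want it in $I_{\boldb'}$. Here I use the other half of Lemma~\ref{soerg:lem:9} iterated, i.e.\ Lemma~\ref{soerg:lem:10}: $x_n^{c_n} I_{\boldb} \subseteq I_{\boldb'}$, so it suffices to show $x_n^{c_n} \mid p$, and for that I may freely replace $p$ by $r = p \bmod I_{\boldb}$ provided I also know $x_n^{c_n} \mid r \Rightarrow x_n^{c_n}\mid p$ — which holds because $q = p - r \in I_{\boldb}$ and every element of $I_{\boldb}$, reduced further, ... this is getting circular, so let me instead argue directly with $r$: we have $r I_{\boldb} \subseteq I_{\boldb'}$ because $r I_{\boldb} = p I_{\boldb} - q I_{\boldb} \subseteq I_{\boldb'} + I_{\boldb}$, not good enough. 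The correct move: apply the hypothesis to the specific generator $h_{b_n}(x_1,\ldots,x_n) \in I_{\boldb}$, so $p \, h_{b_n}(x_1,\ldots,x_n) \in I_{\boldb'}$. Now reduce this element modulo the Groebner basis of $I_{\boldb'}$. As a polynomial in $x_n$, $h_{b_n}(x_1,\ldots,x_n)$ is monic of degree $b_n$, while the reduction forces the $x_n$-degree of the normal form to drop below $b_n' = b_n + c_n$; expanding $p$ by its $x_n$-adic valuation $v$ (write $p = x_n^v \tilde p$ with $x_n \nmid \tilde p$) and comparing top $x_n$-degrees, one extracts the inequality $v \geq c_n$. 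The clean way to see this: $h_{b_n}(x_1,\ldots,x_n) \equiv -\!\!\sum_{\ell \geq 1}\! h_{b_n - \ell + 1}(x_1,\ldots,x_{n-1}) x_n^{?}\ldots$ — use \eqref{soerg:eq:3} repeatedly to rewrite $x_n^{b_n'} = x_n^{b_n + c_n}$ inside $R_{\boldb'}$ purely in terms of lower $x_n$-powers with coefficients the $h$'s in $x_1,\ldots,x_{n-1}$, which in $R_{\boldb'}$ (equivalently in $R_{\boldb}$ since the first $n-1$ entries agree) have $x_n$-degree $0$. Tracking degrees gives $x_n^{c_n} \mid p$ in $R_{\boldb}$; but "divisible in $R_{\boldb}$" needs care.

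\textbf{Assembling the general case.} Once the one-variable statement is in hand, I would reduce the general $(\boldb,\boldb')$ to it by the same telescoping device used in Lemma~\ref{soerg:lem:10}: choose a path $\boldb = \boldb^{(0)}, \boldb^{(1)}, \ldots, \boldb^{(N)} = \boldb'$ of sequences in $\scrB'$, each consecutive pair near each other, \emph{ordered} so that we first increase the last coordinate the required number of times, then the second-to-last, and so on. At the step where we pass from $\boldb^{(k)}$ (with last few coordinates already raised) to $\boldb^{(k+1)}$ by raising coordinate $i$, the "converse" direction $p I_{\boldb^{(k)}} \subseteq I_{\boldb^{(k+1)}} \Rightarrow x_i \mid p$ is exactly the one-variable statement applied to a quotient in which the variables $x_1,\ldots,x_i$ behave like a fresh polynomial ring with the higher variables appearing only as "constants"; the formulas \eqref{soerg:eq:2}--\eqref{soerg:eq:3} let one slice off $x_{i+1},\ldots,x_n$. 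Composing these divisibility conclusions and using that $x_1,\ldots,x_n$ are coprime in $R$ yields $x_1^{c_1}\cdots x_n^{c_n}\mid p$. I expect the genuine obstacle to be precisely the bookkeeping in the previous paragraph: making rigorous the passage from "the $x_n$-degree of a normal form drops" to "$x_n^{c_n}$ divides $p$ \emph{in $R$}", since a priori divisibility only holds modulo the ideal. The resolution is to run the argument on $p$ itself rather than on its normal form: from $p\,h_{b_n}(x_1,\dots,x_n) \in I_{\boldb'}$ and the explicit Groebner basis of $I_{\boldb'}$, division in the polynomial ring $R$ with respect to the order \eqref{soerg:eq:162} shows that every monomial of $p\,h_{b_n}(\dots)$ that is \emph{not} divisible by one of the leading terms $x_1^{b_1},\dots,x_{n-1}^{b_{n-1}}, x_n^{b_n'}$ must cancel; since $h_{b_n}(x_1,\dots,x_n)$ is monic in $x_n$ of degree $b_n$ and the first $n-1$ leading terms do not involve $x_n$, the only constraint that can be imposed on $p$ is divisibility of its low-$x_n$-degree part by $x_n^{b_n' - b_n} = x_n^{c_n}$, forcing $x_n^{c_n}\mid p$ outright in $R$.
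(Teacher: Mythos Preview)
Your proposal takes a different route from the paper's and has a genuine gap at the key step.

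The paper proceeds by induction on the leading monomial of $p$ in the order \eqref{soerg:eq:162}. For the leading term $p'$ of $p$, one applies the hypothesis to \emph{every} generator $h_{b_i}(x_1,\dots,x_i)$ of $I_{\boldb}$: each product $p\,h_{b_i}$ lies in $I_{\boldb'}$, so its leading monomial $p'x_i^{b_i}$ belongs to the monomial ideal $\langle x_1^{b'_1},\dots,x_n^{b'_n}\rangle$. Running this over all $i$ is what forces $x_1^{c_1}\cdots x_n^{c_n}\mid p'$; Lemma~\ref{soerg:lem:10} then gives $p'I_{\boldb}\subseteq I_{\boldb'}$ as well, so $(p-p')I_{\boldb}\subseteq I_{\boldb'}$ and one inducts on the strictly smaller leading term of $p-p'$.

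Your plan instead isolates one variable at a time and tries to show, in the case where $\boldb'$ differs from $\boldb$ only in the last slot, that the single membership $p\,h_{b_n}(x_1,\dots,x_n)\in I_{\boldb'}$ forces $x_n^{c_n}\mid p$. This is not enough: the Groebner reduction of $p\,h_{b_n}$ to zero in $I_{\boldb'}$ can proceed entirely through the leading terms $x_j^{b'_j}$ with $j<n$, imposing no constraint whatsoever on the $x_n$-valuation of $p$. Concretely, any $q$ lying in the sub-ideal $(h_{b'_1}(x_1),\dots,h_{b'_{n-1}}(x_1,\dots,x_{n-1}))\subseteq I_{\boldb'}$ already satisfies $q\,h_{b_n}\in I_{\boldb'}$, yet such $q$ need not be divisible by $x_n$ at all. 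Your sentence ``the only constraint that can be imposed on $p$ is divisibility of its low-$x_n$-degree part by $x_n^{c_n}$'' is exactly where this breaks; one really must bring in the other generators $h_{b_i}$ for $i<n$, which is what the paper's leading-term induction does uniformly.

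The telescoping assembly in your last paragraph inherits the same problem and is in any case underspecified: from the global hypothesis $pI_{\boldb}\subseteq I_{\boldb'}$ you never obtain the intermediate hypotheses $p_k\,I_{\boldb^{(k)}}\subseteq I_{\boldb^{(k+1)}}$ along a chain of near sequences, so even if the one-step claim were repaired, the induction would not chain together as written.
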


\begin{proof}
  We prove the claim by induction on the leading term of $p$, using
  the lexicographic order \eqref{soerg:eq:162}. Let $p'$ be the leading term of $p$ and pick an index
  $1 \leq i \leq n$. By assumption, $p h_{b_i}(x_1,\ldots,x_i) \in
  I_{\boldb'}$. By the theory of Groebner basis, the leading term of $p
  \psi_{b_i}(x_1,\ldots,x_i)$ is divisible by $x_1^{b'_1}\cdots
  x_n^{b'_n}$, and this leading term is just $p' x_i^{b_i}$. It
  follows immediately that $x_1^{c_1} \cdots x_n^{c_n} \mid p'$. By
  Lemma~\ref{soerg:lem:10} we then know that $p' I_\boldb \subseteq I_{\boldb'}$,
  hence also $(p-p') I_\boldb \subseteq I_{\boldb'}$. By induction, we may
  assume that $x_1^{c_1} \cdots x_n^{c_n} \mid (p-p')$, and we are
  done.
\end{proof}

\begin{proof}[Proof of Proposition~\ref{soerg:prop:2}]
  It follows from Lemma~\ref{soerg:lem:10} that the elements of \eqref{soerg:eq:15} indeed define morphisms $R_\boldb \mapto R_{\boldb'}$. By Proposition~\ref{soerg:prop:1} they are linearly independent, and by Lemma~\ref{soerg:lem:11} they are a set of generators.
\end{proof}

\subsection{Duality}
\label{soerg:sec:duality-1}

The category of finite dimensional $R$--modules has a duality, given by
\begin{equation}
  \label{soerg:eq:84}
  M^* = \Hom_\C(M, \C).
\end{equation}
In fact, the vector space $M^*$ is endowed with an $R$--action by setting $(r \cdot f)(m) = f(r\cdot m)$ for all $f \in M^*$, $m \in M$, $r \in R$ (since $R$ is commutative). If $M$ is graded, the dual inherits a grading declaring $(M^*)_j=(M_{-j})^*$.

Now consider some $\boldb \in \scrB'$. The monomial basis
\eqref{soerg:eq:8} of $R_\boldb$ has a unique element of
maximal degree $b=2(b_1+\cdots+b_n-n)$, namely
$\boldx^{\boldsymbol{b}-\boldsymbol{1}}$ where
$\boldsymbol{1}=(1,\ldots,1)$ and
$\boldsymbol{b}-\boldsymbol{1}$ is the sequence
$(b_1-1,\ldots,b_n-1)$. We define a symmetric bilinear form $(
\cdot,\cdot)$ on $R_\boldb$ by letting 
\begin{equation}
  \label{eq:27}
  (\boldx^\boldj,\boldx^{\boldsymbol{j'}}) =
  \begin{cases}
    1& \text{if } \boldj + \boldsymbol{j'} = \boldb - \boldsymbol{1},\\
    0 & \text{otherwise}
  \end{cases}
\end{equation}
on the monomial basis \eqref{soerg:eq:8}, where sequences are added termwise. Since this form is clearly non-degenerate, 
we get an isomorphism of
graded $R$--modules
  \begin{equation}
    \label{soerg:eq:85}
    R_\boldb \cong R_\boldb^*\langle -b \rangle \quad \text{for every } \boldb \in \scrB'.  \end{equation}
The degree shift comes out because the bilinear form pairs the degree $i$ component of $R_\boldb$ with its component of degree $b-i$.

By the properties of a duality, we have
\begin{equation}
  \label{soerg:eq:86}
  \Hom_R (R_\boldb,R_{\boldb'}) \cong \Hom_R (R_{\boldb'}^* , R_{\boldb}^*) \cong \Hom_R (R_{\boldb'}, R_\boldb) \langle b'-b \rangle
\end{equation}
for any $\boldb,\boldb' \in \scrB'$. It is not difficult to see that the composite  isomorphism is given explicitly by
\begin{equation}
  \label{soerg:eq:87}
 \Theta: (1 \mapsto p) \longmapsto \left(1 \mapsto \frac{\boldx^{\boldsymbol{b}-\boldsymbol{1}}}{\boldx^{\boldsymbol{b'}-\boldsymbol{1}}} p\right).
\end{equation}

\subsection{Schubert polynomials}
\label{sec:schubert-polynomials}
We recall some basic facts about Schubert polynomials, referring to \cite{MR1161461} for more details. Let $w \in \bbS_n$  be a permutation; then the operator $\partial_w = \partial_{i_1} \cdots \partial_{i_r}$, where $w=s_{i_1} \cdots s_{i_r}$ is some reduced expression, does not depend on the particular chosen reduced expression and is hence well-defined. Let $w_n \in \bbS_n$ be the longest element. Then one defines the \emph{Schubert polynomial}
\begin{equation}\label{soerg:eq:83}
  \mathfrak S_w (x_1,\ldots,x_n) = \partial_{w^{-1}w_n} x_1^{n-1} x_2^{n-2} \cdots x_{n-1}
\end{equation}
for each $w \in \bbS_n$.
The Schubert polynomials give a basis of $R/(R_+^{\bbS_n})$. It follows from the definition that $\deg \mathfrak S_w (x_1,\ldots,x_n)=2\len(w)$.

For our purposes, it will be more convenient to have a monomial basis of $R/(R_+^{\bbS_n})$, indexed by permutations $w \in \bbS_n$.

\begin{definition}
  For each $w \in \bbS_n$ we define $\mathfrak S'_w(x_1,\ldots,x_n)$ to be the
  leading term of $\mathfrak S_w(x_1,\ldots,x_n)$ in the
  lexicographic order \eqref{soerg:eq:162}.
\end{definition}

Being the leading terms of a basis of $R/(R^{\bbS_n}_+)$, it
follows by the theory of Groebner bases (see \textsection\ref{soerg:sec:ideal-gener-compl}) that also the
monomials $\mathfrak S'_w(x_1,\ldots,x_n)$ give a basis.

\begin{remark}
  \label{soerg:rem:3}
  We already noticed in Example~\ref{soerg:ex:4} that $R/(R_+^{\bbS_n}) \cong R_\boldb$ for $\boldb=(n,n-1,\ldots,1)$. Hence we already have a monomial basis of $R/(R_+^{\bbS_n})$ given by Proposition~\ref{soerg:prop:1}. In fact, this basis coincides with the basis $\{\mathfrak S'_w(x_1,\ldots,x_n) \suchthat w \in \bbS_n\}$; the advantage of using Schubert polynomials is that they give us a way to index these basis elements through permutations.
\end{remark}

There is an easy way to construct the monomials $\mathfrak
S'_w (x_1,\ldots,x_n)$ (cf.\ \cite{MR1241505}): let
$c_{i} = \# \{j<w^{-1}(i) \suchthat w(j)>i \}$; then
$\mathfrak S'_w(x_1,\ldots,x_n) = x_1^{c_1} \cdots
x_{n-1}^{c_{n-1}}$.

\begin{example}
  \label{soerg:ex:5}
  The following table contains the Schubert polynomials and the polynomials $\mathfrak S'_w$ in the case $n=3$.
    \begin{table}[h]
      \renewcommand{\arraystretch}{1.1}
      \setlength{\tabcolsep}{10pt}
      \centering
      \begin{tabular}[]{c|c|c}
        $w \in \bbS_3$ & $\mathfrak S_w$ & $\mathfrak S'_w$ \\
        \hline $e$ & $1$ & $1$ \\
        $s$ & $x_1$ & $x_1$ \\
        $t$ & $x_1+x_2$ & $x_2$ \\
        $st$ & $x_1x_2$ & $x_1x_2$ \\
        $ts$ & $x_1^2$ & $x_1^2$ \\
        $w_3$ & $x_1^2 x_2$ & $x_1^2 x_2$
      \end{tabular}
    \end{table}
\end{example}

\section{Some canonical bases elements}
\label{soerg:sec:some-canon-bases}

In this section we recall the definition of the bar involution and the
canonical basis of the Hecke algebra for the symmetric group
$\bbS_n$. We will then compute some canonical basis elements in the
Hecke algebra for some special permutations of the symmetric group
$\bbS_n$. We will need these expressions to compute the dimension of
Soergel modules in the next section.

We remark that all actions of $\bbS_n$ will be from the right.

\subsection{Hecke algebra}
\label{sec:hecke-algebra}
Let $s_1,\ldots,s_{n-1}$ denote the simple reflections which
generate the symmetric group $\bbS_n$. For $w \in \bbS_n$ we
let $\len(w)$ be the length of $w$. Moreover, we denote
by $\prec$ the Bruhat order on $\bbS_n$.

The Hecke algebra of the symmetric group $W=\bbS_n$ is the
unital associative $\C(v)$--algebra $\ucalH_n$ generated by $\{H_i
\suchthat i=1,\ldots,n-1\}$ with relations
\begin{subequations}
  \begin{align}
    H_i H_j &= H_j H_i \qquad \text{if } \vert i-j\vert>2,\label{eq:7} \\
    H_i H_{i+1} H_i & = H_{i+1} H_i H_{i+1}, \label{eq:8}\\
    H_i^2&=(v^{-1}-v)H_i+1.\label{eq:9}
  \end{align}
\end{subequations}
Note that we use Soergel's normalization \cite{MR1445511}, instead of
Lusztig's one. It follows from \eqref{eq:9} that the
elements $H_i$ are invertible with $H_i^{-1}=H_i
+v-v^{-1}$. For $w \in \bbS_n$ such that $w=s_{i_1} \cdots
s_{i_r}$ is a reduced expression, we define
$H_w=H_{i_1}\cdots H_{i_r}$. Thanks to \eqref{eq:8}, this
does not depend on the chosen reduced expression. The
elements $H_w$ for $w \in W$ form a basis of $\ucalH_n$,
called \emph{standard basis}, and we have
\begin{equation}
  H_w H_i = \begin{cases}
    H_{w s_i} & \text{if }\ell(w s_i)> \ell(w),\\
    H_{w s_i} + (v^{-1}-v) H_{w} & \text{otherwise.}
  \end{cases}
\end{equation}

We can define on $\ucalH_n$ a \emph{bar involution} by
$\overline{H_w}=H^{-1}_{w^{-1}}$ and $\overline{v}=v^{-1}$;
in particular $\overline{H_i}=H_i+v-v^{-1}$. We also have a {\em
  bilinear form} $\langle -,- \rangle$ on $\ucalH_n$ such that the
standard basis elements are orthonormal:
\begin{equation}\label{eq:165}
  \langle H_w , H_{w'} \rangle = \delta_{w, w'} \qquad \text{for all }w, w' \in W.
\end{equation}

By standard arguments one can prove the following:

\begin{prop}[\cite{MR560412}, in the normalization of \cite{MR1445511}]\label{prop:KLBasisOnHeckeAlgebra}
  There exists a unique basis $\{C_w \suchthat w \in
  W\}$ of $\ucalH_n$ consisting of bar-invariant elements such that
  \begin{equation}\label{eq:4}
    C_w = H_w + \sum_{w' \prec w} \mathcal{P}_{w',w}(v) H_{w'}
  \end{equation}
  with $\mathcal{P}_{w',w} \in v\Z[v]$ for every $w' \prec w$.
\end{prop}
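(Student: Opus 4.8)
This is the classical Kazhdan--Lusztig existence-and-uniqueness theorem for the canonical basis, in Soergel's normalization; I would run the standard two-step argument: deduce uniqueness from the unitriangularity of the bar involution with respect to the standard basis, then construct the $C_w$ by induction on $\len(w)$. For \emph{uniqueness}, suppose $C_w$ and $C_w'$ both satisfy \eqref{eq:4}. Then $d = C_w - C_w'$ is bar-invariant and, the coefficients of $H_w$ having cancelled, lies in $\sum_{x \prec w} v\Z[v]\,H_x$; so it suffices to show that any bar-invariant $d = \sum_x q_x(v)\,H_x$ with all $q_x \in v\Z[v]$ is zero. If not, choose $x$ maximal in the Bruhat order with $q_x \neq 0$. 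From $\overline{H_y} = H_y + \sum_{z \prec y}(\cdots)\,H_z$ and the fact that $q_y \neq 0$ forces $y \preceq x$, the coefficient of $H_x$ in $\overline{d}$ is exactly $q_x(v^{-1})$; comparing with $d = \overline{d}$ gives $q_x(v^{-1}) = q_x(v)$, incompatible with $0 \neq q_x \in v\Z[v]$.

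For \emph{existence} I induct on $\len(w)$, the base case being $C_e = H_e = 1$. For $\len(w) \geq 1$ write $w = w' s_i$ with $\len(w') = \len(w)-1$, so $C_{w'}$ exists by induction. Relation \eqref{eq:9} gives $\overline{H_i} = H_i + v - v^{-1}$, so $C_{s_i} := H_i + v$ is bar-invariant; moreover the bar involution is a ring automorphism (semilinear for $v \mapsto v^{-1}$, defined on generators, and compatible with \eqref{eq:7}--\eqref{eq:9}), so $h := C_{w'} C_{s_i}$ is bar-invariant. Using $H_{w'}H_i = H_w$, the multiplication rule for $H_x H_i$, and the subword property of the Bruhat order, expansion yields $h = H_w + \sum_{x \prec w} r_x(v)\,H_x$ with $r_x \in \Z[v]$ (the only constant terms arising from the $(v^{-1}-v)$ corrections in that multiplication rule). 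Now sweep through the indices $x \prec w$ from the top down, along a total order refining $\prec$: whenever the current $H_x$-coefficient has nonzero constant term $c$, replace $h$ by $h - c\,C_x$, which is legitimate since $\len(x) < \len(w)$ makes $C_x$ available by induction. Each step preserves bar-invariance, keeps the $H_w$-coefficient equal to $1$, kills the constant term at $H_x$, and alters only coefficients of $H_y$ with $y \prec x$, i.e.\ among the not-yet-processed indices. After finitely many steps $h$ has the form \eqref{eq:4}, and we set $C_w := h$; uniqueness shows it does not depend on the chosen factorization $w = w' s_i$ nor on the order of the sweep.

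The step needing the most care is this last one: one must check that the "clear the constant terms" procedure terminates and never reintroduces a bad term above the index currently being handled — which is precisely guaranteed by the top-down sweep, since subtracting $c\,C_x$ perturbs only strictly smaller Bruhat cells. Everything else (bar-invariance of $C_{s_i}$, multiplicativity of the bar involution, and the bookkeeping in expanding $C_{w'}C_{s_i}$) reduces to short manipulations with \eqref{eq:7}--\eqref{eq:9}. The polynomials $\mathcal{P}_{x,w}$ produced this way are, by definition, the Kazhdan--Lusztig polynomials in Soergel's normalization.
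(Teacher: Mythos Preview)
The paper does not actually give a proof of this proposition; it cites it from Kazhdan--Lusztig (in Soergel's normalization) and introduces it only with the phrase ``By standard arguments one can prove the following.'' Your argument is precisely that standard argument, and in fact the paper's immediately following Remark~\ref{remark:ConstructionOfCanonicalBasis} sketches exactly your inductive existence construction: form $C_{w'}C_{s_i}$, observe it is bar-invariant with leading term $H_w$, and correct by subtracting integer multiples of the already-constructed $C_x$ for $x \prec w$. Your uniqueness argument via the maximal-support trick is the usual companion to this, and everything you wrote is correct.
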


The basis $C_w$ is called \emph{Kazhdan-Lusztig
  basis}. We will also call it \emph{canonical basis} of $\ucalH_n$.

\begin{remark}\label{remark:ConstructionOfCanonicalBasis}
  There is an inductive way to construct the canonical basis
  elements. First, note that $C_e = H_e$. Then set $C_i =
  H_i + v$: since $C_i$ is bar invariant, we must have $C_{s_i} =
  C_i$. Now suppose $w= w's_i \succ w'$ : then $C_{w'}
  C_i$ is bar invariant and is equal to $ H_{w}$ plus a $\Z[v,v^{-1}]$--linear combination of some $H_{w''}$ for $w'' \prec w$. It follows that
  \begin{equation}
    \label{eq:10}
    C_{w'}C_i = C_w + p \quad \text{ for some } p \in \bigoplus_{w'' \prec w} \Z C_{w''} .
  \end{equation}
\end{remark}

\subsection{Combinatorics}
\label{soerg:sec:combinatorics}

Let us fix an integer $0 \leq k \leq n$. If $s_1,\ldots,s_{n-1}$ are
the simple reflections in $\bbS_n$, let $W_k$ be the subgroup
generated by $s_1,\ldots,s_{k-1}$ and $W_k^\perp$ be the subgroup
generated by $s_{k+1},\ldots,s_{n-1}$. Notice that $\bbS_k \times \bbS_{n-k} \cong W_k \times W_k^\perp \subseteq \bbS_n$. Let
$w_k$ be the longest element of $W_k$, and let
$D=D_{n,k} $ be the set of shortest coset representatives
\scalebox{0.9}{$\big(\text{\raisebox{-2pt}{$W_k \times W_k^\perp$}} \rotatebox{10}{$\big\backslash$} \text{\raisebox{2pt}{$\bbS_n$}}\big)^{\short}$}.
The set $D $ is in natural bijection with $\up\down$--sequences consisting of $k$ $\up$'s and $n-k$ $\down$'s, by mapping the identity $e \in \bbS_n$ to the sequence
\begin{equation}\label{soerg:eq:181}
  e = \underbrace{\up \cdots \up}_{k} \underbrace{\down \cdots \down}_{n-k}
\end{equation}
and letting $\bbS_n$ act by permutation of positions; in order to obtain a bijection with right coset representatives we regard this as a right action.
From now on, we identify an element $z \in D $ with the corresponding
$\up\down$--sequence.

There are a few ways to encode an element $z\in D $, that we are now going to explain.

\begin{description}[leftmargin=0cm,itemsep=2ex,format=\normalfont\em]
\item[The position sequences] In an $\up\down$--sequence $z \in D $, we
  number the $\up$'s (resp.\ the $\down$'s) from $1$ to $k$ (resp.\ from
  $1$ to $n-k$) from the left to the right. Moreover, we number the
  positions of an $\up\down$--sequence from $1$ to $n$ from the left to
  the right. We let $\up_i^z$ be the position of the $i$-th $\up$ and
  $\down_j^z$ be the position of the $j$-th $\down$ in $z$. For
  example, in the sequence
  \begin{equation*}
    \label{soerg:eq:166}
    z = \up \down \down \up \down \up \up
  \end{equation*}
  we have $\up^z_2=4$ and $\down^z_1=2$.  Notice that both the
  sequences $(\up_1^z,\ldots,\up_k^z)$ and
  $(\down_1^z,\ldots,\down_{n-k}^z)$ uniquely determine $z$.

\item[The $\up$--distance sequence]  We set
  \begin{equation}
    \label{soerg:eq:167}
    z^\up_i = \up^z_i - i \qquad \text{for } i=1,\ldots,k,
  \end{equation}
  so that
  \begin{equation}
    \label{soerg:eq:168}
    (\up^z_1,\ldots,\up^z_k) = (1+z_1^\up, \ldots, k + z_k^\up).
  \end{equation}
  In other words, $z^\up_i$ measures how many steps the $i$-th $\up$
  of the initial sequence $e$ has been moved to the right by the
  permutation $z$. This defines a bijection $z \mapsto \boldz^\up$
  between $D $ and the set
  \begin{equation}
    \{\boldz^\up =
    (z^\up_1,\ldots,z^\up_k) \suchthat 0 \leq z_1^\up \leq \cdots \leq z_k^\up \leq n-k\}.\label{soerg:eq:23}
  \end{equation}
  Define the permutation
  \begin{equation}\label{eq:151}
    t^\up_{i,\ell}=s_i s_{i+1}\cdots s_{i+\ell-1}
  \end{equation}
  for all $i = 1,\ldots,n-1$ and $\ell=1,\ldots,n-i$ (and set
  $t^\up_{i,0}=e$). Then we have a reduced expression for $z$:
  \begin{equation}
    z=t^\up_{k,z^\up_k} \cdots
    t^\up_{1,z^\up_1}.\label{soerg:eq:24}
  \end{equation}

\item[The $\down$--distance sequence]  Analogously, set
  \begin{equation}
    \label{soerg:eq:169}
    z^\down_i = i - \down^z_{k-i} \qquad \text{for }i=k+1,\ldots,n
  \end{equation}
  so that
  \begin{equation}
    \label{soerg:eq:170}
    (\down^z_1,\ldots,\down^z_{n-k}) = (k+1-z^{\down}_{k+1},\ldots,n-z^\down_{n-k}).
  \end{equation}
  In other words, $z^\down_i$ measures how many steps the $(i-k)$-th
  $\down$ of $e$ has been moved to the left by the permutation
  $z$. This defines a bijection $z \mapsto \boldz^\down$ between $D $ and
  the set
  \begin{equation}
     \{\boldz^\down =
    (z^\down_{k+1},\ldots,z^\down_n) \suchthat k \geq z_{k+1}^\down \geq \cdots \geq z_n^\down \geq 0\}.\label{soerg:eq:22}
  \end{equation}
  Define
  \begin{equation}
    t^\down_{k+i,\ell}=s_{k+i-1} s_{k+i-2} \cdots
    s_{k+i-\ell}\label{soerg:eq:25}
  \end{equation}
  for $i=1,\ldots,n-k$ and $\ell=1,\ldots,k$ (and set $t^\down_{k+i,0}=e$). Then we have another reduced expression
  for $z$:
  \begin{equation}
    z =t^\down_{k+1,z^\down_{k+1}}\cdots t^\down_{n,z^\down_n}.\label{soerg:eq:26}
  \end{equation}

\item[The $\boldb$--sequence]  Finally we want to assign to the element $z \in D $ its
  $\boldb$--sequence $\boldb^z$. Let
  \begin{equation}
    \label{soerg:eq:34}
    \scrB = \left\{\boldb = (b_1,\ldots,b_n) \in \N^n \, \left|
        \begin{aligned}
          & k+1 \geq b_1 \geq \cdots \geq b_n=1,\\
          & b_i \leq b_{i+1}+1 \quad \forall\,i=1,\ldots,n-1
        \end{aligned}
      \right.\right\}
  \end{equation}
  and define $\boldb^z \in \scrB$ by
  \begin{equation}
    \label{soerg:eq:171}
    b^z_i= \# \{ j \suchthat \up^z_j > i \} + 1.
  \end{equation}
  In other words, $b^z_i-1$ is the number of $\up$'s on the right of
  position $i$. It is clear that $\boldb^z$ uniquely determines the
  element $z \in D $. In fact, this defines a bijection between $D $ and $\scrB$.
\end{description}

\begin{example}\label{soerg:ex:1}
  Let $n=8$, $k=4$ and consider the element $z=s_4 s_5 s_6 s_3 \in
  D $. The corresponding $\up\down$--sequence and the $\boldb$--sequences are:
  \begin{equation*}
    \renewcommand{\tabcolsep}{0.05cm}
    \begin{tabular}{ccccccccc}
      & $\up$ & $\up$ & $\down$ & $\up$ & $\down$ & $\down$ & $\up$ & $\down$ \\
      $\mathbf b^z =$ & $4$ & $3$ & $3$ & $2$ & $2$ & $2$ & $1$ & $1$\\
    \end{tabular}
  \end{equation*}
  We also have $\boldz^\up =(0,0,1,3)$ and
  $\boldz^\down=(2,1,1,0)$.
\end{example}

\subsection{Canonical basis elements}
\label{soerg:sec:canon-basis-elem}

One of the rare examples of explicitly known canonical basis elements is the following (cf.\ \cite[Prop. 2.9]{MR1445511}):
\begin{lemma}\label{soerg:lem:12}
  Let $w_k$ be the longest element of\/ $\bbS_k$. Then the canonical basis element $C_{w_k}$ is given
  by
  \begin{equation}
    C_{w_k} = \sum_{w' \in \bbS_k} v^{\ell(w_k)-\ell(w')} H_{w'}.\label{soerg:eq:10}
  \end{equation}
  This expression holds both in $\ucalH_k$ and in $\ucalH_n \supset \ucalH_k$ for any $n>k$.
\end{lemma}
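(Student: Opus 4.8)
The plan is to verify directly that the right-hand side of \eqref{soerg:eq:10} is bar-invariant and has the correct triangular expansion with respect to the standard basis, and then invoke the uniqueness in Proposition~\ref{prop:KLBasisOnHeckeAlgebra}. Write $Z = \sum_{w' \in \bbS_k} v^{\ell(w_k) - \ell(w')} H_{w'}$. Since $\ell(w_k) - \ell(w') \geq 0$ for all $w' \in \bbS_k$, with equality only for $w' = w_k$, we have $Z = H_{w_k} + \sum_{w' \prec w_k} v^{\ell(w_k)-\ell(w')} H_{w'}$ with all coefficients in $v\Z[v]$, so the triangularity condition \eqref{eq:4} holds. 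The only real content is bar-invariance: we must show $\overline{Z} = Z$.

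To prove bar-invariance, I would use the inductive construction of the canonical basis from Remark~\ref{remark:ConstructionOfCanonicalBasis}, or equivalently argue directly by induction on $k$. The base case $k \leq 1$ is trivial ($Z = H_e$). For the inductive step, fix a reduced expression and peel off one generator: it suffices to show that $C_{s_i} Z' = Z$ (up to the known recursion), where $Z'$ is the analogous sum over a parabolic subgroup $\bbS_{k-1} \subseteq \bbS_k$ chosen so that $w_k = w_{k-1} \cdot (s_{k-1} s_{k-2} \cdots s_1)$, and then use that a product of bar-invariant elements is bar-invariant. Concretely, one checks by a direct computation using the multiplication rule $H_{w'} H_i = H_{w' s_i} + (v^{-1}-v)H_{w'}$ (when $\ell(w' s_i) < \ell(w')$) that $Z \cdot C_i = Z \cdot (H_i + v) = (v + v^{-1}) Z$ whenever $s_i \in \bbS_k$; this says $Z$ is an eigenvector for right multiplication by each $C_i$ with $i < k$, which pins it down as (a scalar multiple of) $C_{w_k}$ since $C_{w_k}$ is characterized among bar-invariant elements by $C_{w_k} C_i = (v+v^{-1}) C_{w_k}$ for all $i<k$. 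Alternatively, the cleanest route: show directly that $C_{i_1} C_{i_2} \cdots$ along a reduced word for $w_k$, suitably grouped, telescopes to $Z$; bar-invariance is then automatic because each $C_i$ is bar-invariant.

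The main obstacle is organizing the induction so that the bookkeeping of coefficients $v^{\ell(w_k)-\ell(w')}$ works out cleanly when multiplying by $C_i$: one needs the standard fact that for $w' \in \bbS_k$ exactly one of $w', w' s_i$ is longer, and that $\bbS_k$ is partitioned into such pairs by right multiplication by $s_i$; summing the contributions $v^{\ell(w_k)-\ell(w')} H_{w'}(H_i+v)$ over each pair must reproduce the same coefficient pattern scaled by $(v+v^{-1})$. This is a routine but slightly fiddly computation. Finally, for the last sentence of the statement, the formula for $C_{w_k}$ computed in $\ucalH_k$ agrees with the one computed in $\ucalH_n$ because the standard-basis expansion of $C_{w_k}$ involves only $H_{w'}$ with $w' \in \bbS_k$, and the embedding $\ucalH_k \hookrightarrow \ucalH_n$ is compatible with the bar involution and sends standard basis to standard basis; hence the uniqueness characterization transports verbatim.
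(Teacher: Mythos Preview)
The paper does not give its own proof of this lemma; it simply cites it as a known result from Soergel's paper \cite[Prop.~2.9]{MR1445511}. Your direct verification is therefore not comparable to anything in the paper, but it is essentially correct. The key computation $Z\,C_i=(v+v^{-1})Z$ for each $i<k$ works exactly as you describe: pairing $w'$ with $w's_i$ and summing the two contributions gives $(v+v^{-1})$ times the original pair. Combined with the standard fact that $C_{w_k}C_i=(v+v^{-1})C_{w_k}$ whenever $w_ks_i\prec w_k$, and the one-dimensionality of the simultaneous $(v+v^{-1})$--eigenspace for right multiplication by all $C_i$ (equivalently, the sign character of the Hecke algebra is one-dimensional), this pins down $Z=C_{w_k}$ without even needing to check bar-invariance separately. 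Your argument for the compatibility with the inclusion $\ucalH_k\hookrightarrow\ucalH_n$ is also fine.

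One caution about your ``alternative cleanest route'': the claim that the product $C_{i_1}C_{i_2}\cdots$ along a reduced word for $w_k$ telescopes to $Z$ is not correct as stated. Already in $\bbS_3$ one has $C_1C_2C_1=C_{w_3}+C_1=Z+C_1\neq Z$. In general that product equals $C_{w_k}$ plus a nontrivial $\Z$--linear combination of lower $C_{w'}$'s (cf.\ Remark~\ref{remark:ConstructionOfCanonicalBasis}), so it is bar-invariant but does not equal $Z$ on the nose. Your eigenvector argument (b) is the one that actually goes through cleanly; drop (c).
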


In fact, formula \eqref{soerg:eq:10} generalizes verbatim to
the case of the longest element of any parabolic subgroup of
$\bbS_n$.

In the next proposition we will generalize \eqref{soerg:eq:10} and
give explicit formulas for the canonical basis elements $C_{w_k z} $
for $z \in D $. But first we introduce the following notation: we set
  \begin{equation}
    \label{soerg:eq:29}
    \sumprime_{w' \in \bbS_k} f(w') = \sum_{w' \in S_k} v^{-\len (w')} f(w') \quad \text{and} \quad \sumprime_{i=0}^{h} g(i) = \sum_{i=0}^h v^{-i} g(i)
  \end{equation}
  for whatever functions $f$ defined on $\bbS_n$ and $g$ defined on $\{0,\ldots,h\}$.

\begin{prop}\label{soerg:prop:3}
  Let $z \in D $, with $z=t^\down_{k+1,z^\down_{k+1}} \cdots
  t^\down_{n,z^\down_n}$. Then
  \begin{equation}   \label{soerg:eq:176}
    C_{w_k  z} = \sumprime_{w' \in \bbS_k} \sumprime_{i_{k+1} = 0}^{z^\down_{k+1}} \cdots \sumprime_{i_n=0}^{z^\down_{n}} v^{\ell(w_k  z)} H_{w't^\down_{k+1,i_{k+1}}\cdots t^\down_{n,i_n}}.
  \end{equation}
\end{prop}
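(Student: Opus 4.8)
Let $C'$ denote the right-hand side of \eqref{soerg:eq:176}; I must show that $C' = C_{w_k z}$. By the uniqueness statement in Proposition~\ref{prop:KLBasisOnHeckeAlgebra} it is enough to check two things: that $C'$ is bar-invariant, and that $C' = H_{w_k z} + \sum_{w \prec w_k z}\mathcal P_w(v)\,H_w$ with all $\mathcal P_w(v)\in v\Z[v]$.

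\textbf{Triangularity.} I would settle this first, since it uses nothing beyond the combinatorics already in place. Expanding the primed sums, $C'$ is a $\Z[v]$-combination of the standard basis elements $H_{w' t^\down_{k+1,i_{k+1}}\cdots t^\down_{n,i_n}}$ with $w'\in\bbS_k$ and $0\le i_j\le z^\down_j$, the coefficient of each being a sum of powers $v^{\len(w_k z)-\len(w')-(i_{k+1}+\cdots+i_n)}$. Now fix a reduced word for $w_k$; every $w'\preceq w_k$ is spelled by a subword of it, while each $t^\down_{p,i_p}=s_{p-1}\cdots s_{p-i_p}$ is a prefix of the reduced word $s_{p-1}\cdots s_{p-z^\down_p}$ of $t^\down_{p,z^\down_p}$. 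Hence every $w' t^\down_{k+1,i_{k+1}}\cdots t^\down_{n,i_n}$ is spelled by a subword of the reduced word for $w_k z$ given by \eqref{soerg:eq:26}, so it is $\preceq w_k z$. Moreover $\len(w')+\sum_j i_j\le\len(w_k)+\len(z)=\len(w_k z)$, with equality only for $w'=w_k$ and $i_j=z^\down_j$ for all $j$; this forces every exponent to be nonnegative, shows that $H_{w_k z}$ occurs with coefficient exactly $v^0=1$ (a permutation of length $<\len(w_kz)$ cannot equal a product of factors of total length $\len(w_kz)$), and shows that every other standard basis element that occurs does so with coefficient in $v\Z_{\ge0}[v]$.

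\textbf{Bar-invariance.} I would induct on $\len(z)$. When $z=e$ all $z^\down_j=0$ and $C'$ collapses to $\sum_{w'\in\bbS_k}v^{\len(w_k)-\len(w')}H_{w'}=C_{w_k}$, which is bar-invariant by Lemma~\ref{soerg:lem:12}. For the inductive step, delete the last letter of the reduced word \eqref{soerg:eq:26}: if $p$ is the largest index with $z^\down_p>0$ and $m=p-z^\down_p$, this produces $z'\in D$ with $z=z's_m$, $\len(z')=\len(z)-1$, whose $\down$-sequence is that of $z$ with the $p$-th entry lowered by one. By the inductive hypothesis $C_{w_k z'}$ is given by \eqref{soerg:eq:176} for $z'$, and $C_{w_k z'}C_m$ is bar-invariant, being a product of bar-invariant elements. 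The core of the argument is then to compute $C_{w_k z'}C_m = C_{w_k z'}H_m + v\,C_{w_k z'}$ explicitly, expanding $C_{w_k z'}$ through its formula and applying the multiplication rule for $H_w H_m$ (which splits according to whether $\len(ws_m)>\len(w)$ or $\len(ws_m)<\len(w)$), and then to reorganize the blocks of the $t^\down$'s. The outcome is of the form $C' + \sum_{y}\nu_y\,C'_y$, where the sum runs over $y\in D$ with $\len(y)<\len(z)$, $C'_y$ denotes the right-hand side of \eqref{soerg:eq:176} for $y$, and $\nu_y\in\Z_{\ge0}[v]$ (that the corrections remain inside the right ideal $C_{w_k}\ucalH_n$, and so are indexed by elements $w_k y$, is transparent because $C_{w_k z'}C_m\in C_{w_k}\ucalH_n$). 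Since each $C'_y$ with $\len(y)<\len(z)$ equals $C_{w_k y}$ by the inductive hypothesis and is therefore bar-invariant, it follows that $C' = C_{w_k z'}C_m - \sum_y\nu_y C'_y$ is bar-invariant, and combined with the triangularity this gives $C'=C_{w_k z}$.

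\textbf{Main obstacle.} The step I expect to cause the real work is the explicit evaluation of $C_{w_k z'}C_m$ in the induction: every single application of the multiplication rule is routine, but keeping track of which products $w' t^\down_{k+1,i_{k+1}}\cdots t^\down_{n,i_n}$ arise, and with which power of $v$, after right multiplication by $s_m$ — and recognizing the sum as $C'$ together with the lower corrections $\nu_y C'_y$ — requires a patient case analysis according to where $m$ falls relative to the blocks $t^\down_{p,\cdot}$ and to the right descents of $w'$. A variant that may lighten the bookkeeping is to check first, using that the relevant subwords of the $t^\down$-blocks stay reduced, that $C'$ factors as $C_{w_k}\bigl(\sum_{i=0}^{z^\down_{k+1}}v^{z^\down_{k+1}-i}H_{t^\down_{k+1,i}}\bigr)\cdots\bigl(\sum_{i=0}^{z^\down_{n}}v^{z^\down_{n}-i}H_{t^\down_{n,i}}\bigr)$, and then to prove by induction on the number of factors that multiplying $C_{w_k}$ by an initial segment of this product yields the canonical basis element $C_{w_k t^\down_{k+1,z^\down_{k+1}}\cdots t^\down_{p,z^\down_p}}$, the mechanism being that the non-canonical part of each factor gets absorbed via identities of the type $C_{w_k}H_{s_j}=v^{-1}C_{w_k}$ for $j<k$.
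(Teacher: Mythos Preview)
Your approach is the paper's: check the triangular shape of the right-hand side, then prove bar-invariance by induction on $\len(z)$, computing $C_{w_k z'}C_m$ with $z=z's_m$ obtained by deleting the last letter of \eqref{soerg:eq:26}.

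One genuine slip: the deduction ``$C'=C_{w_k z'}C_m-\sum_y\nu_y C'_y$ is bar-invariant'' requires each $\nu_y$ itself to be bar-invariant, so $\nu_y\in\Z$ rather than $\Z_{\ge0}[v]$; with $\nu_y$ a genuine polynomial in $v$ the conclusion does not follow. The paper's explicit computation resolves this and shows the correction is far simpler than you anticipate. Writing $p$ for the largest index with $z^\down_p\neq0$ and distinguishing $z^\down_p\ge2$ from $z^\down_p=1$: in the first case $C_{w_k z'}C_m$ equals the right-hand side of \eqref{soerg:eq:176} plus exactly one correction $C'_{z''}$ with coefficient $1$, where $z''$ is obtained from $z$ by lowering $z^\down_p$ by $2$; in the second case there is no correction at all. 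So the $\nu_y$ are manifestly in $\{0,1\}$ and your ``patient case analysis'' collapses to a few lines. The paper also opens with an inversion count showing that every word $w't^\down_{k+1,i_{k+1}}\cdots t^\down_{n,i_n}$ is reduced, which you need implicitly both for the triangularity and for the factorization in your variant.
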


\begin{proof}
  First, we note that all words $w't^\down_{k+1,i_{k+1}} \cdots
  t^\down_{n,i_n}$ that appear in the expression on the right are
  actually reduced words. This is clear if we look at the action
  of this permutation on the string
  \begin{equation}
    \up_1 \cdots \up_k \down_{k+1} \cdots \down_{n}\label{soerg:eq:27}
  \end{equation}
  from the right: the length of the permutation is the cardinality of
  the set $X$ of the couples of symbols of this string that have been
  inverted. To $X$ belong $\ell(w')$ couples consisting of two
  $\up$'s; moreover, every $\down_{k + \alpha}$ appears in $X$ exactly
  $i_\alpha$ times coupled with some $\up$ or some $\down_{k+\beta}$
  for $\beta < \alpha$. Hence the length of the permutation
  $w't^\down_{k+1,i_{k+1}} \cdots t^\down_{n,i_n}$ is exactly
  $\ell(w') + i_{k+1} + \cdots + i_n$, and therefore this is a reduced
  expression.

  Now, in the r.h.s.\ of \eqref{soerg:eq:176} the coefficient of $H_{w_k  z}$
  is one, while the coefficient of every other basis element
  $H_{w't^\down_{k+1,i_{k+1}} \cdots t^\down_{n,i_n}}$ is divisible by
  $v$. Hence the only thing we have to prove is that the r.h.s\ of
  \eqref{soerg:eq:176} is bar invariant.

  We proceed by induction on the length of $z$, the case $z=0$ being
  given by Lemma~\ref{soerg:lem:12}. Let $h$ be the greatest index such that
  $z^\down_h \neq 0$. Hence we have $z=t^\down_{k+1,z^\down_{k+1}}
  \cdots t^\down_{h,z^\down_h}$. First suppose that $z^\down_h \geq
  2$.  Set $z'=t^\down_{k+1,z^\down_{k+1}} \cdots
  t^\down_{h,z^\down_h-1}$ and $j=h-z^\down_h$ so that $z= z'
  s_j$. We compute:
  \begin{align}
      &C_{w_k  z'} C_j  = \left( \sumprime_{w' \in \bbS_k} \sumprime_{i_{k+1} = 0}^{z^\down_{k+1}} \cdots \sumprime_{i_h=0}^{z^\down_{h}-1} v^{\len(w_k  z')} H_{w't^\down_{k+1,i_{k+1}}\cdots t^\down_{h,i_h}} \right) (H_j + v)\\
    &= \sumprime_{w' \in \bbS_k} \sumprime_{i_{k+1} = 0}^{z^\down_{k+1}} \cdots \sumprime_{i_{h-1}=0}^{z^\down_{h-1}} v^{\len(w_k  z')-{z^\down_h} +1 } H_{w't^\down_{k+1,i_{k+1}}\cdots t^\down_{h-1,i_{h-1}}t^\down_{h,z^\down_h}}\label{soerg:eq:173}\\
    &\quad + \left(\sumprime_{w' \in \bbS_k} \sumprime_{i_{k+1} = 0}^{z^\down_{k+1}} \cdots \sumprime_{i_{h}=0}^{z^\down_{h}-2} v^{\len(w_k  z')} H_{w't^\down_{k+1,i_{k+1}}\cdots t^\down_{h,i_{h}}}\right)H_j \label{soerg:eq:174}\\
    &\quad + \sumprime_{w' \in \bbS_k} \sumprime_{i_{k+1} = 0}^{z^\down_{k+1}}
    \cdots \sumprime_{i_h=0}^{z^\down_{h}-1} v^{\ell(w_k  z')+1} H_{w't^\down_{k+1,i_{k+1}}\cdots
      t^\down_{h,i_h}}.\label{soerg:eq:175}
  \end{align}
  The element $C_{w_k z'} C_j$ is obviously bar-invariant. Moreover, the sum of \eqref{soerg:eq:173} and \eqref{soerg:eq:175}  gives exactly the
  r.h.s.\ of \eqref{soerg:eq:176} for $C_{w_k  z}$; hence we only need to prove
  that \eqref{soerg:eq:174} is bar invariant. But it is easy to check that in \eqref{soerg:eq:174} the term $H_j$ on the right acts as $v^{-1}$; hence \eqref{soerg:eq:174} is equal to the
  r.h.s.\ of \eqref{soerg:eq:176} for $C_{w_k  z''}$, where
  $z''=t^\down_{k+1,z^\down_{k+1}} \cdots t^\down_{h,z^\down_h-2}$,
  and this is bar-invariant by induction.

  Now suppose instead that $z_h^\down=1$, and set  $z'=t^\down_{k+1,z^\down_{k+1}} \cdots t^\down_{h-1,z^\down_{h-1}}$
  so that $z=z' s_{h-1}$. Then a computation shows that  $C_{w_k z'}C_{h-1}$ is equal to the r.h.s.\ of \eqref{soerg:eq:176} for $C_{w_k  z}$, which is therefore bar-invariant (see \cite{miophd} for more details).
\end{proof}

Let $z \in D $, with $z=t^\down_{k+1,z^\down_{k+1}} \cdots
  t^\down_{n,z^\down_n}$, and suppose that for some index $j$ we have $z^\down_j = z^\down_{j+1}$. Then in a similar way as above it is possible to compute explicitly the canonical basis element $C_{s_j w_k  z}$. We refer to \cite{miophd} for a detailed computation and we state instead the following corollary, which is all we will use later:

\begin{corollary}
  \label{soerg:cor:1}
    Let $z \in D $, with $z=t^\down_{k+1,z^\down_{k+1}} \cdots
  t^\down_{n,z^\down_n}$. Suppose that for some index $j$ we have $z^\down_j = z^\down_{j+1}$. Then the canonical basis element $C_{s_j w_k  z}$ is a sum of
  \begin{equation}
    \label{soerg:eq:44}
    k!(z^\down_{k+1}+1) \cdots (z^\down_{j}+1)(z^\down_{j+1}+2) (z^\down_{j+2}+1 ) \cdots (z^\down_n+1)
  \end{equation}
  standard basis elements with monomial coefficients in $v$.
\end{corollary}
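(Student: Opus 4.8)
The plan is to adapt the proof of Proposition~\ref{soerg:prop:3}, carrying the extra reflection $s_j$ along. Since $z^\down_j = z^\down_{j+1}$ only makes sense for $j \in \{k+1,\dots,n-1\}$, the reflection $s_j$ commutes with every generator $s_1,\dots,s_{k-1}$ of $\bbS_k$, hence with $w_k$; therefore $s_j w_k z = w_k (s_j z)$, and it suffices to understand the canonical basis element attached to $w_k(s_j z)$. The strategy is to produce for $s_j z$ a reduced expression of the same \emph{shape} as the expressions \eqref{soerg:eq:26} used for elements of $D $, and then to re-run the induction of Proposition~\ref{soerg:prop:3}.

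For the reduced expression: writing $z = t^\down_{k+1,z^\down_{k+1}}\cdots t^\down_{n,z^\down_n}$ and pushing $s_j$ rightwards past the blocks $t^\down_{k+1,z^\down_{k+1}},\dots,t^\down_{j-1,z^\down_{j-1}}$ (which only involve $s_1,\dots,s_{j-2}$), one is reduced to simplifying $s_j\, t^\down_{j,\ell}\, t^\down_{j+1,\ell}$ with $\ell = z^\down_j = z^\down_{j+1}$. A short computation using only the braid relation $s_as_{a+1}s_a = s_{a+1}s_as_{a+1}$ and the commutation of distant reflections gives
\[
  s_j\, t^\down_{j,\ell}\, t^\down_{j+1,\ell} \;=\; t^\down_{j,\ell}\, t^\down_{j+1,\ell+1},
\]
and hence
\[
  s_j z \;=\; t^\down_{k+1,z^\down_{k+1}}\cdots t^\down_{j,z^\down_j}\; t^\down_{j+1,\,z^\down_{j+1}+1}\; t^\down_{j+2,z^\down_{j+2}}\cdots t^\down_{n,z^\down_n}.
\]
Thus $s_j z$ is a product of $t^\down$--blocks with parameter sequence $(z^\down_{k+1},\dots,z^\down_j,\,z^\down_{j+1}+1,\,z^\down_{j+2},\dots,z^\down_n)$, which is weakly decreasing apart from a single step up at position $j+1$ (note $z^\down_{j+2}\leq z^\down_{j+1}< z^\down_{j+1}+1$). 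The inversion count on the string $\up_1\cdots\up_k\down_{k+1}\cdots\down_n$, exactly as at the beginning of the proof of Proposition~\ref{soerg:prop:3}, shows this word is reduced and that prefixing it by $w_k$ still gives a reduced word; in particular $\len(s_j w_k z) = \len(w_k z)+1$.

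Now I would run the induction of Proposition~\ref{soerg:prop:3} on this reduced expression for $w_k(s_j z)$. That argument uses only the block shape, the braid/commutation relations among the $t^\down$--blocks, and Lemma~\ref{soerg:lem:12} as a base case; the single bump at position $j+1$ costs exactly one extra inductive step (peeling the last reflection off $t^\down_{j+1,\,z^\down_{j+1}+1}$ returns the parameter data to $\boldz^\down$, i.e.\ to $w_k z$ itself), after which the computation is literally that of Proposition~\ref{soerg:prop:3}. The outcome is
\begin{multline*}
  C_{s_j w_k z} = \sumprime_{w'\in\bbS_k}\ \sumprime_{i_{k+1}=0}^{z^\down_{k+1}}\cdots\sumprime_{i_j=0}^{z^\down_j}\ \sumprime_{i_{j+1}=0}^{z^\down_{j+1}+1}\ \sumprime_{i_{j+2}=0}^{z^\down_{j+2}}\cdots\sumprime_{i_n=0}^{z^\down_n}\\
  v^{\len(s_j w_k z)}\, H_{w't^\down_{k+1,i_{k+1}}\cdots t^\down_{n,i_n}},
\end{multline*}
where, just as in Proposition~\ref{soerg:prop:3}, the coefficient of $H_{s_j w_k z}$ is $1$ and all the others lie in $v\Z[v]$, so the right-hand side is bar--invariant and hence equals $C_{s_j w_k z}$ by Proposition~\ref{prop:KLBasisOnHeckeAlgebra}. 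Every standard basis element appears with a single monomial in $v$ as coefficient, and the number of them is the product of the sizes of the summation ranges: $\lvert\bbS_k\rvert = k!$, together with $(z^\down_\alpha+1)$ for each block $\alpha\neq j+1$ and $(z^\down_{j+1}+2)$ for the block $\alpha = j+1$. This is precisely \eqref{soerg:eq:44}.

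The main obstacle is the induction in the previous paragraph: one must re-run the (somewhat delicate) bar--invariance bookkeeping of Proposition~\ref{soerg:prop:3} for a parameter sequence that is no longer weakly decreasing, verifying that the extra step does nothing more than enlarge the single range $\sumprime_{i_{j+1}}$ by one and that the standard basis elements so produced remain pairwise distinct. The detailed computation, together with the explicit form of $C_{s_j w_k z}$, is carried out in \cite{miophd}.
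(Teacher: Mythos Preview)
Your approach is essentially the same as the paper's: the paper also says that ``in a similar way as above it is possible to compute explicitly the canonical basis element $C_{s_j w_k z}$'' and defers the details to \cite{miophd}, so the corollary is there only stated as a consequence of that (unwritten) computation. Your proposal is in fact more explicit than what the paper records --- you supply the braid identity $s_j\, t^\down_{j,\ell}\, t^\down_{j+1,\ell} = t^\down_{j,\ell}\, t^\down_{j+1,\ell+1}$ and the resulting closed formula for $C_{s_j w_k z}$, from which the count \eqref{soerg:eq:44} is immediate --- and you are honest that the residual work (distinctness of the terms and the bar--invariance induction with the single bumped parameter) is exactly what is left to \cite{miophd}.
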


\section{Soergel modules}
\label{soerg:sec:soergel-modules}

In this section we will describe some Soergel modules as quotient rings $R_\boldb$ (defined in Section \ref{soerg:sec:symmetric-polynomials}). The strategy is the following: we prove that the ideal $I_\boldb$ is contained in the annihilator and we then use a dimension argument comparing the dimension of $R_\boldb$ (Proposition~\ref{soerg:prop:1}) and of the Soergel module (given by the corresponding canonical basis element computed in Section \ref{soerg:sec:some-canon-bases}). We compute then the homomorphism spaces between these Soergel modules (\textsection\ref{soerg:sec:morph-betw-soerg-1}).

Fix a positive integer $n$ and let $R=\C[x_1,\ldots,x_n]$ be the polynomial ring in $n$ variables. For $0 \leq \ell \leq n$ let $J_\ell$ be the ideal generated by the
non-constant symmetric polynomials in $\ell$ variables
$x_1,\ldots,x_\ell$. Let moreover $B = \C[x_1,\ldots,x_n]/J_n$. For a
simple reflection $s_i \in \bbS_n$, let $B^{s_i}$ denote the
invariants under $s_i$. In the following, we will abbreviate
$\otimes_{B^{s_i}}$ by $\otimes_i$ while $\otimes$ will be simply
$\otimes_B$. We let also $B_i=B \otimes_i B$. 

\subsection{Soergel modules}
\label{sec:soergels-modules}

We define now Soergel modules for the symmetric group $\bbS_n$ by recursion on the Bruhat ordering. First we set $\sfC_e = \C$. Let then $w \in \bbS_n$ be a permutation and choose some reduced expression $w = s_{i_1} \cdots s_{i_r}$ where $s_{i_1},\ldots,s_{i_r}\in \bbS_n$ are simple reflections. Then we have:

\begin{theorem}[\cite{MR1029692}]
  \label{thm:2}
  The $B$--module $B_{i_r}\otimes \cdots \otimes B_{i_1} \otimes \C$ has a unique indecomposable direct summand $\sfC_w$ which is not isomorphic to some $\sfC_{w'}$ for $w' \prec w$. This is the unique indecomposable summand containing $1 \otimes \cdots \otimes 1$. Up to isomorphism, $\sfC_w$ does not depend on the particular reduced expression chosen for $w$.
\end{theorem}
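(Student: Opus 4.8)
The plan is to follow Soergel's argument from \cite{MR1029692}. The structural input is that $B$ is a finite-dimensional, non-negatively graded $\C$-algebra, local with residue field $\C$; hence the category of finitely generated graded $B$-modules is Krull--Schmidt, so every module has an essentially unique decomposition into indecomposable summands (up to isomorphism and grading shift), and it suffices to exhibit $\sfC_w$ as a summand of $M(\underline w) := B_{i_r} \otimes \cdots \otimes B_{i_1} \otimes \C$ (where $\underline w$ denotes the chosen reduced expression $s_{i_1}\cdots s_{i_r}$) and to control the other summands. I would induct on $r = \len(w)$, the case $w = e$ being the definition $\sfC_e = \C$. Writing $u = s_{i_1} \cdots s_{i_{r-1}}$ (a reduced expression, $\len(u) = r-1$, with $w = u\,s_{i_r} \succ u$), one has $M(\underline w) \cong \vartheta_{i_r}\!\left(M(\underline u)\right)$, where $\vartheta_i := B_i \otimes_B (-) = B \otimes_{B^{s_i}} (-)$; this functor is exact, since $B_i$ is free of rank two as a right $B$-module, and preserves grading shifts, so everything reduces to computing $\vartheta_{i_r}$ on the summands of $M(\underline u) \cong \sfC_u \oplus (\text{shifts of } \sfC_{u'},\ u' \prec u)$ given by induction.

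The core of the argument is to understand $\vartheta_i$ on an indecomposable $\sfC_{w'}$: if $w' s_i \prec w'$ then $\vartheta_i \sfC_{w'} \cong \sfC_{w'}\langle 1\rangle \oplus \sfC_{w'}\langle -1\rangle$, while if $w' s_i \succ w'$ then $\vartheta_i \sfC_{w'} \cong \sfC_{w' s_i} \oplus \bigoplus_{w'' \prec w' s_i} \sfC_{w''}^{\oplus \mu_{w''}}$, the multiplicities being governed by the Kazhdan--Lusztig $\mu$-function. To prove this, and simultaneously that $\sfC_w$ is well defined, I would introduce Soergel's \emph{character map} $\mathrm{ch}$, from the split Grothendieck group of the additive category generated by the modules $M(\underline w)$ into $\ucalH_n$, normalised by $\mathrm{ch}(\C) = H_e$, $\mathrm{ch}(X\langle 1\rangle) = v\,\mathrm{ch}(X)$ and $\mathrm{ch}(\vartheta_i X) = \mathrm{ch}(X)\,C_i$ with $C_i = H_i + v$. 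Its well-definedness rests on the graded-dimension identity $\grdim \Hom_B\!\bigl(M(\underline w), M(\underline w')\bigr) = \langle \mathrm{ch}(M(\underline w)), \mathrm{ch}(M(\underline w'))\rangle$ for the form \eqref{eq:165}, proved by induction on lengths using that $\vartheta_i$ is biadjoint to itself up to shift. A simultaneous induction on $\len(w)$ then yields both $\mathrm{ch}(\sfC_w) = C_w$ (the Kazhdan--Lusztig basis element, via the recursion $C_u C_{i_r} = C_w + \sum_{w'' \prec w} \mu_{w''} C_{w''}$, cf.\ Remark~\ref{remark:ConstructionOfCanonicalBasis}) and the above decomposition of $\vartheta_i \sfC_{w'}$; since the $C_w$ are a basis of $\ucalH_n$, the classes $[\sfC_w]$ are linearly independent and $\mathrm{ch}$ is injective on that Grothendieck group.

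The rest is bookkeeping. Applying $\vartheta_{i_r}$ to $\sfC_u \oplus (\text{shifts of }\sfC_{u'})$ and using the two cases produces exactly one copy of $\sfC_{u\,s_{i_r}} = \sfC_w$, plus shifts of modules $\sfC_v$ with either $v = u' \preceq u$ or $v \preceq u'\, s_{i_r}$; the lifting property of the Bruhat order, applied with $w\, s_{i_r} = u \prec w$, gives $u'\, s_{i_r} \preceq w$, and $u'\, s_{i_r} = w$ would force $u' = u$, so in every case $v \prec w$. Thus $M(\underline w) \cong \sfC_w \oplus (\text{shifts of }\sfC_{w''},\ w'' \prec w)$ and $\sfC_w$ is not a shift of any of these, by linear independence of the $[\sfC_x]$. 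For the characterisation through $1 \otimes \cdots \otimes 1$: since $B$ is non-negatively graded with $B_0 = \C$ one has $(\vartheta_i X)_0 = X_0$, so inductively $\dim (\sfC_w)_0 = 1$ (also visible from $\mathrm{ch}(\sfC_w) = C_w = H_w + (\text{terms in } v\Z[v])$); as $M(\underline w)_0$ is one-dimensional, spanned by $1 \otimes \cdots \otimes 1$, this element must lie in the unique $\sfC_w$-summand and in no other indecomposable summand. Finally, independence of the reduced expression follows because $\sfC_w$ built from any reduced word is indecomposable with character $C_w$, and $\mathrm{ch}$ is injective on the Grothendieck group, so all these modules are isomorphic.

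I expect the genuine obstacle to be exactly the middle step: constructing $\mathrm{ch}$, proving the $\Hom$-dimension formula, and deriving the decomposition of $\vartheta_i \sfC_{w'}$ together with $\mathrm{ch}(\sfC_w) = C_w$. This is where the Hecke-algebra combinatorics recalled in Section~\ref{soerg:sec:some-canon-bases} is indispensable; once it is in place, the reduction from $M(\underline w)$ to $\vartheta_{i_r}$ of a shorter Bott--Samelson module, the Bruhat-order bookkeeping, and the degree-zero argument are all formal.
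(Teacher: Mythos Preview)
The paper does not prove this theorem; it is quoted from \cite{MR1029692} and used as a black box. Soergel's argument in that paper is not the direct induction you outline. He first proves the Struktursatz (stated here as Theorem~\ref{thm:9}): the functor $\bbV=\Hom_\catO(P(w_n\cdot 0),-)$ is fully faithful on projectives and intertwines the translation functor $\theta_i$ on $\catO_0$ with $\vartheta_i=B_i\otimes_B(-)$. One then sets $\sfC_w:=\bbV P(w\cdot 0)$; the assertions of the theorem follow immediately from standard facts about projectives in $\catO_0$ (namely that $\theta_{i_r}\cdots\theta_{i_1}P(0)$ decomposes as $P(w\cdot 0)$ plus copies of $P(w'\cdot 0)$ with $w'\prec w$), together with full faithfulness of $\bbV$. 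In particular, independence of the reduced expression is automatic, since $P(w\cdot 0)$ is defined without reference to one.

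Your sketch attempts to bypass $\catO$ and argue directly in $\lmod{B}$, in the style of Soergel's later bimodule papers. The gap is the ``simultaneous induction'' you invoke to obtain $\mathrm{ch}(\sfC_w)=C_w$ and the precise decomposition of $\vartheta_i\sfC_{w'}$. Knowing $\mathrm{ch}(\vartheta_{i_r}\sfC_u)=\mathrm{ch}(\sfC_u)\,C_{i_r}$ together with the Hecke identity $C_uC_{i_r}=C_w+\sum\mu_{w''}C_{w''}$ does \emph{not} force the indecomposable summands of $\vartheta_{i_r}\sfC_u$ to have characters $C_w$ and $C_{w''}$: a priori the module could split into pieces whose characters are other non-negative combinations with the same sum. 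Closing this gap is exactly the Kazhdan--Lusztig conjecture (equivalently Soergel's conjecture, proved only in \cite{2012arXiv1212.0791E}); it is not an output of the induction. The present paper itself treats the character identity as a separate, deeper fact (Theorem~\ref{thm:3}), invoked \emph{after} $\sfC_w$ has been defined. A related issue is that your three rules for $\mathrm{ch}$ specify it only on Bott--Samelson modules; extending it additively to arbitrary summands already presupposes control of the split Grothendieck group, which for modules (unlike bimodules, where Soergel uses $\Delta$-filtrations) is established precisely via $\catO$.
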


We call the $\sfC_w$'s for $w \in \bbS_n$ \emph{Soergel modules}.

\begin{example}
  Consider a simple reflection $s_i \in \bbS_n$. According
  to the theorem, the indecomposable object $\sfC_i = \sfC_{s_i}$ is a
  summand of $B_i \otimes \C$. But it is immediate to check that the
  two dimensional $B$--module $B_i \otimes \C$ is indecomposable, hence
  $\sfC_i = B_i \otimes \C$. This module is in fact isomorphic to
  $R/(x_1,\ldots,x_{i-1},x_i+x_{i+1},x_{i+2},\ldots,x_n)$.\label{ex:1}
\end{example}

Notice that since $B$ is a quotient of $R$ we have $  \Hom_B(M,N) \cong \Hom_R(M,N)$
for all $M,N \in \lmod{B}$. In other words, the category of $B$--modules embeds as a full subcategory into the category of $R$--modules.
Hence, it is harmful to consider $B$--modules as $R$--modules.

To compute Soergel modules we will need to know their dimension. This is given from the Kazhdan-Lusztig conjecture:

\begin{theorem}
  \label{thm:3}
  For all $w \in \bbS_n$ we have
  \begin{equation}
    \label{eq:3}
    \dim_\C \sfC_w = \sum_{z \preceq w} \calP_{z,w}(1),
  \end{equation}
  where the $\calP_{z,w}$'s are the Kazhdan-Lusztig polynomials \eqref{eq:4}.
\end{theorem}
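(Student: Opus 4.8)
The plan is to transport the computation of $\dim_\C \sfC_w$ into the representation theory of $\catO_0 = \catO_0(\gl_n)$ by means of Soergel's functor, and then to apply the Kazhdan--Lusztig conjecture (now a theorem), just as the phrasing of the statement suggests. (Category $\catO$ is set up carefully only in Section~\ref{sec:category-cato}, but only standard facts about it are needed here; a purely Hecke-algebraic argument is sketched at the end.)

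First I would recall from \cite{MR1029692} the exact functor $\bbV = \Hom_{\catO_0}(P,\blank)$, where $P$ is the big (antidominant) projective object of $\catO_0$, i.e.\ the projective cover of the antidominant simple $L(w_0 \cdot 0)$, together with the two facts that $\bbV$ is fully faithful on projectives and that $\bbV P(w \cdot 0) \cong \sfC_w$ for every $w \in \bbS_n$. The matching of the two indexings is pinned down by evaluating on the dominant Verma, $\bbV M(0) = \C = \sfC_e$, and by the fact that $\bbV$ intertwines the translation functors $\theta_{s_i}$ with the functors $B_i \otimes_B \blank$ --- which is exactly the recursion defining $\sfC_w$ in Theorem~\ref{thm:2}. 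Since $\dim_\C \Hom_{\catO_0}(P, N) = [N : L(w_0 \cdot 0)]$ for any $N$, this gives
\begin{equation*}
  \dim_\C \sfC_w = [\,P(w \cdot 0) : L(w_0 \cdot 0)\,].
\end{equation*}

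Next I would compute this multiplicity. Pick a Verma flag of $P(w \cdot 0)$; by BGG reciprocity $(P(w \cdot 0) : \Delta(z \cdot 0)) = [\Delta(z \cdot 0) : L(w \cdot 0)]$, and since the antidominant simple $L(w_0 \cdot 0)$ is the simple socle of every Verma module in the block, occurring there with multiplicity exactly one, one obtains
\begin{equation*}
  \dim_\C \sfC_w = \sum_{z \in \bbS_n} [\Delta(z \cdot 0) : L(w \cdot 0)].
\end{equation*}
The Kazhdan--Lusztig conjecture \cite{MR560412} expresses $[\Delta(z \cdot 0) : L(w \cdot 0)]$ as the value at $1$ of a Kazhdan--Lusztig polynomial; substituting $z \mapsto w_0 z$ and using the relation (specific to $\bbS_n$) between Kazhdan--Lusztig polynomials and inverse Kazhdan--Lusztig polynomials --- the $w_0$--twist --- rewrites the sum as $\sum_{z \preceq w} \calP_{z,w}(1)$, with the $\calP$'s as in \eqref{eq:4}. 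That is the claim.

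The step I expect to require care is not conceptual but notational: fixing all the conventions (which dominant weight, whether $\bbV$ is indexed by $w$ or by $w_0 w$, and whether the polynomials $\calP_{z,w}$ normalized in \eqref{eq:4} agree with the ones entering the character/decomposition-number formula) so that after the $w_0$--reindexing the permutation $w$ sits in the second slot of $\calP_{z,w}$ with $z$ ranging over the elements below it; and one does rely on the full (now proven) Kazhdan--Lusztig conjecture. There is a neater self-contained route avoiding $\catO$ altogether: Soergel's categorification theorem \cite{MR1029692} (classical for $\bbS_n$) gives $[\sfC_w] = C_w$ in the split Grothendieck group of the relevant category of finite-dimensional $B$--modules, identified with $\ucalH_n$ so that $B_i \otimes_B \blank$ acts by multiplication by $C_i$; since $B$ is graded free of rank two over $B^{s_i}$ with the two generators in degrees differing by $2$ (the decomposition \eqref{soerg:eq:161}), tensoring by $B_i$ multiplies the graded $\C$--dimension by a fixed factor, and in the balanced normalization (where the Hecke relations are visibly respected) this matches the algebra homomorphism $\epsilon \colon \ucalH_n \to \Z[v,v^{-1}]$, $H_i \mapsto v^{-1}$; hence $\grdim_\C \sfC_w = \epsilon(C_w)$ up to a grading shift, and evaluating at $v = 1$ gives $\dim_\C \sfC_w = \epsilon(C_w)|_{v=1} = \sum_{z \preceq w} \calP_{z,w}(1)$, since $\epsilon(C_w) = \sum_{z \preceq w} \calP_{z,w}(v)\,v^{-\ell(z)}$.
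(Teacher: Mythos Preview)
Your proof is correct and follows essentially the same route as the paper: compute $\dim_\C \sfC_w = [P(w\cdot 0):L(w_0\cdot 0)]$ via Soergel's functor, expand through a Verma flag using $[M(z\cdot 0):L(w_0\cdot 0)]=1$, and invoke the Kazhdan--Lusztig conjecture. The only difference is packaging: the paper states the KL conjecture directly as $(P(w\cdot 0):M(z\cdot 0)) = \calP_{z,w}(1)$, whereas you pass through BGG reciprocity to $[\Delta(z\cdot 0):L(w\cdot 0)]$ first --- these are the same statement. Your worry about a $w_0$--twist is misplaced in this paper's (Soergel's) normalization \eqref{eq:4}: with $0$ dominant and the Bruhat order as set up here, one has $[M(z\cdot 0):L(w\cdot 0)] = \calP_{z,w}(1)$ on the nose, no reindexing needed. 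Your alternative Hecke-algebraic sketch is fine but not really independent of the KL conjecture, since the identification $[\sfC_w]=C_w$ (Soergel's conjecture, even for $\bbS_n$) rests on the same machinery.
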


\begin{proof}
  For the proof, we need some facts about the BGG category $\catO$ (see
  \cite{MR2428237} and Section \ref{sec:category-cato}
  below); we use the notation from
  \textsection\ref{sec:category-cato-1}.  By Theorem
  \ref{thm:9} we have
$\sfC_w \cong \Hom_B (B, \sfC_w) \cong
  \Hom_\catO (P(w_0 \cdot 0), P(w \cdot 0))$. 
Hence
\begin{equation}
\dim_\C \sfC_w = \dim_\C \Hom_\catO (P(w_0 \cdot 0),
P(w \cdot 0)) = [P(w \cdot 0): L(w_0 \cdot 0)],\label{eq:29}
\end{equation}
where the
latter denotes the multiplicity of the simple module $L(w_0
\cdot 0)$ in some composition series of $P(w \cdot
0)$. Since $P(w \cdot 0)$ has a Verma filtration, and since
$[M(z \cdot 0) : L(w_0 \cdot 0)]=1$ for all Verma modules
$M(z \cdot 0)$, we have further that $[P(w \cdot 0): L(w_0
\cdot 0)] = \sum_{z \in \bbS_n} (P(w \cdot 0): M(z \cdot
0))$, where $(P(w \cdot 0):M(z \cdot 0))$ denotes the
multiplicity of $M(z \cdot 0)$ in some Verma filtration of
$P(w \cdot 0)$. The Kazhdan-Lusztig conjecture \cite{MR560412} (see \cite{2012arXiv1212.0791E} for a proof) states
precisely that $(P(w \cdot 0): M( z \cdot 0)) =
\calP_{z,w}(1)$, and this concludes the proof (notice that $\calP_{z,w}(1) = 0$ unless $z \preceq w$).
\end{proof}

\subsection{Some Soergel modules}
\label{soerg:sec:some-soerg-modul}
We determine now explicitly some modules $\sfC_w$. In the following, we will use the notation introduced in \textsection\ref{soerg:sec:combinatorics}.
We recall the following well-known fact:

\begin{lemma}
  As a $\C$--vector space, a basis of $ B \otimes_{i_1}
  B \otimes \cdots \otimes_{i_{r-1}} B \otimes_{i_r} \C$ is
  given by
  \begin{equation} \label{soerg:eq:12}
    \{ x_{i_1}^{\epsilon_1} \otimes \cdots \otimes
    x_{i_r}^{\epsilon_r} \otimes 1 \suchthat \epsilon_j \in \{0,1\}\}.
  \end{equation}
\end{lemma}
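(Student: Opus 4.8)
The plan is to reduce everything to one basic fact — that for each simple reflection $s_i$ the ring $B$ is free of rank two over $B^{s_i}$ with basis $\{1,x_i\}$ — and then to peel the tensor factors off one at a time by induction on $r$.

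\textbf{Step 1 (descent of the rank-two splitting).} I would start from the $R^{s_i}$-module decomposition $R = R^{s_i}\oplus x_iR^{s_i}$ realised in \eqref{soerg:eq:161} by $f\mapsto P_if\oplus x_i\partial_if$, and show it descends to $B=R/J_n$. The ideal $J_n$ is $s_i$-stable, being generated by symmetric polynomials; it is also $\partial_i$-stable, because $\partial_i(gp)=g\,\partial_ip$ for $g\in R^{s_i}$ and in particular for the symmetric generators of $J_n$, so writing an element of $J_n$ as a combination of those generators gives $\partial_iJ_n\subseteq J_n$, hence also $P_iJ_n=(\id-x_i\partial_i)J_n\subseteq J_n$. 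Consequently the splitting of $R$ restricts to $J_n=(J_n\cap R^{s_i})\oplus x_i(J_n\cap R^{s_i})$, and moreover $B^{s_i}$ is exactly the image of $R^{s_i}$ in $B$ (if $s_if\equiv f$ then $2\partial_if=\partial_i(f-s_if)\in J_n$, so $f\equiv P_if\in R^{s_i}$). Passing to the quotient yields $B=B^{s_i}\oplus x_iB^{s_i}$, i.e.\ $B$ is free as a (left, equivalently right) $B^{s_i}$-module on $\{1,x_i\}$.

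\textbf{Step 2 (induction on $r$).} For $r=0$ the module is $\C$ and there is nothing to prove. For the inductive step write $N=B\otimes_{i_2}\cdots\otimes_{i_{r-1}}B\otimes_{i_r}\C$, a left $B$-module which by induction has $\C$-basis $\{x_{i_2}^{\epsilon_2}\otimes\cdots\otimes x_{i_r}^{\epsilon_r}\otimes 1 : \epsilon_j\in\{0,1\}\}$, and restrict its left $B$-action to $B^{s_{i_1}}$. Using Step 1,
\[
  B\otimes_{i_1}N \;=\; (B^{s_{i_1}}\oplus x_{i_1}B^{s_{i_1}})\otimes_{B^{s_{i_1}}}N \;\cong\; N\;\oplus\;\bigl(x_{i_1}\otimes N\bigr),
\]
the two summands spanned respectively by the elements $1\otimes m$ and $x_{i_1}\otimes m$ with $m$ ranging over the basis of $N$. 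Combining this with the basis of $N$ gives exactly the set \eqref{soerg:eq:12}, which therefore spans and has cardinality $2^r$, hence is a basis.

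The only point that is not routine bookkeeping is the descent in Step 1: one must check that the rank-two splitting of $R$ over $R^{s_i}$ survives in the coinvariant quotient $B$, which is where the $s_i$- and $\partial_i$-stability of $J_n$ enters. (Alternatively this freeness is the ring-theoretic shadow of the fact that the full flag variety is a $\bbP^1$-bundle over a partial one, but the argument above is self-contained given \eqref{soerg:eq:161}.)
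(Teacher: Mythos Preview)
Your proof is correct and follows essentially the same approach as the paper: both rely on the rank-two splitting $B = B^{s_i}\oplus x_iB^{s_i}$ inherited from \eqref{soerg:eq:161} and then iterate over the tensor factors. The paper's proof is a one-line reference to \eqref{soerg:eq:161}, leaving implicit the descent from $R$ to $B$ that you carefully carry out in Step~1; your version is more complete but not conceptually different.
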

\begin{proof}
  The claim follows since each polynomial $f \in R$ can be written uniquely as $f= P_i(f) + x_i \partial_i (f)$, with $P_i(f), \partial_i(f) \in R^{s_i}$, cf.\ \eqref{soerg:eq:161}.
\end{proof}

A key-tool to determine the Soergel modules $\sfC_{w_k z}$ is given by the next proposition; unfortunately it is based on a lemma which uses facts about the BGG category $\catO$, and that we hence postpone to Section \ref{sec:category-cato}.

\begin{prop}
  \label{soerg:prop:4}
  For all $z \in D $ the module $\sfC_{w_k z}$ is cyclic. In particular,
  $\sfC_{w_k z}\cong R/\Ann_R \sfC_{w_k z} \cong B/\Ann_B \sfC_{w_k z}$.
\end{prop}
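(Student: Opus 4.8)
The statement asserts that each Soergel module $\sfC_{w_k z}$ for $z \in D$ is cyclic as a $B$-module (equivalently, as an $R$-module, since $B$ is a quotient of $R$). The plan is to reduce cyclicity to a statement about multiplicities in category $\catO$, which is where the yet-to-be-stated lemma must come in. By Theorem~\ref{thm:2}, $\sfC_{w_k z}$ is the indecomposable summand of $B_{i_r} \otimes \cdots \otimes B_{i_1} \otimes \C$ containing $1 \otimes \cdots \otimes 1$; and by the Soergel-functor identification used in the proof of Theorem~\ref{thm:3}, $\sfC_{w_k z} \cong \Hom_\catO(P(w_0 \cdot 0), P(w_k z \cdot 0))$ as a $B$-module. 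The key observation is that a module of the form $\Hom_\catO(P(w_0 \cdot 0), P)$ is cyclic over $B \cong \End_\catO(P(w_0\cdot 0))$ precisely when $P$ is generated, as a module over the (completed) endomorphism structure, by a single map — and this translates into a combinatorial/geometric condition on $P(w_k z \cdot 0)$.

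Concretely, I would proceed as follows. First, recall (this is the postponed lemma using $\catO$) that for the permutations $w_k z$ with $z \in D$, the projective module $P(w_k z \cdot 0)$ is a \emph{self-dual} projective-injective-like object, or more precisely that the corresponding Schubert variety is rationally smooth; by the criterion referenced in the introduction (cf.\ \cite[Appendix]{MR560412} and \cite{MR1934291}), rational smoothness of the Schubert variety attached to $w_k z$ is equivalent to the Soergel module $\sfC_{w_k z}$ being cyclic. So the real content is: \emph{verify that $w_k z$ indexes a rationally smooth Schubert variety for every $z \in D$}. Using the $\up\down$-combinatorics of \textsection\ref{soerg:sec:combinatorics} and the pattern-avoidance criterion of \cite{MR1934291} (rational smoothness $\Leftrightarrow$ avoiding the patterns $3412$ and $4231$), I would check that the reduced expression $w_k z = w_k \cdot t^\down_{k+1,z^\down_{k+1}} \cdots t^\down_{n,z^\down_n}$ from \eqref{soerg:eq:26}, combined with the longest element $w_k$ of the parabolic $\bbS_k$, produces a permutation that is a product of a "dominant" part and a "chain of cycles" part which cannot contain either forbidden pattern. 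This is essentially a bookkeeping argument on where the inversions of $w_k z$ sit.

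An alternative, more self-contained route — which may be what the paper intends given that the lemma is postponed — is to argue directly that the cyclic generator is $1 \otimes \cdots \otimes 1 \in B_{i_r} \otimes \cdots \otimes B_{i_1} \otimes \C$. One shows that the $R$-submodule generated by this element already has dimension equal to $\dim_\C \sfC_{w_k z}$, which is computed via Theorem~\ref{thm:3} and the explicit canonical basis element $C_{w_k z}$ from Proposition~\ref{soerg:prop:3}. Summing the coefficients of $C_{w_k z}$ at $v = 1$ gives $k! \prod_i (z^\down_i + 1)$, which must match $b_1^z \cdots b_n^z$ (Proposition~\ref{soerg:prop:1}) — and a short combinatorial identity relating the $\boldz^\down$-sequence to the $\boldb^z$-sequence via \eqref{soerg:eq:171} confirms this. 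Since the submodule generated by $1 \otimes \cdots \otimes 1$ is all of $\sfC_{w_k z}$ by the dimension count, cyclicity follows, and then $\sfC_{w_k z} \cong R/\Ann_R \sfC_{w_k z} \cong B/\Ann_B \sfC_{w_k z}$ is automatic for any cyclic module.

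The main obstacle is the reliance on category-$\catO$ input: to make the argument rigorous one genuinely needs either (a) the identification $\sfC_{w_k z} \cong \Hom_\catO(P(w_0\cdot 0), P(w_k z \cdot 0))$ together with a structural fact about $P(w_k z \cdot 0)$ (self-duality, or being a "Bott–Samelson-cyclic" object), or (b) the rational-smoothness translation. Both of these are clean to \emph{cite} but require care to \emph{prove} from scratch, which is why the excerpt flags that the supporting lemma is deferred to Section~\ref{sec:category-cato}. The combinatorial dimension-matching (the $C_{w_k z}$ coefficient sum versus $\prod b_i^z$) is routine once the canonical basis formula is in hand, so I would expect the write-up to isolate exactly the one $\catO$-theoretic lemma, prove cyclicity modulo that lemma by the dimension argument, and defer the lemma itself.
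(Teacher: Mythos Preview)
Your second route has a genuine gap: the dimension matching $\dim_\C \sfC_{w_k z} = k!\prod_i(z^\down_i+1) = \prod_i b^z_i = \dim_\C R_{\boldb^z}$ does \emph{not} prove cyclicity. Knowing $I_{\boldb^z} \subseteq \Ann(1\otimes\cdots\otimes 1)$ gives only an \emph{upper} bound $\dim_\C\big(R\cdot(1\otimes\cdots\otimes 1)\big) \leq \dim_\C R_{\boldb^z}$, which is the wrong direction: to conclude that the cyclic submodule fills all of $\sfC_{w_k z}$ you would need a \emph{lower} bound on its dimension, and nothing in your argument supplies one. The equality of dimensions is precisely what the paper uses \emph{after} cyclicity is established (in Theorem~\ref{soerg:thm:1}) to pin down the annihilator --- it cannot be used to prove cyclicity itself without circularity.

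Your first route via pattern avoidance is in principle valid but considerably more laborious than what the paper does, and you would still have to actually verify that $w_k z$ avoids $3412$ and $4231$ for every $z \in D$, not just gesture at it as bookkeeping.

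The paper's argument is much shorter and closer to what you hint at in your last paragraph, though you never isolate the right criterion. The deferred $\catO$--lemma (Lemma~\ref{lem:3}) says: $\sfC_w$ is cyclic if and only if $\calP_{e,w}=v^{\len(w)}$, because by the Kazhdan--Lusztig conjecture $\calP_{e,w}(1)=(P(w\cdot 0):M(0))$, and by \cite[Lemma~7.3]{MR2017061} this Verma multiplicity equals the cardinality of a minimal generating set for $\sfC_w$. Given that lemma, Proposition~\ref{soerg:prop:4} is a one-liner: in the explicit formula for $C_{w_k z}$ from Proposition~\ref{soerg:prop:3}, the standard basis element $H_e$ visibly appears exactly once, with coefficient $v^{\len(w_k z)}$ (take $w'=e$ and all $i_j=0$ in \eqref{soerg:eq:176}), so $\calP_{e,w_k z}=v^{\len(w_k z)}$ and cyclicity follows. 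No dimension comparison enters at this stage.
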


\begin{proof}
  By Proposition~\ref{soerg:prop:3}, $H_e$ appears exactly once with
  coefficient $v^{\ell(w_k  z)}$ in the canonical basis element $C_{w_k
    z}$. By Lemma \ref{lem:3}, this implies that $\sfC_{w_k z}$ is cyclic.
\end{proof}

\begin{lemma}\label{soerg:lem:dim-of-special-soergel-modules}
  For every $z\in D $ the dimension
  of $\mathsf C_{w_k  z}$ over $\C$ is given by
  \begin{equation}\label{soerg:eq:13}
    \dim_\C \mathsf C_{w_k  z} = k!(z^\down_{k+1}+1)\cdots (z^\down_n+1) = b^z_1 \cdots b^z_n.
  \end{equation}
\end{lemma}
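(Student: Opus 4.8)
The plan is to obtain $\dim_\C\sfC_{w_k z}$ from the Kazhdan--Lusztig conjecture (Theorem~\ref{thm:3}) together with the explicit formula for the canonical basis element $C_{w_k z}$ from Proposition~\ref{soerg:prop:3}, and then to check a purely combinatorial identity relating $\boldb^z$ and $\boldz^\down$. Concretely, write $C_{w_k z}=\sum_{w'\in\bbS_n}a_{w'}(v)H_{w'}$ for the expansion in the standard basis, so that $a_{w'}=\calP_{w',w_k z}$ by \eqref{eq:4} (with $a_{w_k z}=1$ and $a_{w'}=0$ unless $w'\preceq w_k z$). By Theorem~\ref{thm:3} and \eqref{eq:3}, $\dim_\C\sfC_{w_k z}=\sum_{w'\preceq w_k z}\calP_{w',w_k z}(1)=\sum_{w'\in\bbS_n}a_{w'}(1)$; in other words, the dimension is the number of standard-basis elements occurring in $C_{w_k z}$, each counted with multiplicity $a_{w'}(1)$.

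The next step is to substitute Proposition~\ref{soerg:prop:3}. Unwinding the summation notation in \eqref{soerg:eq:176}, the element $C_{w_k z}$ is a sum of terms $v^{\len(w_k z)-\len(w')-i_{k+1}-\cdots-i_n}\,H_{w't^\down_{k+1,i_{k+1}}\cdots t^\down_{n,i_n}}$, one for each tuple $(w',i_{k+1},\ldots,i_n)$ with $w'\in\bbS_k$ and $0\le i_\alpha\le z^\down_\alpha$ for $\alpha=k+1,\ldots,n$ (the exponent is well defined and nonnegative since $\len(w't^\down_{k+1,i_{k+1}}\cdots t^\down_{n,i_n})=\len(w')+i_{k+1}+\cdots+i_n$, as shown in the proof of Proposition~\ref{soerg:prop:3}). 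Thus every coefficient $a_{w'}(v)$ is a single power of $v$, so $a_{w'}(1)\in\{0,1\}$; and the standard-basis elements appearing are pairwise distinct, because from the action of $w't^\down_{k+1,i_{k+1}}\cdots t^\down_{n,i_n}$ on the labelled string \eqref{soerg:eq:27} one reads off the inversions among the $\up$'s (which determine $w'$) and, for each $\alpha$, the number $i_\alpha$ of inversions created by $\down_{k+\alpha}$, so that the indexing tuple is recovered from the permutation. Hence $\dim_\C\sfC_{w_k z}$ equals the number of such tuples, namely $k!\,(z^\down_{k+1}+1)\cdots(z^\down_n+1)$; this is exactly the ``count of monomially weighted standard-basis elements'' bookkeeping already recorded for $C_{s_jw_kz}$ in Corollary~\ref{soerg:cor:1}.

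It remains to prove the second equality in \eqref{soerg:eq:13}, i.e.\ $b^z_1\cdots b^z_n=k!\,(z^\down_{k+1}+1)\cdots(z^\down_n+1)$, from the combinatorics of the $\up\down$-sequence $z$. For a position $j\in\{1,\ldots,n\}$ let $u_j$ be the number of $\up$'s strictly to the right of $j$; then $b^z_j=u_j+1$ by \eqref{soerg:eq:171}. Splitting the product $\prod_{j=1}^{n}(u_j+1)$ according to whether position $j$ carries an $\up$ or a $\down$,
\begin{equation*}
  \prod_{j=1}^{n}b^z_j=\left(\prod_{m=1}^{k}(u_{\up^z_m}+1)\right)\left(\prod_{d=1}^{n-k}(u_{\down^z_d}+1)\right).
\end{equation*}
For the $m$-th $\up$ the $\up$'s lying to its right are exactly those numbered $m+1,\ldots,k$, so $u_{\up^z_m}=k-m$ and the first factor equals $\prod_{m=1}^{k}(k-m+1)=k!$. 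For the $d$-th $\down$, which by \eqref{soerg:eq:170} sits at position $\down^z_d=(k+d)-z^\down_{k+d}$ and has exactly $d-1$ of the $\down^z_d-1$ symbols preceding it equal to $\down$, there remain $k-(\down^z_d-d)=z^\down_{k+d}$ $\up$'s to its right, so the second factor equals $\prod_{d=1}^{n-k}(z^\down_{k+d}+1)=(z^\down_{k+1}+1)\cdots(z^\down_n+1)$. This proves \eqref{soerg:eq:13}.

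The one genuinely delicate point is the distinctness of the standard-basis elements occurring in \eqref{soerg:eq:176}, which is what turns the term count into an exact dimension rather than merely an upper bound; everything else is elementary bookkeeping. As a consistency check, Proposition~\ref{soerg:prop:1} gives $\dim_\C R_{\boldb^z}=b^z_1\cdots b^z_n$, matching the value computed above --- as it must, in view of the isomorphism $\sfC_{w_k z}\cong R_{\boldb^z}$ to be established next.
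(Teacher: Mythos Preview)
Your proof is correct and follows essentially the same approach as the paper: apply Theorem~\ref{thm:3} to the explicit formula of Proposition~\ref{soerg:prop:3} for the first equality, then split $\prod_j b^z_j$ according to whether position $j$ carries an $\up$ or a $\down$ for the second. One small remark: the distinctness of the standard basis elements in \eqref{soerg:eq:176}, while true, is not actually needed for the dimension count---since every term in that sum is a positive power of $v$ times a standard basis element, evaluating at $v=1$ gives the number of tuples regardless of whether different tuples yield the same permutation.
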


\begin{proof}
  The first equality follows directly from Theorem~\ref{thm:3} and
  Proposition~\ref{soerg:prop:3}. We want to show the second
  equality. As in Example~\ref{soerg:ex:1}, we imagine the
  $\boldb$--sequence written on top of the
  $\up\down$--sequence for $z$. Over the $\up$'s we have the
  numbers between $1$ and $k$, each appearing once: hence
  their contribute is $k!$. Over the $j$-th $\down$, we have
  a number measuring how many $\up$'s are on its right, plus
  one: this coincides with how many times this $\down$ has
  been moved to the left plus one, that is,
  $z^\down_{k+j}+1$. The claim follows immediately.
\end{proof}

\begin{lemma}
  \label{soerg:lem:38}
  The module $\sfC_{w_k }$ is isomorphic to $R/(J_k,x_{k+1},\ldots,x_n)$.
\end{lemma}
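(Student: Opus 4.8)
The plan is to identify $\sfC_{w_k}$ with the quotient $R/(J_k, x_{k+1}, \ldots, x_n)$ by first showing the relevant ideal lies in the annihilator, and then matching dimensions. First I would observe that $w_k$ corresponds to the element $z = e \in D$ (the identity coset representative), so that $\boldz^\down = (0,\ldots,0)$, and by Lemma \ref{soerg:lem:dim-of-special-soergel-modules} we have $\dim_\C \sfC_{w_k} = k!\cdot 1 \cdots 1 = k!$. Meanwhile, by Proposition~\ref{soerg:prop:4} the module $\sfC_{w_k}$ is cyclic, so $\sfC_{w_k} \cong R/\Ann_R \sfC_{w_k}$; hence it suffices to show that $(J_k, x_{k+1},\ldots,x_n) \subseteq \Ann_R \sfC_{w_k}$ and that $R/(J_k, x_{k+1},\ldots,x_n)$ has dimension exactly $k!$.

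For the dimension of the quotient: $R/(x_{k+1},\ldots,x_n) \cong \C[x_1,\ldots,x_k]$, and modding out further by $J_k$ (the non-constant symmetric polynomials in $x_1,\ldots,x_k$) gives the ring of coinvariants for $\bbS_k$, which has dimension $k!$ by the classical result recalled in Example~\ref{soerg:ex:4}. (Equivalently, this is $R_\boldb$ for $\boldb^{z} = (k,k-1,\ldots,2,1,1,\ldots,1)$ with $n-k$ trailing ones — which one checks is indeed $\boldb^{e}$ from \eqref{soerg:eq:171} — so Proposition~\ref{soerg:prop:1} gives dimension $k! \cdot 1 \cdots 1 = k!$, consistent with Theorem~\ref{thm:4} in the $z=e$ case.) So the two sides have equal dimension, and it remains only to establish the inclusion of ideals.

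For the inclusion $(J_k, x_{k+1},\ldots,x_n) \subseteq \Ann_R \sfC_{w_k}$: recall that, choosing the reduced expression $w_k = s_{i_1}\cdots s_{i_r}$ inside $\bbS_k$ only (using simple reflections $s_1,\ldots,s_{k-1}$), the module $\sfC_{w_k}$ is the indecomposable summand of $B_{i_r}\otimes\cdots\otimes B_{i_1}\otimes\C$ containing $1\otimes\cdots\otimes 1$, and since the relevant tensor module here already is $B_{i_r}\otimes\cdots\otimes B_{i_1}\otimes\C$ tensored over $B$, everything is a quotient of $B = R/J_n$, so $J_n \supseteq J_k$-part needs care: actually I would argue directly that $J_k$ annihilates $1\otimes\cdots\otimes 1$. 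The symmetric polynomials in $x_1,\ldots,x_k$ with zero constant term are $\bbS_k$-invariant, hence (via the $\otimes_i$ over $B^{s_i}$ for $i<k$, which let invariants slide across the tensor symbols) act on $1\otimes\cdots\otimes 1$ by sliding all the way to the right onto $\C = B\otimes_B\C$, where they act as zero because $B = R/J_n$ already kills symmetric polynomials of positive degree (and $J_k \subseteq J_n$). For the variables $x_{k+1},\ldots,x_n$: these commute with all $s_i$ for $i \le k-1$ appearing in the chosen reduced expression, hence are themselves $B^{s_i}$-invariant for each such $i$, so they also slide to the right onto $\C$ and act by zero there (the $\otimes_B\C$ sends $B_+$ to $0$). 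Thus $(J_k, x_{k+1},\ldots,x_n)$ kills the generator $1\otimes\cdots\otimes 1$, hence annihilates all of $\sfC_{w_k}$. The main obstacle is making the ``sliding invariants across $\otimes_i$'' argument fully rigorous — in particular that for $i \le k-1$ one genuinely has $x_{k+1},\ldots,x_n \in B^{s_i}$ and that $\bbS_k$-symmetric polynomials are $B^{s_i}$-invariant for all $i < k$ — but this is immediate from the definitions since these simple reflections only permute $x_1,\ldots,x_k$ among themselves and fix $x_{k+1},\ldots,x_n$. Combining the ideal inclusion with the dimension count $\dim_\C R/(J_k,x_{k+1},\ldots,x_n) = k! = \dim_\C \sfC_{w_k}$, the surjection $R/(J_k,x_{k+1},\ldots,x_n) \twoheadrightarrow R/\Ann_R\sfC_{w_k} \cong \sfC_{w_k}$ must be an isomorphism.
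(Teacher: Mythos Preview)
Your proof is correct and follows essentially the same approach as the paper: use Proposition~\ref{soerg:prop:4} to know $\sfC_{w_k}$ is cyclic, show the ideal $J' = (J_k,x_{k+1},\ldots,x_n)$ annihilates the cyclic generator by sliding $s_i$--invariants (for $i<k$) across the tensor products to the rightmost $\C$, identify $J'$ with $I_{\boldb^e}$, and match dimensions via Lemma~\ref{soerg:lem:dim-of-special-soergel-modules} and Proposition~\ref{soerg:prop:1}.

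One small correction: your parenthetical claim that $J_k \subseteq J_n$ is false (for instance $x_1 + \cdots + x_k \notin J_n$ when $k<n$), so that is not why elements of $J_k$ act as zero on $\C$. The correct reason is the one you already gave for $x_{k+1},\ldots,x_n$: once an element $f$ with zero constant term slides to the rightmost tensor factor, it acts on $\C = B/B_+$ by its constant term, which is zero. Just apply that same reasoning to the generators of $J_k$.
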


\begin{proof}
  Let $J'=(J_k,x_{k+1},\ldots,x_n)$. By Proposition~\ref{soerg:prop:4}, the module $\sfC_{w_k }$ is cyclic over $B$. Choose any reduced expression $s_{i_1}\cdots s_{i_N}$ for $w_k $ and build the corresponding module $B_{w_k } = B_{i_N} \otimes \cdots \otimes B_{i_1} \otimes \C$. Since all polynomials of $J'$ are symmetric in the first $k$ variables, we have $J' \subseteq \Ann_R B_{w_k } \subseteq \Ann_R \sfC_{w_k }$, hence $\sfC_{w_k }$ is a quotient of $R/J'$.  Notice that $J' = I_{\boldb^e}$ for $e \in \bbS_n$ the identity element. By Lemma~\ref{soerg:lem:dim-of-special-soergel-modules} and Proposition~\ref{soerg:prop:1}, $\dim_\C \sfC_{w_k } = \dim_\C R/J'$, hence $\sfC_{w_k } = R/J'$.
\end{proof}

As we said, we will use the same notation of \textsection{}\ref{soerg:sec:combinatorics}. For
$t^\up_{i,\ell}$, see \eqref{eq:151}, let
\begin{equation}
B_{t^\up_{i,\ell}} = B_{i+\ell-1} \otimes B_{i+\ell-2}
\otimes \cdots \otimes B_i\label{soerg:eq:32}
\end{equation}
and for $z=t^\up_{k,z^\up_k} \cdots
t^\up_{1,z^\up_1}$ let
\begin{equation}
B^\up_z = B_{t^\up_{1,z^\up_1}} \otimes
\cdots \otimes B_{t^\up_{k,z^\up_k}}.\label{soerg:eq:33}
\end{equation}
Moreover, for
$t^\down_{i,\ell}$, see \eqref{soerg:eq:25}, let
\begin{equation}
B_{t^\down_{i,\ell}} = B_{i-\ell} \otimes B_{i-\ell+1}
\otimes \cdots \otimes B_{i-1}\label{soerg:eq:37}
\end{equation}
and for $z=t^\down_{k+1,z^\down_{k+1}} \cdots
t^{\down}_{n,z^\down_n}$ let
\begin{equation}
B^\up_z = B_{t^\down_{n,z^\down_n}} \otimes
\cdots \otimes B_{t^\down_{k+1,z^\down_{k+1}}}.\label{soerg:eq:38}
\end{equation}

From Soergel Theorem~\ref{thm:2} and Proposition~\ref{soerg:prop:4},
it follows that $\mathsf C_{w_k  z}$ is isomorphic both to the
$B$--submodule of $ B^\up_z \otimes \mathsf{C}_{w_k }$ generated by
$\underline{1} = 1 \otimes \cdots \otimes 1$ and to the $B$--submodule
of $ B^\down_z \otimes \mathsf{C}_{w_k }$ generated by $\underline{1} =
1 \otimes \cdots \otimes 1$.

The following lemma is the crucial step to determine the annihilator
of $\mathsf C_{w_k  z}$.

\begin{lemma}\label{soerg:lem:ann-of-C-1}
  Let $z \in D $, and let $m$ be the number of nonzero
  $z^\up_i$'s. Then
  \begin{equation}
h_{\ell}(x_1,\ldots,x_{k-m+z^\up_{k-m+1}})\in \Ann \mathsf C_{w_k  z}\label{eq:6}
\end{equation}
 for all $\ell > m$.
\end{lemma}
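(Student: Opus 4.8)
The plan is to work inside the description of $\sfC_{w_k z}$ as the $B$--submodule of $B^\up_z\otimes\sfC_{w_k}$ generated by $\underline{1}=1\otimes\cdots\otimes1$, so that the claim reads $h_\ell(x_1,\dots,x_p)\cdot\underline{1}=0$ with $p=k-m+z^\up_{k-m+1}$. I would prove this by induction on $m$, in the slightly strengthened form
\[
h_j(x_1,\dots,x_q)\cdot\underline{1}=0\qquad\text{for all }q\text{ with }k-m<q\le p\text{ and all }j\ge m+1,
\]
the lemma being the case $q=p$. For $m=0$ one has $z=e$ and $\sfC_{w_k z}=\sfC_{w_k}\cong R/(J_k,x_{k+1},\dots,x_n)$ by Lemma~\ref{soerg:lem:38}, where the needed vanishings are Lemma~\ref{soerg:lem:1} applied to $\boldb^e$. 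Two elementary facts are used throughout: (a) every reflection occurring in $B^\up_z$ has index $\ge k-m+1$ (look at \eqref{soerg:eq:32}--\eqref{soerg:eq:33}), so any $f$ fixed by all such $s_i$ slides past the whole of $B^\up_z$ and acts only on the rightmost tensorand; and (b) at each tensor sign $\otimes_i$ one has $g\otimes_i 1=1\otimes P_i(g)+x_i\otimes\partial_i(g)$ (from \eqref{soerg:eq:161}), with $\partial_i h_r(x_1,\dots,x_i)=h_{r-1}(x_1,\dots,x_{i+1})$ by \eqref{soerg:eq:5}.

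For the inductive step set $a=z^\up_{k-m+1}\ge1$, so $p=k-m+a$, and let $z'$ be obtained from $z$ by replacing $z^\up_{k-m+1}$ by $0$; then $z'$ has $m-1$ nonzero $\up$--distances and, because $\boldz^\up$ is non-decreasing, its special index satisfies $p'=k-(m-1)+z^\up_{k-m+2}\ge p+1$. From the reduced expression \eqref{soerg:eq:24} one gets $B^\up_z=B_{t^\up_{k-m+1,a}}\otimes B^\up_{z'}$ with $B_{t^\up_{k-m+1,a}}=B\otimes_p B\otimes_{p-1}\cdots\otimes_{k-m+1}B$, and $\sfC_{w_k z'}$ is the submodule of $B^\up_{z'}\otimes\sfC_{w_k}$ generated by $\underline{1}'=1\otimes\cdots\otimes1$. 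Fixing $q$ with $k-m+1\le q\le p$ and $j\ge m+1$, I would push $h_j(x_1,\dots,x_q)$ rightward through the $a$ tensor signs of $B_{t^\up_{k-m+1,a}}$: being fixed by $s_p,\dots,s_{q+1}$ it slides unchanged past $\otimes_p,\dots,\otimes_{q+1}$, and at $\otimes_q,\otimes_{q-1},\dots,\otimes_{k-m+1}$ one applies (b). Using \eqref{soerg:eq:5} and \eqref{soerg:eq:3} one checks that each $\partial$--branch produces $h_{j-1}(x_1,\dots,x_{q+1})$, which (being fixed by every $s_i$ with $i\ne q+1$) slides unchanged onto $\underline{1}'$, while the single surviving $P$--branch produces $h_j(x_1,\dots,x_{q+1})-h_1(x_{k-m+1},\dots,x_{q+1})\,h_{j-1}(x_1,\dots,x_{q+1})$, again acting on $\underline{1}'$. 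Thus $h_j(x_1,\dots,x_q)\cdot\underline{1}$ is a sum of terms of shape $(\text{monomial})\otimes\cdots\otimes(g\cdot\underline{1}')$ with $g\in\{h_{j-1}(x_1,\dots,x_{q+1}),\,h_j(x_1,\dots,x_{q+1})\}$, so it suffices that $g\cdot\underline{1}'=0$. But $k-m+2\le q+1\le p+1\le p'$, hence $q+1>k-(m-1)$, and the degrees $j-1,j$ are $\ge m=(m-1)+1$; so for $m\ge2$ this is precisely the inductive statement for $z'$, while for $m=1$ one has $z'=e$, $q+1>k$, and $g$ already vanishes in $\sfC_{w_k}\cong R/(J_k,x_{k+1},\dots,x_n)$. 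This closes the induction.

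The bulk of the work is the middle step. One must justify the various invariance statements allowing the free sliding, and above all carry out the inductive computation of the surviving $P$--term: writing $\frakp_i:=h_j(x_1,\dots,x_{q+1})-(x_{i+1}+\cdots+x_{q+1})h_{j-1}(x_1,\dots,x_{q+1})$ (so $\frakp_q=h_j(x_1,\dots,x_q)$ by \eqref{soerg:eq:3}), one has $\partial_i\frakp_i=h_{j-1}(x_1,\dots,x_{q+1})$ and $P_i\frakp_i=\frakp_{i-1}$ for $k-m+1\le i\le q$, which follow from $\partial_i$ killing symmetric polynomials, the twisted Leibniz rule $\partial_i(fg)=\partial_i(f)g+s_i(f)\partial_i(g)$, and $\partial_i(x_{i+1}+\cdots+x_{q+1})=-1$; hence $\frakp_{k-m}=h_j(x_1,\dots,x_{q+1})-h_1(x_{k-m+1},\dots,x_{q+1})h_{j-1}(x_1,\dots,x_{q+1})$ is what lands on $\underline{1}'$. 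The monomial prefactors that accumulate in the left-hand tensorands are immaterial once the payload on $\underline{1}'$ is seen to vanish. The one genuinely structural point — and the place where both the hypothesis $\ell>m$ and the monotonicity of $\boldz^\up$ enter — is the inequality $p'\ge p+1$, which is exactly what makes the inductive hypothesis for $z'$ strong enough to kill $h_j(x_1,\dots,x_{q+1})$ even for $q$ as large as $p$.
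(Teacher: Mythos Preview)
Your argument is correct and uses the same basic ingredients as the paper --- the decomposition $f=P_i(f)+x_i\partial_i(f)$, formula \eqref{soerg:eq:5}, and sliding symmetric elements past $\otimes_i$ --- but the inductive organization is genuinely different. The paper inducts on $N=\sum_i z^\up_i$ (the length of $z$), peeling off a single simple reflection $s_p$ at each step; to make the induction close it then manufactures two auxiliary elements $y,y'\in D$ (obtained by rearranging the entries of $\boldz^\up$) so that the two polynomials produced by the $P_p/\partial_p$ split land at exactly the special index $p^y$ or $p^{y'}$ where the lemma for $y$ or $y'$ applies. Your route instead strengthens the statement to a whole range $k-m<q\le p$, inducts on $m$, and removes the entire block $t^\up_{k-m+1,a}$ in one go, tracking the $P$--branch via the telescoping family $\frakp_i$; no auxiliary elements are needed because the strengthened hypothesis for $z'$ already covers every $q'=q+1$ that arises. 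The price you pay is the extra bookkeeping with $\frakp_i$, while the paper's price is the somewhat ad hoc construction of $y$ and $y'$; your version makes more transparent where the monotonicity $z^\up_{k-m+2}\ge z^\up_{k-m+1}$ (hence $p'\ge p+1$) is used.

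One cosmetic point: as written, your strengthened claim for $m=0$ has empty range $k<q\le k$, so the sentence invoking Lemma~\ref{soerg:lem:1} for $\boldb^e$ is not actually doing work; the true base of your induction is the clause you give for $m=1$, appealing directly to $\sfC_{w_k}\cong R/(J_k,x_{k+1},\dots,x_n)$. This is harmless, but you may want to state the base as $m=1$ to match the logic.
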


\begin{proof}
  Let us prove the assertion by induction on the sum $N$ of the
  $z^\up_i$'s (that is, up to a shift, the length of $z$). If $N=0$
  then also $m=0$, and $h_\ell(x_1,\ldots,x_k)\in \Ann \mathsf C_{w_k }
  = (J_k,x_{k+1},\ldots,x_n)$ for every $\ell>1$ by Lemma
  \ref{soerg:lem:38}.

  For the inductive step, let $i=k-m+z^\up_{k-m+1}$, write $z=z's_{i}$
  and compute in $B \otimes_{i} ( B^\up_{z'} \otimes
  \mathsf{C}_{w_k })$:
  \begin{equation}\label{soerg:eq:159}
    \begin{split}
      h_{\ell+1}& (x_1,\ldots,x_{i}) \cdot (1 \otimes 1) \\
      &= \big ( P_i(h_{\ell+1}(x_1,\ldots,x_{i})) +x_i \partial_i
      (h_{\ell+1}(x_1,\ldots,x_{i})) \big)\cdot 1 \otimes 1\\
      &= 1 \otimes (P_i(h_{\ell+1}(x_1,\ldots,x_{i})) \cdot 1) + x_i
      \otimes (\partial_i (h_{\ell+1}(x_1,\ldots,x_{i})) \cdot 1).
    \end{split}
  \end{equation}
  Since $\sfC_{w_kz}$ is a summand of $B \otimes_i (B^\down_{z'} \otimes \sfC_{w_k z})$, it is sufficient to show that \eqref{soerg:eq:159} is zero. In fact, we prove that both terms $P_i(h_{\ell+1}(x_1,\ldots,x_i))$ and $\partial_i(h_{\ell+1}(x_1,\ldots,x_i))$ act as $0$ on $B^\up_{z'}$.

  Let us start considering the second term. Let $y \in D $ be determined by $y_i^\up = z_i^\up$ for $i \neq k-m+1,k-m+2$, while $y_{k-m+1}^\up=0$ and $y_{k-m+2}^\up = z_{k-m+1}^\up$. Notice that our chosen reduced expression for $z$ splits as $z'=yz''$, so that
  \begin{equation}
B^\up_{z'} = B_{i-1} B_{i-2} \cdots B_{k-m+1} \, B_{j} B_{j-1} \cdots B_{i+2} \, B^\up_y = B_{z''} B^\up_y\label{soerg:eq:160}
\end{equation}
for $j=k-m+1+z^{\up}_{k-m+2}$, where we omitted the tensor product signs.
By \eqref{soerg:eq:5}, $\partial_i(h_{\ell+1}(x_1,\ldots,x_i))=h_\ell
  (x_1,\ldots,x_{i+1})$; being symmetric in the variables $x_a$ for $a\neq i,i+1$, this passes through $B_{z''}$ and acts on $B^\up_y \otimes \sfC_{w_k }$. By induction, this action is zero.

  Now let us consider the action of the term $P_i(h_{\ell+1}(x_1,\ldots,x_i))$. Write
  \begin{equation}
  \begin{aligned}
    P_i(h_{\ell+1}(x_1,\ldots,x_{i})) & = h_{\ell+1}(x_1,\ldots,x_i) -
    x_i \partial_i h_{\ell+1}(x_1,\ldots,x_i)\\& =
    h_{\ell+1}(x_1,\ldots,x_i) - x_i h_{\ell}(x_1,\ldots,x_{i+1}).
  \end{aligned}\label{soerg:eq:182}
\end{equation}
  Of these two summands, the second acts as zero exactly as before. For the first one, write $y's_{i+1}=y$ so that $B^\up_y = B \otimes_{i+1} B^\up_{y'}$. Then $h_{\ell+1}(x_1,\ldots,x_i)$ commutes with the first tensor product, and by induction acts as zero on $B^\up_{y'}$.
\end{proof}

\begin{prop}\label{soerg:prop:ann-of-C}
  Let $z \in D $ with corresponding $\boldb$--sequence
  $\boldb^z$. Then the complete symmetric polynomial $
  h_{b^z_i} (x_1,\ldots,x_i)$ lies in $\Ann \mathsf C_{w_k
    z}$ for all $i=1,\ldots,n$.
\end{prop}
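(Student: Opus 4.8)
The plan is to argue by induction on the length $\ell(z) = z^\up_1 + \cdots + z^\up_k$ of $z$, using Lemma~\ref{soerg:lem:ann-of-C-1} at each step to obtain the relation at one distinguished position and transporting the remaining relations from a shorter element of $D$. For the base case $z = e$ one has $\boldb^z = \boldb^e$, and by Lemma~\ref{soerg:lem:38} together with the identification $(J_k, x_{k+1}, \ldots, x_n) = I_{\boldb^e}$ recorded in its proof, $\sfC_{w_k} \cong R/I_{\boldb^e}$, so $\Ann \sfC_{w_k} = I_{\boldb^e}$; since the polynomials $h_{b^e_i}(x_1, \ldots, x_i)$ are by definition the generators \eqref{soerg:eq:7} of $I_{\boldb^e}$, the claim is immediate.

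For the inductive step I would take $z \neq e$, so that $m \geq 1$, where $m$ is the number of nonzero $z^\up_i$ (as in Lemma~\ref{soerg:lem:ann-of-C-1}); then $z^\up_{k-m+1} \geq 1$. Set $i = k - m + z^\up_{k-m+1}$ and $z' = z s_i$. Reading off the reduced expression \eqref{soerg:eq:24}, one checks that $z' \in D$, that its $\up$--distance sequence is $\boldz^\up$ with the $(k-m+1)$-th entry lowered by one, and that $\ell(z') = \ell(z) - 1$, so the induction hypothesis applies to $z'$. The key combinatorial observation is then that $\boldb^z$ and $\boldb^{z'}$ agree in every coordinate except the $i$-th, where $b^z_i = b^{z'}_i + 1 = m + 1$: passing from $z'$ to $z$ only interchanges the $\down$ in position $i+1$ with the $\up$ in position $i$, which changes the number of $\up$'s lying strictly to the right of position $i$ by one and leaves all the other such counts unchanged.

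Now fix $j \in \{1, \ldots, n\}$. If $j \neq i$, then $h_{b^z_j}(x_1, \ldots, x_j) = h_{b^{z'}_j}(x_1, \ldots, x_j)$; this polynomial is symmetric in $x_1, \ldots, x_j$, and since $j \neq i$ the variables $x_i, x_{i+1}$ are either both among $x_1, \ldots, x_j$ or both absent, so it is $s_i$--invariant. By the induction hypothesis it lies in $\Ann \sfC_{w_k z'}$. As in the proof of Lemma~\ref{soerg:lem:ann-of-C-1} one has $B^\up_z = B \otimes_i B^\up_{z'}$, so $\sfC_{w_k z}$ is realized as the $B$--submodule of $B \otimes_i (B^\up_{z'} \otimes \sfC_{w_k})$ generated by $\underline 1 = 1 \otimes \underline 1$, whereas $\sfC_{w_k z'}$ is the $B$--submodule of $B^\up_{z'} \otimes \sfC_{w_k}$ generated by $\underline 1$; hence for any $s_i$--invariant $f \in \Ann \sfC_{w_k z'}$ one computes $f \cdot (1 \otimes \underline 1) = 1 \otimes (f \cdot \underline 1) = 0$, so $f$ annihilates $\sfC_{w_k z}$. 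Applying this with $f = h_{b^z_j}(x_1, \ldots, x_j)$ settles the case $j \neq i$. For $j = i$ we have $h_{b^z_i}(x_1, \ldots, x_i) = h_{m+1}(x_1, \ldots, x_{k-m+z^\up_{k-m+1}})$, which lies in $\Ann \sfC_{w_k z}$ directly by Lemma~\ref{soerg:lem:ann-of-C-1} (taking $\ell = m+1 > m$). Together these cover all $j$, completing the induction.

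I expect the only genuinely delicate point to be the bookkeeping in the inductive step: verifying that $z' = z s_i$ really is an element of $D$ of length one less, and that $\boldb^z$ differs from $\boldb^{z'}$ in the single coordinate $i$. Once that is granted, Lemma~\ref{soerg:lem:ann-of-C-1} supplies the relation at the ``new'' position $i$, and the $s_i$--invariance of $h_{b^z_j}(x_1, \ldots, x_j)$ for $j \neq i$ lets every other relation pass unchanged through the extra tensor factor $B \otimes_i (\blank)$; no module-theoretic input is needed beyond Lemma~\ref{soerg:lem:ann-of-C-1} and the description of $\sfC_{w_k z}$ as the cyclic submodule generated by $\underline 1$.
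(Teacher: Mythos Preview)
Your proof is correct. It differs from the paper's only in its organization: the paper fixes $z$ and splits the indices $i\in\{1,\ldots,n\}$ into \emph{initial}, \emph{middle}, and \emph{final} positions, treating each class by a separate argument (the final case is trivial since $w_kz\in\bbS_i$; the middle case is Lemma~\ref{soerg:lem:ann-of-C-1} applied after commuting the polynomial past an appropriate initial segment of $B^\up_z$; the initial case reduces to the well-known inclusion $h_a(x_1,\ldots,x_\ell)\in J_k$ for $a>k-\ell$). You instead run a single induction on $\ell(z)$, handling exactly one index $j=i$ at each step via Lemma~\ref{soerg:lem:ann-of-C-1} and transporting all others from $z'=zs_i$ through the extra factor $B\otimes_i(\blank)$ using $s_i$--invariance.

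Both arguments rest on the same two inputs---Lemma~\ref{soerg:lem:ann-of-C-1} and the ability to slide $s_j$--symmetric polynomials across $\otimes_j$---so neither is deeper than the other. Your induction has the advantage of avoiding any case distinction and of never invoking the auxiliary fact about $J_k$ separately (it is absorbed into the base case $z=e$ via Lemma~\ref{soerg:lem:38}). The paper's case split, on the other hand, makes it more transparent at a glance which positions are genuinely ``new'' for a given $z$ and which are inherited from $\sfC_{w_k}$. The bookkeeping you flag (that $z'=zs_i\in D$ with the $(k{-}m{+}1)$-th entry of $\boldz^\up$ lowered by one, and that $\boldb^z$ and $\boldb^{z'}$ differ only at $i$) is indeed routine: it is visible from the reduced expression \eqref{soerg:eq:24}, whose rightmost letter is precisely $s_i$, together with the definition \eqref{soerg:eq:171} of the $\boldb$--sequence.
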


\begin{proof}
  We divide the indices $i \in \{1,\ldots,n\}$ corresponding to the
  positions in the $\up\down$--sequence of $z$ in three subsets: we
  call an index $i$ such that $z^\up_i=0$ \emph{initial}, we call an index $i$ for which $b^z_i=1$ \emph{final}, and we call all other indices in between \emph{in the middle}:
  \begin{equation*}
    \underbrace{\up \cdots \up}_{\textit{initial}} \underbrace{\down \cross \cdots \cross}_{\textit{middle}} \underbrace{\up \down \cdots \down}_{\textit{final}}
  \end{equation*}
  where a $\cross$ stands for a $\up$ or a $\down$. Notice that it can
  happen that an index $i$ is both \emph{initial} and {\em
    final} if and only if there are no $\down$'s, that is $k=n$. Since
  in this case we already know the claim, we can exclude it.

  If $i$ is \emph{final}, then $w_k z \in \bbS_i \subset \bbS_n$ (where
  $\bbS_i$ is the subgroup generated by the first $i-1$ simple
  transpositions) and obviously $h_1(x_1,\ldots,x_i)$ annihilates
  $\mathsf C_{w_k z}$.

  If $z$ is not the identity (in which case there are no indexes
  \emph{in the middle}), then $i=k-h+z^\up_{k-h+1}$ is \emph{in the
  middle}, and the statement of Lemma
  \ref{soerg:lem:ann-of-C-1} is that $h_{b^z_i}(x_1,\ldots,x_i) \in \Ann
  \mathsf C_{w_k  z}$. For the other indexes \emph{in the middle}, we can
  use Lemma~\ref{soerg:lem:ann-of-C-1} after letting
  $h_{b^z_i}(x_1,\ldots,x_i)$ commute with some initial tensor symbols
  of $B^\up_z$.

  If $i$ is \emph{initial}, then $z$ is a permutation in the
  subgroup of $\bbS_n$ generated by $s_{i+1},\ldots,s_{n-1}$, hence
  $h_{b^z_i}(x_1,\ldots,x_i)$, when acting on $B^\up_z \otimes \mathsf
  C_{w_k }$, can pass through $B^\up_z$, and we only need to prove that
  $h_{b^z_i}(x_1,\ldots,x_i) \in \Ann \mathsf C_{w_k }$. In fact,
  renaming the indexes this follows from the following general
  statement: $h_{a}(x_1,\ldots,x_\ell) \in J_k$ for every $1 \leq \ell
  \leq k$ and $a > k-\ell$. This well-known fact can easily be proved
  by (reversed) induction on $h$: if $h=k$ the claim is obvious; for
  the inductive step, just use \eqref{soerg:eq:3}.
\end{proof}

We now identify the Soergel modules with the rings $R_\boldb = R/I_\boldb$ defined in \textsection\ref{soerg:sec:ideal-gener-compl}.

\begin{theorem}\label{soerg:thm:1}
  Let $z \in D $ with corresponding $\boldb$--sequence
  $\boldb^z$. Then $\Ann \mathsf C_{w_k z}=I_{\boldb^z}$ and
  $ \mathsf C_{w_k x}\cong R_{\boldb^z}$. A basis of
  $R_{\boldb^z}$ is given by
  \begin{equation}
    \label{soerg:eq:14}
    \left\{x_1^{c_1}\cdots x_{n-1}^{c_{n-1}} \cdot \underline 1 \suchthat
      0 \leq c_{i} < b^z_i
    \right\}.
  \end{equation}
\end{theorem}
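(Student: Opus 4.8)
The plan is a short dimension count assembling the preparatory results of this section. First I would record that the generators $h_{b^z_1}(x_1),\ldots,h_{b^z_n}(x_1,\ldots,x_n)$ of $I_{\boldb^z}$ all lie in $\Ann\sfC_{w_k z}$ by Proposition~\ref{soerg:prop:ann-of-C}, so that $I_{\boldb^z}\subseteq\Ann\sfC_{w_k z}$. Since $\boldb^z\in\scrB\subseteq\scrB'$ (as recorded in \textsection\ref{soerg:sec:combinatorics}, or directly from~\eqref{soerg:eq:171}: passing from position $i$ to $i+1$ removes at most one $\up$ from the right, whence $b^z_i\ge b^z_{i+1}\ge b^z_i-1$ with all entries positive), the quotient $R_{\boldb^z}$ and the machinery of \textsection\ref{soerg:sec:ideal-gener-compl} are available. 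Combining the ideal inclusion with the identification $\sfC_{w_k z}\cong R/\Ann_R\sfC_{w_k z}$ of Proposition~\ref{soerg:prop:4} produces a surjection of graded $R$--modules $R_{\boldb^z}\twoheadrightarrow\sfC_{w_k z}$ sending $1$ to the cyclic generator $\underline 1$.

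Next I would compare dimensions: $\dim_\C R_{\boldb^z}=b^z_1\cdots b^z_n$ by Proposition~\ref{soerg:prop:1}, while $\dim_\C\sfC_{w_k z}=b^z_1\cdots b^z_n$ by Lemma~\ref{soerg:lem:dim-of-special-soergel-modules}. A surjection between finite-dimensional vector spaces of equal dimension is an isomorphism, so the map above is an isomorphism; this yields at once $\sfC_{w_k z}\cong R_{\boldb^z}$ and $\Ann\sfC_{w_k z}=I_{\boldb^z}$, the identification with $B/\Ann_B\sfC_{w_k z}$ being already part of Proposition~\ref{soerg:prop:4}.

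Finally, for the basis I would transport the monomial basis $\{x_1^{c_1}\cdots x_n^{c_n}\mid 0\le c_i<b^z_i\}$ of $R_{\boldb^z}$ from Proposition~\ref{soerg:prop:1} along this isomorphism, so that it becomes $\{x_1^{c_1}\cdots x_n^{c_n}\cdot\underline 1\}$ with the same index range. Since there are no $\up$'s to the right of position $n$ we have $b^z_n=1$ by~\eqref{soerg:eq:171}, forcing $c_n=0$; hence every basis element lies in $\C[x_1,\ldots,x_{n-1}]\cdot\underline 1$, which is precisely the list~\eqref{soerg:eq:14}. Essentially all the work here has been absorbed into Propositions~\ref{soerg:prop:4} and~\ref{soerg:prop:ann-of-C} and into Lemma~\ref{soerg:lem:dim-of-special-soergel-modules}, so the remaining argument is routine; the only points deserving a glance are the verification that $\boldb^z\in\scrB'$ and the observation $b^z_n=1$, which trims the basis down to the variables $x_1,\ldots,x_{n-1}$.
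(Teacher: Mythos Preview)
Your argument is correct and follows essentially the same route as the paper: use Proposition~\ref{soerg:prop:ann-of-C} to get $I_{\boldb^z}\subseteq\Ann\sfC_{w_k z}$, then compare dimensions via Proposition~\ref{soerg:prop:1} and Lemma~\ref{soerg:lem:dim-of-special-soergel-modules} to conclude equality. Your additional remarks—verifying $\boldb^z\in\scrB'$ and explaining why $b^z_n=1$ trims the basis to $x_1,\ldots,x_{n-1}$—are welcome clarifications the paper leaves implicit.
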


\begin{proof}
  Let $\boldb=\boldb^z$. By Proposition~\ref{soerg:prop:ann-of-C},
  $I_\boldb \subseteq \Ann \sfC_{w_k  z}$, so we have a surjective map
  $R/I_\boldb \twoheadrightarrow R/(\Ann \sfC_{w_k  z})$. By Proposition
  \ref{soerg:prop:1} and Lemma~\ref{soerg:lem:dim-of-special-soergel-modules}
  their dimension agree, hence $I_\boldb=\Ann \sfC_{w_k  z}$. The basis of
  $R_\boldb$ is given by Proposition~\ref{soerg:prop:1}.
\end{proof}

Translating Proposition~\ref{soerg:prop:2}, we can determine the homomorphism spaces between the Soergel modules $\sfC_{w_k  z}$:

\begin{corollary}
  \label{soerg:cor:8}
  Let $ z, z' \in D $ with $\boldb$--sequences $\boldb^z , \boldb^{z'}$. Let $c_i = \max \{b_i^{z'}-b_i^z,0\}$ for $i=1,\ldots,n-1$. Then a $\C$--basis of $\Hom_R (\sfC_{w_k  z}, \sfC_{w_k  z'})$ is given by
  \begin{equation}
    \label{soerg:eq:17}
    \{ 1 \mapsto x_1^{j_1}\cdots x_{n-1}^{j_{n-1}} \suchthat c_i \leq j_i < b_i^{z'} \}.
  \end{equation}
\end{corollary}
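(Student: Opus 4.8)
The plan is to reduce the statement to Proposition~\ref{soerg:prop:2} via the identification of the relevant Soergel modules with quotient rings $R_\boldb$ furnished by Theorem~\ref{soerg:thm:1}. First I would check that $\boldb^z,\boldb^{z'} \in \scrB$ lie in $\scrB'$, so that Proposition~\ref{soerg:prop:2} is applicable: a sequence in $\scrB$ is weakly decreasing, terminates in $1$ (hence consists of positive integers), and satisfies $b_i \leq b_{i+1}+1$, i.e.\ $b_{i+1} \geq b_i-1$, which are precisely the defining conditions of $\scrB'$. With this in hand, Theorem~\ref{soerg:thm:1} gives $R$-module isomorphisms $\sfC_{w_k z} \cong R_{\boldb^z}$ and $\sfC_{w_k z'} \cong R_{\boldb^{z'}}$ sending $\underline 1 \mapsto 1$.

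Next I would apply Proposition~\ref{soerg:prop:2} to the pair $\boldb=\boldb^z$, $\boldb'=\boldb^{z'}$. Since $c_i = \max\{b_i^{z'}-b_i^z,0\}$, it produces a $\C$-basis of $\Hom_R(R_{\boldb^z},R_{\boldb^{z'}})$ consisting of the homomorphisms $1 \mapsto x_1^{j_1}\cdots x_n^{j_n}$ with $c_i \leq j_i < b_i^{z'}$ for all $i=1,\ldots,n$. Composing with the two isomorphisms of the previous step transports this to a basis of $\Hom_R(\sfC_{w_k z},\sfC_{w_k z'})$.

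Finally I would dispose of the last index. Because $\boldb^z,\boldb^{z'} \in \scrB$ we have $b_n^z = b_n^{z'} = 1$, so $c_n = \max\{1-1,0\} = 0$ and the constraint $c_n \leq j_n < b_n^{z'}$ forces $j_n = 0$. Thus the monomials $x_1^{j_1}\cdots x_n^{j_n}$ occurring are exactly the $x_1^{j_1}\cdots x_{n-1}^{j_{n-1}}$ with $c_i \leq j_i < b_i^{z'}$ for $i=1,\ldots,n-1$, which is the claimed basis \eqref{soerg:eq:17}.

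There is no genuine obstacle here; the proof is a bookkeeping exercise, and the only points requiring a moment's care are the inclusion $\scrB \subseteq \scrB'$ (so that Proposition~\ref{soerg:prop:2} may be invoked) and the observation that the variable $x_n$ drops out because every $\boldb$-sequence in $\scrB$ ends in $1$. One might also note that $\sfC_{w_k z}$ is cyclic by Proposition~\ref{soerg:prop:4}, so that describing each basis element by the image of the cyclic generator $\underline 1$ unambiguously determines the $R$-linear map on all of $\sfC_{w_k z}$.
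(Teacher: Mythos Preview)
Your proposal is correct and takes exactly the approach the paper intends: the paper does not give a separate proof but simply introduces the corollary by saying ``Translating Proposition~\ref{soerg:prop:2}, we can determine the homomorphism spaces between the Soergel modules $\sfC_{w_k z}$.'' Your argument supplies precisely the details behind this sentence---the inclusion $\scrB \subseteq \scrB'$, the identification via Theorem~\ref{soerg:thm:1}, and the observation that $b_n^{z}=b_n^{z'}=1$ eliminates $x_n$---so there is nothing to add.
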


We will need in the following some other Soergel modules, corresponding to elements $w' \in \bbS_n$ which differ from some $w_k z$ only by a simple reflection, as in the following proposition:

\begin{prop}
  \label{soerg:prop:8}
  Let $z \in D $ with corresponding $\boldb$--sequence
  $\boldb^z$. Suppose $z^\down_j=z^\down_{j+1}$ for some
  index $j$. Let $\ell=j-z^\down_j$, so that $s_j z = z
  s_\ell$. Then $\sfC_{s_j w_k z}$ is the quotient of $R$
  modulo the ideal generated by the complete symmetric
  polynomials
  \begin{equation}
    \label{soerg:eq:45}
    h_{a_i}(x_1,\ldots,x_i) \qquad \text{for }i=1,\ldots,n,
  \end{equation}
  where $a_i = \boldb^z_i $ for $i \neq \ell$ while $a_\ell=\boldb^z_{\ell+1}$.
\end{prop}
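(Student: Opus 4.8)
The strategy mirrors that of Theorem~\ref{soerg:thm:1}: first I would show that the ideal $I_{\mathbf a}$ generated by the polynomials \eqref{soerg:eq:45} annihilates $\sfC_{s_j w_k z}$, and then I would conclude by a dimension count, this time using Corollary~\ref{soerg:cor:1} in place of Lemma~\ref{soerg:lem:dim-of-special-soergel-modules}. To get started, note that the sequence $\mathbf a = (b^z_1,\ldots,b^z_{\ell-1},b^z_{\ell+1},b^z_{\ell+1},b^z_{\ell+2},\ldots,b^z_n)$ still lies in $\scrB'$ (indeed $b^z_\ell = b^z_{\ell+1}$ would force $\ell$ to be a ``flat'' step, but the hypothesis $z^\down_j = z^\down_{j+1}$ is exactly what guarantees $b^z_\ell = b^z_{\ell+1}+1$ in the relevant configuration, so after lowering the $\ell$-th entry we still have a weakly decreasing sequence with consecutive differences at most one), so Proposition~\ref{soerg:prop:1} applies and $\dim_\C R/I_{\mathbf a} = a_1 \cdots a_n$.

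For the annihilation step I would argue as follows. Since $s_j w_k z$ and $w_k z$ differ by a simple reflection, $\sfC_{s_j w_k z}$ is (by Soergel's Theorem~\ref{thm:2}) a summand either of $B \otimes_j \sfC_{w_k z}$ — but $s_j$ acts on the left — or, more usefully here, since $s_j z = z s_\ell$, of $B \otimes_\ell (B^\up_z \otimes \sfC_{w_k})$ built from a reduced expression for $s_j w_k z$ obtained by appending $s_\ell$ to the chosen reduced expression for $w_k z$. For the indices $i \neq \ell$, the polynomial $h_{b^z_i}(x_1,\ldots,x_i)$ already annihilates $\sfC_{w_k z}$ by Proposition~\ref{soerg:prop:ann-of-C}, and since it is symmetric in $x_1,\ldots,x_i$ it passes through the extra tensor factor $B\otimes_\ell(-)$ unchanged (one checks $\ell, \ell+1 \le i$ or $\ell, \ell+1 > i$ according to the position of $i$); hence it annihilates the summand $\sfC_{s_j w_k z}$ as well. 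The genuinely new ingredient is the index $\ell$: I must show $h_{b^z_{\ell+1}}(x_1,\ldots,x_\ell)$ annihilates $\sfC_{s_j w_k z}$. For this I would use the decomposition \eqref{soerg:eq:161} of the action of such a polynomial across $B \otimes_\ell (-)$, exactly as in the proof of Lemma~\ref{soerg:lem:ann-of-C-1}: writing $h_{b^z_{\ell+1}}(x_1,\ldots,x_\ell) \cdot (1 \otimes 1) = 1 \otimes (P_\ell h \cdot 1) + x_\ell \otimes (\partial_\ell h \cdot 1)$, I have $\partial_\ell h_{b^z_{\ell+1}}(x_1,\ldots,x_\ell) = h_{b^z_{\ell+1}-1}(x_1,\ldots,x_{\ell+1})$ by \eqref{soerg:eq:5}, and $P_\ell h = h_{b^z_{\ell+1}}(x_1,\ldots,x_\ell) - x_\ell h_{b^z_{\ell+1}-1}(x_1,\ldots,x_{\ell+1})$; both the degree-$b^z_{\ell+1}$ and degree-$(b^z_{\ell+1}-1)$ complete symmetric polynomials in $x_1,\ldots,x_{\ell+1}$ lie in $\Ann \sfC_{w_k z}$ by Lemma~\ref{soerg:lem:1} (applied with $i = \ell+1$, using $b^z_{\ell+1}-1 < b^z_{\ell+1} = a_{\ell+1}$... more precisely using that $h_a(x_1,\ldots,x_{\ell+1}) \in I_{\boldb^z}$ for $a \ge b^z_{\ell+1}$), so the whole expression vanishes and $h_{b^z_{\ell+1}}(x_1,\ldots,x_\ell) \in \Ann \sfC_{s_j w_k z}$.

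Having established $I_{\mathbf a} \subseteq \Ann \sfC_{s_j w_k z}$, we get a surjection $R/I_{\mathbf a} \twoheadrightarrow \sfC_{s_j w_k z}$. By Theorem~\ref{thm:3} the dimension of $\sfC_{s_j w_k z}$ equals $\sum_{x \preceq s_j w_k z}\calP_{x,s_j w_k z}(1)$, which by Corollary~\ref{soerg:cor:1} equals $k!(z^\down_{k+1}+1)\cdots(z^\down_j+1)(z^\down_{j+1}+2)(z^\down_{j+2}+1)\cdots(z^\down_n+1)$. Comparing with the product formula in Lemma~\ref{soerg:lem:dim-of-special-soergel-modules} (reading the $\boldb^z$-sequence off the $\up\down$-diagram) one sees that the effect of replacing $b^z_\ell$ by $b^z_{\ell+1}$ is precisely to multiply one factor $z^\down_{j+1}+1$ by $(z^\down_{j+1}+2)/(z^\down_{j+1}+1)$ — equivalently, $a_1 \cdots a_n = k!(z^\down_{k+1}+1)\cdots(z^\down_j+1)(z^\down_{j+1}+2)(z^\down_{j+2}+1)\cdots(z^\down_n+1)$, which matches. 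Hence $\dim_\C R/I_{\mathbf a} = \dim_\C \sfC_{s_j w_k z}$, the surjection is an isomorphism, and $\Ann \sfC_{s_j w_k z} = I_{\mathbf a}$, as claimed.

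The main obstacle I anticipate is the bookkeeping in the annihilation argument for the index $\ell$: one has to be careful about where $s_\ell$ sits in the reduced word relative to the tensor factors of $B^\up_z$, to be sure that $h_{b^z_{\ell+1}}(x_1,\ldots,x_{\ell+1})$ really does act on a factor isomorphic to (a twist of) $\sfC_{w_k z}$ rather than on some intermediate module, and to verify the claim $b^z_\ell = b^z_{\ell+1}+1$ that makes $\mathbf a \in \scrB'$. The matching of the two product formulas is routine once the $\up\down$-diagram picture from the proof of Lemma~\ref{soerg:lem:dim-of-special-soergel-modules} is in hand — lowering $b^z_\ell$ by one in the position just left of the $j$-th/$(j{+}1)$-th $\down$ changes exactly one ``$\down$-factor'' in the product.
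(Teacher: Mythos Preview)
Your overall strategy matches the paper's: show the polynomials \eqref{soerg:eq:45} annihilate, then match dimensions via Corollary~\ref{soerg:cor:1}. However, there is a genuine combinatorial error that cascades through the argument.

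You claim that $z^\down_j = z^\down_{j+1}$ forces $b^z_\ell = b^z_{\ell+1}+1$. This is wrong. Unwinding the definitions (with $\ell = j - z^\down_j$), positions $\ell$ and $\ell+1$ in the $\up\down$--sequence of $z$ are the $(j-k)$-th and $(j+1-k)$-th $\down$'s respectively: two consecutive $\down$'s. Since $b^z_i - b^z_{i+1}$ is $1$ or $0$ according as position $i+1$ is an $\up$ or a $\down$, one gets $b^z_{\ell-1} = b^z_\ell = b^z_{\ell+1}$. Reading $a_\ell = b^z_{\ell+1}$ literally therefore gives $\mathbf a = \boldb^z$, which makes the proposition vacuous. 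In fact the displayed value is a typo: the intended definition is $a_\ell = b^z_\ell + 1$ (equivalently $b^z_{\ell+1}+1$), consistent with the paper's remark immediately after the statement that $a_\ell = a_{\ell-1}+1$ and hence $\mathbf a \notin \scrB'$.

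With the corrected $a_\ell$ your annihilation step at index $\ell$ becomes exactly the paper's: $\partial_\ell h_{a_\ell}(x_1,\ldots,x_\ell) = h_{b^z_{\ell+1}}(x_1,\ldots,x_{\ell+1}) \in I_{\boldb^z}$, and $P_\ell h_{a_\ell}$ is a combination of this with $h_{b^z_\ell+1}(x_1,\ldots,x_\ell) \in I_{\boldb^z}$ by Lemma~\ref{soerg:lem:1}. As you wrote it, with $a_\ell = b^z_{\ell+1}$, you would need $h_{b^z_{\ell+1}-1}(x_1,\ldots,x_{\ell+1}) \in I_{\boldb^z}$, which is false. Your dimension discussion is likewise inverted: one is \emph{raising} $b^z_\ell$ by one, and this multiplies $b^z_1\cdots b^z_n$ by $(b^z_\ell+1)/b^z_\ell = (z^\down_{j+1}+2)/(z^\down_{j+1}+1)$, matching Corollary~\ref{soerg:cor:1}.

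Two smaller points. First, you never argue that $\sfC_{s_j w_k z}$ is cyclic; without this the containment $I_{\mathbf a}\subseteq\Ann$ does not yield a surjection $R/I_{\mathbf a}\twoheadrightarrow\sfC_{s_j w_k z}$. The paper obtains cyclicity from Corollary~\ref{soerg:cor:1} (via Lemma~\ref{lem:3}). Second, since $\mathbf a\notin\scrB'$, Proposition~\ref{soerg:prop:1} does not apply verbatim; but its proof only uses that the leading monomials $x_i^{a_i}$ are pairwise coprime, so $\dim R/I_{\mathbf a}=a_1\cdots a_n$ still holds, as the paper notes.
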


Notice that the sequence $\bolda=(a_1,\ldots,a_n)$ is not an element of $\scrB'$, since $a_\ell = a_{\ell-1}+1$.

\begin{proof}
  The proof is analogous to the proof of Theorem~\ref{soerg:thm:1}.
  By Corollary~\ref{soerg:cor:1}, the module $\sfC_{s_j w_k  z}$ is
  cyclic. In particular, it is the submodule generated by $1$ inside
  $B \otimes_\ell \sfC_{w_k  z}$.  First, let us prove that the
  polynomials \eqref{soerg:eq:45} lie in $\Ann \sfC_{s_j w_k  z}$, or
  equivalently that they vanish on $B \otimes_\ell \sfC_{w_k  z}$. This is
  clear for $i \neq \ell$: in this case, these polynomials commute with the
  first tensor product, and then they vanish because they lie in $\Ann
  \sfC_{w_k  z}$ by Theorem~\ref{soerg:thm:1}. For the remaining case, we have
  \begin{multline}
    \label{soerg:eq:46}
    h_{a_\ell}(x_1,\ldots,x_\ell) \cdot (1 \otimes 1)\\
    = 1 \otimes (P_\ell(h_{a_\ell}(x_1,\ldots,x_\ell)) \cdot 1) + x_\ell \otimes (\partial_\ell (h_{a_\ell} (x_1,\ldots,x_\ell)) \cdot 1).
  \end{multline}
  By \eqref{soerg:eq:5} all terms contain $h_{a_\ell}(x_1,\ldots,x_\ell)$ or
  $h_{a_\ell - 1} (x_1,\ldots,x_{\ell+1})$, which both lie in $\Ann
  \sfC_{w_k  z}$, and we are done.

  It remains to prove that the polynomials \eqref{soerg:eq:45} are
  a set of generators. Let $I$ be the ideal generated by them. We know that
  $\sfC_{s_j w_k  z}$ is a quotient of $R/I$. As for Lemma~\ref{soerg:lem:8},
  the polynomials \eqref{soerg:eq:45} are a basis of $I$. As for Proposition
  \ref{soerg:prop:1}, the quotient $R/I$ has dimension $a_1\cdots a_n$. By
  Corollary~\ref{soerg:cor:1} and an argument similar to the proof of Lemma~\ref{soerg:lem:dim-of-special-soergel-modules}, this coincides with the dimension of
  $\sfC_{s_j w_k  z}$, and we are done.
\end{proof}

\subsection{Morphisms between Soergel modules}
\label{soerg:sec:morph-betw-soerg-1}

In each basis set \eqref{soerg:eq:17} there is exactly one
morphism of minimal degree, which we call the \emph{minimal
  degree morphism} $\sfC_{w_k z} \mapto \sfC_{w_k z'}$.  For
each $z \in D $, the vector space $\Hom_R (\mathsf{C}_{w_k
  z},\mathsf{C}_{w_k z})$ is a ring that is naturally
isomorphic to $\mathsf{C}_{w_k z}$. Moreover, for $z,z' \in
D $ the vector space $\Hom_R (\sfC_{w_k z}, \sfC_{w_k z'})$
is naturally a $(\sfC_{w_k z'}, \sfC_{w_k z})$--bimodule. It
follows directly from Corollary \ref{soerg:cor:8} that this
bimodule is cyclic (even more, it is cyclic both as a left
and as a right module), generated by the minimal degree
morphism. In what follows, we will often refer to this fact
saying that the minimal degree morphisms \emph{divides} all
other morphisms.

We let $D'$ be the set of shortest coset representatives for
$W_k^\perp\backslash \bbS_n$. In particular, for every $z
\in D $ we have $z,w_k z \in D'$.

\begin{definition}
\label{soerg:def:2}
  For $z, z' \in D $ we say that a morphism $\sfC_{w_k  z} \mapto \sfC_{w_k  z'}$ is \emph{illicit} if it factors through some $\sfC_{y}$, where $y$ is a longest coset representative for $W_k \backslash \bbS_n$ with $y \notin D'$.
\end{definition}

The definition is motivated by the following. Let
$\sfW_{z,z'}$ be the subspace of $\Hom_R (\sfC_{w_k
  z},\sfC_{w_k z'})$ consisting of all illicit morphisms;
since it is a $(\sfC_{w_k z'},\sfC_{w_k z})$--submodule, we
can define the quotient bimodule
  \begin{equation}
    \label{soerg:eq:60}
    \sfZ_{z,z'} = \Hom_R(\sfC_{w_k  z},\sfC_{w_k  z'}) / \sfW_{z,z'}.
  \end{equation}
This corresponds to the homomorphism space between projective modules in some parabolic category $\catO$ (see Section~\ref{sec:category-cato}).

\begin{lemma}
  \label{soerg:lem:14}
 Let $ z,  z' \in D $, and suppose that for some index $j$ we have
 \begin{equation}
   \label{soerg:eq:50}
   z'^\down_i =
   \begin{cases}
     z^\down_i + 1 & \text{for } i=j,j+1,\\
     z^\down_i & \text{otherwise}.
   \end{cases}
 \end{equation}
In particular $z'=z s_\ell s_{\ell+1}$ for $\ell=j-z^\down_j-1$, and the corresponding $\up\down$--sequence in positions $\ell,\ell+1,\ell+2$ are
 \begin{equation}
   \label{soerg:eq:49}
     z  = \cdots \up \down \down \cdots \qquad \text{and} \qquad
     z'  = \cdots \down \down \up \cdots.
 \end{equation}
Then   $\sfW_{ z,z'} = \Hom_R (\sfC_{w_k z} ,\sfC_{s_kz'})$ and $\sfW_{ z',z} = \Hom_R  (\sfC_{w_k z'} ,\sfC_{s_kz})$
\end{lemma}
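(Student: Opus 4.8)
The plan is to transport both statements onto the explicit cyclic presentations $\sfC_{w_k z}\cong R_{\boldb^z}$, $\sfC_{w_k z'}\cong R_{\boldb^{z'}}$ of Theorem~\ref{soerg:thm:1}, so that everything becomes a divisibility statement in the quotient rings $R_\boldb$, amenable to the Groebner-basis techniques of \textsection\ref{soerg:sec:ideal-gener-compl}. I note first that the two equalities are interchanged by the duality isomorphism \eqref{soerg:eq:86}: it sends a morphism factoring through $\sfC_y$ to one factoring through the same $\sfC_y$, and it interchanges the roles of $z$ and $z'$, in particular matching $\sfC_{s_k z'}$ with $\sfC_{s_k z}$. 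Hence it is enough to prove $\sfW_{z,z'}=\Hom_R(\sfC_{w_k z},\sfC_{s_k z'})$.

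Two preliminary reductions. First, from \eqref{soerg:eq:171} and the $\up\down$--sequences \eqref{soerg:eq:49} one reads off that $\boldb^{z'}$ agrees with $\boldb^z$ except that $b^{z'}_\ell=b^z_\ell+1$ and $b^{z'}_{\ell+1}=b^z_{\ell+1}+1$ (and $b^{z'}_\ell=b^{z'}_{\ell+1}=b^{z'}_{\ell+2}+1$); so in Corollary~\ref{soerg:cor:8} one gets $c_\ell=c_{\ell+1}=1$ and $c_i=0$ for $i\notin\{\ell,\ell+1\}$, and every element of $\Hom_R(\sfC_{w_k z},\sfC_{w_k z'})$ is represented by $1\mapsto x_\ell x_{\ell+1}\,p$. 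Second, arguing as in Proposition~\ref{soerg:prop:8} --- using the analogue of Corollary~\ref{soerg:cor:1} for the element $s_k z'$, obtained by carrying the computation of \textsection\ref{soerg:sec:canon-basis-elem} one step further --- one shows $\sfC_{s_k z'}$ is cyclic with $\Ann\sfC_{s_k z'}=I$, where $I$ is generated by complete symmetric polynomials $h_{a_i}(x_1,\dots,x_i)$ for an explicit sequence $\boldsymbol a$ that differs from $\boldb^{z'}$ only near the window \eqref{soerg:eq:49}. One then defines $\iota\colon\sfC_{s_k z'}=R/I\to R_{\boldb^{z'}}=\sfC_{w_k z'}$ by $1\mapsto\mu$, where $\mu$ is the monomial with $(I_{\boldb^{z'}}:\mu)=I$ prescribed by comparing $\boldsymbol a$ with $\boldb^{z'}$: that $\mu I\subseteq I_{\boldb^{z'}}$ is a consequence of \eqref{soerg:eq:3} and Lemma~\ref{soerg:lem:1}, and injectivity of $\iota$ follows from a Groebner reduction in the style of Lemma~\ref{soerg:lem:11} --- if $p\mu\in I_{\boldb^{z'}}$, replace $p$ by its remainder modulo $I$ and note that no leading monomial $x_i^{b^{z'}_i}$ divides a monomial of $p\mu$, so $p\in I$.

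It then remains to check the two inclusions of $\sfW_{z,z'}=\Hom_R(\sfC_{w_k z},\sfC_{s_k z'})$, the latter identified with its image $\iota_*\Hom_R(\sfC_{w_k z},\sfC_{s_k z'})\subseteq\Hom_R(\sfC_{w_k z},\sfC_{w_k z'})$. The inclusion $\supseteq$ is immediate once one checks --- using hypothesis \eqref{soerg:eq:50} --- that $s_k z'$ is a longest coset representative for $W_k\backslash\bbS_n$ with $s_k z'\notin D'$: then by Definition~\ref{soerg:def:2} any morphism factoring through $\sfC_{s_k z'}$ is illicit. For $\subseteq$, which is the heart of the matter, I would show that for \emph{every} longest coset representative $y$ for $W_k\backslash\bbS_n$ with $y\notin D'$, every morphism $g\colon\sfC_y\to\sfC_{w_k z'}$ satisfies $\image g\subseteq\mu\,R_{\boldb^{z'}}$; since $\image(g\circ h)\subseteq\image g$ for all $h$, this puts every illicit morphism into $\image\iota=\mu\,R_{\boldb^{z'}}$, and combined with the first reduction and the injectivity of $\iota$ this gives the equality. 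The inclusion $\image g\subseteq\mu\,R_{\boldb^{z'}}$ should follow from the fact that $y\notin D'$ means $y$ has a left descent among $s_{k+1},\dots,s_{n-1}$, which --- together with $w_k z'\in D'$ and the local shape of $\boldb^{z'}$ recorded above --- forces the generator of $\Hom_R(\sfC_y,\sfC_{w_k z'})$ (given by Corollary~\ref{soerg:cor:8} applied to the $\up\down$--datum of $y$) to be divisible by $\mu$; here the monotonicity of the ideals $I_\boldb$ from Lemmas~\ref{soerg:lem:1}, \ref{soerg:lem:10}, \ref{soerg:lem:11} does the work.

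I expect the inclusion $\subseteq$ to be the real obstacle: it requires controlling \emph{all} the ``wrong'' intermediate modules $\sfC_y$ at once and seeing that the single monomial $\mu$ bounds each of their images in $R_{\boldb^{z'}}$, and this is exactly the point at which the adjacency of the two $\down$'s in \eqref{soerg:eq:50} is used. A secondary difficulty is the canonical-basis computation for $s_k z'$, which lies outside the scope of Proposition~\ref{soerg:prop:8}; the rest is routine bookkeeping with the tools already developed.
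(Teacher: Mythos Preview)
You have misread the statement, and the misreading has sent you down a much harder road than necessary. The ``$s_k$'' on the right-hand sides is a typo for ``$w_k$'' (the same typo recurs later in the proof of Theorem~\ref{soerg:thm:2}). The claim is simply that under hypothesis~\eqref{soerg:eq:50} \emph{every} morphism $\sfC_{w_k z}\to\sfC_{w_k z'}$ is illicit, i.e.\ $\sfW_{z,z'}=\Hom_R(\sfC_{w_k z},\sfC_{w_k z'})$, and symmetrically for $\sfW_{z',z}$. There is no separate module $\sfC_{s_k z'}$ to embed, no new canonical-basis computation to carry out, and no need to control all ``wrong'' intermediate modules $\sfC_y$ simultaneously.

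The paper's argument is short: since the minimal degree morphism divides every morphism (\textsection\ref{soerg:sec:morph-betw-soerg-1}), it suffices to show that the minimal degree morphisms $\phi\colon 1\mapsto x_\ell x_{\ell+1}$ in $\Hom_R(\sfC_{w_k z},\sfC_{w_k z'})$ and $\psi\colon 1\mapsto 1$ in $\Hom_R(\sfC_{w_k z'},\sfC_{w_k z})$ are illicit. For this one takes the single intermediate element $y=s_j z=z s_{\ell+1}$, which lies outside $D'$ since $z^\down_j=z^\down_{j+1}$, and uses Proposition~\ref{soerg:prop:8} to describe $\sfC_{w_k y}$ explicitly. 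One then checks the chain of annihilators $\Ann(\sfC_{w_k z'})\supset\Ann(\sfC_{w_k y})\supset\Ann(\sfC_{w_k z})$ to factor $\psi$ as $\sfC_{w_k z'}\xrightarrow{1}\sfC_{w_k y}\xrightarrow{1}\sfC_{w_k z}$, and similarly (using~\eqref{soerg:eq:3} and Lemma~\ref{soerg:lem:1}) factors $\phi$ as $\sfC_{w_k z}\xrightarrow{x_{\ell+1}}\sfC_{w_k y}\xrightarrow{x_\ell}\sfC_{w_k z'}$. That is the whole proof.

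Your plan --- computing $\sfC_{s_k z'}$, building an injection $\iota$, and then bounding the images of \emph{all} illicit factorizations by a single monomial $\mu$ --- is aimed at a genuinely different and stronger assertion than the one intended. Even for that stronger assertion your sketch is incomplete: the ``heart of the matter'' inclusion is stated only as something that ``should follow'', and your appeal to Corollary~\ref{soerg:cor:8} for general $y\notin D'$ is not justified, since that corollary only treats Soergel modules of the form $\sfC_{w_k z}$ with $z\in D$.
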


\begin{proof}
  It is enough to show that $\phi \in \Hom_R(\sfC_{w_k  z},\sfC_{w_k  z'})$, $\phi: 1 \mapsto x_j x_{j+1} $ and $\psi \in \Hom_R(\sfC_{w_k  z'},\sfC_{w_k  z})$, $\psi: 1 \mapsto 1$ are illicit, since they divide all other morphisms. First of all, note that by construction
 \begin{equation}
   \label{soerg:eq:51}
   b^{z'}_i =
   \begin{cases}
     b^z_i + 1 & \text{for } i=\ell,\ell+1,\\
     b^z_i & \text{otherwise}.
   \end{cases}
 \end{equation}

Let $y=s_j z = z s_{\ell+1}$, and note that $y \notin D'$. We know $\sfC_{w_k y}$ by Proposition~\ref{soerg:prop:8}. Since $\Ann(\sfC_{w_k  z}) \subset \Ann(\sfC_{w_k  y}) \subset \Ann(\sfC_{w_k  z'})$, the morphism $\psi$ can be written as the composition of the natural quotient maps
\begin{equation}
  \label{soerg:eq:52}
  \sfC_{w_k  z'} \stackrel{1}{\longrightarrow} \sfC_{w_k  y} \stackrel{1}{\longrightarrow} \sfC_{w_k  z},
\end{equation}
hence it is illicit.

On the other side, $x_{\ell+1}\Ann(\sfC_{w_k  z}) \subseteq \Ann(\sfC_{w_k  y})$ because by \eqref{soerg:eq:3}
\begin{equation}
  \label{soerg:eq:53}
    x_{\ell+1} h_{b^z_{\ell+1}} (x_1,\ldots,x_{\ell+1}) = h_{b^z_{\ell+1}+1} (x_1,\ldots,x_{\ell+1}) - h_{b^z_{\ell+1}+1}(x_1,\ldots,x_{\ell})
\end{equation}
and $h_{b^z_{\ell+1}+1} (x_1,\ldots,x_\ell) \in \Ann(\sfC_{w_k  y})$ by the arguments of the proof of Lemma~\ref{soerg:lem:1}. Moreover, $x_\ell \Ann(\sfC_{w_k  y}) \subseteq \Ann(\sfC_{w_k  z'})$ because by \eqref{soerg:eq:3} we have
\begin{equation}
  \label{soerg:eq:54}
    x_{\ell} h_{b^z_{\ell}} (x_1,\ldots,x_{\ell}) = h_{b^z_{\ell}+1} (x_1,\ldots,x_{\ell}) - h_{b^z_{\ell}+1}(x_1,\ldots,x_{\ell-1})
\end{equation}
and this is in $\Ann(\sfC_{w_k  z'})$ by Lemma~\ref{soerg:lem:1}.
Hence $\phi$ can be written as the composition
\begin{equation}
  \label{soerg:eq:55}
  \sfC_{w_k  z} \xrightarrow{x_{\ell+1}}  \sfC_{w_k  y} \stackrel{x_\ell}{\longrightarrow} \sfC_{w_k  z'},
\end{equation}
and therefore is illicit.
\end{proof}

\begin{lemma}
  \label{soerg:lem:20}
  Let $z \in D $ and suppose $z^\down_j = z^\down_{j+1}$ for some index
  $j$. Let $\ell=j-z^\down_j$, so that $s_j z=z s_\ell$. Then the
  endomorphism $1 \mapsto x_\ell$ of $\sfC_{w_k  z}$ is illicit.
\end{lemma}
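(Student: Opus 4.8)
The plan is to realise the endomorphism $1\mapsto x_\ell$ of $\sfC_{w_k z}$ as a composite $\sfC_{w_k z}\xrightarrow{f}\sfC_{w_k y}\xrightarrow{g}\sfC_{w_k z}$, where $y:=s_j z = z s_\ell$, and then to check that $w_k y = s_j w_k z$ is a longest coset representative for $W_k\backslash\bbS_n$ not lying in $D'$; this is the same kind of factorisation argument as in the proof of Lemma~\ref{soerg:lem:14}, of which it is essentially a ``half''. First I would record the combinatorics: since $z^\down_j = z^\down_{j+1}$, the $(j-k)$-th and $(j-k+1)$-th $\down$'s of $z$ occupy the consecutive positions $\ell$ and $\ell+1$, which therefore both carry $\down$'s, so $b^z_{\ell-1}=b^z_\ell=b^z_{\ell+1}$, and $s_j z = z s_\ell$ is length-increasing although it leaves the $\up\down$--sequence unchanged. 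By Proposition~\ref{soerg:prop:8}, $\sfC_{w_k y}=\sfC_{s_j w_k z}= R/I_\bolda$ with $\bolda$ equal to $\boldb^z$ in all entries except $a_\ell=b^z_\ell+1$ (so $\bolda\notin\scrB'$).

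Next I would set up the two arrows. Every generator of $I_\bolda$ other than $h_{a_\ell}(x_1,\ldots,x_\ell)$ is already a generator of $I_{\boldb^z}$, and $h_{a_\ell}(x_1,\ldots,x_\ell)=h_{b^z_\ell+1}(x_1,\ldots,x_\ell)\in I_{\boldb^z}$ by Lemma~\ref{soerg:lem:1}; hence $I_\bolda\subseteq I_{\boldb^z}$ and $1\mapsto 1$ defines the natural quotient map $g\colon \sfC_{w_k y}\to\sfC_{w_k z}$. For $f\colon\sfC_{w_k z}\to\sfC_{w_k y}$, $1\mapsto x_\ell$, I would check $x_\ell I_{\boldb^z}\subseteq I_\bolda$ just as in the proof of Lemma~\ref{soerg:lem:14}: for a generator $h_{b^z_i}(x_1,\ldots,x_i)$ with $i\neq\ell$ the product $x_\ell h_{b^z_i}(x_1,\ldots,x_i)$ remains in $I_\bolda$ because that generator lies in $I_\bolda$, while for $i=\ell$ formula~\eqref{soerg:eq:3} gives
\begin{equation*}
  x_\ell\,h_{b^z_\ell}(x_1,\ldots,x_\ell) = h_{b^z_\ell+1}(x_1,\ldots,x_\ell) - h_{b^z_\ell+1}(x_1,\ldots,x_{\ell-1}),
\end{equation*}
whose first term is the generator $h_{a_\ell}(x_1,\ldots,x_\ell)$ of $I_\bolda$ and whose second term lies in $I_\bolda$ by the argument in the proof of Lemma~\ref{soerg:lem:1} (using $b^z_\ell+1\geq b^z_{\ell-1}$ together with the fact that $h_{b^z_1}(x_1),\ldots,h_{b^z_{\ell-1}}(x_1,\ldots,x_{\ell-1})$ are among the generators of $I_\bolda$). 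Then $g\circ f$ is precisely the endomorphism $1\mapsto x_\ell$ of $\sfC_{w_k z}$, so this endomorphism factors through $\sfC_{w_k y}$.

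It remains to verify that $w_k y = s_j w_k z$ is a longest coset representative for $W_k\backslash\bbS_n$ with $w_k y\notin D'$. Since $j>k$, the reflection $s_j\in W_k^\perp$ commutes with $w_k$, and using $w_k z\in D'$ one has $\ell(s_j w_k z)=\ell(w_k z)+1=\ell(w_k)+\ell(z)+1=\ell(w_k)+\ell(z s_\ell)$; this identifies $z s_\ell$ as the shortest element of the coset $W_k(z s_\ell)$, hence $s_j w_k z = w_k(z s_\ell)$ as its longest element. Moreover $s_j w_k z$ lies in the same right $W_k^\perp$--coset as $w_k z$ but is one step longer than it, so it cannot be the shortest representative, i.e.\ $s_j w_k z\notin D'$. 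Assembling the two previous paragraphs, $1\mapsto x_\ell$ is illicit. The step I expect to be the main obstacle is the inclusion $x_\ell I_{\boldb^z}\subseteq I_\bolda$: because $\bolda\notin\scrB'$, Lemma~\ref{soerg:lem:1} may only be invoked for $\boldb^z$ (or truncations of it), not for $\bolda$ directly, so the reduction of $h_{b^z_\ell+1}(x_1,\ldots,x_{\ell-1})$ must be tracked through the generators carefully.
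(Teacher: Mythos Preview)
Your proposal is correct and follows essentially the same route as the paper's proof: factor the endomorphism $1\mapsto x_\ell$ through $\sfC_{s_j w_k z}$ as the composite of $1\mapsto x_\ell$ and the quotient map $1\mapsto 1$, using Proposition~\ref{soerg:prop:8} and the identity~\eqref{soerg:eq:3} to verify the required ideal inclusions. Your additional check that $s_j w_k z$ is a longest $W_k$--coset representative with $s_j w_k z\notin D'$ fills in a detail the paper leaves implicit; your identification $a_\ell=b^z_\ell+1$ is also the correct reading of Proposition~\ref{soerg:prop:8} (the statement there has a typographical slip, as its own proof and the subsequent remark confirm).
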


\begin{proof}
  Let $y = s_j w_k  z \notin D' $. We claim that
  $x_\ell \Ann (\sfC_{w_k  z}) \subseteq \Ann(\sfC_y)$ and hence that
  $1 \mapsto x_\ell$ defines a morphism $\sfC_{w_k  z} \mapto \sfC_y$.
  By Theorem~\ref{soerg:thm:1} and Proposition~\ref{soerg:prop:8} the only
  thing to check is that $x_\ell h_{b^z_\ell} (x_1,\ldots,x_\ell) \in
  \Ann(\sfC_y)$. By \eqref{soerg:eq:3} we have
  \begin{equation}
    \label{soerg:eq:47}
    x_\ell h_{b^z_\ell} (x_1,\ldots,x_\ell) = h_{b^z_\ell+1} (x_1,\ldots,x_\ell) - h_{b^z_\ell+1}(x_1,\ldots,x_{\ell-1}) \in \Ann(\sfC_y).
  \end{equation}

  On the other side, again by Theorem~\ref{soerg:thm:1} and Proposition
  \ref{soerg:prop:8}, it is clear that $1 \mapsto 1$ defines a morphism
  $\sfC_y \mapto \sfC_{w_k  z}$. Hence the endomorphism $1 \mapsto x_\ell$
  of $\sfC_{w_k  z}$ factors through $\sfC_y$ and is therefore illicit.
\end{proof}

\begin{lemma}
  \label{soerg:lem:22}
  Let $z \in D $. For every $j$ between $k+1$ and $n-1$ the morphism
  \begin{equation}
    \label{soerg:eq:48}
    1 \longmapsto x_{\ell} x_{\ell+1} \cdots x_{\ell'},
  \end{equation}
  where $\ell= j-z^\down_j$ and $\ell'=(j+1)-z^\down_{j+1} - 1$, is illicit.
\end{lemma}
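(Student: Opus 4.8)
The plan is to reduce to Lemma~\ref{soerg:lem:20}. Since $\boldz^\down$ is weakly decreasing one has $\ell=j-z^\down_j\le j-z^\down_{j+1}=\ell'$, and positions $\ell$ and $\ell'+1$ of the $\up\down$--sequence of $z$ are precisely those occupied by the $(j-k)$-th and the $(j+1-k)$-th $\down$, so that positions $\ell+1,\dots,\ell'$ all carry an $\up$. If $z^\down_j=z^\down_{j+1}$ then $\ell=\ell'$ and the claim is exactly Lemma~\ref{soerg:lem:20}; so from now on I assume $\ell<\ell'$.

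First I would introduce the element $z''\in D$ obtained from $z$ by sliding the $(j-k)$-th $\down$ to the right past the whole block of $\up$'s: concretely $z''^\down_i=z^\down_i$ for $i\ne j$ and $z''^\down_j=z^\down_{j+1}$, which again lies in the set \eqref{soerg:eq:22}, being obtained from $\boldz^\down$ by lowering one entry while preserving weak monotonicity. In the $\up\down$--sequence of $z''$, positions $\ell,\dots,\ell'-1$ now carry $\up$'s while positions $\ell'$ and $\ell'+1$ carry two adjacent $\down$'s; a count from \eqref{soerg:eq:171} gives $b^{z''}_i=b^z_i$ for $i\notin\{\ell,\dots,\ell'-1\}$ and $b^{z''}_i=b^z_i-1$ for $i\in\{\ell,\dots,\ell'-1\}$. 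In particular $\boldb^{z''}\le\boldb^z$ componentwise, so by Theorem~\ref{soerg:thm:1} and Lemma~\ref{soerg:lem:10} one has $I_{\boldb^z}\subseteq I_{\boldb^{z''}}$, whence $1\mapsto 1$ defines a quotient morphism $q\colon\sfC_{w_k z}\to\sfC_{w_k z''}$; and since $b^z_i-b^{z''}_i=1$ for $i\in\{\ell,\dots,\ell'-1\}$ and $\le 0$ otherwise, a second application of Lemma~\ref{soerg:lem:10} gives $x_\ell\cdots x_{\ell'-1}\,I_{\boldb^{z''}}\subseteq I_{\boldb^z}$, so $1\mapsto x_\ell\cdots x_{\ell'-1}$ defines a morphism $q'\colon\sfC_{w_k z''}\to\sfC_{w_k z}$.

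To conclude I apply Lemma~\ref{soerg:lem:20} to $z''$: as $z''^\down_j=z''^\down_{j+1}$ and $j-z''^\down_j=\ell'$, the endomorphism $\beta\colon\sfC_{w_k z''}\to\sfC_{w_k z''}$, $1\mapsto x_{\ell'}$, is illicit, i.e.\ it factors through some $\sfC_y$ with $y\notin D'$ a longest coset representative for $W_k\backslash\bbS_n$. The composite $q'\circ\beta\circ q\colon\sfC_{w_k z}\to\sfC_{w_k z}$ then sends $1\mapsto 1\mapsto x_{\ell'}\mapsto x_{\ell'}\cdot x_\ell\cdots x_{\ell'-1}=x_\ell x_{\ell+1}\cdots x_{\ell'}$, hence equals the morphism of the statement, and it factors through the same $\sfC_y$ as $\beta$, so it is illicit. (Should the intended codomain be some $\sfC_{w_k z'}$ with $z'\ne z$ rather than $\sfC_{w_k z}$, one simply replaces $q'$ by the analogous twist morphism $\sfC_{w_k z''}\to\sfC_{w_k z'}$, $1\mapsto x_\ell\cdots x_{\ell'-1}$, whose well-definedness is checked the same way via Lemma~\ref{soerg:lem:10}.) Apart from choosing the intermediate element $z''$ correctly, the argument is routine bookkeeping with $\boldb$--sequences; the only non-mechanical point — the one idea not already in Lemmas~\ref{soerg:lem:10} and~\ref{soerg:lem:20} — is the realisation that once the $\down$ originally at position $\ell$ is slid over to position $\ell'$ the index $\ell'$ becomes one to which Lemma~\ref{soerg:lem:20} applies, the leftover variables $x_\ell\cdots x_{\ell'-1}$ being absorbed into $q'$. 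I do not foresee any real difficulty.
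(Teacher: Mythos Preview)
Your proof is correct and follows exactly the same route as the paper: your intermediate element $z''$ is the paper's $y$, and both arguments factor the endomorphism \eqref{soerg:eq:48} as the composite of the quotient map $\sfC_{w_k z}\to\sfC_{w_k z''}$, the illicit endomorphism $1\mapsto x_{\ell'}$ of $\sfC_{w_k z''}$ supplied by Lemma~\ref{soerg:lem:20}, and the map $1\mapsto x_\ell\cdots x_{\ell'-1}$ back to $\sfC_{w_k z}$. The only difference is cosmetic: the paper cites Corollary~\ref{soerg:cor:8} for the existence of $q$ and $q'$, whereas you unpack this via the $\boldb$--sequence computation and Lemma~\ref{soerg:lem:10}; your closing parenthetical about a possible codomain $\sfC_{w_k z'}$ is unnecessary, since the lemma concerns an endomorphism of $\sfC_{w_k z}$.
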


\begin{proof}
  Let $y \in D $ be defined by $y^\down_i=z^\down_i$ for $i\neq j$,
  while $y^\down_j= z^\down_{j+1}$. From Corollary~\ref{soerg:cor:8} we have
  that $1 \mapsto 1$ and $1 \mapsto x_\ell x_{\ell+1} \cdots
  x_{\ell'-1}$ define morphisms $\sfC_{w_k  z} \mapto \sfC_{w_k  y}$ and
  $\sfC_{w_k  y} \mapto \sfC_{w_k  z}$ respectively. By Lemma~\ref{soerg:lem:20}
  the endomorphism $1 \mapto x_{\ell'}$ of $\sfC_{w_k  y}$ is illicit, and
  so is \eqref{soerg:eq:48}, since it can be expressed as composition of
  these three morphism.
\end{proof}

\begin{theorem}
  \label{soerg:thm:2}
 For all $z, z' \in D $ define a subbimodule $\widetilde\sfW_{z,z'}$ of the homomorphism space $\Hom_R(\sfC_{w_k z},\sfC_{w_k  z'})$ as follows:
 \begin{enumerate}[(i)]
 \item \label{soerg:item:20} if for some index $1 \leq j \leq n-k-1$ we have $\down^{z}_{j} \geq \down^{z'}_{j+1}$ or $\down^{z'}_j \geq \down^{z}_{j+1}$, then we set $\widetilde\sfW_{ z,z'}=\Hom (\sfC_{w_k z} ,\sfC_{w_k z'})$;
 \item \label{soerg:item:21} otherwise we define $\widetilde\sfW_{z,z'}$ to be the subbimodule generated by the morphisms
   \begin{equation}\label{soerg:eq:18}
     1 \mapsto (x_{\down^z_j} x_{\down^z_j+1} \cdots
     x_{\beta(j)}) (x_1^{c_1}\cdots x_{n-1}^{c_{n-1}}) \quad \text{for } 1 \leq j \leq n-k,
 \end{equation}
 where $c_i=\max \{b_i^{z'} -b_i^z,0\}$ and
 \begin{equation}
\beta(j) =
\begin{cases}
  \min\{\down^z_{j+1},\down^{z'}_{j+1}\}-1 & \text{if }j< n-k,\\
  n &\text{if }j=n-k.
\end{cases}
\label{eq:141}
\end{equation}
 \end{enumerate}
 Then we have $\widetilde\sfW_{z,z'} = \sfW_{z,z'}$.
\end{theorem}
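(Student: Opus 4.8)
The plan is to prove the two inclusions $\widetilde\sfW_{z,z'}\subseteq\sfW_{z,z'}$ and $\sfW_{z,z'}\subseteq\widetilde\sfW_{z,z'}$ separately. Two structural facts are used constantly: $\sfW_{z,z'}$ is a sub-bimodule of $\Hom_R(\sfC_{w_k z},\sfC_{w_k z'})$, so pre-composing or post-composing an illicit morphism with an arbitrary morphism again yields an illicit one; and by Corollary~\ref{soerg:cor:8} the minimal degree morphism divides every morphism in each of the $\Hom$-spaces involved.

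\emph{The inclusion $\widetilde\sfW_{z,z'}\subseteq\sfW_{z,z'}$.} Here I would check that every generator of $\widetilde\sfW_{z,z'}$ is illicit. In case (i), by divisibility it suffices to realise the minimal degree morphism $\sfC_{w_k z}\to\sfC_{w_k z'}$ as illicit; the inequality $\down^{z}_{j}\geq\down^{z'}_{j+1}$ (or the symmetric one) forces any path of elementary moves $\up\down\down\leftrightarrow\down\down\up$ from $z$ to $z'$ to contain a step of the form treated in Lemma~\ref{soerg:lem:14}, at which the whole $\Hom$-space — and hence, transported along the rest of the path, the minimal morphism — is illicit. In case (ii), fix $j$ and write the generator \eqref{soerg:eq:18} as the composite of the endomorphism $1\mapsto x_{\down^z_j}x_{\down^z_j+1}\cdots x_{\beta(j)}$ of $\sfC_{w_k z}$ with the minimal degree morphism $\sfC_{w_k z}\to\sfC_{w_k z'}$ (which supplies the factor $x_1^{c_1}\cdots x_{n-1}^{c_{n-1}}$), so it is enough that this endomorphism be illicit. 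When $\beta(j)=\down^z_{j+1}-1$ this is exactly Lemma~\ref{soerg:lem:22} for $j<n-k$, with a boundary variant for $j=n-k$ in which the appearance of $x_n$ is absorbed via \eqref{soerg:eq:3} and the fact that complete symmetric polynomials in all of $x_1,\dots,x_n$ annihilate every module in sight. When $\beta(j)=\down^{z'}_{j+1}-1<\down^z_{j+1}-1$, one routes the endomorphism through the intermediate Soergel module obtained from $z$ by sliding its $(j{+}1)$-st $\down$ leftwards to position $\down^{z'}_{j+1}$ — legitimate because case (ii) guarantees $\down^z_j<\down^{z'}_{j+1}<\down^z_{j+2}$ — and applies Lemma~\ref{soerg:lem:22} or Lemma~\ref{soerg:lem:20} inside that module, using Corollary~\ref{soerg:cor:8} to peel off the leftover monomial factors.

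\emph{The inclusion $\sfW_{z,z'}\subseteq\widetilde\sfW_{z,z'}$.} This is the substantial half. Using the cyclic presentations $\sfC_{w_k z}=R/I_{\boldb^z}$ and $\sfC_{w_k z'}=R/I_{\boldb^{z'}}$ of Theorem~\ref{soerg:thm:1}, a morphism $1\mapsto p$ factoring through a wrong module $\sfC_y=R/\Ann\sfC_y$ satisfies $p\equiv rq\pmod{I_{\boldb^{z'}}}$ with $q\,I_{\boldb^z}\subseteq\Ann\sfC_y$ and $r\,\Ann\sfC_y\subseteq I_{\boldb^{z'}}$. The first step would be to reduce to factorizations through the \emph{minimally wrong} modules, namely the $\sfC_{s_j w_k z''}$ of Proposition~\ref{soerg:prop:8}: a wrong $y$ admits a simple descent $s_i\in W_k^\perp$, so $\sfC_y$ is obtained from the shorter Soergel module $\sfC_{s_i y}$ by tensoring with $B_i$, and an induction on the number of inversions among the $\down$'s of $y$ then replaces $\sfC_y$ — compatibly with the factorization of $p$ — by such a minimally wrong module lying below it. For those modules $\Ann\sfC_y$ is explicit, so the colon ideals $(\Ann\sfC_y:I_{\boldb^z})$ and $(I_{\boldb^{z'}}:\Ann\sfC_y)$ can be computed by the same monomial manipulations with complete symmetric polynomials as in Lemmas~\ref{soerg:lem:1}, \ref{soerg:lem:10} and~\ref{soerg:lem:11}; reading off the resulting products through the basis of Corollary~\ref{soerg:cor:8}, one checks that they are $\C$-linear combinations of the generators \eqref{soerg:eq:18}, and that a factorization of the minimal morphism itself becomes available precisely under the hypothesis of case (i).

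\emph{Main obstacle.} The crux is the reduction to minimally wrong modules in the second inclusion: a general wrong Soergel module $\sfC_y$ has no presentation analogous to Theorem~\ref{soerg:thm:1} or Proposition~\ref{soerg:prop:8}, so controlling all factorizations through it — and certifying that it contributes nothing beyond what is already in $\widetilde\sfW_{z,z'}$ — is where the work concentrates. Should the inductive/translation argument prove unwieldy, a fallback is to deduce $\widetilde\sfW_{z,z'}=\sfW_{z,z'}$ from a dimension count, combining $\widetilde\sfW_{z,z'}\subseteq\sfW_{z,z'}$ with a computation of $\dim_\C\sfZ_{z,z'}$, i.e.\ of the relevant morphism space in the parabolic category $\catO$ of Section~\ref{sec:category-cato}.
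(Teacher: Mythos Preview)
Your treatment of the inclusion $\widetilde\sfW_{z,z'}\subseteq\sfW_{z,z'}$ is close to the paper's, and in case~(i) and in the sub-case $\beta(j)=\down^z_{j+1}-1$ of case~(ii) it matches. In the remaining sub-case $\beta(j)=\down^{z'}_{j+1}-1<\down^z_{j+1}-1$ your decomposition as ``(endomorphism of $\sfC_{w_k z}$) followed by (minimal degree morphism)'' does not obviously work: the endomorphism $1\mapsto x_{\down^z_j}\cdots x_{\down^{z'}_{j+1}-1}$ of $\sfC_{w_k z}$ is a \emph{shorter} product than the one Lemma~\ref{soerg:lem:22} provides, and routing through the intermediate module you describe picks up the extra factor $x_{\down^{z'}_{j+1}}\cdots x_{\down^z_{j+1}-1}$ rather than peeling one off. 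The paper avoids this by not passing through an endomorphism of $\sfC_{w_k z}$ at all: it routes $\sfC_{w_k z}\to\sfC_{w_k z^{(1)}}\to\sfC_{w_k z^{(2)}}\to\sfC_{w_k z^{(3)}}$, with the last step a genuine $\up\down\down\leadsto\down\down\up$ move so that Lemma~\ref{soerg:lem:14} applies, and then checks that composing with the minimal degree morphism $\sfC_{w_k z^{(3)}}\to\sfC_{w_k z'}$ yields exactly~\eqref{soerg:eq:18}. This is a minor repair.

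The substantive divergence is in the reverse inclusion $\sfW_{z,z'}\subseteq\widetilde\sfW_{z,z'}$. Your primary plan --- reduce an arbitrary wrong $\sfC_y$ to a minimally wrong one as in Proposition~\ref{soerg:prop:8} and then compute the colon ideals --- is exactly the kind of direct argument the paper tried and abandoned: the proof states outright ``we cannot check this directly''. The obstacle you flag is real: a general longest coset representative $y\notin D'$ has no presentation of the shape in Theorem~\ref{soerg:thm:1} or Proposition~\ref{soerg:prop:8}, and the inductive reduction you sketch (replacing $\sfC_y$ by $\sfC_{s_iy}$ via tensoring with $B_i$) does not obviously preserve the factorization data $q,r$ in a controllable way. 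What the paper actually does is your fallback: it invokes Lemma~\ref{lem:17} (proved via Proposition~\ref{soerg:prop:6} and the Lie-theoretic Lemma~\ref{lem:2} in Section~\ref{sec:category-cato}) to match $\dim_\C\Hom_R(\sfC_{w_k z},\sfC_{w_k z'})/\widetilde\sfW_{z,z'}$ with $\dim_\C\sfZ_{z,z'}$, which together with $\widetilde\sfW_{z,z'}\subseteq\sfW_{z,z'}$ forces equality. So your proposal is correct once you promote the fallback to the main argument; the direct approach, as the paper itself concedes, does not go through.
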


\begin{example}\label{soerg:ex:2}
Let us consider the following example:
  \begin{center}
    \begin{tikzpicture}
      \matrix (A) [matrix of math nodes,row sep=0.2em,column sep= 0.1em,row 1/.style={color=black!50,font=\small},column 1/.style={text width=2em, minimum
height=1em,text centered}] at (0,0)
      {& 1 & 2 & 3 & 4 & 5 & 6 & 7 & 8 & 9 & 10 & 11 & 12 & 13 & 14 \\
       \boldb^z &10  & 10 & 9 & 8 & 7   & 6 & 5   & 5 & 5 & 4 & 3   & 2 & 2 & 1 \\
        z& \up   & \node[draw]{\down_1}; & \up & \up & \up   & \up & \up   & \node[draw,rounded corners]{\down_2}; & \node[draw,dashed]{\down_3}; & \up & \up   & \up & \node[draw,rounded corners, dashed]{\down_4}; & \up \\
        z'& \down_1 & \up & \up & \up & \node[draw,rounded corners]{\down_2}; & \up & \up & \up & \up & \up & \down_3 & \up & \up & \node[draw,rounded corners,dashed]{\down_4}; \\
        \boldb^{z'} &11  & 10 & 9 & 8 & 8   & 7 & 6   & 5 & 4 & 3 & 3   & 2 & 1 & 1 \\};
      \draw (A-1-2.south west) -- (A-1-15.south east);
      \draw [snake=brace,mirror snake] (A-5-3.south west) -- node[below] {$x_2 x_3 x_4 $} (A-5-6.south east);
      \draw [snake=brace,mirror snake] (A-5-10.south west) -- node[below] {$x_9 x_{10} x_{11} x_{12} $} (A-5-14.south east);
   \end{tikzpicture}
  \end{center}
For convenience we have written the subscripts of the $\down$'s, indicating their progressive number. We are in case \ref{soerg:item:21}, and the generating morphisms \eqref{soerg:eq:18} of $\widetilde\sfW_{z,z'}$ are

\begin{center}
  \renewcommand{\arraystretch}{1.1}
  \setlength{\tabcolsep}{10pt}
  \begin{tabular}[]{r|l}
    $j$ & morphism\\
    \hline
    $1$ & $1 \longmapsto (x_2 x_3 x_4 ) (x_1 x_5 x_6 x_7)$\\
    $2$ &   $1  \longmapsto (x_8 ) (x_1 x_5 x_6 x_7)$\\
    $3$ &   $1  \longmapsto (x_9 x_{10} x_{11} x_{12} ) (x_1 x_5 x_6 x_7)$\\
    $4$ &   $1  \longmapsto (x_{13}) (x_1 x_5 x_6 x_7)$
  \end{tabular}
\end{center}

In the picture, the case $j=1$ is highlighted in solid and the case $j=3$ is highlighted in dashed.
\end{example}

\begin{proof}[Proof of Theorem~\ref{soerg:thm:2}]
  First, we assume that $\down^z_j \geq \down^{z'}_{j+1}$ for some index $1 \leq j < n-k$. Pick $j$ minimal with this property. Notice that by the minimality of $j$ we have $\down^z_{j-1} < \down^{z'}_{j}$ (if $j>1$), and hence on the left of the $j$-th $\down$ of $z$ there is an $\up$ (this remains true also if $j=1$, since in this case $\down^z_1 \geq \down^{z'}_2 > \down^{z'}_1 \geq 1$). Let $\alpha = \down^z_{j}$ and $\ell= \down^z_{j+1} - \down^z_{j}$, and  define $z^{(1)}$ and $z^{(2)}$ as
\vspace{-2.5ex}  \begin{align}
    z & & \up \down \overbrace{\up \cdots \up \down}^{\ell} \\
    z^{(1)} & = z s_{\alpha+\ell-1} s_{\alpha+\ell-2} \cdots s_{\alpha+1} & \up \down \down \cdots \up \up \\
    z^{(2)}& = z^{(1)} s_{\alpha-1} s_h  & \down \down \up \cdots \up \up
\end{align}
where on the right we pictured the corresponding $\up\down$--sequences between positions $\alpha-1$ and $\alpha+\ell$ (and we included $z$ for clarity).
The composition
\begin{align}
  \label{soerg:eq:59}
  \sfC_{w_k  z} \xrightarrow{\phantom{1}1\phantom{1}} \sfC_{w_k  z^{(1)}} \xrightarrow{x_{\ell-1}x_\ell} \sfC_{w_k  z^{(2)}}
\end{align}
is illicit by Lemma~\ref{soerg:lem:14}.
Composing with the minimal degree morphism $\sfC_{w_k  z^{(2)}} \mapto \sfC_{w_k  z'}$ we obtain the minimal degree morphism $\sfC_{w_k  z} \mapto \sfC_{w_k  z'}$, which is therefore illicit. It follows that $\Hom_R(\sfC_{w_k  z},\sfC_{w_k  z'}) = \sfW_{ z,z'}$.

A straightforward dual argument (cf.\ \textsection\ref{soerg:sec:duality-1}) proves that $\Hom_R(\sfC_{w_k  z'},\sfC_{w_k  z}) = \sfW_{z',z}$. Swapping $z$ and $z'$ it follows that $\Hom_R(\sfC_{w_k  z},\sfC_{s_k z'})=\sfW_{z,z'}$ if $\down^{z'}_j \geq \down^z_{j+1}$.

  Now assume we are in case \ref{soerg:item:21} and fix an index $j$. First, let us consider the case $\down^{z'}_{j+1} < \down^z_{j+1}$, so that $\beta(j) = \down^{z'}_{j+1}$. Let $\gamma=\down^z_j$, $\delta=\down^{z'}_{j+1}$, $\epsilon= \down^z_{j+1}$. Define $z^{(1)}$, $z^{(2)}$ and $z^{(3)}$ by
  \begin{align}
    z & & \down \up \cdots \up \up \cdots \up \down \\
    z^{(1)} & = z s_{\gamma} s_{\gamma +1} \cdots s_{\delta-1} & \up \cdots \up \down \up \cdots \up \down\\
    z^{(2)}& = z^{(1)} s_{\epsilon-1} s_{\epsilon -2} \cdots s_{\delta+1} & \up \cdots \up \down \down \up \cdots \up\\
    z^{(3)}& = z^{(2)} s_{\delta-1} s_{\delta} & \up \cdots \down \down \up \up \cdots \up
\end{align}
where on the right we pictured the corresponding $\up\down$--sequences between positions $\gamma$ and $\epsilon$. The composition
\begin{equation}
  \label{soerg:eq:61}
  \sfC_{w_k  z} \xrightarrow{1} \sfC_{w_k  z^{(1)}} \xrightarrow {x_{\delta+1}x_{\delta+2} \cdots x_{\epsilon-1}} \sfC_{w_k  z^{(2)}} \xrightarrow{x_{\delta-1}x_\delta} \sfC_{w_k  z^{(3)}}
\end{equation}
is illicit by Lemma~\ref{soerg:lem:14}. By construction, the composition of \eqref{soerg:eq:61} with the minimal degree morphism $\sfC_{w_k  z^{(3)}} \mapto \sfC_{w_k  z'}$ equals the morphism \eqref{soerg:eq:18} from $\sfC_{w_k z } $ to $\sfC_{w_k  z'}$, that is therefore illicit.

Let us now consider the other case $\down^z_{j+1} \leq \down^{z'}_{j+1}$. By Lemma~\ref{soerg:lem:22} the endomorphism of $\sfC_{w_k  z}$ defined by
\begin{equation}
  \label{soerg:eq:62}
  1 \mapsto x_{\down^z_j} x_{\down^z_j + 1} \cdots x_{\down^z_{j+1}}
\end{equation}
is illicit. This morphism divides the morphism \eqref{soerg:eq:18}, which is therefore illicit.

To conclude the proof we are left to check that in case \ref{soerg:item:21} $\sfW_{z,z'} \subseteq \widetilde\sfW_{z,z'}$. Unfortunately we cannot check this directly. Instead, by Lemma~\ref{lem:17} in the next section we have that the dimensions of the quotients of $\Hom_R(\sfC_{w_k  z},\sfC_{z_k z'})$ by $\sfW_{z,z'}$ and $\widetilde\sfW_{z,z'}$ agree. This implies that $\sfW_{z,z'} = \widetilde\sfW_{z,z'}$.
\end{proof}

\subsection{Grading}
\label{soerg:sec:grading}

In order to keep the computations more transparent, we decided to
postpone the introduction of the grading until now. The ring $R$ is
graded with $\deg x_i = 2$. Since the ideal $J_n$ is homogeneous, $B$
is also graded, and the graded definition of the module $B_i$ is $B_i = B
\otimes_i B \langle -1 \rangle$. By Soergel theorems all $\sfC_w$ for $w \in \bbS_n$ are graded. In the graded version of the module
$\sfC_{w_k  z}$ the cyclic generator is in degree $--\len(w_k  z)$. Then \eqref{eq:3} has the following graded version:
  \begin{equation}
    \label{eq:5}
    \grdim_\C \sfC_z = v^{-\len(w_k z)} \sum_{w' \preceq z} \calP_{w',z}(v^2).
  \end{equation}

The spaces $\sfW_{z,z'}$ are homogeneous submodules, and the quotients $\sfZ_{z,z'}$ are then graded modules.

By our discussion in \textsection\ref{soerg:sec:duality-1}, and with the opportune degree shifting we put on the modules $\sfC_{w_k  z}$, it follows that all modules $\sfC_{w_k  z}$ are graded self-dual. In particular
\begin{equation}
  \label{soerg:eq:88}
  \Hom_R(\sfC_{w_k  z},\sfC_{w_k  z'}) \cong \Hom_R(\sfC_{w_k  z'},\sfC_{w_k  z})
\end{equation}
as graded vector spaces for all $z,z' \in D $. An explicit isomorphism was described in \eqref{soerg:eq:87}.

\section{The diagram algebra}
\label{sec:diagr-algebra-calq_k}
We want now to define some diagram algebras over $\C$,
analogous to the generalized Khovanov algebras defined in \cite{pre06126156}. We will use some diagrams which represents morphisms between the Soergel modules we studied in the previous section.

We point out that the major difficulty is the definition of the multiplication of two diagrams, which is not simply stacking one on the top of the other (as in many other diagram algebras), but instead a quite involved process. Brundan and Stroppel use Khovanov's TQFT to define this multiplication. Since there is not an analogous of such a TQFT in our case, we construct the multiplication in an indirect way using composition of morphisms between Soergel modules. A drawback of our definition of the multiplication is that it is not clear how one can define diagrammatically bimodules for the diagram algebra, as in \cite{MR2600694}.

Using the same techniques of \cite{pre06126156} we will describe explicitly the graded cellular structure (\textsection\ref{soerg:sec:cellularity}) and the graded properly stratifies structure (\textsection\ref{soerg:sec:prop-strat-struct}).

\subsection{Diagrams}
\label{soerg:sec:diagrams}
We start introducing the diagrams on which our algebras will be build. We
will redefine some keywords that are commonly used in Lie theory (such as
\emph{weight} and \emph{block}) in a diagrammatic sense. 

\subsubsection{Weights} A \emph{number line} $\bfL$ is a horizontal
line containing a finite number of \emph{vertices} indexed by a set of
consecutive integers in increasing order from left to right. Given a
number line, a \emph{weight} is obtained by labeling each of the
vertices by $\up$ or $\down$.

On the set of weights there is the partial order called \emph{Bruhat
  order}, generated by $\up \down \succ \down \up$. For weights
$\lambda, \mu$ declare $\lambda \sim \mu$ if $\mu$ can be obtained
from $\lambda$ by permuting $\up$'s and $\down$'s.

\subsubsection{Blocks}
\label{soerg:sec:blocks}
A \emph{block} $\Gamma$ is a $\sim$--equivalence class of weights. From
now on, let us fix a block $\Gamma$. Let also $k$ be the number of
$\up$'s and $n-k$ be the number of $\down$'s of any weight of
$\Gamma$.  The weights of $\Gamma$
can be identified with $\up\down$--sequences  in the sense of \textsection{}\ref{soerg:sec:combinatorics}, and hence with elements of $D_{n,k}$. For a weight $\lambda$, we can then define
as in \textsection{}\ref{soerg:sec:combinatorics} the position sequences
$(\up^\lambda_1,\ldots,\up^\lambda_k)$ and
$(\down^\lambda_1,\ldots,\down^\lambda_{n-k})$ and the $\boldb$--sequence
$\boldb^\lambda$.

\subsubsection{Enhanced weights}
\label{soerg:sec:enhanced-weights}

An \emph{enhanced weight} $\lambda^\sigma$ is a weight
$\lambda$ together with a bijection $\sigma$ between the
vertices labeled $\up$ in $\lambda$ and the set
$\{1,\ldots,k\}$. By numbering the $\up$'s from the left to
the right we may view $\sigma$ as en element in $\bbS_k$ and
call it the underlying permutation. We call $\lambda$ the
\emph{underlying weight}. We will also say that we obtain
the enhanced weight $\lambda^\sigma$ by enhancing the weight
$\lambda$ with the permutation $\sigma$. Notice that there
are exactly $k!$ enhanced weights with the same underlying
weight.

We define a partial order on the set of enhanced weights by the following rule:
\begin{equation}
  \label{soerg:eq:74}
  \lambda^\sigma \preceq \mu^\tau \Longleftrightarrow
    \lambda \prec \mu  \text{ or }
    ( \lambda = \mu \text{ and } \len(\sigma) \leq \len(\tau)).
\end{equation}

\subsubsection{Fork diagrams}
An \emph{$m$--fork} is a tree with a unique branching point (the \emph{root}) of valency $m$; the other $m$ vertices of the tree are called the \emph{leaves}. A $1$--fork will be also called a \emph{ray}. This is an example of a $5$--fork:
\begin{equation*}
  \tikzstyle{vertex}=[circle,fill, minimum size=2pt,inner sep=0pt]
  \begin{tikzpicture}
  \draw[] (0.500000,0) .. controls (0.500000,-0.500000) and (1.500000,-1.000000) ..  (1.500000,-1.000000);
 \draw[] (1.000000,0)  .. controls (1.000000,-0.500000) and (1.500000,-1.000000) ..  (1.500000,-1.000000);
 \draw[] (1.500000,0)  .. controls (1.500000,-0.500000) and (1.500000,-1.000000) ..  (1.500000,-1.000000);
 \draw[] (2.000000,0)  .. controls (2.000000,-0.500000) and (1.500000,-1.000000) ..  (1.500000,-1.000000);
 \draw[] (2.500000,0)  .. controls (2.500000,-0.500000) and (1.500000,-1.000000) ..  (1.500000,-1.000000);
      \draw [snake=brace,very thin,yshift=0.2cm] (0.5,0) -- node[above=0.1cm] {\emph{leaves}} (2.5,0);
      \node[below] at (1.5,-1) {\emph{root}};
 \node[vertex] at (0.5,0) {};
 \node[vertex] at (1,0) {} ;
 \node[vertex] at (1.5,0) {};
 \node[vertex] at (2,0) {};
 \node[vertex] at (2.5,0) {};
 \node[vertex] at (1.5,-1) {};
\end{tikzpicture}
\end{equation*}

Let $\bfV$ be the set of vertices of the number line $\bfL$, and let
$\bfH_-$ (resp.\ $\bfH_+$) be the half-plane below (resp.\ above)
$\bfL$.  A \emph{lower fork diagram} is a diagram made by the number
line $\bfL$ together with some forks contained in $\bfH_-$, such that
the leaves of each $m$--fork are $m$ distinct consecutive vertices in $\bfV$; we require each vertex of $\bfV$ to be a leaf of some fork. The forks and rays of a lover fork diagram will be also
called \emph{lower forks} and {\em lower rays}.

\emph{Upper rays}, \emph{upper forks} and \emph{upper fork diagrams} are defined in an analogous way.
If $c$ is a lower fork diagram, the mirror image $c^*$ through the
horizontal number line is an upper fork diagram, and vice versa. The
following are examples of a lower fork diagram $c$ and its mirror image $c^*$:
\begin{equation*}
  \begin{tikzpicture}
 \draw[dashed] (-0.5,0) node[above right] {$\bfL$} -- (5.5,0);
 \draw[] (0.000000,0) -- ++(0,-1.500000);
 \draw[] (0.500000,0) .. controls (0.500000,-0.500000) and (1.500000,-1.000000) ..  (1.500000,-1.000000);
 \draw[] (1.000000,0)  .. controls (1.000000,-0.500000) and (1.500000,-1.000000) ..  (1.500000,-1.000000);
 \draw[] (1.500000,0)  .. controls (1.500000,-0.500000) and (1.500000,-1.000000) ..  (1.500000,-1.000000);
 \draw[] (2.500000,0)  .. controls (2.500000,-0.500000) and (1.500000,-1.000000) ..  (1.500000,-1.000000);
 \draw[] (3.5,0) -- ++(0,-1.5);
 \draw[] (4.000000,0) -- ++(0,-1.500000);
 \draw[] (4.500000,0) .. controls (4.500000,-0.500000) and (4.750000,-1.000000) ..  (4.750000,-1.000000);
 \draw[] (5.000000,0)  .. controls (5.000000,-0.500000) and (4.750000,-1.000000) ..  (4.750000,-1.000000);
 \node at (2.5,-1.25) {$\bfH_-$};
  \end{tikzpicture}
  \hspace{0.5cm}
  \begin{tikzpicture}[baseline=0pt]
 \draw[dashed] (-0.5,0) node[below right] {$\bfL$} -- (5.5,0);
 \node at (2.5,+1.25) {$\bfH_+$};
 \draw[] (0.000000,0) -- ++(0,1.500000);
 \draw[] (0.500000,0)  .. controls (0.500000,0.500000) and (1.500000,1.000000) ..   (1.500000,1.000000);
 \draw[] (1.000000,0)  .. controls (1.000000,0.500000) and (1.500000,1.000000) ..   (1.500000,1.000000);
 \draw[] (1.500000,0)  .. controls (1.500000,0.500000) and (1.500000,1.000000) ..   (1.500000,1.000000);
 \draw[] (2.500000,0)  .. controls (2.500000,0.500000) and (1.500000,1.000000) ..   (1.500000,1.000000);
 \draw[] (3.5,0) -- ++(0,1.5);
 \draw[] (4.000000,0) -- ++(0,1.500000);
 \draw[] (4.500000,0)  .. controls (4.500000,0.500000) and (4.750000,1.000000) ..   (4.750000,1.000000);
 \draw[] (5.000000,0)  .. controls (5.000000,0.500000) and (4.750000,1.000000) ..   (4.750000,1.000000);
  \end{tikzpicture}
\end{equation*}

\subsubsection{Oriented diagrams} If $c$ is a lower fork diagram and
$\lambda$ is a weight with the same underlying number
line, we can glue them to obtain a diagram $c\lambda$. We call $c
\lambda$ an \emph{unenhanced oriented lower fork diagram} if:
\begin{itemize}
\item each $m$--fork for $m\geq 1$ is labeled with exactly one $\down$
  and $m-1$ $\up$'s;
\item the diagram begins at the left with a (possibly empty) sequence
  of rays labeled $\up$, and there are no other rays labeled $\up$
  in $c$.
\end{itemize}
Notice that by definition each $\up$ and $\down$ of
$\lambda$ labels some fork of $c$. Analogously, we call $\mu
d$ an \emph{unenhanced oriented upper fork diagram} if $d^*\!\mu$ is an unenhanced oriented lower fork diagram. The
\emph{orientation} of an unenhanced oriented lower (or
upper) fork diagram is the corresponding weight.

An (enhanced) \emph{oriented lower fork diagram} $c\lambda^\sigma$ is an unenhanced oriented lower fork diagram $c \lambda$ together with a permutation $\sigma \in \bbS_k$ such that $\lambda^\sigma$ is an enhanced weight. Similarly we define an (enhanced) \emph{oriented upper fork diagram}. If not explicitly specified, our oriented lower/upper fork diagrams will always be enhanced.

For $m\geq 1$ and $1 \leq i \leq m$ we define $\lambda(m,i)$ to be the weight formed by one $\down$ and
$m-1$ $\up$'s, where the $\down$ is at the $i$-th place. Note that a lower fork diagram $c$ consisting of only a lower $m$--fork
admits exactly $m!$ orientations, and they are exactly the
$\lambda(m,i)^\sigma$ for $i \in \{1,\ldots,m\}$, $\sigma \in \bbS_{m-1}$.

By a \emph{fork diagram} we mean a diagram of the form $ab$ obtained by
gluing a lower fork diagram $a$ underneath an upper fork diagram $b$,
assuming that they have the same underlying number lines. An {\em
  unenhanced oriented fork diagram} is a fork diagram $a\lambda b$
obtained by gluing an oriented lower fork diagram $a \lambda$ and an
oriented upper fork diagram $\lambda b$, as in the picture:

\begin{equation*}
  \begin{tikzpicture}
 \draw[dashed, very thin] (-0.5,0) -- (5.5,0);
 \draw[] (0.000000,0)  .. controls (0.000000,0.500000) and (0.500000,1.000000) ..   (0.500000,1.000000);
 \draw[] (0.500000,0)  .. controls (0.500000,0.500000) and (0.500000,1.000000) ..   (0.500000,1.000000);
 \draw[] (1.000000,0)  .. controls (1.000000,0.500000) and (0.500000,1.000000) ..   (0.500000,1.000000);
 \draw[] (1.500000,0)  .. controls (1.500000,0.500000) and (2.500000,1.000000) ..   (2.500000,1.000000);
 \draw[] (2.000000,0)  .. controls (2.000000,0.500000) and (2.500000,1.000000) ..   (2.500000,1.000000);
 \draw[] (2.500000,0)  .. controls (2.500000,0.500000) and (2.500000,1.000000) ..   (2.500000,1.000000);
 \draw[] (3.000000,0)  .. controls (3.000000,0.500000) and (2.500000,1.000000) ..   (2.500000,1.000000);
 \draw[] (3.500000,0)  .. controls (3.500000,0.500000) and (2.500000,1.000000) ..   (2.500000,1.000000);
 \draw[] (4.000000,0) -- ++(0,1.2);
 \draw[] (4.500000,0)  .. controls (4.500000,0.500000) and (4.750000,1.000000) ..   (4.750000,1.000000);
 \draw[] (5.000000,0)  .. controls (5.000000,0.500000) and (4.750000,1.000000) ..   (4.750000,1.000000);
 \draw[] (0.000000,0) -- ++(0,-1.2);
 \draw[] (0.500000,0) .. controls (0.500000,-0.500000) and (1.500000,-1.000000) ..  (1.500000,-1.000000);
 \draw[] (1.000000,0)  .. controls (1.000000,-0.500000) and (1.500000,-1.000000) ..  (1.500000,-1.000000);
 \draw[] (1.500000,0)  .. controls (1.500000,-0.500000) and (1.500000,-1.000000) ..  (1.500000,-1.000000);
 \draw[] (2.000000,0)  .. controls (2.000000,-0.500000) and (1.500000,-1.000000) ..  (1.500000,-1.000000);
 \draw[] (2.500000,0)  .. controls (2.500000,-0.500000) and (1.500000,-1.000000) ..  (1.500000,-1.000000);
 \draw[] (3.000000,0) .. controls (3.000000,-0.500000) and (3.250000,-1.000000) ..  (3.250000,-1.000000);
 \draw[] (3.500000,0)  .. controls (3.500000,-0.500000) and (3.250000,-1.000000) ..  (3.250000,-1.000000);
 \draw[] (4.000000,0) -- ++(0,-1.2);
 \draw[] (4.500000,0) .. controls (4.500000,-0.500000) and (4.750000,-1.000000) ..  (4.750000,-1.000000);
 \draw[] (5.000000,0)  .. controls (5.000000,-0.500000) and (4.750000,-1.000000) ..  (4.750000,-1.000000);
 \draw[yshift=-0.1cm] (-0.100000,-0.100000) -- (0.000000,0.100000) -- (0.100000,-0.100000);
 \draw[yshift=-0.1cm] (0.400000,-0.100000) -- (0.500000,0.100000) -- (0.600000,-0.100000);
 \draw[yshift=0.1cm] (0.900000,0.100000) -- (1.000000,-0.100000) -- (1.100000,0.100000);
 \draw[yshift=-0.1cm] (1.400000,-0.100000) -- (1.500000,0.100000) -- (1.600000,-0.100000);
 \draw[yshift=-0.1cm] (1.900000,-0.100000) -- (2.000000,0.100000) -- (2.100000,-0.100000);
 \draw[yshift=-0.1cm] (2.400000,-0.100000) -- (2.500000,0.100000) -- (2.600000,-0.100000);
 \draw[yshift=0.1cm] (2.900000,0.100000) -- (3.000000,-0.100000) -- (3.100000,0.100000);
 \draw[yshift=-0.1cm] (3.400000,-0.100000) -- (3.500000,0.100000) -- (3.600000,-0.100000);
 \draw[yshift=0.1cm] (3.900000,0.100000) -- (4.000000,-0.100000) -- (4.100000,0.100000);
 \draw[yshift=0.1cm] (4.400000,0.100000) -- (4.500000,-0.100000) -- (4.600000,0.100000);
 \draw[yshift=-0.1cm] (4.900000,-0.100000) -- (5.000000,0.100000) -- (5.100000,-0.100000);
  \end{tikzpicture}
\end{equation*}

An \emph{(enhanced) oriented fork diagram} is obtained by additionally enhancing the corresponding weight.

\subsubsection{Degrees}
Define the \emph{degree} of an unenhanced oriented lower (or upper) $m$--fork
by setting $\deg (c \lambda(m,i)) = \deg
(\lambda(m,i) c^*) = (i-1)$. Define then the
degree of an unenhanced oriented lower (resp.\ upper) fork diagram to be the sum of the degrees of all the lower (resp.\ upper) forks. Finally, the degree of an unenhanced oriented fork diagram $a \lambda b$ is
\begin{equation}
 \deg(a \lambda b) = \deg (a \lambda) + \deg (\lambda b ).\label{soerg:eq:105}
\end{equation}

Moreover, define the degree of a permutation $\sigma$ as $\deg(\sigma) = 2\len(\sigma)$. Then we define the degree of enhanced oriented diagrams by
\begin{align}
  \deg( a \lambda^\sigma) & = \deg(a \lambda) + \deg(\sigma),\\
  \deg(\lambda^\sigma b) & = \deg( \lambda b) + \deg(\sigma),\\
  \deg(a \lambda^\sigma b) & = \deg(a \lambda b) + \deg(\sigma) = \deg(a \lambda) + \deg (\lambda b)+ \deg(\sigma)
\end{align}
In particular, enhancing with the neutral element $e \in \bbS_k$ preserves the degree.

\begin{example}
  \label{soerg:ex:6}
  Consider the fork diagram $a \lambda b$ given by:
\begin{equation*}
  \begin{tikzpicture}
 \draw[dashed, very thin] (-0.5,0) -- (4,0);
 \draw[] (0.000000,0)  .. controls (0.000000,0.500000) and (0.500000,1.000000) ..   (0.500000,1.000000);
 \draw[] (0.500000,0)  .. controls (0.500000,0.500000) and (0.500000,1.000000) ..   (0.500000,1.000000);
 \draw[] (1.000000,0)  .. controls (1.000000,0.500000) and (0.500000,1.000000) ..   (0.500000,1.000000);
 \draw[] (1.500000,0)  .. controls (1.500000,0.500000) and (2.2500000,1.000000) ..   (2.2500000,1.000000);
 \draw[] (2.000000,0)  .. controls (2.000000,0.500000) and (2.2500000,1.000000) ..   (2.2500000,1.000000);
 \draw[] (2.500000,0)  .. controls (2.500000,0.500000) and (2.2500000,1.000000) ..   (2.2500000,1.000000);
 \draw[] (3.000000,0)  .. controls (3.000000,0.500000) and (2.2500000,1.000000) ..   (2.2500000,1.000000);
 \draw[] (0.000000,0) -- ++(0,-1.500000);
 \draw[] (0.500000,0) .. controls (0.500000,-0.500000) and (1.500000,-1.000000) ..  (1.500000,-1.000000);
 \draw[] (1.000000,0)  .. controls (1.000000,-0.500000) and (1.500000,-1.000000) ..  (1.500000,-1.000000);
 \draw[] (1.500000,0)  .. controls (1.500000,-0.500000) and (1.500000,-1.000000) ..  (1.500000,-1.000000);
 \draw[] (2.000000,0)  .. controls (2.000000,-0.500000) and (1.500000,-1.000000) ..  (1.500000,-1.000000);
 \draw[] (2.500000,0)  .. controls (2.500000,-0.500000) and (1.500000,-1.000000) ..  (1.500000,-1.000000);
 \draw[] (3.000000,0) -- ++(0,-1.5);
 \draw[yshift=-0.1cm] (-0.100000,-0.100000) -- (0.000000,0.100000) -- (0.100000,-0.100000);
 \draw[yshift=-0.1cm] (0.400000,-0.100000) -- (0.500000,0.100000) -- (0.600000,-0.100000);
 \draw[yshift=0.1cm] (0.900000,0.100000) -- (1.000000,-0.100000) -- (1.100000,0.100000);
 \draw[yshift=-0.1cm] (1.400000,-0.100000) -- (1.500000,0.100000) -- (1.600000,-0.100000);
 \draw[yshift=-0.1cm] (1.900000,-0.100000) -- (2.000000,0.100000) -- (2.100000,-0.100000);
 \draw[yshift=-0.1cm] (2.400000,-0.100000) -- (2.500000,0.100000) -- (2.600000,-0.100000);
 \draw[yshift=0.1cm] (2.900000,0.100000) -- (3.000000,-0.100000) -- (3.100000,0.100000);
  \end{tikzpicture}
\end{equation*}
We have
 $ \deg(a \lambda) = 1 $ and $\deg (\lambda b) = 2+3=5$,
so that $\deg(a \lambda b) = 6$. We can enhance the diagram with any permutation $\sigma \in \bbS_5$, and then $\deg(a \lambda^\sigma b) = 6 +2 \len(\sigma)$.
\end{example}

\subsubsection{The lower fork diagram associated to a
  weight} There is a natural way to associate a lower fork diagram to a weight $\lambda$:
\begin{lemma}
  For each weight $\lambda$ there is a unique lower fork
  diagram, denoted $\underline \lambda$, such that
  $\underline \lambda \lambda^e$ is an oriented lower fork
  diagram of degree $0$.\label{lem:1}
\end{lemma}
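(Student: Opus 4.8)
The plan is to read off, from the degree-$0$ condition together with the defining clauses of an unenhanced oriented lower fork diagram, an exhaustive combinatorial description of the admissible $c$, and to check there is exactly one. The key observation is that the degree of a lower fork diagram is a sum of non-negative contributions, one per fork, and an $m$--fork carrying $\lambda(m,i)$ contributes $i-1$; hence $c\lambda$ has degree $0$ exactly when \emph{every} fork of $c$ carries its unique $\down$ at its leftmost leaf. Since enhancing with $e\in\bbS_k$ adds $2\len(e)=0$ to the degree, the enhancement plays no role and I will suppress it.

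For existence I would write $\lambda=\lambda_1\cdots\lambda_n$ for the vertex labels from left to right, let $p\ge 0$ be maximal with $\lambda_1=\cdots=\lambda_p=\up$, and let $q_1<\cdots<q_r$ be the positions of the $\down$'s (so $q_1=p+1$ unless $r=0$), with the convention $q_{r+1}=n+1$. Then I would define $\underline\lambda$ to consist of a ray at each of $1,\dots,p$ together with, for each $t=1,\dots,r$, a $(q_{t+1}-q_t)$--fork with leaf set $\{q_t,q_t+1,\dots,q_{t+1}-1\}$. One checks at once that this is a lower fork diagram, that each fork of size $\ge 2$ is labelled $\down\up\cdots\up$ (exactly one $\down$, at its leftmost leaf), that the rays among the blocks $[q_t,q_{t+1}-1]$ are labelled $\down$, and that the leftmost $p$ rays are labelled $\up$ with no other $\up$--rays; hence $\underline\lambda\lambda$ is an unenhanced oriented lower fork diagram, and its degree is $0$ because every fork has its $\down$ leftmost.

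For uniqueness, suppose $c\lambda$ is an unenhanced oriented lower fork diagram of degree $0$; I would show its blocks, read left to right, are forced. By the observation above, every block of size $\ge 2$ is labelled $\down\up\cdots\up$, and every ray is labelled $\down$ unless it is one of the leading $\up$--rays. Therefore a vertex $i$ with $\lambda_i=\up$ sitting in a size-$1$ block must be a leading $\up$--ray, which by the ``no other $\up$--rays'' clause forces $\lambda_1=\cdots=\lambda_{i-1}=\up$; conversely any $i\le p$ must be a size-$1$ block, since a larger block containing it would carry a $\down$ at a position $\le p$. So the first $p$ vertices of $c$ are exactly the leading $\up$--rays, as in $\underline\lambda$. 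Then the block of $c$ containing $p+1$ has $p+1$ as its leftmost leaf, hence is $\down\up^{j}$ for some $j\ge 0$; if it did not extend all the way to the vertex just before the next $\down$ (or to $n$), the first vertex $v$ after it would satisfy $\lambda_v=\up$ while lying to the right of a $\down$, so its block could be neither a ``$\down\up\cdots\up$'' fork (its leftmost leaf $v$ is labelled $\up$) nor an $\up$--ray (not leading), a contradiction. So this block equals the first nontrivial fork of $\underline\lambda$; peeling it off and inducting on the number of $\down$'s (or on $n$) yields $c=\underline\lambda$.

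I expect the only step needing genuine care is this ``maximality'' point in the uniqueness argument — that each fork must absorb \emph{all} the $\up$'s following its $\down$ up to the next $\down$. This is precisely where the innocuous-looking clause ``there are no other rays labeled $\up$'' (equivalently, once a $\down$ has occurred no fork may begin with an $\up$) is essential; without it, strictly shorter forks would also be admissible and $\underline\lambda$ would fail to be unique. Everything else is routine bookkeeping about consecutive blocks partitioning the vertex set.
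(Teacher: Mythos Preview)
Your proof is correct and follows essentially the same approach as the paper's: both arguments hinge on the observation that degree $0$ forces the $\down$ in each fork to sit at the leftmost leaf, so that (apart from the initial $\up$-rays) the forks of $c$ must coincide with the maximal $\down\up\cdots\up$ runs in $\lambda$. The paper compresses existence and uniqueness into a single description of what $c$ must look like, whereas you separate them and spell out the ``maximality'' step in the uniqueness argument more carefully; but the content is the same.
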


\begin{proof}
  Suppose that some oriented lower fork diagram $c
  \lambda^e$ of degree $0$ exists. Recall that, by the
  definition of orientation, each fork of $c$ is labeled by
  at most one $\down$ of $\lambda$; by the assumption on the
  degree, this $\down$ has to be the leftmost label of the
  corresponding fork. As a consequence, each $m$--fork of
  $c$, with the only exception of some initial rays labeled
  by $\up$, has to be labeled by the weight $\lambda(m,1)$.
  In other words, the lower fork diagram $c$ is obtained in
  the following way: examine the weight $\lambda$ from the
  left to the right and find all maximal subsequences
  consisting of a $\down$ followed by some (eventually
  empty) set of $\up$'s; draw a lower fork under each of
  these subsequences, and then draw lower rays under the
  remaining $\up$'s which are at the beginning of
  $\lambda$. Hence $c$ exists and is uniquely determined.
\end{proof}

Analogously we let $\overline
\lambda = (\underline \lambda)^*$ be the unique upper fork
diagram such that $\lambda^e \overline \lambda $ is an
oriented upper fork diagram of degree $0$.

\begin{example}
  \label{ex:2}
  As an example, let us illustrate the procedure of constructing $\underline \lambda$ for $\lambda = \up\up\down\up\up\up\down\down\up\down$. First, we circle all maximal subsequences consisting of a $\down$ followed by $\up$'s:
  \begin{equation*}
    \begin{tikzpicture}[baseline=0.8cm]
      \mydrawup{(0,1)};
      \mydrawup{(0.5,1)};
      \mydrawdown{(1,1)};
      \mydrawup{(1.5,1)};
      \mydrawup{(2,1)};  
    \mydrawup{(2.5,1)};
    \mydrawdown{(3,1)};
    \mydrawdown{(3.5,1)};
    \mydrawup{(4,1)};
    \mydrawdown{(4.5,1)};
    \draw [black!50,rounded corners = 2mm] (0.8,1.15) rectangle (2.7,0.65);
    \draw [black!50,rounded corners = 2mm] (2.8,1.15) rectangle (3.2,0.65);
    \draw [black!50,rounded corners = 2mm] (3.3,1.15) rectangle (4.2,0.65);
    \draw [black!50,rounded corners = 2mm] (4.3,1.15) rectangle (4.7,0.65);
  \end{tikzpicture}
\end{equation*}
Then we draw a lower fork under each of such subsequences, and lower rays under the remaining $\up$'s at the beginning of $\lambda$.
\begin{equation*}
  \begin{tikzpicture}
    \draw[] (0.000000,0) -- ++(0,-1.500000);
    \draw[] (0.500000,0) -- ++(0,-1.500000);
    \draw[] (1.000000,-0.2) .. controls (1.00000,-0.500000) and (1.75000000,-1.000000) ..  (1.75000000,-1.000000);
    \draw[] (1.500000,0)  .. controls (1.500000,-0.500000) and (1.75000000,-1.000000) ..  (1.75000000,-1.000000);
    \draw[] (2.00000,0)  .. controls (2.000000,-0.500000) and (1.75000000,-1.000000) ..  (1.75000000,-1.000000);
    \draw[] (2.50000,0)  .. controls (2.500000,-0.500000) and (1.75000000,-1.000000) ..  (1.75000000,-1.000000);
    \draw[] (3.000000,-0.2) -- ++(0,-1.500000);
    \draw[] (3.5000000,-0.2) .. controls (3.5000000,-0.500000) and (3.750000,-1.000000) ..  (3.750000,-1.000000);
    \draw[] (4.00000,0)  .. controls (4.000000,-0.500000) and (3.750000,-1.000000) ..  (3.750000,-1.000000);
    \draw[] (4.5000000,-0.2) -- ++(0,-1.500000); 
      \mydrawup{(0,0)};
      \mydrawup{(0.5,0)};
      \mydrawdown{(1,0)};
      \mydrawup{(1.5,0)};
      \mydrawup{(2,0)};  
    \mydrawup{(2.5,0)};
    \mydrawdown{(3,0)};
    \mydrawdown{(3.5,0)};
    \mydrawup{(4,0)};
    \mydrawdown{(4.5,0)};
  \end{tikzpicture}
\end{equation*}
The resulting lower fork diagram is then
  \begin{equation*}
   \underline \lambda =  \;\; \begin{tikzpicture}[baseline=-0.7cm]
    \draw[] (0.000000,0) -- ++(0,-1.500000);
    \draw[] (0.500000,0) -- ++(0,-1.500000);
    \draw[] (1.000000,0) .. controls (1.00000,-0.500000) and (1.75000000,-1.000000) ..  (1.75000000,-1.000000);
    \draw[] (1.500000,0)  .. controls (1.500000,-0.500000) and (1.75000000,-1.000000) ..  (1.75000000,-1.000000);
    \draw[] (2.00000,0)  .. controls (2.000000,-0.500000) and (1.75000000,-1.000000) ..  (1.75000000,-1.000000);
    \draw[] (2.50000,0)  .. controls (2.500000,-0.500000) and (1.75000000,-1.000000) ..  (1.75000000,-1.000000);
    \draw[] (3.000000,0) -- ++(0,-1.500000);
    \draw[] (3.5000000,0) .. controls (3.5000000,-0.500000) and (3.750000,-1.000000) ..  (3.750000,-1.000000);
    \draw[] (4.00000,0)  .. controls (4.000000,-0.500000) and (3.750000,-1.000000) ..  (3.750000,-1.000000);
    \draw[] (4.5000000,0) -- ++(0,-1.500000); 
  \end{tikzpicture}
\end{equation*}
\end{example}

For weights $\mu$ and $\lambda$, we use the notation $\mu \subset
\lambda$ to indicate that $\mu \sim \lambda$ and $\underline \mu
\lambda^e$ is an oriented lower fork diagram.

\begin{lemma}\label{soerg:lem:26}
  Let $\lambda,\mu$ be two weights in the same block $\Gamma$. If
  $\underline \lambda = \underline \mu$ then $\lambda = \mu$.
  If $\underline\mu \lambda$ is oriented then $\mu \preceq \lambda$ in the Bruhat order.
\end{lemma}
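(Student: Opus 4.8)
The plan is to prove the second assertion first and then derive the first one from it by antisymmetry of the Bruhat order. Recall from the proof of Lemma~\ref{lem:1} the explicit description of $\underline\mu$: reading $\mu$ from left to right, the leading run of $\up$'s (say $r$ of them) becomes a sequence of lower rays, and each maximal subword of the form $\down\up^{m-1}$ becomes a lower $m$--fork. Thus $\underline\mu$ consists of $r$ lower rays followed by lower forks $F_1,\dots,F_{n-k}$; write $m_j$ for the size of $F_j$. The fork $F_j$ is spanned exactly by the vertices from the $j$-th $\down$ of $\mu$ up to (but not including) the $(j{+}1)$-st $\down$ of $\mu$; in particular its leftmost leaf is the vertex $\down^\mu_j$, and its subword in $\mu$ is $\down\up^{m_j-1}$.

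For the second assertion, suppose $\underline\mu\,\lambda$ is an (unenhanced) oriented lower fork diagram. By the definition of orientation, every fork of $\underline\mu$ carries exactly one $\down$ of $\lambda$ and the remaining leaves of that fork carry $\up$, while the rays carrying $\up$ form a leading segment and every other ray carries $\down$. Now use the hypothesis that $\lambda$ and $\mu$ lie in the same block: $\lambda$ has exactly $n-k$ letters $\down$. Since each of the $n-k$ forks $F_1,\dots,F_{n-k}$ already accounts for one $\down$, none of the $r$ leading rays of $\underline\mu$ may carry a $\down$ in the orientation $\lambda$; hence they all carry $\up$, and $\lambda=\up^r\,C_1\cdots C_{n-k}$, where $C_j$ is the subword of $\lambda$ labelling $F_j$, a word of length $m_j$ with a single $\down$. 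Because $F_j$ is spanned by the consecutive vertices starting at $\down^\mu_j$, the unique $\down$ of $C_j$ lies at a position $\geq\down^\mu_j$; that is, $\down^\lambda_j\geq\down^\mu_j$ for every $j$. Equivalently, each $C_j$ is obtained from $\down\up^{m_j-1}$ by repeatedly replacing an adjacent subword $\down\up$ by $\up\down$, so $\lambda$ is obtained from $\mu$ by a sequence of such moves, i.e.\ $\mu\preceq\lambda$ in the Bruhat order.

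For the first assertion, assume $\underline\lambda=\underline\mu=:c$. By Lemma~\ref{lem:1}, $\underline\lambda\,\lambda^e$ is oriented of degree $0$, so, forgetting the enhancement, $c\,\lambda=\underline\mu\,\lambda$ is an oriented lower fork diagram; symmetrically $c\,\mu=\underline\lambda\,\mu$ is oriented. Applying the second assertion once to $\underline\mu\,\lambda$ and once to $\underline\lambda\,\mu$ gives $\mu\preceq\lambda$ and $\lambda\preceq\mu$, whence $\lambda=\mu$.

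I expect the only genuinely delicate step to be the use of the same-block hypothesis to rule out a $\down$ of $\lambda$ sitting on one of the $r$ leading rays of $\underline\mu$; without it one would only conclude that $\lambda$ has at most as many leading $\up$'s as $\mu$, and the block condition is exactly what forces equality. Everything else is a routine unwinding of the definition of $\underline\mu$ and of an oriented lower fork diagram, together with the standard characterization of the Bruhat order on $\up\down$--sequences by the componentwise inequalities $\down^\lambda_j\geq\down^\mu_j$.
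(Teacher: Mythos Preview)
Your proof is correct and the core argument for the second assertion is the same as the paper's: both identify $\underline\mu$ as $r$ initial rays followed by forks $F_1,\dots,F_{n-k}$, use the block hypothesis to count that all $n-k$ $\down$'s of $\lambda$ must land one on each $F_j$, and conclude $\down^\lambda_j\geq\down^\mu_j$, hence $\mu\preceq\lambda$. The only genuine difference is in the first assertion: the paper proves it directly by observing that the positions of the $\down$'s of $\lambda$ are exactly the leftmost leaves of the $n-k$ rightmost forks of $\underline\lambda$, so $\underline\lambda$ determines $\lambda$; you instead deduce it from the second assertion via antisymmetry of the Bruhat order, which is a pleasant shortcut but not substantively different.
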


\begin{proof}
  Being in the same block, the weights $\lambda$ and $\mu$
  have the same number of $\up$'s and $\down$'s; let $h$ be
  the number of $\down$'s. Consider the $h$ rightmost forks
  of $\underline \lambda$ and let $a_1,\ldots,a_h$ be their
  initial positions; then $\lambda$ is uniquely determined
  by the condition of having $\down$'s in the positions
  $a_1,\ldots,a_h$ and $\up$'s elsewhere. Hence the first
  claim follows.

  Now, given the lower fork diagram $\underline \mu$, let
  $F_1,\ldots,F_h$ denote its $h$ rightmost forks. Let also
  $\Gamma_\mu = \{ \lambda\in \Gamma \suchthat \underline
  \mu \lambda \text{ is oriented}\}$. Then $\lambda \in
  \Gamma_\mu$ if and only if each $\down$ of $\lambda$
  labels exactly one of the $F_i$'s. Since $\mu$ is the
  weight of $\Gamma_\mu$ with the $\down$'s in the leftmost
  positions, it follows that $\mu$ is the minimal element in
  $\Gamma_\mu$ with respect to the Bruhat order.
\end{proof}

In particular, given our fixed block $\Gamma$, it follows that every
lower fork diagram $a$ (such that $a\mu$ is oriented for some $\mu \in
\Gamma$) determines a unique weight $\lambda$ with $\underline
\lambda=a$.
 In what follows, we will sometime interchange $a$ and
$\lambda$ in the notation: for example, we will write $\down_j^a$ for
$\down_j^\lambda$ or $\boldb^a$ for $\boldb^\lambda$ and so on.

We collect now some lemmas that we will need later.

\begin{lemma}
  \label{soerg:lem:21}
  Let $\lambda,\mu$ be two weights in the same block $\Gamma$.
  \begin{enumerate}[(i)]
  \item \label{soerg:item:23} The lower fork diagram $\underline \lambda \mu$ is oriented if
    and only if
    \begin{equation}
      \label{soerg:eq:21}
      \down^\lambda_i \leq \down^\mu_i < \down^{\lambda}_{i+1} \quad \text{for all }i \in 1,\ldots,n-k-1.
    \end{equation}
    \item \label{soerg:item:22} There exists an oriented fork diagram $\underline \lambda
    \eta \overline \mu$ for some $\eta \in \Gamma$ if and only if
    \begin{equation}
      \label{soerg:eq:132}
      \down^\lambda_i < \down^\mu_{i+1} \quad \text{and} \quad \down^{\mu}_{i} < \down^\lambda_{i+1} \quad \text{for all }i \in 1,\ldots,n-k-1.
    \end{equation}
  \end{enumerate}
\end{lemma}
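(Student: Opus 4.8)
The plan is to reduce both statements to the explicit combinatorics of the lower fork diagram $\underline\lambda$ described in Lemma~\ref{lem:1} and illustrated in Example~\ref{ex:2}. Label the forks of $\underline\lambda$ as $F_1,\dots,F_{n-k}$ from left to right; by the construction of $\underline\lambda$ the leaves of $F_i$ are exactly the vertices in the interval $[\down^\lambda_i,\,\down^\lambda_{i+1}-1]$, with the convention $\down^\lambda_{n-k+1}:=n+1$, while the first $\down^\lambda_1-1$ vertices carry the initial lower rays. Since $\lambda$ and $\mu$ lie in the same block, $\mu$ also has exactly $n-k$ symbols $\down$. The first thing I would record is a direct reformulation of the definition of an oriented lower fork diagram: $\underline\lambda\mu$ is oriented if and only if $\mu$ labels every initial ray by $\up$ and each fork $F_i$ carries exactly one $\down$ of $\mu$.

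For part~(i) I would argue both implications from this reformulation. If $\underline\lambda\mu$ is oriented, then $\mu$ has no $\down$ among the first $\down^\lambda_1-1$ vertices, so $\down^\mu_1\ge\down^\lambda_1$; moreover each interval $F_i$ contains exactly one $\down$ of $\mu$, and since the intervals $F_1,\dots,F_{n-k}$ are ordered from left to right, partition $[\down^\lambda_1,n]$, and the $\down$'s of $\mu$ are likewise ordered, the $i$-th $\down$ of $\mu$ must lie in $F_i$; this is precisely \eqref{soerg:eq:21}, read also for $i=n-k$, where the convention $\down^\lambda_{n-k+1}=n+1$ makes the upper bound automatic. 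Conversely, if these inequalities hold, then $\down^\mu_1\ge\down^\lambda_1$ forces $\mu$ to label all initial rays $\up$, and $\down^\lambda_i\le\down^\mu_i<\down^\lambda_{i+1}$ together with $\down^\lambda_{i+1}\le\down^\mu_{i+1}$ shows that $F_i$ contains precisely the single $\down$ at position $\down^\mu_i$; hence $\underline\lambda\mu$ is oriented.

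For part~(ii) I would first unwind the definitions: an oriented fork diagram $\underline\lambda\eta\overline\mu$ is, by definition, the gluing of an oriented lower fork diagram $\underline\lambda\eta$ with an oriented upper fork diagram $\eta\overline\mu$, and since $\overline\mu=(\underline\mu)^*$ the latter is oriented exactly when $\underline\mu\eta$ is an oriented lower fork diagram. So the assertion is the existence of a weight $\eta\in\Gamma$ with both $\underline\lambda\eta$ and $\underline\mu\eta$ oriented. Applying part~(i) to each, this is equivalent to the existence of a strictly increasing sequence $\down^\eta_1<\dots<\down^\eta_{n-k}$ in $\{1,\dots,n\}$ with $\max(\down^\lambda_i,\down^\mu_i)\le\down^\eta_i<\min(\down^\lambda_{i+1},\down^\mu_{i+1})$ for every $i$. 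Such a sequence exists if and only if each of these windows is nonempty: if they all are, the greedy choice $\down^\eta_i=\max(\down^\lambda_i,\down^\mu_i)$ is automatically strictly increasing (because both position sequences $(\down^\lambda_i)$ and $(\down^\mu_i)$ are) and lies in every window, and it is the $\down$-sequence of a unique weight $\eta\in\Gamma$; conversely, if some window is empty, part~(i) shows that no admissible $\eta$ exists. Finally, expanding the inequality $\max(\down^\lambda_i,\down^\mu_i)<\min(\down^\lambda_{i+1},\down^\mu_{i+1})$ into its four constituents, two of them ($\down^\lambda_i<\down^\lambda_{i+1}$ and $\down^\mu_i<\down^\mu_{i+1}$) hold automatically and the remaining two are exactly \eqref{soerg:eq:132}.

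I expect no essential difficulty: the argument is purely combinatorial. The points that need care are the fork/interval dictionary for $\underline\lambda$, the boundary conventions at $i=1$ and $i=n-k$, and correctly transporting the definition of ``oriented'' through the mirror-image identifications relating $\overline\mu$, $(\underline\mu)^*$, and oriented upper versus lower fork diagrams.
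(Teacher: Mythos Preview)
Your proof is correct and follows essentially the same approach as the paper's: both reduce (i) to the observation that $\underline\lambda\mu$ is oriented precisely when each fork of $\underline\lambda$ carries exactly one $\down$ of $\mu$, and both derive (ii) from (i). Your version simply fills in the details the paper leaves implicit, including the mirror-image identification $\eta\overline\mu$ oriented $\Leftrightarrow$ $\underline\mu\eta$ oriented, the greedy construction of $\eta$, and the boundary convention $\down^\lambda_{n-k+1}=n+1$ that makes the condition at $i=n-k$ work out.
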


\begin{proof}
  It is clear that \ref{soerg:item:22} follows from \ref{soerg:item:23}, so let us
  prove \ref{soerg:item:23}. It is easy to see that the lower fork
  diagram $\underline \lambda \mu$ is oriented if and only if each
  lower fork of $\underline \lambda$ is labeled by exactly
  one $\down$: this is exactly the same as \eqref{soerg:eq:21}.
\end{proof}

\begin{lemma}
  \label{soerg:lem:19}
  Consider weights $\lambda,\mu \in \Gamma$ with the corresponding $\boldb$--sequences $\boldb^\lambda,\boldb^\mu$.
  \begin{enumerate}[(a)]
  \item \label{soerg:item:1} If $\mu \succeq \lambda$, then $b^\mu_i \leq b^\lambda_i$ for all $i=1,\ldots,n$.
  \item \label{soerg:item:2} If $\underline \lambda \mu$ is oriented, then $b^\lambda_i - b^\mu_i \leq 1$ for all $i=1,\ldots,n$.
  \item \label{soerg:item:3} If $\underline \lambda \eta^e \overline \mu$ is oriented (for some weight $\eta \in \Gamma$), then $\abs{b^\lambda_i-b^\mu_i}\leq 1$ for all $i=1,\ldots,n$.
  \end{enumerate}
\end{lemma}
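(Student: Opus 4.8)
The plan is to push everything down to the combinatorics of the down-position sequences $(\down^z_1,\ldots,\down^z_{n-k})$, where both the Bruhat order and the orientation hypotheses have clean descriptions (the latter via Lemma~\ref{soerg:lem:21}). The bookkeeping device is the identity
\begin{equation*}
  b^z_i \;=\; (k-i+1) + \#\{\, j \suchthat \down^z_j \leq i \,\},
\end{equation*}
which holds because $b^z_i-1$ counts the $\up$'s of $z$ strictly to the right of position $i$, and the $\up$'s among the first $i$ positions number $i - \#\{j\suchthat \down^z_j\leq i\}$. Writing $d_i(z) := \#\{j\suchthat \down^z_j\leq i\}$, and using that two weights in the block $\Gamma$ share the same $k$, this gives $b^\lambda_i - b^\mu_i = d_i(\lambda) - d_i(\mu)$ for all $i$; from here each of the three statements is immediate.

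For part (a): on a block the Bruhat order is the componentwise order on down-position sequences, i.e.\ $\mu\succeq\lambda$ iff $\down^\mu_j\geq\down^\lambda_j$ for all $j$ (this follows from the generating relation $\up\down\succ\down\up$, under which exactly one $\down$ moves one step to the right). Then $d_i(\mu)\leq d_i(\lambda)$ for every $i$, so $b^\mu_i\leq b^\lambda_i$ by the identity above.

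For part (b): by Lemma~\ref{soerg:lem:21}(i) the orientation of $\underline\lambda\mu$ is equivalent to $\down^\lambda_j\leq\down^\mu_j<\down^\lambda_{j+1}$ for all $j$ (with the convention $\down^\lambda_{n-k+1}:=n+1$ for the last index). The inequalities $\down^\lambda_j\leq\down^\mu_j$ give $b^\lambda_i-b^\mu_i=d_i(\lambda)-d_i(\mu)\geq 0$ as in (a). For the upper bound, fix $i$ and put $p=d_i(\lambda)$; if $p\leq 1$ there is nothing to prove, and if $p\geq 2$ then $\down^\lambda_p\leq i$, so the orientation condition applied with index $p-1$ (which lies in $\{1,\ldots,n-k-1\}$ since $2\leq p\leq n-k$) yields $\down^\mu_{p-1}<\down^\lambda_p\leq i$, whence $d_i(\mu)\geq p-1$ and $b^\lambda_i-b^\mu_i=p-d_i(\mu)\leq 1$. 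Part (c) then needs no new argument: by Lemma~\ref{soerg:lem:21}(ii) the existence of an oriented $\underline\lambda\,\eta^e\,\overline\mu$ is equivalent to $\down^\lambda_i<\down^\mu_{i+1}$ and $\down^\mu_i<\down^\lambda_{i+1}$ for all $i$, a condition symmetric in $\lambda$ and $\mu$; the second inequality is exactly the input used in the upper-bound step of (b) (which did not use $\down^\lambda_j\leq\down^\mu_j$), so it gives $b^\lambda_i-b^\mu_i\leq 1$, and by symmetry the first gives $b^\mu_i-b^\lambda_i\leq 1$, hence $\abs{b^\lambda_i-b^\mu_i}\leq 1$.

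I do not expect a genuine obstacle: the whole proof is the translation identity plus a careful reading of Lemma~\ref{soerg:lem:21}. The only points demanding a little care are the boundary conventions (the auxiliary index $p-1$ must be a legal index, and the strict inequalities ``$<\down^\lambda_{i+1}$'' in the orientation conditions require the convention $\down^\lambda_{n-k+1}=n+1$), and the reminder that on a single block the Bruhat order is literally the componentwise order on down-positions — so that (a) is in fact an ``if and only if'' — rather than something finer as for general permutations.
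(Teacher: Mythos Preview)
Your proof is correct and follows essentially the same approach as the paper: both reduce to the $\down$-position combinatorics via Lemma~\ref{soerg:lem:21}. Your explicit identity $b^z_i=(k-i+1)+d_i(z)$ makes the argument cleaner than the paper's informal ``at most one more $\up$ to the right'' reasoning for (b); and for (c) you appeal directly to Lemma~\ref{soerg:lem:21}\ref{soerg:item:22} and symmetry, whereas the paper instead decomposes $b^\lambda_i-b^\mu_i=(b^\lambda_i-b^\eta_i)+(b^\eta_i-b^\mu_i)$ and applies (b) twice through the intermediate weight~$\eta$ --- a slightly different but equally short route.
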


\begin{proof}
  If $\mu \succeq \lambda$ then the $i$-th $\down$ of $\mu$ is not on the right of the $i$-th $\down$ of $\lambda$, and the first claim follows.

  Let $\underline \lambda \mu$ be oriented. By Lemma~\ref{soerg:lem:21} we
  have $\down^\lambda_i \leq \down^\mu_i < \down^\lambda_{i+1}$.  This
  means that for every vertex $v \in \bfV$ there is at most one $\up$
  more to the right of $v$ in $\lambda$ than in $\mu$. This is exactly
  \ref{soerg:item:2}.

  The last claim follows from the second: if $\underline \lambda
  \eta^\sigma \overline \mu$ is oriented (for some weight $\eta$ with
  $\boldb$--sequence $\boldb^\eta$), then $b^\lambda_i -b^\mu_i = b^\lambda_i
  - b^\eta_i + b^\eta_i -b^\mu_i \in \{1-1,1+0,0-1,0+0\}$.
\end{proof}

Since we have identified $\Gamma$ with $D_{n,k}$, we can define the length $\len(\lambda)$ of any weight $\lambda \in \Gamma$ to be the length of the corresponding permutation in $D_{n,k}$.

\begin{lemma}
  \label{soerg:lem:24}
  Consider weights $\lambda,\eta$ in the same block $\Gamma$. Then
  \begin{equation}
    \label{soerg:eq:65}
    \deg (\underline \lambda \eta^\sigma ) = \len(\lambda)-\len(\eta)+ 2\len(\sigma).
  \end{equation}
\end{lemma}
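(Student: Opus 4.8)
The plan is to reduce the claim to the degree formula for oriented lower fork diagrams with trivial enhancement, namely to the identity $\deg(\underline\lambda\,\eta^e) = \len(\lambda) - \len(\eta)$, together with the observation (already recorded after the definition of degrees) that enhancing by $\sigma$ adds $2\len(\sigma)$. So the real content is the case $\sigma = e$, and I would handle it as follows.

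First I would recall from Lemma~\ref{soerg:lem:21}\ref{soerg:item:23} that $\underline\lambda\,\eta$ is oriented if and only if $\down^\lambda_i \leq \down^\eta_i < \down^\lambda_{i+1}$ for all $i$; in particular $\eta\succeq\lambda$, so $\len(\lambda)\leq\len(\eta)$ and the right-hand side is nonnegative, as it must be since degrees of oriented fork diagrams are nonnegative. Now I would compute $\deg(\underline\lambda\,\eta)$ directly from the definition: it is the sum over the lower forks of $\underline\lambda$ of $(i-1)$, where $i$ is the position of the (unique) $\down$ of $\eta$ inside that fork. The lower fork diagram $\underline\lambda$ has one $m$-fork for each maximal block of $\lambda$ of the form $\down\up\cdots\up$ (plus initial $\up$-rays, which contribute $0$); inside the fork rooted at such a block, if the $\down$ of $\eta$ sits in the $i$-th position then it has been moved $i-1$ steps to the right relative to where the $\down$ of $\lambda$ sits (which is the leftmost, $1$st, position). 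Hence the contribution of that fork is exactly the number of steps the corresponding $\down$ has been moved right when passing from $\lambda$ to $\eta$.

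Summing over all forks, $\deg(\underline\lambda\,\eta)$ equals the total rightward displacement of the $\down$'s, i.e.\ $\sum_{i} (\down^\eta_i - \down^\lambda_i)$. The key point is then that this total displacement equals $\len(\eta)-\len(\lambda)$: indeed, using the $\down$-distance sequences from \textsection\ref{soerg:sec:combinatorics}, $\len(\lambda) = \sum_i \len(t^\down_{\cdots}) = \sum_i (\text{number of }\up\text{'s right of }\down^\lambda_i) = \sum_i(n-k+i - \down^\lambda_i)$ after re-indexing, and likewise for $\eta$; subtracting, the $n$-dependent terms cancel and one is left with $\len(\eta)-\len(\lambda) = \sum_i(\down^\eta_i - \down^\lambda_i)$. (Equivalently: $\len$ of a $\up\down$-sequence counts inversions $\up\down$, and moving a single $\down$ one step to the right past an $\up$ increases the number of such inversions by exactly one, while moving it past another $\down$ changes nothing; orientedness guarantees each $\down$ of $\eta$ stays within the fork of the corresponding $\down$ of $\lambda$, so no $\down$ overtakes another and the count is clean.) This gives $\deg(\underline\lambda\,\eta^e) = \len(\lambda)-\len(\eta) + 0$, and then adding $2\len(\sigma)$ for the enhancement yields \eqref{soerg:eq:65}.

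The main obstacle is the bookkeeping in the middle step: one must check carefully that the position of $\eta$'s $\down$ within its fork in $\underline\lambda$ coincides with $\down^\eta_i - \down^\lambda_i + 1$, which requires knowing that no two $\down$'s of $\eta$ land in the same fork and that their relative order is preserved — both consequences of the orientedness inequalities $\down^\lambda_i \leq \down^\eta_i < \down^\lambda_{i+1}$. Once that combinatorial identification is pinned down, the length computation is a short telescoping argument and the rest is immediate from the definitions of degree.
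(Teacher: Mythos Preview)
Your approach is essentially the same as the paper's: both compute $\deg(\underline\lambda\,\eta)$ as the total rightward displacement $\sum_j(\down^\eta_j - \down^\lambda_j)$ of the $\down$'s and identify this with a difference of lengths, then add $2\len(\sigma)$ for the enhancement.

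One sign slip to fix: under the paper's conventions the Bruhat order on weights is \emph{opposite} to the Bruhat order on the corresponding permutations in $D_{n,k}$ (cf.\ the parenthetical remark in the proof of Proposition~\ref{soerg:prop:9}). Hence $\eta\succeq\lambda$ in the weight order gives $\len(\lambda)\geq\len(\eta)$, not $\leq$; correspondingly, since $\len(\lambda) = \sum_j(k+j-\down^\lambda_j)$, subtraction yields
\[
\len(\lambda)-\len(\eta) \;=\; \sum_j(\down^\eta_j-\down^\lambda_j),
\]
the opposite of what you wrote. Your two sign errors cancel and the final formula is correct, but the intermediate reasoning (``$\len(\lambda)\leq\len(\eta)$ and the right-hand side is nonnegative'') is internally inconsistent as written and should be cleaned up.
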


\begin{proof}
   Since $\underline \lambda \eta$ is oriented, the weight $\eta$ is obtained from $\lambda$ permuting the $\up$'s and $\down$'s on each lower fork of $\underline \lambda$. The degree of $\underline \lambda \eta$ is the sum of how much each $\down$ of $\lambda$ has been moved to the right to reach the corresponding $\down$ of $\eta$; hence it is just the length of this permutation. In other words, if we let $z,z' \in D_{n,k}$  be the permutations corresponding to $\lambda,\eta$ respectively, then we have $z=z' y$ for some $y \in \bbS_n$ with $\len(z')=\len(z)+\len(y)$, and $\deg(\underline \lambda \eta) = \len(y)$.
\end{proof}

\subsection{The algebra structure}
\label{soerg:sec:algebra-structure}

We connect now our diagrams with the commutative algebra from Section~\ref{soerg:sec:soergel-modules}.
Let us fix a block $\Gamma$ with $k$ $\up$'s
and $n-k$ $\down$'s.

\subsubsection{Relations with polynomial rings}
\label{soerg:sec:relat-with-polyn}

We associate to the weight $\lambda$ the ring $R_{
  \lambda}=R_{\boldb^\lambda}=R/I_{\boldb^\lambda}$ (defined in \textsection\ref{soerg:sec:ideal-gener-compl}), and we want to describe $\sfZ_{z,z'}$ from \eqref{soerg:eq:60} diagrammatically.

Given an oriented lower fork diagram
$\underline \lambda \eta^\sigma$, we define the polynomial
\begin{equation}
p_{\underline  \lambda \eta^\sigma} =\mathfrak
S'_\sigma(x_{\up^\eta_1},\ldots,x_{\up^\eta_k}) \cdot \prod_{j=1}^{n-k} x_{\down^\lambda_j} x_{\down^\lambda_j+1} \cdots x_{\down^\eta_j-1} \in R.  \label{soerg:eq:1}
\end{equation}
with $\mathfrak
S'_\sigma(x_{\up^\eta_1},\ldots,x_{\up^\eta_k}) $ as defined
in \textsection\ref{sec:schubert-polynomials}. Notice that
the terms on the right always make sense because, since
$\underline \lambda \eta^\sigma$ is oriented, $\down_j^\eta
\geq \down_j^\lambda$ for all indices $j$ (cf.\
Lemma~\ref{soerg:lem:21}). Often we will consider
$p_{\underline \lambda \epsilon^\sigma}$ as a polynomial in
the quotient $R_\lambda$, but it will be convenient to have
a chosen lift in $R$. Notice that we have
\begin{equation}
  \label{soerg:eq:66}
  \deg(p_{\underline \lambda \eta^\sigma}) = 2(\len(\sigma) + \len(\lambda) - \len(\eta)).
\end{equation}

\begin{prop}
  \label{soerg:prop:6}
  Let $ \lambda,\mu \in \Gamma$ be weights,
and let $z, z' $ be the corresponding elements of $D$.
Let $\ucalZ_{\mu,\lambda}$ be the graded vector space with homogeneous basis
\begin{equation}
  \label{soerg:eq:64}
  \{ \underline \mu \eta^\sigma \overline \lambda \suchthat \underline \mu \eta^\sigma \overline \lambda \text{ is an oriented fork diagram} \}.
\end{equation}
With $\widetilde\sfW_{z,z'}$ as defined in Theorem~\ref{soerg:thm:2} we have an isomorphism of graded vector spaces
  \begin{equation}
    \label{soerg:eq:20}
    \begin{split}
     \Psi: \ucalZ_{\mu,\lambda} & \longrightarrow \Hom_R(\sfC_{w_k z},\sfC_{w_k z'})/ \widetilde\sfW_{z,z'}\\
      \underline \mu \eta^\sigma \overline \lambda & \longmapsto (1 \mapsto p_{\underline \mu \eta^\sigma}) + \widetilde\sfW_{z,z'}.
    \end{split}
  \end{equation}
\end{prop}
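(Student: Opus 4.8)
The plan is to verify, in order, that $\Psi$ is well defined, that it preserves degrees, and that it is bijective; as every relevant polynomial is a monomial times a Schubert monomial, the argument is combinatorial bookkeeping with the three $\up\down$-sequences $\lambda,\eta,\mu$. For an oriented fork diagram $\underline\mu\eta^\sigma\overline\lambda$ I first check that $1\mapsto p_{\underline\mu\eta^\sigma}$ really defines an element of $\Hom_R(\sfC_{w_kz},\sfC_{w_kz'})=\Hom_R(R_{\boldb^\lambda},R_{\boldb^\mu})$. By Lemmas~\ref{soerg:lem:10} and~\ref{soerg:lem:11} this is equivalent to $x_1^{c_1}\cdots x_n^{c_n}\mid p_{\underline\mu\eta^\sigma}$ with $c_i=\max\{b^\mu_i-b^\lambda_i,0\}$, and by Lemma~\ref{soerg:lem:19}\,(\ref{soerg:item:3}) each $c_i$ equals $0$ or $1$. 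The orientedness of $\underline\mu\eta^\sigma\overline\lambda$ says that both $\underline\mu\eta$ and $\underline\lambda\eta$ are oriented lower fork diagrams, i.e.\ $\down^\mu_i\le\down^\eta_i<\down^\mu_{i+1}$ and $\down^\lambda_i\le\down^\eta_i<\down^\lambda_{i+1}$ (Lemma~\ref{soerg:lem:21}\,(\ref{soerg:item:23})). From these inequalities I would deduce: if vertex $i$ is a $\down$ of $\eta$ then $\lambda$ and $\mu$ carry the same number of $\down$'s among the first $i$ vertices, hence $b^\mu_i=b^\lambda_i$ and $c_i=0$; if $i$ is an $\up$ of $\eta$ and $c_i=1$, then either there is a $j$ with $\down^\mu_j\le i<\down^\eta_j$, so $x_i$ occurs in $\prod_j x_{\down^\mu_j}\cdots x_{\down^\eta_j-1}$, or the $\up$ at position $i$ is involved in an inversion of $\sigma$, so $x_i$ occurs in $\mathfrak S'_\sigma(x_{\up^\eta_1},\dots,x_{\up^\eta_k})$. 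The same estimates, now used to bound exponents from above (via the strict inequalities and the exponent bounds for $\mathfrak S'_\sigma$), show that $p_{\underline\mu\eta^\sigma}$ is nonzero in $R_{\boldb^\mu}$, so $\Psi$ indeed lands in $\Hom_R(\sfC_{w_kz},\sfC_{w_kz'})$, hence in the quotient.

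\emph{Grading.} With the degree shifts of \textsection\ref{soerg:sec:grading} the morphism $1\mapsto p_{\underline\mu\eta^\sigma}$ has degree $\deg p_{\underline\mu\eta^\sigma}+\len(w_kz)-\len(w_kz')$, and $\len(w_kz)-\len(w_kz')=\len(\lambda)-\len(\mu)$ since $z,z'\in D$. Using \eqref{soerg:eq:66} for $\deg p_{\underline\mu\eta^\sigma}$, Lemma~\ref{soerg:lem:24} for $\deg(\underline\mu\eta^\sigma)$, and the definition of the degree of the upper half $\eta^\sigma\overline\lambda$ (which equals $\deg(\underline\lambda\eta^\sigma)=\len(\lambda)-\len(\eta)+2\len(\sigma)$), one checks that the degree of $1\mapsto p_{\underline\mu\eta^\sigma}$ equals $\len(\lambda)+\len(\mu)-2\len(\eta)+2\len(\sigma)=\deg(\underline\mu\eta^\sigma\overline\lambda)$, the degree recorded on the basis \eqref{soerg:eq:64} of $\ucalZ_{\mu,\lambda}$. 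Hence $\Psi$ is homogeneous of degree $0$.

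\emph{Bijectivity.} Fixing $\eta$ with $\underline\mu\eta^e\overline\lambda$ oriented and writing $m_{\mu,\eta}=\prod_j x_{\down^\mu_j}\cdots x_{\down^\eta_j-1}$, the polynomials $p_{\underline\mu\eta^\sigma}=m_{\mu,\eta}\cdot\mathfrak S'_\sigma(x_{\up^\eta_1},\dots,x_{\up^\eta_k})$ run over $k!$ pairwise distinct monomials as $\sigma$ ranges over $\bbS_k$, since the $\mathfrak S'_\sigma$ are the $k!$ distinct monomials of the monomial basis of $R/(R^{\bbS_k}_+)$ (Remark~\ref{soerg:rem:3}), evaluated in the pairwise distinct variables $x_{\up^\eta_1},\dots,x_{\up^\eta_k}$. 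Letting $\eta$ vary too, I would verify that all these monomials are distinct, that each is one of the basis monomials of \eqref{soerg:eq:17} for $\Hom_R(\sfC_{w_kz},\sfC_{w_kz'})$ (with $z\leftrightarrow\lambda$, $z'\leftrightarrow\mu$), and — the crucial point — that the remaining basis monomials of \eqref{soerg:eq:17} are exactly those divisible by one of the generators $1\mapsto(x_{\down^\lambda_j}\cdots x_{\beta(j)})(x_1^{c_1}\cdots x_{n-1}^{c_{n-1}})$ of $\widetilde\sfW_{z,z'}$ from Theorem~\ref{soerg:thm:2}\,(\ref{soerg:item:21}); in case~(\ref{soerg:item:20}) of that theorem both $\ucalZ_{\mu,\lambda}$ (by Lemma~\ref{soerg:lem:21}\,(\ref{soerg:item:22})) and $\Hom_R(\sfC_{w_kz},\sfC_{w_kz'})/\widetilde\sfW_{z,z'}$ vanish. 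Granting this partition, the $\Psi(\underline\mu\eta^\sigma\overline\lambda)$ form coset representatives for a basis of $\Hom_R(\sfC_{w_kz},\sfC_{w_kz'})/\widetilde\sfW_{z,z'}$, so $\Psi$ is an isomorphism of graded vector spaces.

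\emph{Main obstacle.} The real content is the partition statement in the last step. That monomials coming from oriented fork diagrams avoid the illicit patterns follows by reading $\down^\eta_i<\down^\mu_{i+1}$ and $\down^\eta_i<\down^\lambda_{i+1}$ as saying that no exponent accumulated at an $\up$-vertex can reach the next $\down$-vertex. The converse is the delicate part: from an admissible monomial not divisible by any illicit pattern I must reconstruct the intermediate weight $\eta$ (whose $j$-th $\down$ marks where the $j$-th factor $x_{\down^\mu_j}\cdots$ stops) and the permutation $\sigma$ (reading off the residual exponents at the $\up$-vertices), and check this is well defined and inverse to $\sigma\mapsto p_{\underline\mu\eta^\sigma}$. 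Matching the explicit formula for $\beta(j)$ against the orientedness conditions vertex by vertex is where the bookkeeping is heaviest, and is the step I expect to be the main difficulty.
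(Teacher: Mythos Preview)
Your approach matches the paper's. Two remarks.

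First, the well-definedness step is simpler than your case analysis suggests: the Schubert factor is never needed. Writing $p_{\underline\mu\eta^e}=\boldx^{\boldsymbol\epsilon}$ with $\epsilon_i=b^\mu_i-b^\eta_i\in\{0,1\}$, the orientation of $\underline\lambda\eta$ gives $\lambda\preceq\eta$ (Lemma~\ref{soerg:lem:26}) and hence $b^\eta_i\le b^\lambda_i$ by Lemma~\ref{soerg:lem:19}\,(\ref{soerg:item:1}); thus $\epsilon_i\ge b^\mu_i-b^\lambda_i$, so $c_i\le\epsilon_i$ directly and $x_1^{c_1}\cdots x_n^{c_n}\mid p_{\underline\mu\eta^e}\mid p_{\underline\mu\eta^\sigma}$. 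Your second alternative (that an inversion of $\sigma$ supplies the missing $x_i$) is both unnecessary and not correct as stated: for $\sigma=e$ the Schubert monomial is $1$, yet well-definedness still holds. The paper also checks at this stage that $p_{\underline\mu\eta^\sigma}$ is already one of the basis monomials \eqref{soerg:eq:17}, which streamlines the injectivity argument.

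Second, the partition you flag as the main obstacle is exactly what the paper carries out, and essentially as you describe. Given a basis monomial $\mathfrak m\in\Hom_R(\sfC_{w_kz},\sfC_{w_kz'})$ not lying in $\widetilde\sfW_{z,z'}$, set $\ell_j$ to be the largest index with $x_{\down^\mu_j}x_{\down^\mu_j+1}\cdots x_{\ell_j-1}\mid\mathfrak m$; the non-illicitness forces $\ell_j<\down^\lambda_{j+1}$ and $\ell_j<\down^\mu_{j+1}$, so by Lemma~\ref{soerg:lem:21} placing the $\down$'s of $\eta$ at the positions $\ell_j$ yields an oriented diagram $\underline\mu\eta\overline\lambda$. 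The quotient $\mathfrak m/p_{\underline\mu\eta^e}$ is then supported on the $\up$-positions of $\eta$ and, by the exponent bounds, lies in the monomial Schubert basis, hence equals $\mathfrak S'_\sigma(x_{\up^\eta_1},\dots,x_{\up^\eta_k})$ for a unique $\sigma$. This gives the inverse of $\Psi$.
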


\begin{proof}
First, note that $p_{ \underline \mu \eta^\sigma} = p_{ \underline \mu \eta^e} \mathfrak S'_\sigma(x_{\up^\eta_1},\ldots,x_{\up^\eta_k})$. By definition we have $p_{\underline \mu \eta^e}=x_1^{\epsilon_1}\cdots x_n^{\epsilon_n}$ with $\epsilon_j=b^\mu_j-b^\eta_j$. By Lemma~\ref{soerg:lem:19}, $b^\lambda_j \geq b^\eta_j$ for every $j$, hence $\epsilon_j \geq b^\mu_j - b^\lambda_j$. By Corollary~\ref{soerg:cor:8}, the map $1 \mapsto p_{\underline \mu \eta^\sigma}$ induces a well-defined morphism in $\Hom_R (\sfC_{w_k  z}, \sfC_{w_k  z'})$, hence also in the quotient.

Let us show that \eqref{soerg:eq:20} is homogeneous of degree $0$. The degree of the morphism $1 \mapsto p_{\underline \mu \eta^\sigma}$ in $\Hom_R(\sfC_{w_k z}, \sfC_{w_k z'})$ is $\deg (p_{\underline \mu \eta^\sigma}) - \len(w_k  z') + \len(w_k  z)$, that is the same as $\deg (p_{\underline \mu \eta^\sigma}) - \len(z') + \len(z) = \deg (p_{\underline \mu \eta^\sigma}) - \len(\mu) + \len(\lambda)$. By \eqref{soerg:eq:66} this is $\len(\lambda) + \len(\mu) - 2\len(\eta) + 2 \len(\sigma)$. By Lemma~\ref{soerg:lem:24}, this is the same as $\deg( \underline \mu \eta^\sigma \overline \lambda )$.

Next, we want to see that $p_{ \underline \mu \eta^\sigma}$ is always a monomial of the basis \eqref{soerg:eq:17}. For that, note that by definition $\epsilon_j=1$ exactly when $b^\mu_j=b^\eta_j + 1$. Moreover, the monomial $\mathfrak S'_\sigma(x_{\up^\eta_1},\ldots,x_{\up^\eta_k})=x_{1}^{i_1}\cdots x_{n}^{i_n}$ is by construction in the basis of $R_{\eta}$, that means that $i_j < b^\eta_j$ for every $j$. It follows that $i_j + \epsilon_j < b^\mu_j$, hence $p_{\underline \mu \eta^\sigma}$ is a monomial of the basis \eqref{soerg:eq:17}.

We claim now that none of the $p_{\underline \mu \eta^\sigma}$ is in $\widetilde\sfW_{z,z'}$. Note that by construction the indeterminate $x_{\down^\eta_j}$ does not appear in $p_{\underline \mu \eta^\sigma}$. By Lemma~\ref{soerg:lem:21} we have $\down_j^\lambda \leq \down_j^\eta$ and both $\down_j^\eta < \down_{j+1}^\lambda$ and $\down_j^\eta < \down_{j+1}^\mu$. This means that both $x_{\down^\lambda_j} \cdots x_{\down^\lambda_{j+1}-1}$ and $x_{\down^\lambda_j} \cdots x_{\down^\mu_{j+1}-1}$ do not divide $p_{\underline \mu \eta^\sigma}$.

To conclude the proof, we need to construct an inverse of $\Psi$. Take
a basis monomial $\mathfrak m = x_1^{i_1}\cdots x_1^{i_n} \in \Hom_R
(\sfC_{w_k  z}, \sfC_{w_k  z'})$ that does not lie in $\widetilde\sfW_{z,z'}$. For
every $j$, let $\ell_j$ be the maximum such that $x_{\down_j^\mu}
x_{\down_j^{\mu}+1} \cdots x_{\ell_j-1}$ divide $\mathfrak m$. As
$\mathfrak m$ does not lie in $\widetilde\sfW_{z,z'}$, it should be $\ell_j <
\down_{j+1}^\lambda$ and $\ell_j < \down_{i+1}^\mu$. Form a weight
$\eta$ in the same block of $\lambda$ and $\mu$ with the $\down$'s in
positions $\ell_1,\ldots,\ell_{n-k}$. By Lemma~\ref{soerg:lem:21}
the diagram $\underline \mu \eta \overline \lambda$ is oriented. Let $\mathfrak
m'$ be the quotient of $\mathfrak m$ by $p_{\underline \mu
  \eta^e}$. By construction, $b^\mu_j = b^\eta_j$ if $x_j$ does not
appear in $p_{\underline \mu \eta^e}$, and $b^\mu_j = b^\eta_j+1$ if
$x_j$ appears (with coefficient $1$) in $p_{\underline \mu
  \eta^e}$. Hence, it is clear that $\mathfrak m'$ is a monomial
$\mathfrak S'_\sigma(x_{\up^\eta_1},\ldots,x_{\up^\eta_k})$. By
construction, it follows that in this way we get an inverse of the map
\eqref{soerg:eq:20}, that is hence an isomorphism.
\end{proof}

As a consequence we obtain the following result, that completes the proof of Theorem~\ref{soerg:thm:2}:

\begin{lemma}
  \label{lem:17}
  For all $z,z' \in D$ we have
  \begin{equation}
\dim_{\C} \Hom_R(\sfC_{w_k z},\sfC_{w_k z'})/\widetilde\sfW_{z,z'} = \dim_{\C} \sfZ_{z,z'}.\label{eq:161}
\end{equation}
\end{lemma}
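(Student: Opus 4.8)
The plan is to compare two quotients of $\Hom_R(\sfC_{w_k z},\sfC_{w_k z'})$. The computation already carried out in the proof of Theorem~\ref{soerg:thm:2} shows that every generator \eqref{soerg:eq:18} of $\widetilde\sfW_{z,z'}$ is illicit, i.e.\ $\widetilde\sfW_{z,z'}\subseteq\sfW_{z,z'}$; hence $\sfZ_{z,z'}=\Hom_R(\sfC_{w_k z},\sfC_{w_k z'})/\sfW_{z,z'}$ is a quotient of $\Hom_R(\sfC_{w_k z},\sfC_{w_k z'})/\widetilde\sfW_{z,z'}$, so that
\[
  \dim_\C\sfZ_{z,z'}\;\le\;\dim_\C\!\big(\Hom_R(\sfC_{w_k z},\sfC_{w_k z'})/\widetilde\sfW_{z,z'}\big)\;=\;\dim_\C\ucalZ_{\mu,\lambda},
\]
where $\mu,\lambda\in\Gamma$ are the weights corresponding to $z,z'$ and the last equality is Proposition~\ref{soerg:prop:6}. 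So the whole content of the lemma is the reverse inequality $\dim_\C\sfZ_{z,z'}\ge\dim_\C\ucalZ_{\mu,\lambda}$; once it is established, all three spaces have equal dimension and in particular $\sfW_{z,z'}=\widetilde\sfW_{z,z'}$, which is exactly what finishes Theorem~\ref{soerg:thm:2}.

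I would obtain the reverse inequality by computing $\dim_\C\sfZ_{z,z'}$ outright through the category $\catO$ picture of Section~\ref{sec:category-cato} --- which, as in the proofs of Theorem~\ref{thm:3} and Proposition~\ref{soerg:prop:4}, we are free to invoke ahead of time. There Soergel's functor identifies $\Hom_R(\sfC_{w_k z},\sfC_{w_k z'})$ with $\Hom_{\catO}(P(w_k z\cdot 0),P(w_k z'\cdot 0))$ and the subspace $\sfW_{z,z'}$ of illicit morphisms with the morphisms factoring through the projective covers of the simple modules of $\catO_0$ that are annihilated in the subquotient category $\calQ_k(\compn)$; consequently $\sfZ_{z,z'}$ is the homomorphism space between the images of $P(w_k z\cdot 0)$ and $P(w_k z'\cdot 0)$ in $\calQ_k(\compn)$. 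Its dimension can then be read off by the usual computation of $\dim\Hom$ between projectives as a pairing of their classes in the (graded) Grothendieck group, carried out inside $\calQ_k(\compn)$: using BGG reciprocity for the properly stratified category $\calQ_k(\compn)$ together with the explicit canonical basis elements $C_{w_k z}$ and $C_{w_k z'}$ of Proposition~\ref{soerg:prop:3} (and the auxiliary elements of Corollary~\ref{soerg:cor:1}), one finds that $\dim_\C\sfZ_{z,z'}$ equals $k!$ times the number of weights $\eta\in\Gamma$ with $\down^\mu_i\le\down^\eta_i<\down^\mu_{i+1}$ and $\down^\lambda_i\le\down^\eta_i<\down^\lambda_{i+1}$ for all $i$. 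By Lemma~\ref{soerg:lem:21} these $\eta$ are exactly those for which $\underline\mu\,\eta\,\overline\lambda$ is an oriented fork diagram, and since each of them admits exactly $k!$ enhancements by permutations in $\bbS_k$, this count is precisely $\dim_\C\ucalZ_{\mu,\lambda}$. Hence $\dim_\C\sfZ_{z,z'}\ge\dim_\C\ucalZ_{\mu,\lambda}$, which together with the first paragraph gives \eqref{eq:161}.

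The hard part is the dimension count in the middle: one must pin down which simples of $\catO_0$ survive in $\calQ_k(\compn)$ --- a bookkeeping matter with the various coset-representative sets ($W_k\backslash\bbS_n$ versus $D'$ and $D_{n,k}$) underlying Definition~\ref{soerg:def:2} --- check that ``illicit'' coincides with ``factors through a killed projective'', and then convert the Kazhdan--Lusztig data of Proposition~\ref{soerg:prop:3} and Corollary~\ref{soerg:cor:1} into the counting of the integer intervals $[\max(\down^\mu_i,\down^\lambda_i),\,\min(\down^\mu_{i+1},\down^\lambda_{i+1}))$. One should also dispose separately of the degenerate case~\ref{soerg:item:20} of Theorem~\ref{soerg:thm:2}, where $\widetilde\sfW_{z,z'}$ is all of $\Hom_R(\sfC_{w_k z},\sfC_{w_k z'})$ and no such $\eta$ exists, so that both sides of \eqref{eq:161} vanish, keeping it apart from the generic case~\ref{soerg:item:21}. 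Everything else --- the quotient comparison, Proposition~\ref{soerg:prop:6}, and the enhancement count --- is formal.
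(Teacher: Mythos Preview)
Your approach is essentially the same as the paper's: both sides compute $\dim_\C\sfZ_{z,z'}$ through the category $\catO$ interpretation and compare with the diagrammatic count $\dim_\C\ucalZ_{\mu,\lambda}$ coming from Proposition~\ref{soerg:prop:6}. The paper packages this into Lemma~\ref{lem:2}, which identifies $\sfZ_{z,z'}$ with $\Hom_{\catO^{\frakp,\frakq\pres}_0}(Q(w_k z),Q(w_k z'))$ and then cites \cite[Lemma~7.13]{miopaperO} for the actual dimension count in the subquotient category; your initial inequality $\dim_\C\sfZ_{z,z'}\le\dim_\C\ucalZ_{\mu,\lambda}$ is not needed once both sides are computed directly.

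The one place where your sketch differs is the mechanism for that count: you propose to extract it from the Hecke-algebra expressions of Proposition~\ref{soerg:prop:3} and Corollary~\ref{soerg:cor:1} together with BGG reciprocity for $\calQ_k(\compn)$. This is in principle viable, but be aware that Proposition~\ref{soerg:prop:3} gives you Verma multiplicities in the \emph{full} $\catO_0$, not the standard/proper-standard multiplicities in the subquotient $\calQ_k(\compn)$; translating one into the other requires exactly the combinatorics of passing to the parabolic Hecke module and then to the $\frakq$--presentable quotient, which is what \cite{miopaperO} carries out. So your route and the paper's route converge on the same external input, just phrased differently. Corollary~\ref{soerg:cor:1} plays no direct role here --- it was used earlier only to compute $\dim_\C\sfC_{s_j w_k z}$, not homomorphism spaces.
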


\begin{proof}
  By Proposition~\ref{soerg:prop:6}, the dimension of $ \Hom_R(\sfC_{w_k z},\sfC_{w_k z'})/\widetilde\sfW_{z,z'}$ is the same as $\dim_\C \ucalZ_{\mu,\lambda}$, where $\lambda,\mu \in \Gamma$ are the weights corresponding to $z,z'$. This dimension is simply $k!$ times the number of unenhanced weights $\eta$ such that $\underline \mu \eta \overline \lambda$ is oriented. By Lemma \ref{lem:2} this is the same as $\dim \sfZ_{z,z'}$.
\end{proof}

Being $\Gamma$ and $D_{n,k}$ identified, we will often write
$\sfC_\lambda$ for $\sfC_{w_k z}$, where $z \in D_{n,k}$ is
the element corresponding to $\lambda$. If $a=\underline
\lambda$ and $b=\overline \lambda$ we will even write
$\sfC_a$ or $\sfC_b$ instead of $\sfC_\lambda$. We will do
similarly for $\sfW_{z,z'}$ and $\sfZ_{z,z'}$.

\subsubsection{The algebra structure}
\label{soerg:sec:algebra-structure-1}

Thanks to Proposition~\ref{soerg:prop:6}, we can define a graded algebra
$A=A_\Gamma$ over $\C$. As a graded vector space, a homogeneous basis is given
by
\begin{equation}
 \{(\underline \alpha \lambda^\sigma \overline \beta) \suchthat \text{for all }
\alpha, \lambda, \beta \in \Gamma , \sigma \in \bbS_k\text{ such that } \alpha \supset
\lambda \subset \beta \}\label{soerg:eq:67}
\end{equation}
 that is the same as
 \begin{equation}
 \{(a \lambda^\sigma b) \suchthat \text{for all oriented fork diagrams }
a\lambda b \text{ with } \lambda \in \Gamma \}.\label{soerg:eq:68}
\end{equation}

The degree on this basis is given by the degrees on fork
diagrams. For $\lambda \in \Gamma$ we write $e_\lambda$ for
$(\underline \lambda \lambda \overline \lambda)$. Note that the
vectors $e_\lambda$ give a basis for the degree $0$ component of $A$.

\begin{example}
  \label{ex:3}
  Let us consider a block $\Gamma$ of weights with 2 $\up$'s and 1 $\down$, that is
  \begin{equation}
    \label{eq:30}
    \Gamma= \{\lambda_1 = \up \up \down, \;\lambda_2=\up \down \up,\; \lambda_3 = \down \up \up\}.
  \end{equation}
 Then the basis $\{e_{\lambda_i}\}$ of the degree $0$ component is given by
 \begin{equation}
   \label{eq:31}
   e_{\lambda_1} =\,\,
   \begin{tikzpicture}[baseline=-0.16cm]
     \draw[] (3.000000,-0.8) -- (3.0,0.60000);
     \draw[] (3.500000,-0.8) -- (3.5,0.600000);
     \draw[] (4.000000,-0.8) -- (4,0.600000);
    \mydrawup{(3,0)};
    \mydrawup{(3.5,0)};
    \mydrawdown{(4,0)};
   \end{tikzpicture}\,
,\quad   e_{\lambda_2} =\,\,
   \begin{tikzpicture}[baseline=-0.16cm]
     \draw[] (3.000000,-0.8) -- (3.0,0.60000);
    \draw[yscale=0.7] (3.5000000,-0.2) .. controls (3.5000000,-0.500000) and (3.750000,-1.000000) ..  (3.750000,-1.000000);
    \draw[yscale=0.7] (4.00000,0)  .. controls (4.000000,-0.500000) and (3.750000,-1.000000) ..  (3.750000,-1.000000);
    \mydrawup{(3,0)};
    \mydrawdown{(3.5,0)};
    \mydrawup{(4,0)};
    \draw[yscale=0.7] (3.5000000,-0.2) .. controls (3.5000000,0.500000) and (3.750000,0.8500000) ..  (3.750000,0.8500000);
    \draw[yscale=0.7] (4.00000,0)  .. controls (4.000000,0.500000) and (3.750000,0.8500000) ..  (3.750000,0.8500000);
   \end{tikzpicture}\,,
\quad   e_{\lambda_3} =\,\,
   \begin{tikzpicture}[baseline=-0.16cm]
    \draw[yscale=0.7] (3.000000,-0.2) .. controls (3.000000,-0.500000) and (3.50000,-1.000000) ..  (3.50000,-1.000000);
    \draw[yscale=0.7] (3.5000000,0) .. controls (3.5000000,-0.500000) and (3.50000,-1.000000) ..  (3.50000,-1.000000);
    \draw[yscale=0.7] (4.00000,0)  .. controls (4.000000,-0.500000) and (3.50000,-1.000000) ..  (3.50000,-1.000000);
    \mydrawdown{(3,0)};
    \mydrawup{(3.5,0)};
    \mydrawup{(4,0)};
    \draw[yscale=0.7] (3.000000,-0.2) .. controls (3.000000,0.500000) and (3.50000,0.8500000) ..  (3.50000,0.8500000);
    \draw[yscale=0.7] (3.5000000,0) .. controls (3.5000000,0.500000) and (3.50000,0.8500000) ..  (3.50000,0.8500000);
    \draw[yscale=0.7] (4.00000,0)  .. controls (4.000000,0.500000) and (3.50000,0.8500000) ..  (3.50000,0.8500000);
   \end{tikzpicture}
 \end{equation}
\end{example}

From Proposition~\ref{soerg:prop:6} we get the following:
\begin{corollary}\label{soerg:cor:5}
  There is an isomorphism of graded vector spaces
  \begin{equation}
    A \cong \bigoplus_{z,z' \in D} \Hom(\mathsf C_{w_k z},\mathsf C_{w_k  z'})/\sfW_{z,z'}.\label{soerg:eq:69}
  \end{equation}
  This defines a graded algebra structure on $A$.\label{soerg:cor:2}
\end{corollary}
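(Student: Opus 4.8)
The plan is to read off the isomorphism \eqref{soerg:eq:69} from Proposition~\ref{soerg:prop:6} combined with Theorem~\ref{soerg:thm:2}, and then to transport along it the obvious composition product, the one point requiring care being that the illicit morphisms form a graded two-sided ideal.

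\textbf{Step 1 (graded vector space isomorphism).} Recall the identification $\Gamma\cong D_{n,k}$ from \textsection\ref{soerg:sec:blocks}. By \eqref{soerg:eq:67} the homogeneous basis of $A$ consists of the oriented fork diagrams $\underline\alpha\,\lambda^\sigma\,\overline\beta$ with $\alpha\supset\lambda\subset\beta$; partitioning this basis according to the pair of weights $(\alpha,\beta)$ exhibits $A=\bigoplus_{\alpha,\beta\in\Gamma}\ucalZ_{\alpha,\beta}$ as a direct sum of graded subspaces, with $\ucalZ_{\alpha,\beta}$ the space appearing in Proposition~\ref{soerg:prop:6} (first subscript playing the role of $\mu$, second that of $\lambda$). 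By Theorem~\ref{soerg:thm:2} we have $\widetilde\sfW_{z,z'}=\sfW_{z,z'}$, so Proposition~\ref{soerg:prop:6} yields, for each $(\alpha,\beta)$, an isomorphism of graded vector spaces $\Psi_{\alpha,\beta}\colon\ucalZ_{\alpha,\beta}\xrightarrow{\ \sim\ }\Hom_R(\sfC_{w_kz_\beta},\sfC_{w_kz_\alpha})/\sfW_{z_\beta,z_\alpha}$, where $z_\alpha,z_\beta\in D$ correspond to $\alpha,\beta$; it is degree preserving by \eqref{soerg:eq:66} and Lemma~\ref{soerg:lem:24}. Taking the direct sum of the $\Psi_{\alpha,\beta}$ and reindexing over $D$ gives the graded vector space isomorphism \eqref{soerg:eq:69}.

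\textbf{Step 2 (algebra structure on the right-hand side).} Identify $\widetilde A:=\bigoplus_{z,z'\in D}\Hom_R(\sfC_{w_kz},\sfC_{w_kz'})$ with $\End_R\bigl(\bigoplus_{z\in D}\sfC_{w_kz}\bigr)$; since every $\sfC_{w_kz}$ is graded and morphisms decompose into homogeneous components, $\widetilde A$ is an associative unital \emph{graded} $\C$-algebra, with unit $\sum_{z\in D}\id_{\sfC_{w_kz}}$. The subspace $\sfW:=\bigoplus_{z,z'}\sfW_{z,z'}$ is homogeneous by \textsection\ref{soerg:sec:grading}, and it is a two-sided ideal of $\widetilde A$: if $\phi\colon\sfC_{w_kz}\to\sfC_{w_kz'}$ is illicit, i.e.\ factors through some $\sfC_y$ with $y$ a longest coset representative for $W_k\backslash\bbS_n$ and $y\notin D'$ (Definition~\ref{soerg:def:2}), then for arbitrary $\psi\colon\sfC_{w_kz'}\to\sfC_{w_kz''}$ and $\chi\colon\sfC_{w_kz_0}\to\sfC_{w_kz}$ the composites $\psi\circ\phi$ and $\phi\circ\chi$ again factor through $\sfC_y$, hence are illicit. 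Therefore the quotient $\widetilde A/\sfW=\bigoplus_{z,z'}\Hom_R(\sfC_{w_kz},\sfC_{w_kz'})/\sfW_{z,z'}$ inherits the structure of an associative unital graded $\C$-algebra.

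\textbf{Step 3 (transport and conclusion).} Transporting the algebra structure of $\widetilde A/\sfW$ along the graded vector space isomorphism \eqref{soerg:eq:69} of Step~1 endows $A$ with the structure of a graded $\C$-algebra, whose grading on the basis \eqref{soerg:eq:67} is the fork-diagram degree (consistency here is exactly the degree-preservation of the $\Psi_{\alpha,\beta}$); under $\bigoplus\Psi_{\alpha,\beta}$ the unit corresponds to $\sum_{\lambda\in\Gamma}e_\lambda$. The only substantive point is the stability of the illicit morphisms under pre- and post-composition used in Step~2, which is immediate from Definition~\ref{soerg:def:2}; everything else is bookkeeping with the identification $\Gamma\cong D_{n,k}$.
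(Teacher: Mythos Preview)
Your proof is correct and follows essentially the same approach as the paper, which treats the corollary as an immediate consequence of Proposition~\ref{soerg:prop:6} (combined with $\widetilde\sfW_{z,z'}=\sfW_{z,z'}$ from Theorem~\ref{soerg:thm:2}) without spelling out the details. Your Step~2, making explicit that the illicit morphisms $\sfW$ form a homogeneous two-sided ideal so that the quotient inherits a graded algebra structure, is precisely the detail the paper leaves implicit; it is indeed immediate from Definition~\ref{soerg:def:2}, as you note.
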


The product of two basis vectors of $A$ can be
computed explicitly using the isomorphism
\eqref{soerg:eq:69} as explained in details in the following
Remark~\ref{soerg:rem:2}. Unfortunately, we are not able to describe the
multiplication in the algebra $A$ purely in terms of
diagrams. Nevertheless, the diagrammatic
description proves useful to find other properties of the
algebra $A$, as we will explain in the following.

\begin{remark}
  Explicitly, the multiplication of the basis vectors $(a
  \lambda^\sigma b)$ and $(c \mu^{\sigma'} d)$ can be computed in the following
  way. First, if $b^* \neq c$ then set it to be zero. Now suppose $b =
  c^*$. Then take $p_{c \mu^{\sigma'}}$ and $p_{a \lambda^{\sigma}}$ in $R$
  and multiply them. By construction,
  the result gives a well defined morphism of the corresponding
  Soergel modules: write it as a linear combination of the basis
  \eqref{soerg:eq:17} and translate it in the diagrammatic algebra $A$ using
  the isomorphism of Proposition~\ref{soerg:prop:6}.\label{soerg:rem:2}
\end{remark}

\begin{example}
  \label{soerg:ex:7}
  Let
  \begin{equation*}
    a \lambda b =
    \begin{tikzpicture}[baseline=-0.5ex]
      \draw[dashed] (-0.5,0) -- (2.5,0);
 \draw[] (0.000000,0)  .. controls (0.000000,0.500000) and (0.500000,1.000000) ..   (0.500000,1.000000);
 \draw[] (0.500000,0)  .. controls (0.500000,0.500000) and (0.500000,1.000000) ..   (0.500000,1.000000);
 \draw[] (1.000000,0)  .. controls (1.000000,0.500000) and (0.500000,1.000000) ..   (0.500000,1.000000);
 \draw[] (1.500000,0)  .. controls (1.500000,0.500000) and (1.750000,1.000000) ..   (1.750000,1.000000);
 \draw[] (2.000000,0)  .. controls (2.000000,0.500000) and (1.750000,1.000000) ..   (1.750000,1.000000);
 \draw[] (0.000000,0) .. controls (0.000000,-0.500000) and (0.750000,-1.000000) ..  (0.750000,-1.000000);
 \draw[] (0.500000,0)  .. controls (0.500000,-0.500000) and (0.750000,-1.000000) ..  (0.750000,-1.000000);
 \draw[] (1.000000,0)  .. controls (1.000000,-0.500000) and (0.750000,-1.000000) ..  (0.750000,-1.000000);
 \draw[] (1.500000,0)  .. controls (1.500000,-0.500000) and (0.750000,-1.000000) ..  (0.750000,-1.000000);
 \draw[] (2.000000,0) -- ++(0,-1.500000);
 \draw[,yshift=-0.100000 cm] (-0.100000,-0.100000) -- (0.000000,0.100000) -- (0.100000,-0.100000);
 \draw[xshift=0.5cm,yshift=-0.100000 cm] (0.400000,-0.100000) -- (0.500000,0.100000) -- (0.600000,-0.100000);
 \draw[xshift=-0.5cm,yshift=0.100000 cm] (0.900000,0.100000) -- (1.000000,-0.100000) -- (1.100000,0.100000);
 \draw[,yshift=-0.100000 cm] (1.400000,-0.100000) -- (1.500000,0.100000) -- (1.600000,-0.100000);
 \draw[,yshift=0.100000 cm] (1.900000,0.100000) -- (2.000000,-0.100000) -- (2.100000,0.100000);
\end{tikzpicture} \qquad \text{and} \qquad
c \mu d =
\begin{tikzpicture}[baseline=-0.5ex]
      \draw[dashed] (-0.5,0) -- (2.5,0);
 \draw[] (0.000000,0)  .. controls (0.000000,0.500000) and (0.250000,1.000000) ..   (0.250000,1.000000);
 \draw[] (0.500000,0)  .. controls (0.500000,0.500000) and (0.250000,1.000000) ..   (0.250000,1.000000);
 \draw[] (1.000000,0)  .. controls (1.000000,0.500000) and (1.500000,1.000000) ..   (1.500000,1.000000);
 \draw[] (1.500000,0)  .. controls (1.500000,0.500000) and (1.500000,1.000000) ..   (1.500000,1.000000);
 \draw[] (2.000000,0)  .. controls (2.000000,0.500000) and (1.500000,1.000000) ..   (1.500000,1.000000);
 \draw[] (0.000000,0) .. controls (0.000000,-0.500000) and (0.500000,-1.000000) ..  (0.500000,-1.000000);
 \draw[] (0.500000,0)  .. controls (0.500000,-0.500000) and (0.500000,-1.000000) ..  (0.500000,-1.000000);
 \draw[] (1.000000,0)  .. controls (1.000000,-0.500000) and (0.500000,-1.000000) ..  (0.500000,-1.000000);
 \draw[] (1.500000,0) .. controls (1.500000,-0.500000) and (1.750000,-1.000000) ..  (1.750000,-1.000000);
 \draw[] (2.000000,0)  .. controls (2.000000,-0.500000) and (1.750000,-1.000000) ..  (1.750000,-1.000000);
 \draw[,yshift=-0.100000 cm] (-0.100000,-0.100000) -- (0.000000,0.100000) -- (0.100000,-0.100000);
 \draw[,yshift=0.100000 cm] (0.400000,0.100000) -- (0.500000,-0.100000) -- (0.600000,0.100000);
 \draw[,yshift=-0.100000 cm] (0.900000,-0.100000) -- (1.000000,0.100000) -- (1.100000,-0.100000);
 \draw[xshift=0.5cm,yshift=-0.100000 cm] (1.400000,-0.100000) -- (1.500000,0.100000) -- (1.600000,-0.100000);
 \draw[xshift=-0.5cm,yshift=0.100000 cm] (1.900000,0.100000) -- (2.000000,-0.100000) -- (2.100000,0.100000);
\end{tikzpicture}
\end{equation*}
Let also $\sigma= s_1\in \bbS_3$, $\tau=e \in \bbS_3$. We
want to compute the product $(a \lambda^\sigma b)(c \mu^\tau
d)$. First notice that $b^*=c$ (otherwise the product would
be trivially zero). By \eqref{soerg:eq:1} we have
\begin{align}
  \label{soerg:eq:187}
  p_{a \lambda^\sigma} & = x_1 \cdot x_1 x_4\\
  p_{c \mu^\tau d} & = 1 \cdot x_1
\end{align}
(for the computation of the polynomials $\mathfrak S'_\sigma$ and $\mathfrak S'_\tau$ we refer to Example~\ref{soerg:ex:5}). The product is $p_{a \lambda^\sigma} p_{c \mu^\tau} = x_1^3 x_4$. The $\boldb$--sequence of $a$ is $(4,3,2,1,1)$, hence $x_1^3 x_4$ is not an element of the monomial basis \eqref{soerg:eq:17} of $R_a$. We need to do some computations in the ring $R_a$:  using the relations $x_1+x_2+x_3+x_4 \equiv 0$ and $x_1^4 \equiv 0$ we have
\begin{equation}
  \label{soerg:eq:188}
  x_1^3 x_4 \equiv - x_1^4  - x_1^3 x_2 - x_1^3 x_3 \equiv -x_1^3 x_2 -x_1^3 x_3.
\end{equation}
This is now a linear combination of monomials of the basis
\eqref{soerg:eq:17}. The monomial $- x_1^3 x_2$, although
not zero in $R_a$, is of type \eqref{soerg:eq:18}, hence
defines an illicit morphism and is zero in the quotient. We
are left only with the monomial $\frakm = x_1^3 x_3$. This
is an element of \eqref{soerg:eq:17} and, according to
Theorem~\ref{soerg:thm:2}, does not define an illicit
morphism. We need to translate it into a diagram via
Proposition~\ref{soerg:prop:6}. The $\up\down$--sequence
corresponding to $a$ is $\down \up \up \up \down$; in
particular, the indices of the $\down$'s are $1,5$. Now,
$x_5$ does not divide $\frakm$, and the biggest index $i$
such that $x_1x_2\cdots x_i \mid \frakm$ is $1$. Hence the
monomial $\frakm$ corresponds to a diagram $a \eta^\pi d$
where $\eta$ has $\down$'s in positions $2,5$. Moreover, the
permutation $\pi$ is determined by $\mathfrak
S'_\pi(x_1,x_3,x_4)= x_1^2 x_3$. By
Example~\ref{soerg:ex:5}, $\pi$ is the longest element of
$\bbS_3$. Hence $(a \lambda^\sigma b)(c\mu^\tau d)= -(a
\eta^\pi d)$, where
\begin{equation*}
  a \eta d =
\begin{tikzpicture}[baseline=-0.5ex]
  \draw[dashed] (-0.5,0) -- (2.5,0);
 \draw[] (0.000000,0)  .. controls (0.000000,0.500000) and (0.250000,1.000000) ..   (0.250000,1.000000);
 \draw[] (0.500000,0)  .. controls (0.500000,0.500000) and (0.250000,1.000000) ..   (0.250000,1.000000);
 \draw[] (1.000000,0)  .. controls (1.000000,0.500000) and (1.500000,1.000000) ..   (1.500000,1.000000);
 \draw[] (1.500000,0)  .. controls (1.500000,0.500000) and (1.500000,1.000000) ..   (1.500000,1.000000);
 \draw[] (2.000000,0)  .. controls (2.000000,0.500000) and (1.500000,1.000000) ..   (1.500000,1.000000);
 \draw[] (0.000000,0) .. controls (0.000000,-0.500000) and (0.750000,-1.000000) ..  (0.750000,-1.000000);
 \draw[] (0.500000,0)  .. controls (0.500000,-0.500000) and (0.750000,-1.000000) ..  (0.750000,-1.000000);
 \draw[] (1.000000,0)  .. controls (1.000000,-0.500000) and (0.750000,-1.000000) ..  (0.750000,-1.000000);
 \draw[] (1.500000,0)  .. controls (1.500000,-0.500000) and (0.750000,-1.000000) ..  (0.750000,-1.000000);
 \draw[] (2.000000,0) -- ++(0,-1.500000);
 \draw[,yshift=-0.100000 cm] (-0.100000,-0.100000) -- (0.000000,0.100000) -- (0.100000,-0.100000);
 \draw[,yshift=0.100000 cm] (0.400000,0.100000) -- (0.500000,-0.100000) -- (0.600000,0.100000);
 \draw[,yshift=-0.100000 cm] (0.900000,-0.100000) -- (1.000000,0.100000) -- (1.100000,-0.100000);
 \draw[,yshift=-0.100000 cm] (1.400000,-0.100000) -- (1.500000,0.100000) -- (1.600000,-0.100000);
 \draw[,yshift=0.100000 cm] (1.900000,0.100000) -- (2.000000,-0.100000) -- (2.100000,0.100000);
\end{tikzpicture}
\end{equation*}
\end{example}

\medskip
By construction, $p_{\underline \lambda \lambda^e} = 1$ for any $\lambda \in \Gamma$. Under the isomorphism of Proposition~\ref{soerg:prop:6}, the element $e_\lambda$ is sent to $\id_{\sfC_{w_k  z}} \in \End_R(\sfC_{s_k z})$, where $z \in D$ corresponds to $\lambda$; hence the elements $e_\lambda$ satisfy
\begin{equation}
  \label{soerg:eq:70}
  e_\lambda (a \mu^\sigma b) =
  \begin{cases}
    a \mu^\sigma b & \text{if } a= \overline \lambda, \\
    0  & \text{otherwise},
  \end{cases}
  \qquad
  (a \mu^\sigma b) e_\lambda =
  \begin{cases}
    a \mu^\sigma b & \text{if } b= \underline \lambda, \\
    0  & \text{otherwise}
  \end{cases}
\end{equation}
for any basis element $a \mu^\sigma b \in A$. That is, the vectors $\{e_\lambda \suchthat \lambda \in \Gamma\}$ are mutually orthogonal idempotents whose sum is the identity $1 \in A$. The decomposition \eqref{soerg:eq:69} can be written as
\begin{equation}
  \label{soerg:eq:71}
  A = \bigoplus_{\lambda,\mu \in \Gamma} e_\lambda A e_\mu.
\end{equation}
A basis of the summand $e_\lambda A e_\mu$ is
\begin{equation}
  \label{soerg:eq:72}
  \{\underline \lambda \eta^\sigma \overline \mu \suchthat \text{ for all }\eta \in \Gamma, \sigma \in \bbS_k \text{ such that } \lambda \supset \eta \subset \mu  \}.
\end{equation}

\subsubsection{Duality}
\label{sec:duality}
Recall from \textsection\ref{soerg:sec:duality-1} that for every $z,z' \in D$ we have an isomorphism
\begin{equation}
  \begin{aligned}
    \Theta: \Hom_R(\sfC_{w_k  z}, \sfC_{w_k z'})& \longrightarrow
    \Hom_R(\sfC_{w_k  z'},\sfC_{w_k  z}),\\
    (1 \mapsto p) & \longmapsto (1 \mapsto \boldx^{\boldsymbol{b}-\boldsymbol{b'}}p ),
  \end{aligned}
\label{soerg:eq:56}
\end{equation}
where $\boldb$ and $\boldb'$ are the $\boldb$--sequences of $z$ and $z'$,
respectively, $\boldsymbol{b}-\boldsymbol{b'}=
(b_1-b'_1,\ldots,b_n-b'_n)$ and the notation is as in
\eqref{soerg:eq:87}.

\begin{lemma}
  \label{soerg:lem:27}
  Let $\lambda,\mu \in \Gamma$ and let $z,z'$ be the corresponding
  elements of $D_{n,k}$. We have
  $\Theta(\sfW_{z,z'})=\sfW_{z',z}$. Therefore the isomorphism
  $\Theta$ descends to an isomorphism $\Theta: \sfZ_{z,z'} \mapto
  \sfZ_{z',z}$ that fits with the duality on diagrams:
  \begin{equation}
    \label{soerg:eq:40}
    \Theta(\Psi(\underline \mu \eta^\sigma \overline \lambda)) = \Psi(\underline \lambda \eta^\sigma \overline \mu)
  \end{equation}
  for every enhanced weight $\eta^\sigma$ such that $\underline \mu \eta^\sigma \overline \lambda$ is oriented.
\end{lemma}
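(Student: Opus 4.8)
The plan is to establish the three assertions in order: $\Theta(\sfW_{z,z'})=\sfW_{z',z}$, the induced descent of $\Theta$, and the compatibility~\eqref{soerg:eq:40}. For the first one, observe that since $R$ is commutative the subspace $\sfW_{z,z'}$ is an $R$-submodule of $\Hom_R(\sfC_{w_kz},\sfC_{w_kz'})$, and by~\eqref{soerg:eq:56} the map $\Theta$ is $R$-linear; so it suffices to send a set of $R$-module generators of $\sfW_{z,z'}$ into $\sfW_{z',z}$, and then run the same argument on $\Theta^{-1}$ (which is the analogous map for the pair $(z',z)$) to obtain equality. By Theorem~\ref{soerg:thm:2} we have $\sfW_{z,z'}=\widetilde\sfW_{z,z'}$. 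If $z,z'$ fall in case~\ref{soerg:item:20} of Theorem~\ref{soerg:thm:2} the condition there is symmetric in $z$ and $z'$, so both $\widetilde\sfW_{z,z'}$ and $\widetilde\sfW_{z',z}$ are the whole Hom spaces and nothing is to be shown. In case~\ref{soerg:item:21}, $\widetilde\sfW_{z,z'}$ is generated by the explicit morphisms~\eqref{soerg:eq:18}, and I would argue that $\Theta$ preserves illicitness, so that these generators land in $\sfW_{z',z}$.

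The reason $\Theta$ preserves illicitness is that, by~\eqref{soerg:eq:86}--\eqref{soerg:eq:87}, it is the composite of the contravariant duality $(-)^{*}=\Hom_{\C}(-,\C)$ with the graded self-duality isomorphisms $\sfC_{w_kz}\cong\sfC_{w_kz}^{*}\langle -b\rangle$ and $\sfC_{w_kz'}\cong\sfC_{w_kz'}^{*}\langle -b'\rangle$ of~\textsection\ref{soerg:sec:duality-1}. Inspecting the proof of Theorem~\ref{soerg:thm:2} (via Lemmas~\ref{soerg:lem:14}, \ref{soerg:lem:20} and~\ref{soerg:lem:22}), every morphism~\eqref{soerg:eq:18} factors through a ``wrong'' module $\sfC_y$ of the type appearing in Proposition~\ref{soerg:prop:8}, i.e.\ $\sfC_y\cong R/J$ with $J$ generated by complete symmetric polynomials whose leading monomials $x_1^{a_1},\dots,x_n^{a_n}$ are pairwise coprime. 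As in Lemma~\ref{soerg:lem:8}, Proposition~\ref{soerg:prop:1} and~\textsection\ref{soerg:sec:duality-1}, such a module has a monomial basis with a unique top-degree element $x^{\bolda-\mathbf 1}$, and the form ``coefficient of $x^{\bolda-\mathbf 1}$ in the product'' is a nondegenerate $R$-invariant symmetric form, so $\sfC_y$ is graded self-dual. Hence $(-)^{*}$ takes a morphism factoring through $\sfC_y$ to one factoring through $\sfC_y^{*}\cong\sfC_y$, and after reinstating the self-dualities of source and target, $\Theta$ takes a morphism factoring through $\sfC_y$ to one factoring through the same $\sfC_y$; in particular it takes illicit morphisms to illicit morphisms. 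This yields $\Theta(\sfW_{z,z'})\subseteq\sfW_{z',z}$, equality follows by the symmetric argument, and the descent $\Theta\colon\sfZ_{z,z'}\to\sfZ_{z',z}$ is then immediate.

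It remains to check~\eqref{soerg:eq:40} by a direct computation. Fix an enhanced weight $\eta^\sigma$ with $\underline\mu\eta^\sigma\overline\lambda$ oriented; directly from the definitions this happens exactly when both $\underline\mu\eta^\sigma$ and $\underline\lambda\eta^\sigma$ are oriented lower fork diagrams, a condition symmetric in $\lambda$ and $\mu$, so $\underline\lambda\eta^\sigma\overline\mu$ is oriented too. The proof of Proposition~\ref{soerg:prop:6} gives $p_{\underline\mu\eta^\sigma}=\mathfrak S'_\sigma(x_{\up^\eta_1},\dots,x_{\up^\eta_k})\cdot x^{\boldb^\mu-\boldb^\eta}$ and, with $\lambda$ in place of $\mu$, $p_{\underline\lambda\eta^\sigma}=\mathfrak S'_\sigma(x_{\up^\eta_1},\dots,x_{\up^\eta_k})\cdot x^{\boldb^\lambda-\boldb^\eta}$, where $\boldb^\mu-\boldb^\eta$ and $\boldb^\lambda-\boldb^\eta$ have non-negative entries (since $\down^\mu_j\le\down^\eta_j$ and $\down^\lambda_j\le\down^\eta_j$). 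Applying~\eqref{soerg:eq:56} with $\boldb=\boldb^z=\boldb^\lambda$ and $\boldb'=\boldb^{z'}=\boldb^\mu$ and using $(\boldb^\lambda-\boldb^\mu)+(\boldb^\mu-\boldb^\eta)=\boldb^\lambda-\boldb^\eta$, one gets
\[
\Theta(\Psi(\underline\mu\eta^\sigma\overline\lambda))=(1\mapsto x^{\boldb^\lambda-\boldb^\mu}p_{\underline\mu\eta^\sigma})+\sfW_{z',z}=(1\mapsto p_{\underline\lambda\eta^\sigma})+\sfW_{z',z}=\Psi(\underline\lambda\eta^\sigma\overline\mu),
\]
the Laurent monomial $x^{\boldb^\lambda-\boldb^\mu}$ being absorbed into the product exactly as the fraction in~\eqref{soerg:eq:87}. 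The one genuinely substantive point is the self-duality input in the second paragraph: one has to be sure that all the ``wrong'' modules producing illicit morphisms are graded self-dual, so that $(-)^{*}$ respects illicitness; granting this, the descent is formal and~\eqref{soerg:eq:40} is a one-line computation resting on the identity $p_{\underline\mu\eta^\sigma}=\mathfrak S'_\sigma\,x^{\boldb^\mu-\boldb^\eta}$ already isolated in the proof of Proposition~\ref{soerg:prop:6}.
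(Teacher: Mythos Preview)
Your proof of the compatibility~\eqref{soerg:eq:40} is exactly the paper's computation: both rest on the identity $p_{\underline\mu\eta^\sigma}=\mathfrak S'_\sigma\cdot\boldx^{\boldb^\mu-\boldb^\eta}$ extracted from the proof of Proposition~\ref{soerg:prop:6}, together with $(\boldb^\lambda-\boldb^\mu)+(\boldb^\mu-\boldb^\eta)=\boldb^\lambda-\boldb^\eta$.

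For $\Theta(\sfW_{z,z'})=\sfW_{z',z}$, however, you take a genuinely different route. The paper argues directly on monomials: it writes out $\boldx^{\boldb-\boldb'}$ explicitly as the Laurent monomial~\eqref{soerg:eq:90}, observes that $\Theta$ bijects the monomial basis~\eqref{soerg:eq:17} of $\Hom_R(\sfC_{w_kz},\sfC_{w_kz'})$ with that of $\Hom_R(\sfC_{w_kz'},\sfC_{w_kz})$, and checks that the divisibility conditions of Theorem~\ref{soerg:thm:2}\ref{soerg:item:21} characterizing membership in $\sfW$ match up under this bijection (using that $\boldx^{\boldb-\boldb'}\cdot\boldx^{\boldc}=\boldx^{\boldc'}$ and that $\beta(j)$ is symmetric in $z,z'$). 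This is entirely elementary and never mentions self-duality of the intermediate ``wrong'' modules. Your approach instead unwinds $\Theta$ as dualization composed with self-duality isomorphisms and argues that the specific modules $\sfC_y$ of Proposition~\ref{soerg:prop:8} through which the generators~\eqref{soerg:eq:18} are shown to factor (in Lemmas~\ref{soerg:lem:14}--\ref{soerg:lem:22}) are themselves graded self-dual, so that factorization through them is preserved under $\Theta$.

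Your approach is valid, but note one soft spot: you assert that for the Proposition~\ref{soerg:prop:8} modules ``the form `coefficient of $\boldx^{\bolda-\mathbf 1}$ in the product' is nondegenerate'' and cite \textsection\ref{soerg:sec:duality-1}. That section actually defines a \emph{different} form (see~\eqref{eq:27}), and the nondegeneracy of the trace form you want is not a formal consequence of having a unique top-degree monomial; it requires that the quotient be Gorenstein, which here follows because the generators~\eqref{soerg:eq:45} have pairwise coprime leading terms and hence cut out a zero-dimensional complete intersection. Once this is said, your argument goes through. The trade-off is that your route is more structural (it explains \emph{why} illicitness is preserved), while the paper's monomial check is shorter and needs no extra input beyond Theorem~\ref{soerg:thm:2}.
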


\begin{proof}
  Let $\boldb$, $\boldb'$ be the $\boldb$--sequences of $\lambda $ and $\mu$, respectively. Note that
  \begin{equation}
    \label{soerg:eq:90}
    \frac{\boldx^{\boldsymbol{b-1}}}{\boldx^{\boldsymbol{b'-1}}} = \boldx^{\boldsymbol{b-b'}} = \prod_{\down_j^\lambda < \down_j^\mu} (x_{\down^\lambda_j} \cdots x_{\down^\mu_j -1}) \prod_{\down_j^\mu < \down_j^\lambda} (x_{\down^\mu_j}^{-1} \cdots x_{\down^\lambda_j -1}^{-1})
  \end{equation}
  as an element in $\C[x_1^{\pm 1},\ldots,x_n^{\pm 1}]$. If $(1
  \mapsto \frakm)$ is a monomial morphism of the basis \eqref{soerg:eq:17}
  of $\Hom_R(\sfC_{w_k  z'},\sfC_{w_k  z})$, it follows immediately
  that $ (1 \mapsto \frakm )\in \sfW_{z',z}$ if and only if $(1 \mapsto
  \boldx^{\boldsymbol {b'-b}} \frakm) \in \sfW_{z,z'}$, hence
  $\Theta(\sfW_{z',z})=\sfW_{z,z'}$.

  Moreover, it follows from equation
  \eqref{soerg:eq:1} for the polynomials $p_{\underline \lambda \eta^\sigma}$ and
  $p_{\underline \mu \eta^\sigma}$ that $p_{\underline \mu \eta^\sigma} =x^{\boldsymbol{b-b'}} p_{\underline \lambda \eta^\sigma}$,
  hence     $\Theta(\Psi(\underline \mu \eta^\sigma \overline \lambda)) = \Psi(\underline \lambda \eta^\sigma \overline \mu)$.
\end{proof}

As a corollary, we have that the linear map $\star:A \rightarrow A$ defined by
\begin{equation}
  \label{soerg:eq:92}
  (a \lambda b)^\star = (b^* \lambda a^*)
\end{equation}
is an algebra anti-isomorphism.

As follows from the definition, the algebra $A$ only depends on the number of $\up$'s and $\down$'s in the block $\Gamma$.

\begin{definition}
  We define $A_{n,k}=A_\Gamma$ for some block $\Gamma$
  with $k$ $\up$'s and $n-k$ $\down$'s.\label{def:1}
\end{definition}

\subsection{Graded cellular structure}
\label{soerg:sec:cellularity}
We construct here explicitly the graded cellular structure of the algebra $A$. The construction follows the one in \cite{pre06126156}.

\begin{prop}
  \label{soerg:prop:9}
  Let $(a \lambda b)$ and $(c \mu d)$ be basis vectors of $A$. Then the product $(a \lambda^\sigma b) (c \mu^\tau d)$ is equal to:
  \begin{equation}\label{eq:39}
\left\{    \begin{aligned}
      &0 && \text{if } b \neq c^*,  \\[5pt]
      &  (a \mu^\tau d)
&& \parbox{4cm}{if $b=c^*=\overline\lambda$, $\sigma = e$ and\\$(a\mu d)$ is oriented,} \vspace{5pt}\\[5pt]
      &\displaystyle\sum_{\len(\tau')> \len(\tau)} t^{\tau'}_{(a\lambda^\sigma c)}(\mu^\tau) \cdot (a \mu^{\tau'} d) + (\dag) && \parbox{4.5cm}{if $b=c^*$, $(a\mu d)$ is oriented, and either $b \neq \overline \lambda$ or $\sigma \neq e$,} \vspace{5pt}\\[5pt]
      &(\dag) && \text{otherwise},
    \end{aligned}\right.
  \end{equation}
  where:
  \begin{enumerate}[(i)]
  \item \label{item:1} the scalars $t^{\tau'}_{(a \lambda^\sigma c)}(\mu^{\tau})$ are independent of $d$;
  \item \label{item:2} $(\dag)$ denotes a linear combination of basis vectors of $A$
    of the form $(a \nu^\chi d)$ with $\nu \succ \mu$;
  \end{enumerate}
\end{prop}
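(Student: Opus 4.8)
The plan is to pull the whole computation back through the isomorphism of Proposition~\ref{soerg:prop:6}, perform the product of Remark~\ref{soerg:rem:2} as an explicit calculation with polynomials, and then read off the four cases from the combinatorics of the resulting monomial.

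The case $b\ne c^*$ is immediate from the definition of the product. So assume $b=c^*$ and write $a=\underline\alpha$, $b=\overline\beta$, $d=\overline\delta$ for the weights attached to $a,b,d$; then $b=c^*$ forces $c=\underline\beta$. Let $z_\bullet\in D$ denote the element of $D$ corresponding to the weight $\bullet$. By Proposition~\ref{soerg:prop:6} the product $(a\lambda^\sigma b)(c\mu^\tau d)$ is the image under $\Psi^{-1}$ of the class of $1\mapsto\frakM$ in $\Hom_R(\sfC_{w_k z_\delta},\sfC_{w_k z_\alpha})/\widetilde\sfW_{z_\delta,z_\alpha}$, where, using \eqref{soerg:eq:1},
\begin{equation*}
\frakM=p_{\underline\alpha\lambda^\sigma}\,p_{\underline\beta\mu^\tau}
=\mathfrak S'_\sigma(x_{\up^\lambda_1},\dots,x_{\up^\lambda_k})\,\mathfrak S'_\tau(x_{\up^\mu_1},\dots,x_{\up^\mu_k})\prod_{j=1}^{n-k}\big(x_{\down^\alpha_j}\cdots x_{\down^\lambda_j-1}\big)\big(x_{\down^\beta_j}\cdots x_{\down^\mu_j-1}\big)\in R.
\end{equation*}
The polynomial $\frakM$ does not involve $d$; neither does its normal form modulo $I_{\boldb^\alpha}$ with respect to the Groebner basis of Lemma~\ref{soerg:lem:8}, nor the weight $\nu$ and permutation $\chi$ attached to each monomial of that normal form by the recipe in the proof of Proposition~\ref{soerg:prop:6}; only the test ``$\frakm\in\widetilde\sfW_{z_\delta,z_\alpha}$?'' depends on $d=\overline\delta$. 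Hence every basis vector occurring in the product has the shape $(a\,\nu^\chi\,d)$, and its coefficient depends only on $(a\lambda^\sigma c)$ and $\mu^\tau$; this is \ref{item:1}.

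The next step is the triangularity underlying \ref{item:2}: every $\nu$ that occurs satisfies $\nu\succeq\mu$. The point is that $1\mapsto p_{\underline\beta\mu^\tau}$ factors as $\sfC_{w_k z_\delta}\xrightarrow{\,\mathfrak S'_\tau(x_{\up^\mu})\,}\sfC_{w_k z_\mu}\xrightarrow{\,p_{\underline\beta\mu^e}\,}\sfC_{w_k z_\beta}$, the second arrow being the minimal-degree morphism of Corollary~\ref{soerg:cor:8}, so that $1\mapsto\frakM$ factors through $\sfC_{w_k z_\mu}$. Translating this factorization through the recipe of Proposition~\ref{soerg:prop:6} --- in which the $j$-th $\down$ of $\nu$ is located by the longest staircase $x_{\down^\alpha_j}x_{\down^\alpha_j+1}\cdots$ dividing the normal form --- and using Lemmas~\ref{soerg:lem:21} and \ref{soerg:lem:19}\ref{soerg:item:1} to move between divisibility, $\boldb$--sequences and the Bruhat order, one obtains $\down^\nu_j\ge\down^\mu_j$ for all $j$, that is $\nu\succeq\mu$. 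If $(a\mu d)$ is not oriented there is no basis vector $(a\,\mu^\chi\,d)$ at all, so every occurring $\nu$ is $\succ\mu$: this is the last case.

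Finally, assume $(a\mu d)$ is oriented and isolate the part of the product supported on the diagrams $(a\,\mu^{\tau'}\,d)$. When $\sigma=e$ and $b=\overline\lambda$ --- equivalently $\beta=\lambda$ --- the two staircase factors of $\frakM$ telescope and $\frakM=p_{\underline\alpha\mu^\tau}$, which by the proof of Proposition~\ref{soerg:prop:6} is a monomial of the basis \eqref{soerg:eq:17} not lying in $\widetilde\sfW_{z_\delta,z_\alpha}$; since $(a\mu d)$ is oriented this gives $(a\lambda^e b)(c\mu^\tau d)=(a\mu^\tau d)$, the second case. In general the product is homogeneous of degree $\deg(a\lambda^\sigma b)+\deg(c\mu^\tau d)$, whereas $\deg(a\mu^{\tau'}d)=\deg(a\mu d)+2\len(\tau')$; equating the two and simplifying the lengths with Lemma~\ref{soerg:lem:24} yields, for every $\tau'$ that occurs,
\begin{equation*}
\len(\tau')=\len(\tau)+\len(\sigma)+\deg(\lambda b),
\end{equation*}
so $\len(\tau')>\len(\tau)$ exactly when $\sigma\ne e$ or $b\ne\overline\lambda$, which is the third case; the scalars $t^{\tau'}_{(a\lambda^\sigma c)}(\mu^\tau)$ are the coefficients of the monomials $p_{\underline\alpha\mu^{\tau'}}$ in the normal form of $\frakM$ modulo $I_{\boldb^\alpha}$, independent of $d$ by the first paragraph, and whatever is not of this shape is a $(\dag)$--term by the triangularity just established. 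The main obstacle is this triangularity step: controlling exactly which middle weights $\nu$ survive the Groebner reduction of $\frakM$ modulo $I_{\boldb^\alpha}$, and checking that their companion diagrams are not illicit (Theorem~\ref{soerg:thm:2}), requires a careful bookkeeping in the spirit of \cite{pre06126156}.
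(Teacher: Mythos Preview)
Your overall architecture matches the paper's: translate via Proposition~\ref{soerg:prop:6}, study the polynomial $\frakM=p_{\underline\alpha\lambda^\sigma}p_{\underline\beta\mu^\tau}$ in $R_{\boldb^\alpha}$, and use the degree formula of Lemma~\ref{soerg:lem:24} to separate the case $\len(\tau')=\len(\tau)$ from $\len(\tau')>\len(\tau)$. The telescoping computation for the second case and the degree identity $\len(\tau')=\len(\tau)+\len(\sigma)+\deg(\lambda b)$ are exactly what the paper does.

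There are, however, two genuine gaps. First, and most importantly, the triangularity $\nu\succeq\mu$ is not proved. Your factorization of $1\mapsto\frakM$ through $\sfC_{w_k z_\mu}$ is correct as a statement about $R$--module maps, but it does not by itself control which monomials survive Groebner reduction of $\frakM$ in $R_{\boldb^\alpha}$, which is what determines the middle weights $\nu$. You yourself flag this as ``the main obstacle'' and defer to ``careful bookkeeping''. The paper does not use your factorization idea; instead it observes directly that $x_{\down^\alpha_j}x_{\down^\alpha_j+1}\cdots x_{\down^\mu_j-1}$ divides $\frakM$ (because the two staircase factors overlap, since $\down^\beta_j\le\down^\lambda_j$), and then invokes the technical Lemma~\ref{soerg:lem:25}, proved immediately afterwards, which guarantees that such a divisibility by a consecutive run starting at a $\down$--position of $\alpha$ is preserved under reduction modulo $I_{\boldb^\alpha}$. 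That lemma is precisely the missing ingredient in your argument, and you do not mention it.

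Second, your argument for \ref{item:1} is a non sequitur as written: you note that the normal form of $\frakM$ is $d$--independent but that the illicit test \emph{does} depend on $d$, and then conclude that the coefficients are $d$--independent. What is missing is the observation (made explicitly in the paper's last paragraph, and implicit in the proof of Proposition~\ref{soerg:prop:6}) that whenever $a\mu d$ is oriented the monomial $p_{\underline\alpha\mu^{\tau'}}$ lies in $\C[x_{\up^\mu_1},\dots,x_{\up^\mu_k}]$ and therefore can never be divided by any of the generators \eqref{soerg:eq:18} of $\widetilde\sfW_{z_\delta,z_\alpha}$, irrespective of $d$. Without this, the illicit test could in principle kill $p_{\underline\alpha\mu^{\tau'}}$ for some $d$ but not others, destroying \ref{item:1}.
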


\begin{proof}
  If $b \neq c^*$ the claim is obvious, so let us suppose $b=c^*$. Suppose moreover that there is some weight $\nu$ such that $a\nu d$ is oriented (or equivalently that $\sfZ_{d,a}$ is not trivial) otherwise the claim is also obvious.

Of course we have
  \begin{equation}
    (a \lambda^\sigma b) ( c \mu^\tau d) = \sum_{\nu \in \Gamma, \chi \in \bbS_k} C(\nu^\chi)\, (a \nu^\chi d).\label{soerg:eq:177}
  \end{equation} 
 for some coefficients $C(\nu^\chi) \in \C$.
Let us first prove that only terms with $\nu^\chi \succeq \mu^\tau$ occur in the sum, i.e.\ if  $C(\nu^\chi) \neq 0$ then $\nu^\chi \succeq \mu^\tau$.

 Before continuing, let us stress the subtlety in the argument. We want to understand which element of $A$ corresponds to
 the morphism $1 \mapsto p_{a \lambda^\sigma} p_{c \mu^\tau}$: in
 general this morphism is not a monomial morphism of the basis
 \eqref{soerg:eq:17}, and we have to use the relations defining $R_a$ to
 rewrite it as a linear combination of the monomial morphisms
 \eqref{soerg:eq:17}.

 Let us fix some $\nu^\chi$ such that $C(\nu^\chi) \neq
 0$. First, let us prove that $\nu \succeq \mu$. By
 definition, $\nu \succeq \mu$ is equivalent to $\down_j^\nu
 \geq \down_j^\mu$ for all $j = 1,\ldots,n-k$. Fix an index
 $j$. If $\down_j^a \geq \down_j^\mu$, then also $\down_j^\nu
 \geq \down_j^\mu$ by Lemma \ref{soerg:lem:21}
 \ref{soerg:item:23}. Hence suppose $\down_j^a <
 \down_j^\mu$.
By construction, the monomial
\begin{equation}
(x_{\down_j^a} x_{\down_j^a+1} \cdots x_{\down_j^\lambda-1})(x_{\down_j^c} x_{\down_j^c +1} \cdots x_{\down_j^\mu -1})\label{soerg:eq:75}
\end{equation}
divides $p_{a \lambda^\sigma} p_{c \mu^\tau}$. In particular, since $\down_j^\lambda \geq \down_j^b = \down_j^c$, also $x_{\down_j^a} x_{\down_j^a +1} \cdots  x_{\down_j^\mu -1}$ divides $p_{a \lambda^\sigma} p_{c \mu^\tau}$. Hence, if $p_{a
  \lambda^\sigma} p_{c \mu^\tau}$ is a monomial of the basis
\eqref{soerg:eq:17}, we can conclude that $\down_j^\nu \geq
\down_j^\mu$. Otherwise, we get the same conclusion using the
technical Lemma~\ref{soerg:lem:25} below.

Now to check that $\nu^\chi \succeq \mu^\tau$ we have to show that in the case $\nu = \mu$ we have  $\len(\chi) \geq \len(\tau)$. So let us suppose $\nu=\mu$. Since the multiplication is graded, we must have
\begin{equation}
  \label{soerg:eq:186}
  \deg(a \lambda^\sigma b) + \deg(c \mu^\tau d) = \deg (a \mu^\chi d).
\end{equation}
If $a=\underline \rho$ we write $\len(a)$ for $\len(\rho)$, and similarly for $b,c,d$. Then, using Lemma~\ref{soerg:lem:24}, we get from \eqref{soerg:eq:186}
\begin{equation}
  \label{soerg:eq:189}
  2\len(\chi) = 2 \len(\tau) + 2 \len(\sigma) + 2 \len(b) - 2 \len(\lambda).
\end{equation}
Since $\lambda^\sigma b$ is oriented, by Lemma~\ref{soerg:lem:26} the diagram $b$ corresponds to some weight that is smaller or equal than $\lambda$ in the Bruhat order. This implies that $\len(\lambda) \leq \len(b)$ (notice that under the identification of $\Gamma$ with $D_{n,k}$, the Bruhat order on weights corresponds to the opposite of the usual Bruhat order on permutations). It follows that $\len(\chi) \geq \len(\tau)$. Hence we have shown that
  \begin{equation}
    (a \lambda^\sigma b) ( c \mu^\tau d) = \sum_{\len(\tau') \geq \len(\tau)} C(\mu^{\tau'})\, (a \mu^{\tau'} d) + \sum_{\nu \succ \mu, \chi \in \bbS_k} C(\nu^\chi)\, (a \nu^\chi d).\label{eq:38}
  \end{equation}

Now suppose that $C(\mu^\xi)\neq 0$ for some $\xi \in \bbS_k$ with $\len(\xi)=\len(\tau)$. If we substitute in \eqref{soerg:eq:189} $\chi=\xi$, we get $2 \len (\sigma)+ 2 \len(b) - 2 \len(\lambda) =0$. Since $\len(b) \geq \len(\lambda)$, we must have $\len(\sigma)=0$ and $\len(b) = \len(\lambda)$. This implies $\sigma=e$ and $b= \overline \lambda$.
It is easy to see that in this case the morphism $1 \mapsto p_{a
  \lambda^\sigma} p_{c \mu^\tau} $ is an element of the monomial basis
\eqref{soerg:eq:17}, and hence we have exactly $(a \lambda^\sigma b)(c
\mu^\tau d) = (a \mu^\tau d)$. This shows the second case of \eqref{eq:39} and also that if either $b \neq \overline \lambda$ or $\sigma \neq e$ then we can rewrite \eqref{eq:38} as
  \begin{equation}
    (a \lambda^\sigma b) ( c \mu^\tau d) = \sum_{\len(\tau') > \len(\tau)} C(\mu^{\tau'})\, (a \mu^{\tau'} d) + \sum_{\nu \succ \mu, \chi \in \bbS_k} C(\nu^\chi)\, (a \nu^\chi d).\label{eq:40}
  \end{equation}
Since $C(\mu^{\tau'})$ is automatically zero unless $a\mu d$ is oriented, this concludes the proof of \eqref{eq:39} and \ref{item:2}

We are left to show \ref{item:1}.
 In order to determine the coefficients of \eqref{soerg:eq:177}, consider the expression of the polynomial $p_{a \lambda^\sigma} p_{c \mu^\tau}$ in the basis \eqref{soerg:eq:14} of $R_{a}$:
   \begin{equation}
     \label{eq:37}
          p_{a \lambda^\sigma} p_{c \mu^\tau} = \sum_{\boldj \in J} \alpha_\boldj \boldx^{\boldj}.
   \end{equation}
Define $J''\subseteq J$ to be the subset of tuples $\boldj$ such that the morphism $(1 \mapsto \boldx^\boldj) \in \Hom_R(\sfC_d,\sfC_a)$ dies in the quotient $\sfZ_{d,a}$, since it is divided by some morphism of the type \ref{soerg:item:21} of Theorem \ref{soerg:thm:2}. Let also $J' = J \setminus J''$. Fix some $j \in J'$; by Proposition \ref{soerg:prop:6}, the basis morphism $(1 \mapsto \boldx^\boldj) \in \sfZ_{d,a}$ corresponds to a diagram $a \nu^\chi d$: then we have $C(\nu^\chi)=\alpha_j$. Notice that the unique dependence on $d$ is in determining the subset $J'' \subseteq J$.

Now suppose $a\mu d$ is oriented, fix some $\tau' \in \bbS_k$ and let $(1 \mapsto \boldx^\boldj) \in \sfZ_{d,a}$ be the morphism of the basis \ref{soerg:eq:17} corresponding to the diagram $a\mu^{\tau'} d$. By the definition of orientation, for all $i$ we have $\down^d_i \leq \down^\mu_i< \down^d_{i+1}$ and $\down^a_i \leq \down^\mu_i <\down^a_{i+1}$. Since $\boldx^\boldj \in \C[x_{\up^\mu_1},\ldots,x_{\up^\mu_k}]$, neither $x_{\down^d_i} x_{\down^d_{i}+1} \cdots x_{\down^d_{i+1}}$ nor $x_{\down^d_i} x_{\down^d_{i}+1} \cdots x_{\down^a_{i+1}}$ can divide $\boldx^\boldj$. Hence for all $d$ such that $a\mu d$ is oriented we have  $(1 \mapsto \boldx^\boldj) \notin \sfW_{d,a}$ and with the notation of the preceding paragraph $\boldj \in J'$. Hence $C(\mu^{\tau'})$ is independent of $d$, proving \ref{item:1}.
\end{proof}

\begin{lemma}
  \label{soerg:lem:25}
  Fix some $\boldb \in \scrB$ and let $m$ be an index such that
  $b_{m-1}=b_m$. Suppose that $x_m x_{m+1} \cdots x_{m+\ell}$ divides
  some polynomial $p \in R$. Write $p = \sum_{\boldi} c_\boldi \boldx^\boldi$ in
  $R_\boldb$, where $\boldx^\boldi$ are monomials of the basis
  \eqref{soerg:eq:17}. Then $x_m x_{m+1} \cdots x_{m+\ell}$ divides all
  monomials $x^\boldi$ for which $c_\boldi \neq 0$.
\end{lemma}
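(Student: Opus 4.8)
The plan is to compute the normal form of $p$ modulo the Groebner basis \eqref{soerg:eq:7} of $I_\boldb$ (Lemma~\ref{soerg:lem:8}) and to check that divisibility of all monomials by $P:=x_m x_{m+1}\cdots x_{m+\ell}$ is preserved throughout. Writing $p$ in the monomial basis \eqref{soerg:eq:17} of $R_\boldb$ is precisely this computation (as in the proof of Proposition~\ref{soerg:prop:1}), and a polynomial of $R$ has all its monomials divisible by $P$ exactly when it lies in $PR$; so it suffices to exhibit a reduction $p=g_0,g_1,\ldots,g_N$ with $g_N$ reduced, each $g_t\equiv p\pmod{I_\boldb}$, and every monomial of every $g_t$ divisible by $P$. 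Recall $\leadingterm\big(h_{b_i}(x_1,\ldots,x_i)\big)=x_i^{b_i}$, and, by \eqref{soerg:eq:2}, that reducing a monomial $c\,\boldx^\bolda$ with $a_i\ge b_i$ replaces it by $-c\sum_{t=1}^{b_i}h_t(x_1,\ldots,x_{i-1})\,\boldx^\bolda/x_i^{t}$ (each $\boldx^\bolda/x_i^{t}$ being a monomial, since $a_i\ge b_i\ge t$). For $i\notin\{m,\ldots,m+\ell\}$ this never lowers the exponent of a variable occurring in $P$, so it preserves divisibility by $P$; the same holds for $i\in\{m,\ldots,m+\ell\}$ as long as $a_i>b_i$. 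The only obstruction is the summand with $t=b_i$ when $i\in\{m,\ldots,m+\ell\}$ and $a_i=b_i$: then $\boldx^\bolda/x_i^{b_i}$ has lost the variable $x_i$, and the summand is $-c\,h_{b_i}(x_1,\ldots,x_{i-1})\,\boldx^\bolda/x_i^{b_i}$.

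I would show these obstructing summands lie in $I_\boldb$, hence may be discarded without changing the class modulo $I_\boldb$. The key input is:
\[
x_m x_{m+1}\cdots x_j\cdot h_{b_j-1}(x_1,\ldots,x_j)\in I_\boldb\qquad\text{for all }m\le j\le n,
\]
proved by induction on $j$. For $j=m$, equation \eqref{soerg:eq:3} gives $x_m h_{b_m-1}(x_1,\ldots,x_m)=h_{b_m}(x_1,\ldots,x_m)-h_{b_m}(x_1,\ldots,x_{m-1})$: the first term is a generator of $I_\boldb$, and the second lies in $I_\boldb$ by Lemma~\ref{soerg:lem:1} because $b_m\ge b_{m-1}$ --- this is exactly where the hypothesis $b_{m-1}=b_m$ is used. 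For $j>m$, \eqref{soerg:eq:3} gives $x_j h_{b_j-1}(x_1,\ldots,x_j)\equiv -h_{b_j}(x_1,\ldots,x_{j-1})\pmod{I_\boldb}$, so $x_m\cdots x_j\,h_{b_j-1}(x_1,\ldots,x_j)\equiv -(x_m\cdots x_{j-1})\,h_{b_j}(x_1,\ldots,x_{j-1})$; since $\boldb$ is weakly decreasing with consecutive differences at most one, $b_j\in\{b_{j-1},b_{j-1}-1\}$, and in the first case $h_{b_j}(x_1,\ldots,x_{j-1})$ is a generator of $I_\boldb$ while in the second $h_{b_j}(x_1,\ldots,x_{j-1})=h_{b_{j-1}-1}(x_1,\ldots,x_{j-1})$ and the inductive hypothesis applies to $j-1$.

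Granting this, return to an obstructing summand $-c\,h_{b_i}(x_1,\ldots,x_{i-1})\,\boldx^\bolda/x_i^{b_i}$ with $i\in\{m,\ldots,m+\ell\}$ and $a_i=b_i$. If $i=m$, then $h_{b_m}(x_1,\ldots,x_{m-1})\in I_\boldb$ by Lemma~\ref{soerg:lem:1} again. If $i>m$, then $m,m+1,\ldots,i-1$ all lie in $\{m,\ldots,m+\ell\}$, so $\boldx^\bolda/x_i^{b_i}$ is still divisible by $x_m x_{m+1}\cdots x_{i-1}$ (the monomial $\boldx^\bolda$ was divisible by $P$, and dividing by $x_i^{b_i}$ removes only the variable $x_i$); moreover $h_{b_i}(x_1,\ldots,x_{i-1})$ is either a generator of $I_\boldb$ (if $b_i=b_{i-1}$) or equals $h_{b_{i-1}-1}(x_1,\ldots,x_{i-1})$ (if $b_i=b_{i-1}-1$), so by the key input $(x_m\cdots x_{i-1})\,h_{b_i}(x_1,\ldots,x_{i-1})\in I_\boldb$, whence the whole summand lies in $I_\boldb$. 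Therefore, reducing at each step the lexicographically largest reducible monomial and deleting the obstructing summands, one obtains a polynomial congruent to $p$ modulo $I_\boldb$, all of whose monomials are divisible by $P$, and whose newly created monomials are lexicographically smaller than the monomial reduced; by the usual termination argument for Groebner reduction (cf.\ \cite[Chapter~2]{MR2290010}) this stabilises at the normal form of $p$, and all of its monomials are divisible by $P$, which is the assertion.

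The main obstacle is precisely the auxiliary divisibility statement above: a naive step-by-step reduction does produce monomials not divisible by $x_m x_{m+1}\cdots x_{m+\ell}$, which cancel only globally at the very end of the computation, so one is forced to recognise and discard the offending terms, and this recognition is exactly what the hypothesis $b_{m-1}=b_m$ makes possible.
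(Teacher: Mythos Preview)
Your proof is correct and follows essentially the same reduction strategy as the paper: both rewrite a monomial divisible by $P=x_m\cdots x_{m+\ell}$ using the relations $h_{b_i}(x_1,\ldots,x_i)\in I_\boldb$ and show that divisibility by $P$ is preserved, with the hypothesis $b_{m-1}=b_m$ used at the same point to absorb the one term that loses a factor of some $x_i$. Your presentation is slightly cleaner in that you isolate the key fact $x_m\cdots x_j\,h_{b_j-1}(x_1,\ldots,x_j)\in I_\boldb$ as a separate inductive lemma and frame the whole computation as a Groebner normal form reduction with explicit discarding of terms in $I_\boldb$, whereas the paper achieves the same effect via the ad~hoc identities \eqref{soerg:eq:79}--\eqref{soerg:eq:80}.
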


\begin{proof}
  We will use the relations defining the ideal $I_\boldb$ to write the
  expression of $p$ as a linear combination of basis monomials. Of
  course, it is sufficient to examine the case in which $p=\boldx^\boldj$ is
  a monomial.

  Consider the maximum $r$ for which $j_r\geq b_r$: if there is no
  such $r$, then $p$ is a monomial of the basis \eqref{soerg:eq:17} and we
  are done. If $r<m$ or $r> m+\ell$ then using the relation
  $h_{b_r}(x_1,\ldots,x_r)$ we can rewrite $p$ as a linear
  combination of monomials $\boldx^{\boldsymbol{j'}}$ with $j'_r < j_r$
  and $x_m x_{m+1} \cdots x_{m + \ell } \mid \boldsymbol{x^{j'}}$: so
  by an induction argument we may suppose $m <r < m+\ell$. If $\ell
  \geq 1$ we can write
  \begin{equation}
    x_{r-1 }x_{r}^{j_{r}} = x_{r-1} h_{j_r}(x_1,\ldots,x_r) - \sum_{s=0}^{j_\ell-1} x_{\ell-1} x_r^s h_{j_r-s} (x_1,\ldots,x_{r-1}). \label{soerg:eq:78}
\end{equation}
Since $h_{j_r}(x_1,\ldots,x_r) \in I_\boldb$ because $j_r \geq b_r$,
and also $ x_{r-1} h_{j_r} (x_1,\ldots,x_{r-1}) \in I_\boldb$ by
\eqref{soerg:eq:3}, the expression \eqref{soerg:eq:78} gives in $R_\boldb$
  \begin{equation}
    x_{r-1 }x_{r}^{j_{r}} \equiv \sum_{s=1}^{j_r-1} x_{r-1} x_r^s h_{j_r-s} (x_1,\ldots,x_{r-1}) \mod{I_\boldb}. \label{soerg:eq:79}
\end{equation}
In the special case $\ell=0$, $r=m$, we write instead
  \begin{equation}
    x_m^{j_m} = h_{j_m}(x_1,\ldots,x_m) - \sum_{s=0}^{j_m-1} x_m^s h_{j_m-s} (x_1,\ldots,x_{m-1}) ,
 \end{equation}
 that in $R_\boldb$ is
  \begin{equation}
    x_m^{j_m} \equiv
  - \sum_{s=1}^{j_m-1} x_m^s h_{j_m-s} (x_1,\ldots,x_{m-1}) \mod{I_\boldb}, \label{soerg:eq:80}
\end{equation}
 since $j_m \geq b_{m-1},b_m$. Both in \eqref{soerg:eq:79} and \eqref{soerg:eq:80},
on the r.h.s.\ we have a sum of monomials $\boldsymbol{x^{j'}}$ with $1
\leq j'_r < j_r$: by an induction argument on $j_r$, the claim
follows.
\end{proof}

The main result of this subsection is the graded cellular
algebra structure of $A$ in the sense of \cite{MR1376244},
\cite{MR2671176}. A \emph{graded cellular algebra} is an
associative unital algebra $H$ together with a \emph{cell
  datum} $(X,I,C,\deg)$ such that:
\begin{enumerate}[label=(GC\arabic*),ref=\arabic*]
\item \label{soerg:item:10} $X$ is a finite partially ordered set;
\item \label{soerg:item:11} $I(\lambda)$ is a finite set for each $\lambda \in X$;
\item\label{soerg:item:12}  $C:\dot\bigcup_{\lambda \in X} I(\lambda) \times I(\lambda) \rightarrow H$, $(i,j) \mapsto C^\lambda_{i,j}$ is an injective map whose image is a basis of $H$;
\item \label{soerg:item:13} the map $H \rightarrow H$, $C^\lambda_{i,j} \mapsto C^\lambda_{j,i}$ is an algebra anti-automorphism;
\item \label{soerg:item:14}if $\lambda \in X$ and $i,j \in I(\lambda)$ then for any $x \in H$ we have that
  \begin{equation}
    \label{soerg:eq:93}
    x C_{i,j}^\lambda \equiv \sum_{i' \in I(\lambda)} r_x (i',i)C_{i',j}^\lambda \pmod{H_{>\lambda}},
  \end{equation}
  where the scalar $r_x(i',i)$ is independent of $j$ and $H_{>\lambda}$ is the subspace of $H$ spanned by $\{C^\mu_{h,l} \suchthat \mu > \lambda \text{ and } k,l \in I(\mu)\}$;
\item \label{soerg:item:15} $\deg: \dot \bigcup_{\lambda \in X} I(\lambda) \rightarrow \Z$, $i \mapsto \deg_i^\lambda$ is a function such that the $\Z$--grading on $H$ defined by declaring $\deg C^\lambda_{i,j}= \deg_i^\lambda + \deg_j^\lambda$ makes $H$ into a graded algebra.
\end{enumerate}

We have:
\begin{prop}
  \label{soerg:prop:10}
  The algebra $A$ is a graded cellular algebra with cell datum $((\Gamma \times \bbS_k ,\preceq),I,C,\deg)$ where:
  \begin{enumerate}[(a)]
  \item $I(\lambda^\sigma) = \{\alpha \in \Gamma \suchthat \alpha \subset \lambda \}$;
  \item $C$ is defined by setting $C_{\alpha,\beta}^{\lambda^\sigma} = (\underline \alpha \lambda^\sigma \overline \beta)$;
  \item $\deg_\alpha^{\lambda^\sigma} = \deg(\underline \alpha \lambda^\sigma) - \len(\sigma)$.
  \end{enumerate}
\end{prop}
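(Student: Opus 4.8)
The plan is to check the six axioms (GC1)--(GC6) of a graded cellular algebra, the only substantive input being the multiplication formula of Proposition~\ref{soerg:prop:9}. Axioms (GC1) and (GC2) are immediate: $\Gamma$ is identified with the finite set $D_{n,k}$, so $(\Gamma\times\bbS_k,\preceq)$ is a finite poset with $\preceq$ as in \eqref{soerg:eq:74}, and each $I(\lambda^\sigma)=\{\alpha\in\Gamma\suchthat\alpha\subset\lambda\}$ is a subset of the finite set $\Gamma$ (and does not depend on $\sigma$). For (GC3), a pair $\alpha,\beta\in I(\lambda^\sigma)$ is precisely the data making $\underline\alpha\lambda^\sigma\overline\beta$ an oriented fork diagram, so $(\lambda^\sigma,\alpha,\beta)\mapsto(\underline\alpha\lambda^\sigma\overline\beta)$ is a bijection onto the distinguished basis \eqref{soerg:eq:67} of $A$; the spanning and linear independence of that basis come from Proposition~\ref{soerg:prop:6} together with Corollary~\ref{soerg:cor:5}, so $C$ is injective with image a basis.

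For (GC4) we invoke the algebra anti-isomorphism $\star$ of \eqref{soerg:eq:92}. Since $C^{\lambda^\sigma}_{\alpha,\beta}=(\underline\alpha\lambda^\sigma\overline\beta)$, using $(\overline\beta)^*=\underline\beta$, $(\underline\alpha)^*=\overline\alpha$, and the fact recorded in \eqref{soerg:eq:40} of Lemma~\ref{soerg:lem:27} that $\star$ preserves the enhancing permutation, we obtain $(C^{\lambda^\sigma}_{\alpha,\beta})^\star=(\underline\beta\lambda^\sigma\overline\alpha)=C^{\lambda^\sigma}_{\beta,\alpha}$, which is (GC4). For (GC6), $A$ is already a graded algebra by Corollary~\ref{soerg:cor:5}, so it suffices to check $\deg C^{\lambda^\sigma}_{\alpha,\beta}=\deg^{\lambda^\sigma}_\alpha+\deg^{\lambda^\sigma}_\beta$. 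This is a short computation with the fork degree conventions: by \eqref{soerg:eq:105} and $\deg(\sigma)=2\len(\sigma)$ one has $\deg(\underline\alpha\lambda^\sigma\overline\beta)=\deg(\underline\alpha\lambda)+\deg(\lambda\overline\beta)+2\len(\sigma)$, and the mirror symmetry of fork degrees gives $\deg(\lambda\overline\beta)=\deg(\underline\beta\lambda)$; hence $\deg(\underline\alpha\lambda^\sigma\overline\beta)=\bigl(\deg(\underline\alpha\lambda^\sigma)-\len(\sigma)\bigr)+\bigl(\deg(\underline\beta\lambda^\sigma)-\len(\sigma)\bigr)$, as required.

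The heart of the matter is (GC5), which is essentially a reformulation of Proposition~\ref{soerg:prop:9}. By bilinearity it suffices to take $x=(a\lambda^\sigma b)$ a basis vector and compute $x\cdot C^{\mu^\tau}_{c,d}=(a\lambda^\sigma b)(c\mu^\tau d)$. Reading off the cases of \eqref{eq:39}: the product is $0$ unless $b=c^*$; if moreover $b=c^*=\overline\lambda$, $\sigma=e$ and the fork diagram $a\mu d$ is oriented, it equals $(a\mu^\tau d)=C^{\mu^\tau}_{a,d}$; in all remaining cases it is a combination of terms $(a\mu^{\tau'}d)$ with $\len(\tau')>\len(\tau)$ together with $(\dag)$, itself a combination of $(a\nu^\chi d)$ with $\nu\succ\mu$. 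Since $\len(\tau')>\len(\tau)$ forces $\mu^{\tau'}\succ\mu^\tau$ and $\nu\succ\mu$ forces $\nu^\chi\succ\mu^\tau$ in the order \eqref{soerg:eq:74}, all these terms lie in $A_{>\mu^\tau}$. Hence $x\cdot C^{\mu^\tau}_{c,d}\equiv\sum_{\alpha}r_x(\alpha,c)\,C^{\mu^\tau}_{\alpha,d}\pmod{A_{>\mu^\tau}}$, where $r_x(\alpha,c)$ equals $1$ in the single special case above and $0$ otherwise; whether that case occurs depends only on $b$, $\sigma$, and on $a$ and $\mu$ but not on $d$ (given that $c\mu^\tau d$ is a basis vector), so $r_x(\alpha,c)$ is independent of the column index $d$. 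This proves (GC5) and completes the verification.

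I expect the only real difficulty to be bookkeeping: correctly matching the two layers of triangularity in Proposition~\ref{soerg:prop:9} (the Bruhat order on the underlying weight, refined by the length of the enhancing permutation) with the order \eqref{soerg:eq:74} on $\Gamma\times\bbS_k$, and carefully tracking the independence of the structure constants from the column index, which is exactly what makes the scalars $r_x$ in (GC5) well defined.
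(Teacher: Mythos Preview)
Your proof is correct and follows exactly the same approach as the paper's own proof, which simply records that (GC1)--(GC3) and (GC6) are immediate from the definitions, (GC4) follows from Lemma~\ref{soerg:lem:27}, and (GC5) from Proposition~\ref{soerg:prop:9}. You have merely expanded these one-line references into explicit verifications; in particular your handling of (GC5), including the observation that the orientedness of $a\mu d$ reduces to that of $a\mu$ once $d=\overline\delta$ with $\delta\in I(\mu^\tau)$, is exactly the independence statement the paper relies on.
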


\begin{proof}
  Conditions (GC\ref{soerg:item:10}-\ref{soerg:item:12}) and (GC\ref{soerg:item:15}) are direct consequences of the definitions. Condition (GC\ref{soerg:item:13}) follows from Lemma~\ref{soerg:lem:27}. Condition (GC\ref{soerg:item:14}) follows from Proposition~\ref{soerg:prop:9}.
\end{proof}

\subsection{Properly stratified structure}
\label{soerg:sec:prop-strat-struct}
As before, let us fix a block $\Gamma$ and let $A=A_\Gamma$. We construct now explicitly a properly stratified structure on $A$.
The construction is similar to the one of \cite{pre06126156}.

An
$A$--module will always be a finite dimensional graded left
$A$--module. Let $\gmod{A}$ be the category of such modules. If
$M=\bigoplus M_i$ is a graded $A$--module then we will write $M\langle
j \rangle$ for the same module structure but with new grading defined
by $(M\langle j \rangle)_i= M_{i-j}$. If $M,N$ are graded $A$--modules
then $\Hom_A (M,N)$ is a graded vector space.

\subsubsection{Irreducible and projective \texorpdfstring{$A$}{A}-modules}
\label{soerg:sec:irred-proj-a}

As we already noticed, the algebra $A$ is unital with $1 =
\sum_{\lambda \in \Gamma} e_\lambda$. Let $A_{>0}$  be the sum of all components of $A$ of strictly positive degree. Then
\begin{equation}
  \label{soerg:eq:63}
  A/A_{>0} = \bigoplus_\lambda e_\lambda \C e_\lambda \cong \bigoplus_{\lambda \in \Gamma} \C
\end{equation}
is a split semisimple algebra, with a basis given by the images of the
idempotents $e_\lambda$. The image of $e_\lambda$ spans a one
dimensional $A/A_{>0}$--modules, and hence also a one dimensional
$A$--module which we denote $L(\lambda)$. Thus $L(\lambda)$ is a copy
of the field concentrated in degree $0$, and $(a \mu^\sigma b) \in A$ acts on
it as $1$ if $(a \mu^\sigma b) = (\underline \lambda \lambda^e \overline
\lambda)$ and as $0$ otherwise. The modules
\begin{equation}
  \label{soerg:eq:77}
  \{ L(\lambda)\langle j \rangle \suchthat \lambda \in \Gamma, j \in \Z \}
\end{equation}
give a complete set of isomorphism classes of irreducible graded $A$--modules.

For any finite dimensional graded $A$--module $M$, let $M^*$
denote its graded dual. That is, $(M^*)_j = \Hom_\C
(M_{-j},\C)$ and $x \in A$ acts on $f \in M^*$ by $xf(m) =
f(x^\star m)$. As $e_\lambda^\star=e_\lambda$ we have that
\begin{equation}
  \label{soerg:eq:81}
  L(\lambda)^* \cong L(\lambda)
\end{equation}
for each $\lambda \in \Gamma$.

For each $\lambda \in \Gamma$ let also $P(\lambda) = A e_\lambda$. This is a graded $A$--module with basis
\begin{equation}
  \label{soerg:eq:96}
  \{ (\underline \nu \mu^\sigma \overline \lambda) \suchthat \text{for all } \nu, \mu \in \Gamma \text{ and } \sigma \in \bbS_k \text{ with } \nu \subset \mu \supset \lambda\}.
\end{equation}
The module $P(\lambda)$ is a projective module; in fact, it is the projective cover of $L(\lambda)$ in $\gmod{A}$. The modules
\begin{equation}
  \label{soerg:eq:97}
  \{ P(\lambda)\langle j \rangle \suchthat \lambda \in \Gamma, j \in \Z \}
\end{equation}
give a complete set of isomorphism classes of indecomposable projective $A$--modules.

\subsubsection{Cell modules and standard modules}
\label{soerg:sec:cell-modul-stand}

We introduce now \emph{standard modules}. The terminology will be motivated at the end of the section. For $\mu \in \Gamma$, define $\Delta(\mu)$ to be the vector space with basis
\begin{equation}
  \label{soerg:eq:98}
  \{ (\underline \lambda \mu^\tau \sbar \suchthat \text{for all } \lambda \in \Gamma, \tau \in \bbS_k \text{ such that } \lambda \subset \mu \}
\end{equation}
or, equivalently,
\begin{equation}
  \label{soerg:eq:100}
  \{ ( c \mu^\tau \sbar \suchthat \text{for all oriented lower fork diagrams } c \mu^\tau\}.
\end{equation}
We put a grading on $\Delta(\mu)$ by defining the degree of $(c \mu^\tau\sbar$ to be $\deg ( c \mu^\tau)$, and we make it into an $A$--module through
\begin{equation}
  \label{soerg:eq:99}
  (a \lambda^\sigma b) (c \mu^\tau\sbar =
  \begin{cases}
    \sum_{\tau' \in \bbS_k} t^{\tau'}_{(a \lambda^\sigma b) } (\mu^\tau) (a \mu^{\tau'}  \sbar & \text{if } b=c^* \text{ and } (a \mu) \text{ is oriented},\\
    0 & \text{otherwise},
  \end{cases}
\end{equation}
where $ t^{\tau'}_{(a \lambda^\sigma b) } (\mu^\tau)$ is the scalar defined by Proposition~\ref{soerg:prop:9}. Note that $t^{\tau'}_{(a \lambda^\sigma b) } (\mu^\tau)$ was defined only for $\tau'=\tau$ or for $\len(\tau')>\len(\tau)$; otherwise we set $t^{\tau'}_{(a \lambda^\sigma b) } (\mu^\tau)=0$.

\begin{prop}
  \label{soerg:thm:3}
  For $\lambda \in \Gamma$ enumerate the distinct elements of the set $\{\mu \in \Gamma \suchthat \mu \supset \lambda\}$ as $\mu_1,\mu_2,\ldots,\mu_m=\lambda$ so that if $\mu_i \prec \mu_j$ then $i>j$. Set $M(0) = \{0\}$ and for $i=1,\ldots,m$ define $M(i)$ to be the subspace of $P(\lambda)$ generated by $M(i-1)$ and the vectors
  \begin{equation}
    \label{soerg:eq:101}
    \{ (c \mu_i^\tau \overline \lambda) \suchthat \text{for all oriented lower fork diagrams } c \mu_i^\tau\}.
  \end{equation}
  Then
  \begin{equation}
    \label{soerg:eq:102}
    \{0\} = M(0) \subset M(1) \subset \cdots \subset M(m)=P(\lambda)
  \end{equation}
  is a filtration of $P(\lambda)$ as an $A$--module such that
  \begin{equation}
    \label{soerg:eq:103}
    M(i)/M(i-1) \cong \Delta(\mu_i) \langle \deg \mu_i \overline \lambda \rangle
  \end{equation}
  for each $i=1,\ldots,m$.
\end{prop}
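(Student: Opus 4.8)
The plan is to mimic the corresponding argument for the generalized Khovanov algebras in \cite{pre06126156}, with the multiplication rule of Proposition~\ref{soerg:prop:9} playing the role of the surgery computation there. First I would reinterpret $M(i)$ as a span of diagrams. Using the basis \eqref{soerg:eq:96} of $P(\lambda)$ and the linear independence of enhanced oriented fork diagrams, for each $i$ the set
\[
  \Lambda(i) = \{\, (c\,\mu_j^\tau\,\overline\lambda) \suchthat 1\le j\le i,\ c\,\mu_j^\tau \text{ an oriented lower fork diagram} \,\}
\]
is linearly independent, and $\Lambda(m)$ is exactly the basis \eqref{soerg:eq:96} once one reindexes the admissible middle weights $\mu$ by their position in the chosen enumeration $\mu_1,\dots,\mu_m=\lambda$. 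I would then prove by induction on $i$ that $M(i)$ equals the $\C$-span of $\Lambda(i)$; granting this, the chain \eqref{soerg:eq:102} is immediate, because the vectors \eqref{soerg:eq:101} with $j=i$ do not lie in the $\C$-span of $\Lambda(i-1)$, and $\Lambda(m)$ spans all of $P(\lambda)$.

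The inductive step needs that the $\C$-span of $\Lambda(i)$ is an $A$-submodule of $P(\lambda)$; once this is known, $M(i)$, which by definition is generated by the submodule $M(i-1)=\C\text{-span of }\Lambda(i-1)$ together with the vectors \eqref{soerg:eq:101}, coincides with the $\C$-span of $\Lambda(i)$. To see the stability, fix a basis vector $(a\,\nu^\sigma\,b)$ of $A$ and a generator $(c\,\mu_j^\tau\,\overline\lambda)\in\Lambda(i)$. By Proposition~\ref{soerg:prop:9} the product vanishes unless $b=c^*$, and when $b=c^*$ it is a $\C$-linear combination of vectors $(a\,\mu_j^{\tau'}\,\overline\lambda)$ with $\tau'=\tau$ or $\len(\tau')>\len(\tau)$, which belong to $\Lambda(i)$ (the coefficient of such a term being $0$ unless $a\,\mu_j$ is oriented), together with a combination of vectors $(a\,\eta^\chi\,\overline\lambda)$ whose middle weight satisfies $\eta\succ\mu_j$. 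By the choice of the enumeration ($\mu_p\prec\mu_q$ forces $p>q$), any such $\eta$ equals $\mu_{j'}$ for some $j'<j\le i$, so these terms also lie in $\Lambda(i)$; hence $A$ preserves the $\C$-span of $\Lambda(i)$.

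It remains to identify the subquotients. For each $i$ I would define a linear map $\phi_i\colon\Delta(\mu_i)\to M(i)/M(i-1)$ by $(c\,\mu_i^\tau\sbar\mapsto(c\,\mu_i^\tau\,\overline\lambda)+M(i-1)$; by the span description above it carries the distinguished basis \eqref{soerg:eq:100} of $\Delta(\mu_i)$ bijectively onto a basis of $M(i)/M(i-1)$, so it is a linear isomorphism. It is homogeneous of the right degree: by \eqref{soerg:eq:105} and the conventions for enhanced diagrams, the degree of $(c\,\mu_i^\tau\,\overline\lambda)$ in $P(\lambda)$ equals $\deg(c\,\mu_i^\tau)+\deg(\mu_i\,\overline\lambda)$, which is precisely the degree of $(c\,\mu_i^\tau\sbar$ in $\Delta(\mu_i)\langle\deg\mu_i\overline\lambda\rangle$. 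Finally, for $A$-linearity one reads off from Proposition~\ref{soerg:prop:9} that, modulo $M(i-1)$ (which absorbs all the $\eta\succ\mu_i$ terms by the previous paragraph),
\[
  (a\,\nu^\sigma\,b)\cdot(c\,\mu_i^\tau\,\overline\lambda)\equiv\sum_{\tau'\in\bbS_k}t^{\tau'}_{(a\,\nu^\sigma\,b)}(\mu_i^\tau)\,(a\,\mu_i^{\tau'}\,\overline\lambda)
\]
when $b=c^*$ and $a\,\mu_i$ is oriented, and $\equiv 0$ otherwise, which is exactly the defining formula \eqref{soerg:eq:99} for the $A$-action on $\Delta(\mu_i)$, the scalars matching because $t^{\tau'}_{(a\,\nu^\sigma\,b)}(\mu_i^\tau)$ is independent of the lower diagram (here $\overline\lambda$) by Proposition~\ref{soerg:prop:9}. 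Hence $\phi_i$ is an isomorphism of graded $A$-modules and \eqref{soerg:eq:103} follows.

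The main obstacle is the bookkeeping in the last two steps: one must be certain that in the third case of Proposition~\ref{soerg:prop:9} every contribution that is not of the form $(a\,\mu_i^{\tau'}\,\overline\lambda)$ with $\len(\tau')\ge\len(\tau)$ has middle weight strictly above $\mu_i$ in the partial order, hence lies in $M(i-1)$ by the enumeration, and that the surviving coefficients depend only on $(a\,\nu^\sigma\,b)$ and $\mu_i^\tau$, not on the upper diagram $\overline\lambda$, so that \eqref{soerg:eq:99} is both well-defined and compatible with the subquotient structure. Both facts are precisely what parts (i)--(ii) of Proposition~\ref{soerg:prop:9} provide, so the remaining work is just organizing the order relations carefully.
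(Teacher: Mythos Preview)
Your proposal is correct and follows essentially the same approach as the paper: both arguments use Proposition~\ref{soerg:prop:9} to show that each $M(i)$ is an $A$--submodule (since the $(\dag)$ terms have middle weight $\succ \mu_j$, hence equal to some $\mu_{j'}$ with $j'<j$), and then identify the subquotient with $\Delta(\mu_i)$ via the map $(c\,\mu_i^\tau\sbar \mapsto (c\,\mu_i^\tau\,\overline\lambda)+M(i-1)$, checking homogeneity from \eqref{soerg:eq:105} and $A$--linearity from the coefficient formula in Proposition~\ref{soerg:prop:9}\ref{item:1}. The only point to be careful about---and the paper makes it explicit---is that the formula \eqref{soerg:eq:99} is not known in advance to define an $A$--module structure on $\Delta(\mu_i)$; rather, this is deduced \emph{a posteriori} by transporting the module structure from $M(i)/M(i-1)$, so your $\phi_i$ should be phrased as a vector-space isomorphism first, with the $A$--module structure on $\Delta(\mu_i)$ defined through it.
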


\begin{proof}
  It follows from Proposition~\ref{soerg:prop:9} that $M(i)$ is indeed a submodule of $P(\lambda)$. The map
  \begin{equation}
    \label{soerg:eq:104}
    \begin{aligned}
      f_i: \Delta(\mu_i) \langle \deg \mu_i \overline \lambda \rangle  & \longrightarrow M(i)/M(i-1)\\
      (c\mu_i^\tau\sbar & \longmapsto (c \mu_i^\tau \overline \lambda) + M(i-1)
    \end{aligned}
  \end{equation}
  gives an isomorphism of graded vector spaces. This map is of degree zero because
  \begin{equation}
    \label{soerg:eq:106}
    \deg(c \mu_i^\tau \overline \lambda) = \deg(c \mu_i^\tau) + \deg( \mu_i \overline \lambda).
  \end{equation}
  Through this vector space isomorphism we can transport the $A$--module structure of $M(i)/M(i-1)$ to $\Delta(\mu_i)$. Using Proposition~\ref{soerg:prop:9} we see that the module structure we get on $\Delta(\mu_i)$ is given by \eqref{soerg:eq:99}. Hence \eqref{soerg:eq:99} defines indeed an $A$--module structure on $\Delta(\mu_i)$ and \eqref{soerg:eq:104} is an isomerism of $A$--modules. Since any weight $\mu$ arises as $\mu_i$ for some $\lambda$ as in the statement of the theorem (take for example $\lambda=\mu$, $i=m$), we conclude also that \eqref{soerg:eq:99} defines an $A$--module structure for every $\mu$.
\end{proof}

\begin{remark}
  \label{rem:6}
  In particular, it follows from
  Proposition~\ref{soerg:thm:3} that each projective module
  $P(\lambda)$ has a \emph{standard filtration}, that is a
  filtration with standard modules. The function
  $\height(\lambda) = \len(\lambda)$, where we consider
  $\lambda$ as an element of $D_{n,k}$ via our bijection,
  plays the role of an \emph{height function}, as in
  \cite{MR2341265}. All subquotients of the standard
  filtration \eqref{soerg:eq:102} of $P(\lambda)$ have the
  form $\Delta(\mu)\langle h \rangle$, where $h \geq 0$ and
  $\height(\mu)=\height(\lambda) - h$, and $h=0$ is possible
  only if $\mu = \lambda$.
\end{remark}

Let us now define \emph{cell modules}. Let $\mu^\tau \in \Gamma \times \bbS_k$ be an enhanced weight and define $V(\mu^\tau)$ to be the vector space with basis
\begin{equation}
  \label{soerg:eq:107}
  \{ (\underline \lambda \mu^\tau\psbar \suchthat \text{for all }\lambda \in \Gamma \text{ such that } \lambda \subset \mu \}
\end{equation}
or, equivalently,
\begin{equation}
  \label{soerg:eq:108}
  \{( c \mu^\tau \psbar \suchthat \text{for all oriented lower fork diagrams } c\mu^\tau \}.
\end{equation}
Here the symbol $\psbar$ has no particular meaning (as also $\sbar$ in \eqref{soerg:eq:98}) and is used only as a symbol to distinguish the basis of cell modules.
We remark that the difference with \eqref{soerg:eq:98} and \eqref{soerg:eq:100} is that now the permutation $\tau$ is fixed. As before, we put a grading on $V(\mu^\tau)$ by defining the degree of $(c\mu^\tau\psbar$ to be $\deg(c \mu^\tau)$, and we make it into an $A$--module through
\begin{equation}
  \label{soerg:eq:109}
  (a \lambda^\sigma b) (c \mu^\tau\psbar =
  \begin{cases}
    t^{\tau}_{(a \lambda^\sigma b) } (\mu^\tau)\cdot (a \mu^{\tau}  \psbar & \text{if } b=c^* \text{ and } (a \mu) \text{ is oriented},\\
    0 & \text{otherwise}.
  \end{cases}
\end{equation}
From Proposition~\ref{soerg:prop:9} we have that $t^\tau_{(a \lambda^\sigma b)}(\mu^\tau)$ does not depend on $\tau$. Hence \eqref{soerg:eq:109} is the same as
\begin{equation}
  \label{soerg:eq:110}
  (a \lambda^\sigma b) (c \mu^\tau\psbar =
  \begin{cases}
    (a \mu^{\tau} \psbar & \text{if } b=c^*=\overline \lambda,\, \sigma=e \text{ and } (a \mu) \text{ is oriented},\\
    0 & \text{otherwise}.
  \end{cases}
\end{equation}

It will follow from Proposition~\ref{soerg:thm:4} that this indeed defines an $A$--module structure. It is clear from \eqref{soerg:eq:110} that all cell modules $V(\mu^\tau)$ for a fixed $\mu$ are isomorphic (up to a degree shift). Explicitly we have $V(\mu^\tau) \cong V(\mu^e) \langle \deg(\tau)\rangle$. We recall that $\deg(\tau)=2 \len(\tau)$. Therefore for a weight $\mu \in \Gamma$ we define the \emph{proper standard module} $\overline \Delta(\mu)$ to be the vector space with basis
\begin{equation}
  \label{soerg:eq:115}
  \{ (\underline \lambda \mu\psbar \suchthat \text{for all }\lambda \in \Gamma \text{ such that } \lambda \subset \mu \}
\end{equation}
or, equivalently,
\begin{equation}
  \label{soerg:eq:116}
  \{( c \mu \psbar \suchthat \text{for all unenhanced oriented lower fork diagrams } c\mu \}.
\end{equation}
We put a grading on $\overline \Delta(\mu)$ by defining the degree of $(c\mu\psbar$ to be $\deg(c \mu)$, and we make it into an $A$--module through
\begin{equation}
  \label{soerg:eq:117}
  (a \lambda^\sigma b) (c \mu\psbar =
  \begin{cases}
    (a \mu \psbar & \text{if } b=c^*=\overline \lambda,\, \sigma=e \text{ and } (a \mu) \text{ is oriented},\\
    0 & \text{otherwise}.
  \end{cases}
\end{equation}
Of course we have an isomorphism $\overline\Delta(\mu)\cong V(\mu^e)$.

\begin{prop}
  \label{soerg:thm:4}
  Let $\mu \in \Gamma$. Enumerate the elements of\/ $\bbS_k$ as $\sigma_1,\sigma_2,\ldots,\sigma_{k!}=e$ in such a way that if\/ $\len(\sigma_i) > \len(\sigma_j)$ then $i<j$. Let $N(0)=\{0\}$ and for $i=1,\ldots,k!$ define $N(i)$ to be the subspace of $\Delta(\mu)$ generated by $N(i-1)$ and the vectors
  \begin{equation}
    \label{soerg:eq:111}
    \{ (c\mu^{\sigma_i}\sbar \suchthat \text{ for all oriented lower fork diagrams } c\mu^{\sigma_i} \}.
  \end{equation}
  Then
  \begin{equation}
    \label{soerg:eq:112}
    \{0\} = N(0) \subset N(1) \subset \cdots \subset N(k!) = \Delta(\mu)
  \end{equation}
  is a filtration of $\Delta(\mu)$ as an $A$--module such that
  \begin{equation}
    \label{soerg:eq:113}
    N(i)/N(i-1) \cong \overline\Delta(\mu)\langle 2\len(\sigma_i) \rangle.
  \end{equation}
\end{prop}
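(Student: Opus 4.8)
The plan is to imitate the proof of Proposition~\ref{soerg:thm:3}, with the Bruhat order on $\Gamma$ replaced by the length order on $\bbS_k$, and with the triangularity of the structure constants $t^{\tau'}_{(a\lambda^\sigma b)}(\mu^\tau)$ in the parameter $\len(\tau')$ playing the role that Proposition~\ref{soerg:prop:9}\eqref{eq:39} played there. So fix the enumeration $\sigma_1,\dots,\sigma_{k!}=e$ of $\bbS_k$ with $\len(\sigma_i)>\len(\sigma_j)\Rightarrow i<j$, and let $N(i)\subseteq\Delta(\mu)$ be the subspace spanned by $N(i-1)$ together with the vectors \eqref{soerg:eq:111}.

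First I would check that each $N(i)$ is an $A$-submodule. Applying a basis vector $(a\lambda^\sigma b)$ to a generator $(c\mu^{\sigma_i}\sbar$ of $N(i)$ gives, by \eqref{soerg:eq:99}, a linear combination of the $(a\mu^{\tau'}\sbar$ with coefficients $t^{\tau'}_{(a\lambda^\sigma b)}(\mu^{\sigma_i})$; by Proposition~\ref{soerg:prop:9} together with the convention fixed right after \eqref{soerg:eq:99}, these scalars vanish unless $\tau'=\sigma_i$ or $\len(\tau')>\len(\sigma_i)$. In the first case the resulting vectors lie in the span \eqref{soerg:eq:111} defining $N(i)$ (when $(a\mu)$ is oriented) or are zero; in the second case $\tau'=\sigma_j$ with $j<i$, so they lie in $N(i-1)\subseteq N(i)$. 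Hence $A\cdot N(i)\subseteq N(i)$, and \eqref{soerg:eq:112} is a filtration by $A$-submodules.

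Next I would identify the layers. The images of the vectors $(c\mu^{\sigma_i}\sbar$, over all oriented lower fork diagrams $c\mu^{\sigma_i}$, form a basis of $N(i)/N(i-1)$, so $(c\mu\psbar\mapsto(c\mu^{\sigma_i}\sbar+N(i-1)$ defines a vector space isomorphism
\[
f_i\colon \overline\Delta(\mu)\langle 2\len(\sigma_i)\rangle \;\longrightarrow\; N(i)/N(i-1),
\]
which is homogeneous of degree $0$ because $\deg(c\mu^{\sigma_i})=\deg(c\mu)+2\len(\sigma_i)$ by the definition of the degree of enhanced oriented fork diagrams. Transporting the $A$-action from the subquotient $N(i)/N(i-1)$ (a genuine $A$-module) back to $\overline\Delta(\mu)$ along $f_i$, formula \eqref{soerg:eq:99} shows that in the quotient every term $(a\mu^{\tau'}\sbar$ with $\tau'\ne\sigma_i$ — necessarily $\len(\tau')>\len(\sigma_i)$ — dies, so the action of $(a\lambda^\sigma b)$ reduces to multiplication by $t^{\sigma_i}_{(a\lambda^\sigma b)}(\mu^{\sigma_i})$ onto $(a\mu^{\sigma_i}\sbar+N(i-1)$ when $(a\mu)$ is oriented. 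By Proposition~\ref{soerg:prop:9} this scalar is independent of $d$ and equals $1$ exactly when $b=c^*=\overline\lambda$, $\sigma=e$ and $(a\mu)$ is oriented, and $0$ otherwise; comparing with \eqref{soerg:eq:117} we get that $f_i$ is an isomorphism of $A$-modules, which is \eqref{soerg:eq:113}. Exactly as in the proof of Proposition~\ref{soerg:thm:3}, this same computation also shows along the way that \eqref{soerg:eq:117}, hence \eqref{soerg:eq:109} and \eqref{soerg:eq:110}, really do define $A$-module structures, and since every $\mu\in\Gamma$ occurs this holds for all weights.

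I expect the only genuinely delicate point to be the triangularity input: one must read off from cases two and three of \eqref{eq:39} that $t^{\tau'}_{(a\lambda^\sigma b)}(\mu^\tau)=0$ whenever $\len(\tau')\le\len(\tau)$ and $\tau'\ne\tau$, and that the surviving diagonal coefficient takes the stated value. Everything else is the bookkeeping of the filtration, parallel to \cite{pre06126156}.
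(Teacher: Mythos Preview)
Your proof is correct and follows essentially the same approach as the paper's own proof: both use Proposition~\ref{soerg:prop:9} to verify that the $N(i)$ are submodules, define the same map $f_i$, and transport the module structure back to identify it with \eqref{soerg:eq:117}. Your version is simply more explicit about the case analysis of the diagonal coefficient $t^{\sigma_i}_{(a\lambda^\sigma b)}(\mu^{\sigma_i})$, which the paper leaves implicit by pointing to the equivalence of \eqref{soerg:eq:109} and \eqref{soerg:eq:110} established just before the proposition.
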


\begin{proof}
  It follows from Proposition~\ref{soerg:prop:9} that $N(i)$ is indeed a submodule of $\Delta(\mu)$. The map
  \begin{equation}
    \label{soerg:eq:114}
    \begin{aligned}
      f_i : \overline\Delta(\mu)\langle 2 \len(\sigma_i) \rangle & \longrightarrow N(i)/N(i-1)\\
      (c \mu \psbar & \longmapsto (c \mu^{\sigma_i}\sbar + N(i-1)
    \end{aligned}
  \end{equation}
  gives an isomorphism of graded vector spaces. The degree shift comes from
  \begin{equation}
    \label{soerg:eq:118}
    \deg( c\mu^{\sigma_i}) = \deg(c \mu) + 2 \len(\sigma_i).
  \end{equation}
  Through $f_i$ we can transport the module structure of $N(i)/N(i-1)$ to $\overline \Delta(\mu)$. The module structure on $N(i)/N(i-1)$ is described by \eqref{soerg:eq:99}. It follows that $\overline\Delta(\mu) \langle 2 \len (\sigma_i) \rangle$ is endowed with the module structure of $V(\mu^{\sigma_i})$ described by \eqref{soerg:eq:109}; this shows in particular that \eqref{soerg:eq:109} defines indeed an $A$--module structure. We have already argued that this is the same as the module structure described by \eqref{soerg:eq:117} on $\overline\Delta(\mu)$.
\end{proof}

\begin{prop}
  \label{soerg:thm:5}
  For $\mu \in \Gamma$, let $Q(j)$ be the submodule of $\oDelta(\mu)$ spanned by all homogeneous vectors of degree $\geq j$. Then
  \begin{equation}
    \label{soerg:eq:119}
    \oDelta(\mu)=Q(0) \supseteq Q(1) \supseteq Q(2) \supseteq \cdots
  \end{equation}
  is a (finite) filtration of $\oDelta(\mu)$ as an $A$--module such that
  \begin{equation}
    \label{soerg:eq:120}
    Q(j)/Q(j+1) \cong \bigoplus_{\substack{\lambda \subset \mu \text{ with}\\ \deg(\underline \lambda \mu) = j}} L(\lambda)\langle j \rangle
  \end{equation}
  for all $j\geq 0$.
\end{prop}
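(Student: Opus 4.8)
The plan is to exploit that $A$ is non-negatively graded with semisimple degree-zero part, so that the degree filtration on $\oDelta(\mu)$ is automatically a module filtration with semisimple subquotients; here we take for granted that $\oDelta(\mu)$ is a genuine graded $A$--module, which follows from Proposition~\ref{soerg:thm:4} together with \eqref{soerg:eq:117}.

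First I would record that $A=\bigoplus_{d\geq 0}A_d$ with $A_0=\bigoplus_{\lambda\in\Gamma}\C e_\lambda$. Indeed, the degree of a basis vector $(a\lambda^\sigma b)$ is $\deg(a\lambda)+\deg(\lambda b)+\deg(\sigma)$, and each summand is $\geq 0$ by the degree conventions ($\deg(c\lambda(m,i))=i-1\geq 0$ and $\deg(\sigma)=2\len(\sigma)\geq 0$), with total degree $0$ forcing $a=\underline\lambda$, $b=\overline\lambda$ by Lemma~\ref{lem:1} and $\sigma=e$; this is exactly \eqref{soerg:eq:63}. Consequently, for homogeneous $x\in A_d$ and a homogeneous vector $v\in\oDelta(\mu)$ of degree $e$, the product $xv$ is homogeneous of degree $d+e\geq e$. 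In particular $A_{>0}$ maps $Q(j)$ into $Q(j+1)$ and $A_0$ preserves each $Q(j)$, so every $Q(j)$ is an $A$--submodule and $\oDelta(\mu)=Q(0)\supseteq Q(1)\supseteq\cdots$ is a filtration; it is finite since $\oDelta(\mu)$ is finite-dimensional (there are finitely many $\lambda\subset\mu$).

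Next, since $A_{>0}$ sends $Q(j)$ into $Q(j+1)$, the subquotient $Q(j)/Q(j+1)$ is a module over $A/A_{>0}\cong\bigoplus_{\lambda\in\Gamma}\C$, hence is semisimple. As a graded vector space it is the degree-$j$ component $\oDelta(\mu)_j$, which by the definition of the grading on $\oDelta(\mu)$ has basis $\{(\underline\lambda\mu\psbar \suchthat \lambda\subset\mu,\ \deg(\underline\lambda\mu)=j\}$. I would then read off the residual $A_0$--action from \eqref{soerg:eq:117}: for the idempotent $e_\nu=(\underline\nu\nu^e\overline\nu)$ and a basis vector $(\underline\lambda\mu\psbar=(c\mu\psbar$ with $c=\underline\lambda$, the requirements ``$b=c^*=\overline\nu$'' and ``$\sigma=e$'' force $\nu=\lambda$, while $(\underline\lambda\mu)$ is oriented because $\lambda\subset\mu$; thus $e_\nu(\underline\lambda\mu\psbar=\delta_{\nu,\lambda}\,(\underline\lambda\mu\psbar$. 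Hence each basis vector $(\underline\lambda\mu\psbar$ of $\oDelta(\mu)_j$ spans a one-dimensional $A$--submodule of $Q(j)/Q(j+1)$ isomorphic to $L(\lambda)$ sitting in degree $j$, i.e.\ to $L(\lambda)\langle j\rangle$; summing over the basis gives \eqref{soerg:eq:120}.

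There is essentially no serious obstacle here: the only points requiring care are (i) checking that $A$ is concentrated in non-negative degrees with $A_0$ as above, which is immediate from the degree conventions and Lemma~\ref{lem:1} (or one simply cites \eqref{soerg:eq:63}), and (ii) matching the conditions in \eqref{soerg:eq:117} to conclude that $e_\nu$ acts as the Kronecker delta on the chosen basis; both are routine verifications.
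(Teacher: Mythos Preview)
Your proposal is correct and follows essentially the same approach as the paper's own proof: use that $A$ is non-negatively graded so that $A_{>0}$ kills the subquotient $Q(j)/Q(j+1)$, identify this subquotient with the degree-$j$ component of $\oDelta(\mu)$, and then read off the action of each idempotent $e_\nu$ from \eqref{soerg:eq:117} to see that each basis vector $(\underline\lambda\mu\psbar$ spans a copy of $L(\lambda)\langle j\rangle$. Your write-up simply supplies a little more detail than the paper (e.g.\ explicitly justifying $A_0=\bigoplus_\lambda \C e_\lambda$ via the degree conventions and Lemma~\ref{lem:1}), but the logical structure is identical.
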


\begin{proof}
  Since $A$ is positively graded, it is clear that each $Q(j)$ is a submodule. The quotient $Q(j)/Q(j+1)$ has basis
  \begin{equation}
    \label{soerg:eq:121}
    \{ (\underline\lambda \mu \psbar + Q(j+1) \suchthat \text{for all } \lambda \in \Gamma \text{ such that } \lambda \subset \mu \text{ and }\deg(\underline \lambda \mu) = j\}.
  \end{equation}
  We need to show that for each $\lambda$ which occurs the one-dimensional subspace $Q'(\lambda)$ of $Q(j)/Q(j+1)$ spanned by $(\underline \lambda \mu \psbar + Q(j+1)$ is an $A$--module isomorphic to $L(\lambda) \langle j \rangle$. It is clear where the degree shift comes from. If $x \in A$ has $\deg(x)>0$ then obviously $x$ vanishes on $Q(j)/Q(j+1)$. So let us consider $e_\nu \in A$. It follows from \eqref{soerg:eq:117} that
  \begin{equation}
    \label{soerg:eq:123}
    e_\nu \cdot (\underline \lambda \mu\psbar =
    \begin{cases}
      (\underline \lambda \mu \psbar & \text{if } \nu=\lambda,\\
      0 & \text{otherwise}.
    \end{cases}
  \end{equation}
  Hence $Q'(\lambda)$ is isomorphic to $L(\lambda)$ after the opportune degree shift.
\end{proof}

\subsubsection{The Grothendieck group}
\label{soerg:sec:grothendieck-group}

The Grothendieck group $K(\gmod{A})$ of $\gmod{A}$ is a free $\Z$--module with basis given by equivalence classes of simple modules. The group $K(\gmod{A})$ becomes a $\Z[v,v^{-1}]$--module if we set $v[M]= [M\langle 1 \rangle]$ for all graded $A$--modules $M$. It is also free as a $\Z[v,v^{-1}]$--module, with basis $\{ [L(\lambda)] \suchthat \lambda \in \Gamma \}$.

For $\lambda,\mu \in \Gamma$, define
\begin{equation}
  \label{soerg:eq:124}
  d_{\lambda,\mu} =
  \begin{cases}
    v^{\deg (\underline \lambda \mu)} & \text{if }\lambda \subset \mu,\\
    0 & \text{otherwise}.
  \end{cases}
\end{equation}
By Propositions~\ref{soerg:thm:3}, \ref{soerg:thm:5} and \ref{soerg:thm:4} respectively  we have that
\begin{align}
  \label{soerg:eq:125}
  [P(\lambda)] & = \sum_{\mu \in \Gamma} d_{\lambda,\mu} [\Delta(\mu)],\\ 
  \label{soerg:eq:126}
  [\oDelta(\mu)] &= \sum_{\lambda \in \Gamma} d_{\lambda,\mu} [L(\lambda)],\\
  \label{soerg:eq:127}
  [\Delta(\mu)] &= [k]_0!\cdot  [\oDelta(\mu)],
\end{align}
where we set
\begin{equation}
  \label{soerg:eq:128}
  [k]_0 = \frac{v^{2k} - 1}{v^2-1} \qquad \text{and} \qquad [k]_0! = [k]_0[k-1]_0\cdots[1]_0.
\end{equation}
Since $d_{\lambda,\lambda}=1$, the matrix $(d_{\lambda,\mu})$ is upper triangular with determinant $1$, hence it is invertible over $\Z[v,v^{-1}]$. In particular, the proper standard modules give also a $\Z[v,v^{-1}]$--basis of $[\gmod{A}]$. On the other side, notice that the matrix $[k]_0! \Id$ is not invertible over $\Z[v,v^{-1}]$ unless $k=0,1$. In particular, standard and projective modules do not give a basis of the Grothendieck group in general.

We recall the definition of a graded \emph{properly stratified algebra} in the sense of \cite{MR2057398} (see also \cite{MR1921761}, \cite{MR2344576}).
\begin{definition}\label{def:5}
  Let $B$ be a finite dimensional associative graded algebra over a
  field $\K$ with a simple preserving duality and with equivalence
  classes of simple modules $\{ \mathbbol L(\lambda)\langle j \rangle \suchthat \lambda \in \Lambda, j \in \Z\}$ where $(\Lambda,\prec)$ is a partially ordered finite set. For each $\lambda
  \in \Lambda$ let
  \begin{enumerate}[(i)]
  \item \label{soerg:item:8} $\mathbbol P(\lambda)$ denote the
    projective cover of $\mathbbol L(\lambda)$,
  \item \label{soerg:item:9} $\mathbbol \Delta(\lambda)$ be the
    maximal quotient of $\mathbbol P(\lambda)$ such that
    $[\mathbbol\Delta(\lambda):\mathbbol L(\mu)]=0$ for all $\mu
    \succ \lambda$,
  \item \label{soerg:item:16} $\overline{\mathbbol \Delta}(\lambda)$
    be the maximal quotient of $\mathbbol \Delta(\lambda)$ such that
    $[\rad \overline{\mathbbol \Delta}(\lambda):\mathbbol L(\mu)]=0$
    for all $\mu \succeq \lambda$.
  \end{enumerate}
  Then $B$ is \emph{properly stratified} if the following conditions hold for
  every $\lambda \in \Lambda$:
  \begin{enumerate}[label=(PS\arabic*),ref=\arabic*]
  \item \label{soerg:item:17} the kernel of the canonical epimorphism
    $\mathbbol P(\lambda) \surto \mathbbol\Delta(\lambda)$ has a
    filtration with subquotients isomorphic to graded shifts of
    $\mathbbol\Delta(\mu)$, $\mu \succ \lambda$;
  \item \label{soerg:item:18} the kernel of the canonical epimorphism
    $\mathbbol\Delta(\lambda) \surto \overline{\mathbbol
      \Delta}(\lambda)$ has a filtration with subquotients isomorphic
    to graded shifts of $\overline{\mathbbol \Delta}(\lambda)$;
  \item \label{soerg:item:19} the kernel of the canonical epimorphism
    $\overline{\mathbbol \Delta}(\lambda) \surto \mathbbol
    L(\lambda)$ has a filtration with subquotient isomorphic to graded
    shifts of $\mathbbol L(\mu)$, $\mu \prec \lambda$.
  \end{enumerate}
\end{definition}
The modules $\mathbbol\Delta(i)$ and $\overline{\mathbbol\Delta}(i)$ are called \emph{standard} and \emph{proper standard} modules respectively.

\begin{theorem}
  \label{soerg:thm:6}
  For every block $\Gamma$ the algebra $A_\Gamma$ is a graded properly stratified algebra. The partially ordered set indexing the simple modules is $(\Gamma,\prec)$. The modules $\Delta(\mu)$ and $\oDelta(\mu)$ are the standard and proper standard modules respectively. Moreover, the diagonal matrix of the multiplicity numbers of the proper standard modules in the filtrations of the standard modules is a multiple of the identity.
\end{theorem}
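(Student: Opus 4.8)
The strategy is to reduce everything to the three filtration results already proved — Proposition~\ref{soerg:thm:3} (a $\Delta$--filtration of each projective), Proposition~\ref{soerg:thm:4} (a $\oDelta$--filtration of each standard), Proposition~\ref{soerg:thm:5} (a simple filtration of each proper standard) — together with the graded anti-automorphism $\star$ of \eqref{soerg:eq:92}; the only real work is to check that the combinatorially defined modules $\Delta(\mu)$ and $\oDelta(\mu)$ coincide with the modules singled out purely representation-theoretically in Definition~\ref{def:5}. Fix the block $\Gamma$, ordered by the Bruhat order $\prec$; this is the poset indexing the simple modules $L(\lambda)$. The simple-preserving graded duality demanded by Definition~\ref{def:5} is the functor $M\mapsto M^*$ of \textsection\ref{soerg:sec:irred-proj-a} (which exists because $\star$ is an anti-automorphism), and it fixes each $L(\lambda)$ by \eqref{soerg:eq:81}; moreover, since $A$ is positively graded with semisimple degree--$0$ part $\bigoplus_\lambda\C e_\lambda$ and each $e_\lambda$ has degree $0$, every $P(\lambda)=Ae_\lambda$, and hence every nonzero graded quotient of it, has simple head $L(\lambda)$ concentrated in degree $0$.

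First I would record the relevant composition-factor information. By Proposition~\ref{soerg:thm:5} every composition factor of $\oDelta(\mu)$ is $L(\lambda)$ with $\lambda\subset\mu$, hence $\lambda\preceq\mu$ by Lemma~\ref{soerg:lem:26}; and the top layer $Q(0)/Q(1)$ of that filtration is $\bigoplus L(\lambda)\langle\deg(\underline\lambda\mu)\rangle$ over the $\lambda\subset\mu$ with $\deg(\underline\lambda\mu)=0$. By Lemma~\ref{lem:1} (and the uniqueness established in its proof) the only such $\lambda$ is $\mu$ itself, so $[\oDelta(\mu):L(\mu)]=1$, this copy sitting in degree $0$ as the head, while every other composition factor is $L(\lambda)\langle j\rangle$ with $\lambda\prec\mu$, $j\geq1$. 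Feeding this into Proposition~\ref{soerg:thm:4}, the module $\Delta(\mu)$ has all composition factors of the form $L(\lambda)\langle j\rangle$ with $\lambda\preceq\mu$, the occurrences of $L(\mu)$ appearing precisely in the degrees $2\len(\sigma)$, $\sigma\in\bbS_k$; in particular $\Delta(\mu)$ has the same simple head $L(\mu)$ (degree $0$), and the further $k!-1$ copies of $L(\mu)$ all lie in strictly positive degree, hence in the radical of $\Delta(\mu)$ and of any of its quotients.

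Now the identifications are formal. As a quotient of $P(\mu)$ with no composition factor $L(\nu)$, $\nu\succ\mu$, the module $\Delta(\mu)$ factors through the standard module $\mathbbol\Delta(\mu)$ of Definition~\ref{def:5}; conversely, if $P(\mu)\surto Q$ has no such factor, then the kernel of $P(\mu)\surto\Delta(\mu)$ is contained in the kernel of $P(\mu)\surto Q$, for otherwise, running up the $\Delta$--filtration of that kernel supplied by Proposition~\ref{soerg:thm:3} (whose layers are graded shifts of $\Delta(\mu_i)$ with $\mu_i\succ\mu$, using that $\mu$ is the minimal element of $\{\nu\mid\nu\supset\mu\}$) one would obtain a nonzero quotient of some such $\Delta(\mu_i)\langle j\rangle$ inside $Q$, which by the simple head property forces $L(\mu_i)$ with $\mu_i\succ\mu$ into $Q$, a contradiction. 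Hence $\Delta(\mu)=\mathbbol\Delta(\mu)$, and then (PS1) is exactly Proposition~\ref{soerg:thm:3}. The same argument with Proposition~\ref{soerg:thm:4} in place of Proposition~\ref{soerg:thm:3}, using that $[\rad\oDelta(\mu):L(\mu)]=0$ while the kernel of $\Delta(\mu)\surto\oDelta(\mu)$ is filtered by graded shifts $\oDelta(\mu)\langle2\len(\sigma_i)\rangle$ with $\len(\sigma_i)\geq1$ (each contributing a copy of $L(\mu)$ in positive degree, i.e.\ in the radical of any quotient), shows $\oDelta(\mu)=\overline{\mathbbol\Delta}(\mu)$ and gives (PS2). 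Finally (PS3) is Proposition~\ref{soerg:thm:5}: since $\rad\oDelta(\mu)=Q(1)$, the kernel of the canonical epimorphism $\oDelta(\mu)\surto L(\mu)$ is $Q(1)$, filtered by shifts $L(\lambda)\langle j\rangle$ with $\lambda\subset\mu$, $j\geq1$, and each such $\lambda$ satisfies $\lambda\prec\mu$. The closing assertion is immediate from Proposition~\ref{soerg:thm:4} (equivalently from \eqref{soerg:eq:127}): every $\Delta(\mu)$ is filtered by exactly $k!$ graded shifts of $\oDelta(\mu)$, independently of $\mu$, so the diagonal matrix of these multiplicities is $[k]_0!$ (which is $k!$ after forgetting the grading) times the identity.

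I expect the only step carrying genuine content to be the composition-factor analysis of $\oDelta(\mu)$ in the second paragraph, and within it the bookkeeping of graded shifts — pinning down that the single copy of $L(\mu)$ sits in degree $0$ (so it is the head) while all remaining occurrences of $L(\mu)$ in $\Delta(\mu)$ have strictly positive degree. This is precisely the input that makes the maximality characterisations of Definition~\ref{def:5} applicable; the remainder is a mechanical transcription of Propositions~\ref{soerg:thm:3}--\ref{soerg:thm:5}.
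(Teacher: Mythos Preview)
Your proof is correct and follows essentially the same approach as the paper: identify the combinatorially defined $\Delta(\mu)$ and $\oDelta(\mu)$ with the abstract standard and proper standard modules of Definition~\ref{def:5} via the composition-factor information coming from \eqref{soerg:eq:125}--\eqref{soerg:eq:127}, and then read off (PS1)--(PS3) directly from Propositions~\ref{soerg:thm:3}, \ref{soerg:thm:4}, \ref{soerg:thm:5}. The paper's proof is terser---it simply asserts the maximality identifications and cites the three propositions---whereas you spell out the head/degree bookkeeping and the maximality argument explicitly; but there is no genuine difference in strategy.
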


\begin{proof}
  We already noticed that $A=A_\Gamma$ is a finite dimensional associative unital graded algebra over $\C$ with a duality with respect to which the simple modules are self-dual. For $\lambda \in \Gamma$ let $\mathbbol L(\lambda) = L(\lambda)$ and define $\mathbbol P(\lambda)$, $\mathbbol \Delta(\lambda)$ and $\mathbbol {\overline \Delta}(\lambda)$ as in \ref{soerg:item:8}, \ref{soerg:item:9}, \ref{soerg:item:16} above.
By the uniqueness of the projective cover we have $P(\lambda) \cong \mathbbol P(\lambda) $. From \eqref{soerg:eq:127} and \eqref{soerg:eq:126} we have that $\Delta(\lambda)$ is a quotient of $P(\lambda)$ such that $[\Delta(\lambda):L(\mu)]=0$ for every $\mu \succ \lambda$; from Proposition~\ref{soerg:thm:3} it follows that it is maximal with this property, hence $\Delta(\lambda) \cong \mathbbol \Delta(\lambda)$. By the same argument using \eqref{soerg:eq:126} and Proposition~\ref{soerg:thm:4} we get that $\oDelta(\lambda)\cong \mathbbol{\overline \Delta}(\lambda)$. Hence we need to show that properties (PS\ref{soerg:item:17}-\ref{soerg:item:19}) are satisfied. But this follows immediately from Propositions~\ref{soerg:thm:3}, \ref{soerg:thm:4} and \ref{soerg:thm:5}.
\end{proof}

\subsection{A bilinear form and self-dual projective modules}
\label{soerg:sec:bilinear-form}
We define a bilinear form on $A$ and we determine which projective modules are self-dual.

\subsubsection{Defect}
\label{soerg:sec:defect}

Let $\lambda$ be a weight in some block $\Gamma$. We say that an $\up$ of $\lambda$ is \emph{initial} if it has no $\down$'s on its left. Let us define the \emph{defect} of $\lambda$ to be
\begin{equation}
  \label{soerg:eq:129}
  \defect(\lambda) = \# \{\text{non initial }\up \text{'s of }\lambda \}.
\end{equation}

We have the following elementary result:
\begin{lemma}
  \label{soerg:lem:16}
  The maximal degree of\/ $e_\lambda A e_\lambda$ is $k(k-1) + 2\defect(\lambda) $ and the homogeneous subspace of maximal degree of $e_\lambda A e_\lambda$ is one dimensional.
\end{lemma}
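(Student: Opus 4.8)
The plan is to read off everything from the explicit homogeneous basis of $e_\lambda A e_\lambda$. Specialising \eqref{soerg:eq:72} to $\mu=\lambda$, this basis consists of the oriented fork diagrams $\underline\lambda\,\eta^\sigma\,\overline\lambda$ with $\sigma\in\bbS_k$ arbitrary and $\eta\in\Gamma$ such that $\underline\lambda\eta$ is oriented; by Lemma~\ref{soerg:lem:21}\,\ref{soerg:item:23} the latter condition is exactly $\down^\lambda_j\le\down^\eta_j<\down^\lambda_{j+1}$ for $1\le j\le n-k-1$ together with $\down^\lambda_{n-k}\le\down^\eta_{n-k}\le n$. First I would compute the degree of such a basis vector. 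Since $\overline\lambda=(\underline\lambda)^*$, the upper fork diagram $\eta\overline\lambda$ is the fork-by-fork mirror of the lower fork diagram $\underline\lambda\eta$, so $\deg(\eta\overline\lambda)=\deg(\underline\lambda\eta)$; combining this with the definition of the degree of an enhanced oriented fork diagram and with Lemma~\ref{soerg:lem:24} (applied with the trivial permutation) gives $\deg(\underline\lambda\,\eta^\sigma\,\overline\lambda)=2\bigl(\len(\lambda)-\len(\eta)\bigr)+2\len(\sigma)$, where $\len$ denotes the length of the corresponding element of $D_{n,k}$.

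Next I would maximise the two contributions separately. The term $2\len(\sigma)$ is at most $2\len(w_k)=k(k-1)$, with equality precisely for $\sigma=w_k$ (the unique longest element of $\bbS_k$). For the term $2(\len(\lambda)-\len(\eta))$ I would use the inversion count for weights: the $j$-th $\down$ of a weight $\mu$, sitting in position $\down^\mu_j$, has exactly $k+j-\down^\mu_j$ $\up$'s to its right, so $\len(\mu)=\sum_{j=1}^{n-k}(k+j-\down^\mu_j)$ differs from $-\sum_j\down^\mu_j$ by a constant depending only on the block. Hence $\len(\lambda)-\len(\eta)=\sum_{j=1}^{n-k}(\down^\eta_j-\down^\lambda_j)$, which is $\ge 0$ by the orientation inequalities (consistent with Lemma~\ref{soerg:lem:26}). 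Bounding each summand by its maximal admissible value ($\down^\lambda_{j+1}-1-\down^\lambda_j$ for $j<n-k$, and $n-\down^\lambda_{n-k}$ for $j=n-k$) and summing, the terms telescope to $k+1-\down^\lambda_1$. Since the $\up$'s in positions $1,\dots,\down^\lambda_1-1$ are exactly the initial ones, we have $\defect(\lambda)=k-(\down^\lambda_1-1)=k+1-\down^\lambda_1$, so $\max_\eta\bigl(\len(\lambda)-\len(\eta)\bigr)=\defect(\lambda)$ and the maximal degree of $e_\lambda A e_\lambda$ is $k(k-1)+2\defect(\lambda)$.

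For one-dimensionality I would note that the bound on $\sum_j(\down^\eta_j-\down^\lambda_j)$ is attained only when every summand attains its own bound, i.e.\ only for the single weight $\eta_{\max}$ given by $\down^{\eta_{\max}}_j=\down^\lambda_{j+1}-1$ ($j<n-k$) and $\down^{\eta_{\max}}_{n-k}=n$; one checks that this $\eta_{\max}$ is a genuine weight of $\Gamma$ and that $\underline\lambda\,\eta_{\max}$ is oriented (both immediate from $\down^\lambda_j<\down^\lambda_{j+1}$). Together with the forced choice $\sigma=w_k$ this singles out the unique basis vector $\underline\lambda\,\eta_{\max}^{\,w_k}\,\overline\lambda$ of top degree, proving that the top-degree homogeneous component is one-dimensional. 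I do not expect a real obstacle; the only point requiring a little care is the bookkeeping in the inversion-count formula for $\len$ and the telescoping sum, plus a quick check of the degenerate cases $k\in\{0,1\}$, $n=k$ and $n-k=1$, none of which cause trouble.
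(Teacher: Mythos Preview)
Your proof is correct and follows essentially the same strategy as the paper: both identify the unique top-degree basis vector $\underline\lambda\,\eta_{\max}^{\,w_k}\,\overline\lambda$ (with each $\down$ pushed to the rightmost leaf of its fork and $\sigma=w_k$) and compute its degree. The only difference is bookkeeping: the paper reads off the degree directly as $2\len(w_k)+\sum_{\text{forks}}2(m-1)$ and observes that $\sum(m-1)$ counts the non-initial $\up$'s, whereas you route the same computation through Lemma~\ref{soerg:lem:24} and a telescoping sum on $\down$-positions.
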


\begin{proof}
  It is straightforward to notice that the homogeneous subspace of maximal degree of $e_\lambda A e_\lambda$ is one dimensional: the diagram of maximal degree is $\underline \lambda \eta^\sigma \overline \lambda$, where $\eta$ orients every fork of $\underline \lambda$ with maximal degree (that is, each $\down$ is at the rightmost position) and $\sigma$ is the longest element of $\bbS_k$. By definition, the degree of this diagram is obtained by adding $2\len(\sigma)$ to the sum of $2(m-1)$ for every $m$--fork of $\underline \lambda$. Hence, this degree is $2 \len(\sigma)$ plus twice the number of non-initial $\up$'s of $\lambda$.
\end{proof}

\begin{lemma}
  \label{soerg:lem:34}
  Consider $\lambda,\mu \in \Gamma$ and suppose that
  $e_\lambda A e_\mu $ is not trivial. Then the homogeneous
  subspaces of minimal and maximal degree of $e_\lambda A
  e_\mu$ are one dimensional. The minimal and the maximal
  degrees are, respectively,
  \begin{equation}
    \label{soerg:eq:135}
    \sum_{i=1}^{n-k} \abs{\down_i^\lambda - \down_i^\mu} \quad \text{and} \quad   k(k-1) + \sum_{i=1}^{n-k} \abs{\down_{i+1}^\mathrm{min} -1 - \down_{i}^\lambda} + \abs{\down_{i+1}^\mathrm{min} -1 - \down_{i}^\mu},
  \end{equation}
where we set $\down_i^{\mathrm{min}} = \min\{\down_i^\lambda,\down_i^\mu\}$ and $\down_{n-k+1}^\lambda=n+1$.
If\/ $\defect(\lambda) \geq \defect(\mu)$ then the sum of the two expressions of \eqref{soerg:eq:135} is equal to the maximal degree of\/ $e_\lambda A e_\lambda$.
\end{lemma}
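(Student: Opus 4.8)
The plan is to read off the extremal degrees directly from the homogeneous basis \eqref{soerg:eq:72} of $e_\lambda A e_\mu$. Writing $z,z'$ for the elements of $D_{n,k}$ corresponding to $\lambda,\mu$, a basis vector is $\underline\lambda\eta^\sigma\overline\mu$ with $\lambda \supset \eta \subset \mu$ and $\sigma \in \bbS_k$; unwinding the definition of the degree of a fork diagram and applying Lemma~\ref{soerg:lem:24} twice (to $\underline\lambda\eta$ and to the mirror image of $\eta\overline\mu$) gives
\begin{equation*}
  \deg(\underline\lambda\eta^\sigma\overline\mu) = \len(\lambda) + \len(\mu) - 2\len(\eta) + 2\len(\sigma).
\end{equation*}
I would also record two elementary facts: first, from the reduced expression \eqref{soerg:eq:26}, $\len(\nu) = c - \sum_{j=1}^{n-k}\down_j^\nu$ for a constant $c$ depending only on $n,k$; second, by Lemma~\ref{soerg:lem:21}\ref{soerg:item:22}, the hypothesis $e_\lambda A e_\mu \neq 0$ is equivalent to $\max\{\down_j^\lambda,\down_j^\mu\} < \min\{\down_{j+1}^\lambda,\down_{j+1}^\mu\}$ for all $j$, with the convention $\down_{n-k+1}^\lambda = \down_{n-k+1}^\mu = n+1$.

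Since the degree above is the sum of a function of $\eta$ and a function of $\sigma$, it is extremized by extremizing each separately. The minimum is attained at $\sigma = e$ and at the $\eta$ of maximal length, which by Lemma~\ref{soerg:lem:21}\ref{soerg:item:23} must have $\down_j^\eta = \max\{\down_j^\lambda,\down_j^\mu\}$; the non-triviality condition guarantees this defines a legal weight with $\lambda \supset \eta \subset \mu$, and it is the unique such weight, so the minimal-degree subspace is one dimensional. Substituting and using $2\max\{a,b\} - a - b = |a-b|$ produces the first expression in \eqref{soerg:eq:135}. Dually, the maximum is attained at $\sigma$ the longest element of $\bbS_k$ (contributing $2\len(\sigma) = k(k-1)$) and at the $\eta$ of minimal length, which must have $\down_j^\eta = \min\{\down_{j+1}^\lambda,\down_{j+1}^\mu\}-1 = \down_{j+1}^{\mathrm{min}}-1$; non-triviality again makes this a legal, and unique, weight, so the maximal-degree subspace is one dimensional. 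As $\down_j^\eta \geq \down_j^\lambda,\down_j^\mu$ here, the degree becomes $k(k-1) + \sum_j(2\down_j^\eta - \down_j^\lambda - \down_j^\mu)$, which is exactly the second expression in \eqref{soerg:eq:135}.

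For the final assertion I would add the two expressions, using $\down_j^\lambda + \down_j^\mu = \down_j^{\mathrm{max}} + \down_j^{\mathrm{min}}$ and $|\down_j^\lambda - \down_j^\mu| = \down_j^{\mathrm{max}} - \down_j^{\mathrm{min}}$; everything cancels except
\begin{equation*}
  k(k-1) + 2\sum_{j=1}^{n-k}\bigl(\down_{j+1}^{\mathrm{min}} - \down_j^{\mathrm{min}} - 1\bigr) = k(k-1) + 2\bigl(n+1 - \down_1^{\mathrm{min}} - (n-k)\bigr) = k(k-1) + 2\bigl(k+1-\down_1^{\mathrm{min}}\bigr),
\end{equation*}
by telescoping and $\down_{n-k+1}^{\mathrm{min}} = n+1$. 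Since the initial $\up$'s of a weight $\nu$ are exactly those in positions $1,\dots,\down_1^\nu-1$, one has $\defect(\nu) = k+1-\down_1^\nu$; hence $\defect(\lambda) \geq \defect(\mu)$ is equivalent to $\down_1^\lambda \leq \down_1^\mu$, i.e.\ to $\down_1^{\mathrm{min}} = \down_1^\lambda$, in which case the sum equals $k(k-1) + 2\defect(\lambda)$, which is the maximal degree of $e_\lambda A e_\lambda$ by Lemma~\ref{soerg:lem:16}.

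The only point requiring care is checking that the two extremal candidate sequences $(\down_j^\eta)_j$ are genuinely weights obeying $\lambda \supset \eta \subset \mu$ — that is, strictly increasing and lying in the intervals prescribed by Lemma~\ref{soerg:lem:21}\ref{soerg:item:23} — which is precisely where the non-triviality hypothesis enters, and which simultaneously delivers the uniqueness needed for the one-dimensionality claims; the rest is bookkeeping with the $\down$--sequence conventions of \textsection\ref{soerg:sec:combinatorics}.
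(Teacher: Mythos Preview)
Your proof is correct and follows essentially the same approach as the paper: identify the unique extremal oriented diagrams via Lemma~\ref{soerg:lem:21}\ref{soerg:item:23} (minimal at $\sigma=e$, $\down_j^\eta=\max\{\down_j^\lambda,\down_j^\mu\}$; maximal at $\sigma=w_k$, $\down_j^\eta=\down_{j+1}^{\mathrm{min}}-1$) and compute their degrees. For the final assertion the paper phrases the same computation slightly differently, recognizing the sum \eqref{soerg:eq:137} as the maximal degree of $e_\eta A e_\eta$ for the auxiliary weight with $\down_j^\eta=\down_j^{\mathrm{min}}$ and then invoking Lemma~\ref{soerg:lem:16}, whereas you telescope directly; the content is the same.
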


\begin{proof}
  We use the condition \eqref{soerg:eq:21} to determine if a
  diagram is oriented. The minimal degree diagram is
  $\lambda \eta^e \mu$ where $\down^\eta_i =
  \max\{\down_i^\lambda,\down_i^\mu\}$.
  The
  maximal degree diagram is $\lambda \eta^{w_k } \mu$ where
  $w_k \in \bbS_k$ is the longest element and $\down^\eta_i
  = \min\{\down^\lambda_{i+1},\down^\mu_{i+1}\} - 1$. Computing their degrees we obtain exactly \eqref{soerg:eq:135}.

Let us now check the last assertion. The sum of the two expressions of \eqref{soerg:eq:135} is
\begin{equation}
  \label{soerg:eq:137}
  k(k-1) + \sum_{i=1}^{n-k} 2\left(\down_{i+1}^{\mathrm{min}} - 1 - \down_i^{\mathrm{min}} \right).
\end{equation}
This is the maximal degree of $e_\eta A e_\eta$ where $\eta \in \Gamma$ is the weight with $\down_i^\eta = \down_i^{\mathrm{min}}$. Of course $\defect(\eta) = \max\{\defect(\lambda),\defect(\mu)\}$, and by Lemma~\ref{soerg:lem:16} the maximal degrees of $e_\lambda A e_\lambda$ and $e_\eta A e_\eta$ are the same.
\end{proof}

Notice that a weight $\lambda$ is of maximal defect if and only if it starts with a $\down$.
If $\lambda$ is not of maximal defect, let $\tilde\lambda$ be obtained from $\lambda$ by swapping the first $\down$ and the first $\up$. Otherwise, let $\tilde\lambda=\lambda$. In particular, $\tilde\lambda$ is always of maximal defect.

\begin{lemma}
  \label{soerg:lem:36}
  For every $\lambda \in \Gamma$ the socle of $P(\lambda)$ contains a degree shift of $L(\tilde\lambda)$.
\end{lemma}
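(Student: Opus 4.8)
The plan is to exhibit an explicit one–dimensional simple submodule of $P(\lambda)=Ae_\lambda$ sitting in its largest possible degree. Decompose $P(\lambda)=\bigoplus_{\nu\in\Gamma}e_\nu Ae_\lambda$. For weights $\nu,\mu$ with $e_\nu Ae_\mu\neq 0$ write $m^\pm(\nu,\mu)$ for the maximal, resp.\ minimal, degree of $e_\nu Ae_\mu$; by Lemma~\ref{soerg:lem:34} both the top– and the bottom–degree homogeneous components of $e_\nu Ae_\mu$ are one–dimensional, $m^-(\nu,\mu)=\sum_i\abs{\down^\nu_i-\down^\mu_i}$, and $m^-(\nu,\mu)+m^+(\nu,\mu)=k(k-1)+2\max\{\defect(\nu),\defect(\mu)\}$ (the last holds always, by the concluding sentence of Lemma~\ref{soerg:lem:34} applied with the roles ordered so the higher defect comes first). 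I would let $v$ span the top–degree component of $e_{\tilde\lambda}Ae_\lambda$, which is nonzero by Lemma~\ref{soerg:lem:21}\ref{soerg:item:22} because $\tilde\lambda$ differs from $\lambda$ only by moving the first $\down$ to the front, and show that $\C v$ is an $A$–submodule isomorphic to a degree shift of $L(\tilde\lambda)$, hence lies in the socle of $P(\lambda)$.

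First I would record the combinatorics of $\tilde\lambda$. If $\lambda$ is already of maximal defect then $\tilde\lambda=\lambda$ and the argument below degenerates harmlessly; otherwise, writing $p\geq 2$ for the position of the first $\down$ of $\lambda$, one has $\down^{\tilde\lambda}_1=1$ and $\down^{\tilde\lambda}_i=\down^\lambda_i$ for all $i\geq 2$, so $\defect(\tilde\lambda)=k$ is maximal and $m^-(\tilde\lambda,\lambda)=p-1=k-\defect(\lambda)$. Combining with the last assertion of Lemma~\ref{soerg:lem:34} gives
\[
d:=\deg v=m^+(\tilde\lambda,\lambda)=k(k-1)+2k-\bigl(k-\defect(\lambda)\bigr).
\]
By the basis \eqref{soerg:eq:72} we may write $v=(\underline{\tilde\lambda}\,\eta^{w_k}\,\overline\lambda)$ with lower fork diagram $\underline{\tilde\lambda}$ and $w_k\in\bbS_k$ the longest element.

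The heart of the argument is to check $A_{>0}v=0$. Let $x=(a\,\rho^\sigma\,b)\in A_{>0}$ be a homogeneous basis vector; by the multiplication rule (Remark~\ref{soerg:rem:2}, Proposition~\ref{soerg:prop:9}) the product $xv$ vanishes unless $b=(\underline{\tilde\lambda})^*=\overline{\tilde\lambda}$, in which case, writing $\nu$ for the weight underlying $a$, we have $x\in e_\nu Ae_{\tilde\lambda}$, $xv\in e_\nu Ae_\lambda$ and $\deg(xv)=\deg x+d$. If $e_\nu Ae_\lambda=0$ there is nothing to prove, so assume it is nonzero and apply Lemma~\ref{soerg:lem:34} to it. Using $\deg x\geq m^-(\nu,\tilde\lambda)$, the triangle inequality $m^-(\tilde\lambda,\lambda)\leq m^-(\nu,\tilde\lambda)+m^-(\nu,\lambda)$, the identities above and $\max\{\defect(\nu),\defect(\lambda)\}\leq k$, a direct computation gives
\[
\deg(xv)-m^+(\nu,\lambda)=\deg x+m^-(\nu,\lambda)-m^-(\tilde\lambda,\lambda)+2k-2\max\{\defect(\nu),\defect(\lambda)\}\ \geq\ 0,
\]
and equality forces $\defect(\nu)=k$, hence $\down^\nu_1=1$, together with $\down^\nu_i$ lying between $\down^{\tilde\lambda}_i$ and $\down^\lambda_i$ for every $i$; since $\down^{\tilde\lambda}_i=\down^\lambda_i$ for $i\geq 2$ and $\down^{\tilde\lambda}_1=1$, this pins $\nu=\tilde\lambda$ and then $\deg x=m^-(\tilde\lambda,\tilde\lambda)=0$, contradicting $\deg x>0$. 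Hence $\deg(xv)>m^+(\nu,\lambda)$ and $xv=0$. Consequently $Av$ is spanned by $\{e_\mu v\suchthat\mu\in\Gamma\}=\{v\}$, so $Av=\C v$ is a one–dimensional module on which $e_{\tilde\lambda}$ acts as the identity and every other diagram basis vector acts as $0$, i.e.\ $\C v\cong L(\tilde\lambda)\langle d\rangle$, a simple submodule of $P(\lambda)$.

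The main obstacle is exactly the degree bookkeeping in the last paragraph: one must exclude the a priori possibility that $xv$ lands nonzero in the one–dimensional top–degree component of $e_\nu Ae_\lambda$. The reason it works is that $\tilde\lambda$ is chosen to be of maximal defect and, among weights of maximal defect, as close to $\lambda$ as possible, so that the ``defect slack'' $2k-2\max\{\defect(\nu),\defect(\lambda)\}$ and the triangle–inequality slack $\deg x+m^-(\nu,\lambda)-m^-(\tilde\lambda,\lambda)$ cannot both vanish while $\deg x>0$. Everything else is definitional or a direct appeal to Lemmas~\ref{soerg:lem:21}, \ref{soerg:lem:16} and \ref{soerg:lem:34} and Proposition~\ref{soerg:prop:9}.
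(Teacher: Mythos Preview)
Your proof is correct and follows essentially the same idea as the paper: both locate the simple submodule in the one-dimensional top-degree component $(\underline{\tilde\lambda}\,\eta^{w_k}\,\overline\lambda)$ and use the formulas of Lemma~\ref{soerg:lem:34}. The paper's version is shorter because it observes directly that this diagram has the maximal degree in all of $P(\lambda)=Ae_\lambda$ (not just in $e_{\tilde\lambda}Ae_\lambda$), so $A_{>0}v=0$ is immediate from positive grading; your degree bookkeeping with the triangle inequality amounts to re-proving this global maximality, which you could have stated once and avoided the case analysis on $\nu$.
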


In facts, the socle of $P(\lambda)$ is simple, hence it is isomorphic to a degree shift of $L(\tilde\lambda)$, but we will not need this in what follows.

\begin{proof}
  It is straightforward to check that the diagram of maximal degree in $A e_\lambda$ is of type $\underline{\tilde\lambda} \eta^\sigma \overline {\lambda}$. The claim follows.
\end{proof}

\subsubsection{A bilinear form}
\label{sec:bilinear-form-1}

For every $\lambda \in \Gamma$ of maximal defect, let us choose a non-zero element $\xi_\lambda^{\mathrm{max}} \in e_\lambda A e_\lambda$ of maximal degree (for example, we can choose it to be the diagram $\underline \lambda \eta^\sigma \overline \lambda$ of the previous proof). For every element $z \in A$ write $e_\lambda z e_\lambda = t \xi_\lambda^{\mathrm{max}} + \text{terms of lower degree}$, and set $\Theta_\lambda(z) = t$. Moreover, define
\begin{equation}
  \label{soerg:eq:130}
  \Theta(z) = \sum_{\defect(\lambda) \text{ max}} \Theta_\lambda(z).
\end{equation}
Finally, define a bilinear form $\theta : A \times A \rightarrow
\C$ by setting $\theta(y,z) = \Theta(yz)$. Obviously, this form is associative in the sense that $\theta(y,zw)=\theta(yz,w)$ for every $y,z,w \in A$.

\begin{lemma}
  \label{soerg:lem:30}
  For every $\lambda$, the form $\theta$ restricted to $e_\lambda A e_\lambda$ is symmetric and non-degenerate.
\end{lemma}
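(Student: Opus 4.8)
The plan is to reduce the statement to a socle computation for the indecomposable projective $P(\lambda)$. Fix $\lambda$ (necessarily of maximal defect, so that $\xi_\lambda^{\mathrm{max}}$ and $\Theta_\lambda$ are defined), let $z\in D$ correspond to $\lambda$, and put $E=e_\lambda A e_\lambda$. For $y,y'\in E$ one has $e_\mu(yy')e_\mu=\delta_{\mu,\lambda}\,yy'$ for every $\mu\in\Gamma$, so $\theta(y,y')=\Theta(yy')=\Theta_\lambda(yy')$ is just the coefficient of $\xi_\lambda^{\mathrm{max}}$ in the product $yy'$. By Corollary~\ref{soerg:cor:5} the algebra $E$ is isomorphic to $\End_R(\sfC_{w_k z})/\sfW_{z,z}$, a quotient of the \emph{commutative} ring $\End_R(\sfC_{w_k z})\cong\sfC_{w_k z}\cong R_{\boldb^z}$; hence $E$ is commutative, $yy'=y'y$, and $\theta|_E$ is symmetric. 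So the whole content is non-degeneracy.

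Next I would use that $E$ is a finite-dimensional, non-negatively graded, commutative \emph{local} ring (it is $\End_A(P(\lambda))$), with $E_0=\C e_\lambda$ and $\rad(E)=E_{>0}$. By Lemma~\ref{soerg:lem:16} its top-degree piece $E_d$, $d=k(k-1)+2\defect(\lambda)$, is one-dimensional, spanned by $\xi_\lambda^{\mathrm{max}}$, and $\Theta_\lambda$ is the projection $E\to E_d\cong\C$. I first observe that $d$ is also the top degree of $P(\lambda)=Ae_\lambda$, with degree-$d$ part exactly $\C\,\xi_\lambda^{\mathrm{max}}$: using the duality $\star$ and the last assertion of Lemma~\ref{soerg:lem:34}, the maximal degree of $e_\nu Ae_\lambda$ is $d$ minus the minimal degree of $e_\lambda Ae_\nu$, which is positive unless $\nu=\lambda$, so only $\nu=\lambda$ contributes in degree $d$, where Lemma~\ref{soerg:lem:16} gives a line. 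Hence $\xi_\lambda^{\mathrm{max}}$ lies in the top degree of $P(\lambda)$, is annihilated by $A_{>0}=\rad(A)$, and spans a copy of $L(\lambda)\langle\,\cdot\,\rangle$ inside $\mathrm{soc}_A P(\lambda)$. \emph{Assuming} that $\mathrm{soc}_A P(\lambda)$ is simple (the maximal-defect case of the strengthening of Lemma~\ref{soerg:lem:36} mentioned in the remark after it), it must equal $\C\,\xi_\lambda^{\mathrm{max}}$ and is the unique minimal $A$-submodule of $P(\lambda)$. Then, given a homogeneous $0\neq\eta\in E$, the submodule $A\eta\subseteq P(\lambda)$ is nonzero, so contains $\mathrm{soc}_A P(\lambda)$; write $\xi_\lambda^{\mathrm{max}}=a\eta$ with $a\in A$. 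Since $\eta=e_\lambda\eta$ and $\xi_\lambda^{\mathrm{max}}=e_\lambda\xi_\lambda^{\mathrm{max}}$, this gives $\xi_\lambda^{\mathrm{max}}=(e_\lambda a e_\lambda)\,\eta$ with $e_\lambda a e_\lambda\in E$, whence $\theta(e_\lambda a e_\lambda,\eta)=\Theta_\lambda(\xi_\lambda^{\mathrm{max}})\neq0$. Thus the radical of $\theta|_E$ is trivial, and as the form is symmetric it is non-degenerate.

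The one non-formal ingredient — and the step I expect to be the actual obstacle — is the simplicity of $\mathrm{soc}_A P(\lambda)$ for $\lambda$ of maximal defect. I would prove it either diagrammatically in the style of \cite{pre06126156}, by showing that for every basis vector $(\underline\nu\mu^\sigma\overline\lambda)$ of $Ae_\lambda$ different from the unique top-degree one there is a positive-degree element of $A$ not killing it (so the degree-$d$ vector is the only socle vector); or abstractly, using that $A$ is properly stratified with a simple-preserving duality (Theorem~\ref{soerg:thm:6}, Lemma~\ref{soerg:lem:27}): then $[P(\lambda):L(\mu)\langle j\rangle]=[P(\lambda)^*:L(\mu)\langle j\rangle]$ for all $\mu,j$, so comparing the projective cover $P(\lambda)$ of $L(\lambda)$ with the injective hull $P(\lambda)^*$ of $L(\lambda)$ (both carry $L(\lambda)$ at the relevant extremity because $\lambda$ has maximal defect) forces $P(\lambda)$ to have simple socle. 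Once this is in place, the commutativity of $E$ and the one-dimensionality of $E_d$ coming from Lemmas~\ref{soerg:lem:16} and~\ref{soerg:lem:34} make the rest of the argument immediate.
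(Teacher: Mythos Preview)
Your symmetry argument matches the paper's exactly. For non-degeneracy, however, you take a substantially longer route than necessary. The paper argues directly from the explicit monomial description of $e_\lambda A e_\lambda\cong\sfZ_{z,z}$: the monomial basis consists of those morphisms $1\mapsto\boldx^{\boldj}$ from \eqref{soerg:eq:17} not divisible by any of the generators \eqref{soerg:eq:18}, and for each such $\phi$ there is a unique basis element $\phi^T$ whose product with $\phi$ hits the top monomial $\xi_\lambda^{\mathrm{max}}$ (take the complementary exponents). That gives a permutation-matrix pairing and non-degeneracy immediately.

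Your detour through the simplicity of $\mathrm{soc}_A P(\lambda)$ is both harder and logically precarious within the paper. The paper explicitly notes the simple-socle fact after Lemma~\ref{soerg:lem:36} but says it \emph{will not be used}; in fact the paper's route to self-duality of $P(\lambda)$ for maximal-defect $\lambda$ (Lemma~\ref{soerg:lem:35}) goes \emph{through} Lemma~\ref{soerg:lem:29}, which in turn uses Lemma~\ref{soerg:lem:30}. So feeding self-duality or injectivity of $P(\lambda)$ back into the proof of Lemma~\ref{soerg:lem:30} would be circular in this framework.

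More concretely, your ``abstract'' argument for simple socle does not work as stated: the equality of graded composition multiplicities $[P(\lambda):L(\mu)\langle j\rangle]=[P(\lambda)^*:L(\mu)\langle j\rangle]$ coming from the simple-preserving duality does \emph{not} force $P(\lambda)$ to have simple socle; two modules with identical composition series can have very different socles. You would really need $P(\lambda)\cong P(\lambda)^*$, which is exactly what the paper establishes later using the form. Your diagrammatic alternative (showing every non-top basis vector of $Ae_\lambda$ survives multiplication by some positive-degree element) is essentially the content of Lemma~\ref{soerg:lem:31}, whose proof in the paper already invokes the proof of Lemma~\ref{soerg:lem:30}. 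So both proposed routes to simple socle either fail or loop back to the direct monomial argument you are trying to avoid.
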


\begin{proof}
  Let $\lambda$ correspond to $z \in D$. Up to a degree
  shift, $e_\lambda A e_\lambda \cong \sfZ_{z,z}$. Since
  $\sfZ_{z,z}$ is commutative, note that $\theta$ is
  symmetric on $e_\lambda A e_\lambda$. Consider the
  monomial basis $\{1 \mapsto \boldx^\boldi\}$ that consists
  of the elements of \eqref{soerg:eq:17} that are not
  divided by \eqref{soerg:eq:18}. It is clear that for every
  element $\phi$ in that basis there exists exactly one
  element $\phi^T$ in the same basis with
  $\theta(\phi,\phi^T) \neq 0$. This proves that the form is
  non-degenerate.
\end{proof}

Let $e_{\mathrm{def}}= \sum_{\defect(\lambda) \text{ max}} e_\lambda$.

\begin{lemma}
  \label{soerg:lem:29}
  The form $\theta$ restricted to $e_{\mathrm{def}} A \times A e_{\mathrm{def}}$ is non-degenerate.
\end{lemma}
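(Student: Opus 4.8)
The plan is to reduce the statement, via the block decomposition $A=\bigoplus_{\lambda,\mu\in\Gamma}e_\lambda Ae_\mu$, to an assertion about each individual block, and then to combine the non-degeneracy already established on the diagonal blocks (Lemma~\ref{soerg:lem:30}) with the socle computation of Lemma~\ref{soerg:lem:36}. First I would record the elementary observation that, for homogeneous $y=e_\kappa ye_\mu$ and $z=e_{\mu'}ze_{\kappa'}$, the product $yz$ is supported in $e_\kappa Ae_{\kappa'}$ and meets some $e_\tau Ae_\tau$ only when $\tau=\kappa=\kappa'$; since $\Theta=\sum_{\defect(\tau)\ \mathrm{max}}\Theta_\tau$ only ``sees'' the blocks $e_\tau Ae_\tau$ with $\tau$ of maximal defect, this forces $\theta(y,z)=0$ unless $\mu=\mu'$ and $\kappa=\kappa'$ has maximal defect, in which case $\theta(y,z)=\Theta_\kappa(yz)$. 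Consequently $\theta$ restricted to $e_{\mathrm{def}}A\times Ae_{\mathrm{def}}$ is a direct sum, over all pairs $(\kappa,\mu)$ with $\defect(\kappa)$ maximal, of the pairings
\[
 P_{\kappa,\mu}\colon e_\kappa Ae_\mu\times e_\mu Ae_\kappa\longrightarrow\C,\qquad (x,y)\longmapsto\Theta_\kappa(xy),
\]
with the different blocks pairwise orthogonal; so it suffices to show that every $P_{\kappa,\mu}$ is non-degenerate. Since the anti-automorphism $\star$ restricts to a bijection $e_\kappa Ae_\mu\to e_\mu Ae_\kappa$, the two factors have equal dimension, and it is enough to prove that the left radical $K^{\kappa,\mu}$ of $P_{\kappa,\mu}$ vanishes.

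Now assemble these: let $K=\bigoplus_{\kappa,\mu}K^{\kappa,\mu}\subseteq e_{\mathrm{def}}A$ be the left radical of $\theta$ on $e_{\mathrm{def}}A\times Ae_{\mathrm{def}}$, and suppose $K\ne 0$. Using the associativity $\theta(ya,z)=\theta(y,az)$ and the fact that $az\in Ae_{\mathrm{def}}$ whenever $z\in Ae_{\mathrm{def}}$, one checks that $K$ is a right $A$-submodule of $e_{\mathrm{def}}A$; moreover $K$ is block diagonal in the above sense, so under the decomposition $e_{\mathrm{def}}A=\bigoplus_{\defect(\kappa)\ \mathrm{max}}e_\kappa A$ into right $A$-submodules, each projection $\pi_\kappa$ (left multiplication by $e_\kappa$, which is a right $A$-module map) carries $K$ into itself. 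Pick a nonzero simple right submodule $S\subseteq K$; since $\sum_\kappa\pi_\kappa=\mathrm{id}$, some $\pi_{\kappa_0}|_S$ is injective, and then $\pi_{\kappa_0}(S)$ is a nonzero simple submodule lying in $\pi_{\kappa_0}(K)=K\cap e_{\kappa_0}A\subseteq K$, with $\kappa_0$ of maximal defect. Applying $\star$, $\pi_{\kappa_0}(S)$ corresponds to a simple submodule in the socle of $P(\kappa_0)=Ae_{\kappa_0}$; by Lemma~\ref{soerg:lem:36} and the simplicity of $\mathrm{soc}\,P(\lambda)$ (recorded after Lemma~\ref{soerg:lem:36}; here $\widetilde{\kappa_0}=\kappa_0$ because $\kappa_0$ has maximal defect) this socle is one dimensional, spanned by the top-degree diagram of $Ae_{\kappa_0}$, which has the form $\underline{\kappa_0}\eta^\sigma\overline{\kappa_0}\in e_{\kappa_0}Ae_{\kappa_0}$, i.e.\ the generator $\xi_{\kappa_0}^{\mathrm{max}}$ of the one-dimensional top-degree space of $e_{\kappa_0}Ae_{\kappa_0}$ (Lemma~\ref{soerg:lem:16}). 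As $\star$ fixes this self-dual diagram, $\mathrm{soc}(e_{\kappa_0}A)=\C\,\xi_{\kappa_0}^{\mathrm{max}}$, hence $\pi_{\kappa_0}(S)=\C\,\xi_{\kappa_0}^{\mathrm{max}}$ and so $\xi_{\kappa_0}^{\mathrm{max}}\in K$. But $e_{\kappa_0}\in Ae_{\kappa_0}\subseteq Ae_{\mathrm{def}}$ and $\theta(\xi_{\kappa_0}^{\mathrm{max}},e_{\kappa_0})=\Theta_{\kappa_0}(\xi_{\kappa_0}^{\mathrm{max}})=1\ne 0$, contradicting $\xi_{\kappa_0}^{\mathrm{max}}\in K$. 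Therefore $K=0$, and applying $\star$ (which preserves $\Theta$ and exchanges the left and right radicals) shows the right radical vanishes as well, so $\theta$ is non-degenerate on $e_{\mathrm{def}}A\times Ae_{\mathrm{def}}$.

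The point that will require the most care is the input that $\mathrm{soc}\,P(\kappa)$ is simple for $\kappa$ of maximal defect. To keep the argument self-contained one can instead finish at the level of the block reformulation above: by cyclicity of $e_\mu Ae_\kappa$ as a right $e_\kappa Ae_\kappa$-module, generated by the minimal degree morphism (\textsection\ref{soerg:sec:morph-betw-soerg-1}), together with Lemma~\ref{soerg:lem:30}, the vanishing of $K^{\kappa,\mu}$ is equivalent to the injectivity of right multiplication by the minimal degree morphism $e_\kappa Ae_\mu\to e_\kappa Ae_\kappa$; translated through Corollary~\ref{soerg:cor:8} and Proposition~\ref{soerg:prop:6} this becomes the statement that multiplication by a certain explicit monomial is injective on $e_\kappa Ae_\mu$, and this can be verified directly from the description of the illicit morphisms in Theorem~\ref{soerg:thm:2} and the degree bookkeeping of Lemma~\ref{soerg:lem:34} — in particular the identity that, when $\defect(\kappa)\ge\defect(\mu)$, the minimal and maximal degrees of $e_\kappa Ae_\mu$ add up to the maximal degree of $e_\kappa Ae_\kappa$. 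This explicit monomial computation is the only genuinely combinatorial step; everything else in the proof is formal.
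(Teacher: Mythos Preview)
Your block-by-block reduction to the pairings $P_{\kappa,\mu}$ is correct and matches the paper's setup. From that point on, the paper argues exactly along the lines of your \emph{second} approach: take $t$ in the right radical of $P_{\kappa,\mu}$, use cyclicity of $e_\mu A e_\kappa$ over $e_\kappa A e_\kappa$ together with Lemma~\ref{soerg:lem:30} to deduce $y_0 t=0$ (where $y_0$ is the minimal-degree element), and then invoke the injectivity of left multiplication by $y_0$. So your outline and the paper's are the same; the only substantive gap is that you dismiss this last injectivity as ``an explicit monomial computation'', whereas in the paper it is isolated as a separate lemma (Lemma~\ref{soerg:lem:31}) with a nontrivial proof: one first checks $y_0 t_0\neq 0$ for the minimal-degree $t_0$ by a careful case analysis of when $1\mapsto \boldx^\boldh$ (with $h_i=\lvert b_i^\lambda-b_i^\mu\rvert$) survives in $\sfZ_{\lambda,\lambda}$, and then bootstraps to arbitrary $t$ using the degree identity from Lemma~\ref{soerg:lem:34}. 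This is the genuine content of the lemma, and your proposal does not supply it.

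Your \emph{first} approach, via simplicity of $\mathrm{soc}\,P(\kappa)$, is circular in the paper's logical order. The remark after Lemma~\ref{soerg:lem:36} that the socle is simple is stated without proof and explicitly flagged as unused; the natural proof (indecomposable injective $\Rightarrow$ simple socle) goes through Lemma~\ref{soerg:lem:35}, whose proof \emph{is} Lemma~\ref{soerg:lem:29}. A direct combinatorial proof that $\mathrm{soc}(Ae_\kappa)=\C\,\xi_\kappa^{\max}$ would amount to showing that every nonzero element of $Ae_\kappa$ can be pushed up to top degree by left multiplication, which is essentially Lemma~\ref{soerg:lem:31} again. So both routes funnel through the same combinatorial core, and that core is precisely the step you left unverified.
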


\begin{proof}
  We may take $t \in e_\mu A e_\lambda$ for some $\lambda$ of maximal defect and suppose $\theta(y,t)=0$ for every $y \in e_\lambda A e_\mu$. Let $y_0$ be a generator of the minimal-degree subspace of $e_\lambda A e_\mu$ (which by Lemma~\ref{soerg:lem:34} is one dimensional). In particular, $\theta(y',y_0 t) = \theta(y' y_0, t)=0$ for every $y' \in e_\lambda A e_\lambda$. By Lemma~\ref{soerg:lem:30}, this implies that $y_0 t=0$. From the following Lemma~\ref{soerg:lem:31} it follows then that $t=0$.

The vice versa follows because $\theta(y,t)=\theta(t^\star,y^\star)$.
\end{proof}

\begin{lemma}
  \label{soerg:lem:31}
  Suppose $\lambda$ is of maximal defect and let $0 \neq t \in e_\mu A e_\lambda$. Let also $0 \neq y_0 \in e_\lambda A e_\mu$ be of minimal degree. Then $y_0 t \neq 0$.
\end{lemma}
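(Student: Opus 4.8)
The plan is to transport the statement into the commutative setting of Section~\ref{soerg:sec:soergel-modules} and then to exploit that maximal defect of $\lambda$ forces $b_1^\lambda$ to take its largest possible value. Write $z,z'\in D$ for the elements corresponding to $\lambda,\mu$. By Corollary~\ref{soerg:cor:5} and Remark~\ref{soerg:rem:2} I would identify $e_\mu A e_\lambda$, $e_\lambda A e_\mu$ and $e_\lambda A e_\lambda$ with $\Hom_R(\sfC_{w_k z},\sfC_{w_k z'})/\sfW_{z,z'}$, $\Hom_R(\sfC_{w_k z'},\sfC_{w_k z})/\sfW_{z',z}$ and $\sfZ_{z,z}$ respectively, the algebra product becoming composition of morphisms. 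By Corollary~\ref{soerg:cor:8} and \textsection\ref{soerg:sec:morph-betw-soerg-1}, $y_0$ is, up to a scalar, the minimal-degree morphism $1\mapsto\boldx^{\boldc_0}$ with $c_{0,i}=\max\{b_i^\lambda-b_i^\mu,0\}$; writing $t$ as $1\mapsto p$ with $p$ a polynomial whose class is $t$, the product $y_0t$ is the endomorphism $1\mapsto\boldx^{\boldc_0}p$ of $\sfC_{w_k z}$. Thus we must show $\boldx^{\boldc_0}p$ is not illicit, i.e.\ nonzero in $\sfZ_{z,z}=e_\lambda A e_\lambda$; equivalently, since $t$ was an arbitrary nonzero element, that left multiplication by $y_0$ is an \emph{injective} map $e_\mu A e_\lambda\to e_\lambda A e_\lambda$.

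I would test injectivity on the monomial basis of Proposition~\ref{soerg:prop:6}: $e_\mu A e_\lambda$ is spanned by the morphisms $1\mapsto p_{\underline\mu\eta^\sigma}$ with $\underline\mu\eta^\sigma\overline\lambda$ ranging over the oriented fork diagrams, and $e_\lambda A e_\lambda$ has basis $\{p_{\underline\lambda\nu^\chi}\}$. Since $e_\mu A e_\lambda\neq 0$, Lemma~\ref{soerg:lem:19} gives $|b_i^\lambda-b_i^\mu|\leq 1$, so $c_{0,i}\in\{0,1\}$; and since $p_{\underline\mu\eta^\sigma}$ and $p_{\underline\lambda\eta^\sigma}$ differ by the monomial fraction $\boldx^{\boldb^\mu-\boldb^\lambda}$ (see the proof of Lemma~\ref{soerg:lem:27}), one obtains
\begin{equation*}
\boldx^{\boldc_0}\,p_{\underline\mu\eta^\sigma}=\boldx^{\boldsymbol\delta}\,p_{\underline\lambda\eta^\sigma},\qquad \delta_i:=\max\{b_i^\mu-b_i^\lambda,0\}\in\{0,1\}.
\end{equation*}
So $y_0$ carries the class of $p_{\underline\mu\eta^\sigma}$ to the class of $\boldx^{\boldsymbol\delta}p_{\underline\lambda\eta^\sigma}$ in $\sfZ_{z,z}$, and the task is to show these classes remain linearly independent.

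The heart of the argument is a normal-form computation in $R_{\boldb^\lambda}=R/I_{\boldb^\lambda}$: using the Groebner basis of Lemma~\ref{soerg:lem:8} one repeatedly rewrites $x_i^{b_i^\lambda}\equiv-(\text{lower terms of }h_{b_i^\lambda}(x_1,\dots,x_i))$ until a linear combination of basis monomials is reached, and by Lemma~\ref{soerg:lem:25} such rewrites preserve divisibility by the blocks $x_{\down^\lambda_j}\cdots x_{\down^\lambda_{j+1}-1}$, so that which monomials are illicit can be read off from Theorem~\ref{soerg:thm:2}. Here maximal defect of $\lambda$ is decisive: a weight is of maximal defect precisely when it begins with a $\down$, so $\down^\lambda_1=1$ and hence $b_1^\lambda=k+1\geq b_1^\mu$, which forces $\delta_1=0$ and keeps the exponent of $x_1$ in $\boldx^{\boldsymbol\delta}p_{\underline\lambda\eta^\sigma}$ within its permitted bound $k=b_1^\lambda-1$ --- it is exactly the failure of this for non-maximal $\lambda$ that makes the statement false in general (already for $\lambda=\up\down$, $\mu=\down\up$ in $A_{2,1}$ multiplication by $y_0$ annihilates the one-dimensional space $e_\mu A e_\lambda$). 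Carrying out the reduction, one identifies (modulo illicit terms and modulo terms that vanish in $R_{\boldb^\lambda}$) the reduction of $\boldx^{\boldsymbol\delta}p_{\underline\lambda\eta^\sigma}$ with a single, non-illicit basis monomial $p_{\underline\lambda\nu(\eta)^{\chi(\sigma)}}$, with $(\eta,\sigma)\mapsto(\nu(\eta),\chi(\sigma))$ injective; linear independence of the images, hence injectivity, hence the lemma, then follows.

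The only real obstacle is this last step --- the explicit bookkeeping of the reductions: tracking which monomials survive in $R_{\boldb^\lambda}$ and which become illicit, matching the survivors with oriented fork diagrams $\underline\lambda\nu^\chi\overline\lambda$, and verifying that the combination of the $\boldsymbol\delta$-shift with the reductions never destroys the leading monomial, which is precisely where $b_1^\lambda=k+1$ is used.
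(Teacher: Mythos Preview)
Your reformulation as injectivity of left multiplication by $y_0$ and your identity $\boldx^{\boldc_0}p_{\underline\mu\eta^\sigma}=\boldx^{\boldsymbol\delta}p_{\underline\lambda\eta^\sigma}$ are correct, but the step you flag as ``the only real obstacle'' is in fact the whole content, and your outline does not give a workable strategy for it. The claim that the Groebner reduction of $\boldx^{\boldsymbol\delta}p_{\underline\lambda\eta^\sigma}$ yields, modulo illicit terms, a \emph{single} basis monomial $p_{\underline\lambda\nu(\eta)^{\chi(\sigma)}}$ with $(\eta,\sigma)\mapsto(\nu(\eta),\chi(\sigma))$ injective is not supported; a priori the reduction spreads over many basis monomials, Lemma~\ref{soerg:lem:25} only controls divisibility by blocks starting at $\down_j^\lambda$ for indices $m$ with $b_{m-1}^\lambda=b_m^\lambda$, and there is no mechanism offered to isolate a leading survivor. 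Your localisation of the maximal-defect hypothesis to $\delta_1=0$ is also too narrow: one can have $\delta_i=1$ at positions $i>1$ with $b_i^\lambda=1$ (so $x_i$ already falls outside the monomial basis and must be rewritten via $h_1(x_1,\dots,x_i)$), and nothing in your sketch explains why the resulting linear combinations stay injective.

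The paper avoids this entirely by a two-step argument. First it proves the statement only for the minimal-degree element $t_0\in e_\mu A e_\lambda$: then $y_0t_0$ is the single explicit morphism $1\mapsto\boldx^{\boldh}$ with $h_i=|b_i^\lambda-b_i^\mu|$, and one checks directly (with one case distinction when some $h_i=1=b_i^\lambda$) that it is nonzero in $\sfZ_{z,z}$. Second, it bootstraps: by Lemma~\ref{soerg:lem:30} there is $u$ with $y_0t_0u$ spanning the top degree of $e_\lambda A e_\lambda$; by Lemma~\ref{soerg:lem:34} (this is where maximal defect is really used, via $\defect(\lambda)\geq\defect(\mu)$) the element $t_0u$ then spans the top degree of $e_\mu A e_\lambda$, hence equals $u't$ for some $u'$; and commutativity of $R$ gives $y_0tu'=y_0u't=y_0t_0u\neq0$. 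This reduces the whole problem to one concrete monomial computation rather than a family of reductions parametrised by $(\eta,\sigma)$, and is the route you should take.
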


\begin{proof}
  First, let $0 \neq t_0 \in e_\mu A e_\lambda$ be of minimal degree, and let us prove that $y_0 t_0 \neq 0$. By definition, $y_0 t_0 : 1 \mapsto \boldx^\boldh$, where $h_i = \abs{b_i^\lambda - b_i^\mu} \in \{0,1\}$. First let us suppose that $1 \mapsto \boldx^\boldh$ is an element of the basis \eqref{soerg:eq:17}, that is $h_i < b_i^\lambda$ for every $i$. It is quite easy to argue that for every $i$ there exist an index $j$ with $\down_i^\lambda \leq j < \down_{i+1}^\lambda$ and $b_i^\lambda = b_i^\mu$; in fact it is sufficient to choose $j=\down_i^\mu$ if $\down_i^\mu \geq \down_i^\lambda$ or $j=\down_i^\lambda$ otherwise. This means that $1 \mapsto \boldx^\boldh$ is not illicit (cf.\ Theorem~\ref{soerg:thm:2}), hence it is not zero.

  We should now consider the case in which $1 \mapsto
  \boldx^\boldh$ is not an element of the basis
  \eqref{soerg:eq:17}. This happens if $h_i = 1$ for some
  $i$ with $b_i^\lambda=1$ and $b_i^\mu=2$. Let $j$ be such
  that $\down_{j}^\lambda$ is the rightmost $\down$ in a
  position $\down_{j}^\lambda \leq i$. It is easy to argue
  that for $e_\mu A e_\lambda$ to be non-trivial we must
  actually have $\down_j^\lambda < i$. Let also
  $i'=\down_j^{\mathrm{max}} =
  \max\{\down_j^\lambda,\down_j^\mu\} < i$. Then we have
  $b^\lambda_{i'} = b^\mu_{i'} \geq 2$. Using the relation
  $h_1(x_1,\ldots,x_i)=0$ to write $\boldx^\boldh$ in our
  fixed monomial basis we get in particular a term divided
  by $x_{i'}$. Applying the technique of the previous
  paragraph to this term we get that $y_0 t_0 \neq 0$: the
  only thing to notice is that $x_{\down_j^\lambda}
  x_{\down_j^\lambda + 1} \cdots x_{\down_{j+1}^\lambda -
    1}$ never divides a monomial basis element, since
  $b_{\down_{j+1}^\lambda-1}^\lambda = 1$.

Now, it follows from the proof of Lemma~\ref{soerg:lem:30} that there is some element $u \in R$ such that $y_0 t_0 u$ generates the maximal degree subspace of $e_\lambda A e_\lambda$. In particular $y_0 t_0 u \neq 0$. By Lemma~\ref{soerg:lem:34}, $t_0 u$ is of maximal degree in $e_\mu A e_\lambda$. It is then clear by our characterization of $e_\mu A e_\lambda$ that there exists an element $u' \in R$ such that $u' t = t_0 u$. Now $y_0 t u' = y_0 u' t = y_0 t_0 u \neq 0$ implies that $y_0 t \neq 0$.
\end{proof}

\subsubsection{Self-dual projective modules}
\label{sec:self-dual-projectiveF}

Finally, we can determine which indecomposable projective modules are self-dual.

\begin{lemma}
  \label{soerg:lem:35}
  Let $\lambda$ be of maximal defect. Then $P(\lambda)$ is self-dual up to a degree shift. In particular, it is an injective module.
\end{lemma}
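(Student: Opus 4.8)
The plan is to use the bilinear form $\theta$ and the associated pairing between $Ae_{\mathrm{def}}$ and $e_{\mathrm{def}}A$ to exhibit a non-degenerate pairing on $P(\lambda) = Ae_\lambda$ that identifies it with its graded dual, after an appropriate shift. Concretely, since $\lambda$ is of maximal defect, $e_\lambda$ is one of the idempotents summing to $e_{\mathrm{def}}$, so $P(\lambda) = Ae_\lambda$ sits inside $Ae_{\mathrm{def}}$; similarly $e_\lambda A$ sits inside $e_{\mathrm{def}}A$. First I would restrict $\theta$ to $e_\lambda A \times A e_\lambda$ and check that it is non-degenerate: if $0 \neq t \in Ae_\lambda$, write $t = \sum_\mu e_\mu t$ with $e_\mu t \in e_\mu A e_\lambda$, pick $\mu$ with $e_\mu t \neq 0$, and apply Lemma~\ref{soerg:lem:31} (with $y_0$ a minimal-degree generator of $e_\lambda A e_\mu$) to get $y_0 (e_\mu t) \neq 0$; then $\Theta_\lambda(y_0 t) = \Theta_\lambda(y_0 e_\mu t)$ is non-zero by Lemma~\ref{soerg:lem:30}, so $\theta(y_0, t) \neq 0$. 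The symmetric argument (using $t \mapsto t^\star$, which swaps $e_\lambda A e_\mu$ and $e_\mu A e_\lambda$) handles non-degeneracy in the other variable.

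Next I would turn this non-degenerate pairing into an isomorphism $P(\lambda) \cong P(\lambda)^* \langle d \rangle$ for a suitable $d$. The pairing $\theta\colon e_\lambda A \times A e_\lambda \to \C$ is associative, i.e. $\theta(ya, t) = \theta(y, at)$ for $a \in A$, $y \in e_\lambda A$, $t \in A e_\lambda$; this is exactly the statement that the induced map $A e_\lambda \to (e_\lambda A)^*$, $t \mapsto \theta(-, t)$, is a homomorphism of left $A$-modules, where $(e_\lambda A)^*$ carries the left action dual to the right action of $A$ on $e_\lambda A$. But $(e_\lambda A)^*$ with this left action is precisely the graded dual of the right projective $e_\lambda A$, and there is a standard identification of that with $(A e_\lambda)^* = P(\lambda)^*$ as defined in \textsection\ref{soerg:sec:irred-proj-a} via the anti-automorphism $\star$: the map $a e_\lambda \mapsto (a e_\lambda)^\star = e_\lambda a^\star$ identifies $P(\lambda)$ and $e_\lambda A$ compatibly with the two actions. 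Composing, $\theta$ gives a degree-zero (after shifting) injective $A$-module map $P(\lambda) \hookrightarrow P(\lambda)^*\langle d\rangle$, which is an isomorphism by dimension count since the form is non-degenerate. The shift $d$ is read off from Lemma~\ref{soerg:lem:34}: the top degree of $e_\lambda A e_\lambda$ is $k(k-1) + 2\defect(\lambda)$, and this is the degree in which $\Theta_\lambda$ is supported, so $d = k(k-1) + 2\defect(\lambda)$.

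Finally, self-duality up to shift immediately gives injectivity: $P(\lambda)$ is projective, hence $P(\lambda)^*$ is injective in $\gmod{A}$ (the graded dual of a projective module over a finite-dimensional algebra is injective, as $A$ has a duality), and $P(\lambda) \cong P(\lambda)^*\langle d\rangle$ is then injective as well.

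The main obstacle is the non-degeneracy of $\theta$ on $e_\lambda A \times A e_\lambda$ for a \emph{fixed} $\lambda$ of maximal defect, rather than just on the full $e_{\mathrm{def}}A \times Ae_{\mathrm{def}}$ as in Lemma~\ref{soerg:lem:29}. The subtlety is that $\Theta$ sums over all maximal-defect weights, so one must isolate the $e_\lambda$-component; this works because $e_\lambda z e_\lambda$ already picks out a single summand $\Theta_\lambda$ and because $y_0 t \in e_\lambda A e_\lambda$ for $y_0 \in e_\lambda A e_\mu$, $t \in e_\mu A e_\lambda$, so one never leaves the block $e_\lambda A e_\lambda$ where Lemma~\ref{soerg:lem:30} applies. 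Once this localized non-degeneracy is in hand, everything else is formal module theory about dualities on finite-dimensional graded algebras, together with bookkeeping of the degree shift via Lemma~\ref{soerg:lem:16} and Lemma~\ref{soerg:lem:34}.
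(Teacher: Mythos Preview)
Your approach is essentially the same as the paper's: the paper simply writes down the map $y \mapsto \theta(y^\star,\cdot)$ from $P(\lambda)=Ae_\lambda$ to $P(\lambda)^*$ and invokes Lemma~\ref{soerg:lem:29} for non-degeneracy, while you spell out the localization to a fixed $\lambda$ and the module-theoretic formalities. The block-diagonal observation you make (that $\theta$ pairs $e_\lambda A e_\mu$ with $e_\mu A e_\lambda$ and vanishes on mixed blocks) is exactly why citing Lemma~\ref{soerg:lem:29} suffices.

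There is one small slip in your non-degeneracy argument: from $y_0(e_\mu t)\neq 0$ in $e_\lambda A e_\lambda$ you conclude $\Theta_\lambda(y_0 e_\mu t)\neq 0$ ``by Lemma~\ref{soerg:lem:30}'', but Lemma~\ref{soerg:lem:30} does not say that every nonzero element of $e_\lambda A e_\lambda$ has nonzero top-degree component. What it says is that the form is non-degenerate, i.e.\ there exists $y'\in e_\lambda A e_\lambda$ with $\Theta_\lambda(y'\cdot y_0 e_\mu t)\neq 0$. Then $y'y_0\in e_\lambda A e_\mu$ and $\theta(y'y_0,t)=\theta(y'y_0,e_\mu t)=\Theta_\lambda(y'y_0 e_\mu t)\neq 0$, which is what you want. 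This is precisely the argument in the paper's proof of Lemma~\ref{soerg:lem:29}, so once you make this fix your argument goes through.
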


\begin{proof}
  By Lemma~\ref{soerg:lem:29}, the map $    y \longmapsto \theta(y^\star, \cdot)$
  defines an isomorphism between $P(\lambda)$ and its dual up to a degree shift.
\end{proof}

\begin{theorem}
  \label{soerg:thm:7}
  Let $\lambda \in \Gamma$. Then $P(\lambda)$ is an injective module if and only if $\lambda$ is of maximal defect.
\end{theorem}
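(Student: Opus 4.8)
Theorem~\ref{soerg:thm:7} has one direction already in hand: Lemma~\ref{soerg:lem:35} shows that if $\lambda$ is of maximal defect then $P(\lambda)$ is self-dual up to a degree shift, hence injective (the dual of a projective module is injective). So the plan is to prove the converse: if $\lambda$ is \emph{not} of maximal defect, then $P(\lambda)$ is not injective. I would argue this by comparing socles. Recall that for a finite dimensional graded algebra, $P(\lambda)$ is injective if and only if it is the injective hull of $L(\mu)$ for some $\mu$, and this forces $P(\lambda)$ to have simple socle $L(\mu)\langle j\rangle$ with $\dim\Hom_A(P(\mu),P(\lambda)) = \dim\Hom_A(P(\lambda),P(\mu))$ suitably matching — more precisely, injective indecomposables are exactly the $P(\mu)^*$, and $P(\mu)^*$ has simple \emph{top} $L(\mu)$, equivalently $P(\mu)^*$ is the injective hull $I(\mu)$ which has simple socle $L(\mu)$.

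The key input is Lemma~\ref{soerg:lem:36}: for every $\lambda$, the socle of $P(\lambda)$ contains a degree shift of $L(\tilde\lambda)$, where $\tilde\lambda$ is obtained from $\lambda$ by swapping the first $\down$ with the first $\up$ (so $\tilde\lambda$ is always of maximal defect), and $\tilde\lambda = \lambda$ precisely when $\lambda$ already has maximal defect. Now suppose $P(\lambda)$ is injective; being indecomposable it must be the injective hull of its socle, so in particular it has simple socle, and that socle must be (a shift of) $L(\tilde\lambda)$ by Lemma~\ref{soerg:lem:36}. Thus $P(\lambda) \cong I(\tilde\lambda)$, the injective hull of $L(\tilde\lambda)$. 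On the other hand, by the other direction of the theorem (Lemma~\ref{soerg:lem:35}) $P(\tilde\lambda)$ is self-dual up to a shift, hence injective with simple socle $L(\tilde\lambda)$, so $P(\tilde\lambda) \cong I(\tilde\lambda)$ as well. Combining, $P(\lambda) \cong P(\tilde\lambda)$ up to degree shift, which by the classification of indecomposable projectives (the modules \eqref{soerg:eq:97} are pairwise non-isomorphic, even up to shift, being the projective covers of the distinct simples $L(\mu)$) forces $\lambda = \tilde\lambda$, i.e.\ $\lambda$ is of maximal defect. This is the contradiction.

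A cleaner way to phrase the same argument, avoiding any appeal to "simple socle," is just the head/socle duality: $P(\lambda)$ is injective iff $P(\lambda) \cong Q^*$ for some projective $Q$; since $\mathrm{top}(Q^*) = (\mathrm{soc}\,Q)^*$ and for our algebra $L(\mu)^* \cong L(\mu)$ (eq.~\eqref{soerg:eq:81}), an injective $P(\lambda)$ must have $\mathrm{top}(P(\lambda)) = L(\lambda) \cong (\mathrm{soc}\,Q)^*$, whence $\mathrm{soc}\,Q = L(\lambda)$, i.e.\ $Q = P(\lambda)$ (projective cover of $L(\lambda)$), so $P(\lambda) \cong P(\lambda)^*$. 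Then $\mathrm{soc}(P(\lambda)) = \mathrm{top}(P(\lambda))^* = L(\lambda)$. But by Lemma~\ref{soerg:lem:36}, $\mathrm{soc}(P(\lambda))$ contains $L(\tilde\lambda)$ up to a shift, so $\tilde\lambda = \lambda$ and $\lambda$ is of maximal defect.

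I expect the only real subtlety — and hence the main obstacle — to be making precise the statement "$P(\lambda)$ injective indecomposable $\Rightarrow P(\lambda) \cong Q^*$ for some projective $Q$," and that $Q$ must be the projective cover of the top. This is standard finite dimensional algebra theory (the indecomposable injectives are the duals of the indecomposable projectives, matched up by $I(\mu) = P(\mu)^*$), but since the paper works in the graded setting one must track the degree shift; that is harmless since all the statements are "up to degree shift." One should also note that Lemma~\ref{soerg:lem:35} already supplies the forward implication, so the proof of the theorem is genuinely short: cite Lemma~\ref{soerg:lem:35} for one direction, and run the socle-versus-$\tilde\lambda$ argument above for the other, using Lemma~\ref{soerg:lem:36}, eq.~\eqref{soerg:eq:81}, and the classification \eqref{soerg:eq:97} of indecomposable projectives.
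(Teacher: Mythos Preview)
Your first argument is correct and is close in spirit to the paper's proof: both reduce the converse direction to Lemma~\ref{soerg:lem:36}, the point being that once one knows the socle of $P(\lambda)$ is $L(\lambda)$, Lemma~\ref{soerg:lem:36} forces $\tilde\lambda=\lambda$. The paper reaches ``socle $=L(\lambda)$'' in one stroke by observing that a projective--injective module is tilting and hence (by standard theory for algebras with a simple-preserving duality) self-dual, so $\mathrm{soc}\,P(\lambda)\cong(\mathrm{top}\,P(\lambda))^*\cong L(\lambda)$. Your route instead identifies $P(\lambda)\cong I(\tilde\lambda)$ via Lemma~\ref{soerg:lem:36}, then invokes Lemma~\ref{soerg:lem:35} for $\tilde\lambda$ to get $P(\tilde\lambda)\cong I(\tilde\lambda)$, and concludes $P(\lambda)\cong P(\tilde\lambda)$. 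This is slightly longer but has the virtue of avoiding any appeal to tilting theory for properly stratified algebras, relying only on the elementary fact that indecomposable injectives are the $P(\mu)^*$.

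However, your ``cleaner'' reformulation contains a genuine slip. From $P(\lambda)\cong Q^*$ with $Q$ projective you correctly deduce $\mathrm{soc}\,Q\cong L(\lambda)$, but then write ``i.e.\ $Q=P(\lambda)$ (projective cover of $L(\lambda)$).'' This does not follow: $P(\lambda)$ is characterised by $\mathrm{top}\,P(\lambda)=L(\lambda)$, not by its socle. Knowing $\mathrm{soc}\,Q=L(\lambda)$ only tells you that $Q=P(\mu)$ for \emph{some} $\mu$ with $\mathrm{soc}\,P(\mu)=L(\lambda)$; identifying $\mu$ with $\lambda$ is precisely the content of the theorem you are trying to prove. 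So the ``cleaner'' phrasing is circular as written, and you should stick with your first argument (or with the paper's tilting shortcut).
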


\begin{proof}
  By Lemma~\ref{soerg:lem:35} if $\lambda$ is of maximal defect then $P(\lambda)$ is injective. On the other side, suppose $P(\lambda)$ is injective. Then $P(\lambda)$ is a tilting module, and by standard theory it is self dual (as an ungraded module). In particular, the socle of $P(\lambda)$ is $L(\lambda)$. By Lemma~\ref{soerg:lem:36}, $\lambda$ has to be of maximal defect.
\end{proof}

\begin{remark}
  \label{rem:4}
  Let $w^0 $ be the longest element of $D$. The weights
  $\lambda$ of maximal defect are exactly the ones that
  correspond to permutations $w_k z$, $z \in D$ which are in
  the same right Kazhdan-Lusztig cell of $w_k w^0$. This can
  be easily checked using the equivalence between
  Kazhdan-Lusztig cells and Knuth equivalence (see
  \cite[\textsection{}5]{MR560412}), and either applying
  directly the definition of Knuth equivalence or using its
  description through the Robinson-Schensted correspondence
  (cf.\ \cite[\textsection{}5.1.4]{MR0445948} and also
  \cite{MR2167481}). This gives another proof of a
  particular case of \cite[Theorem~5.1]{MR2369489} (for the relation with the category $\catO$ see Section~\ref{sec:category-cato} below).
\end{remark}

\subsection{Diagrammatic functors \texorpdfstring{$\calE_k$}{E} and \texorpdfstring{$\calF_k$}{F}}
\label{soerg:sec:functor-calf}

We construct now two functors
\begin{equation}\label{eq:24}
\begin{tikzpicture}[baseline=(A1.base)]
  \node (A1) at (0,0) {$\gmod{A_{n,k}}$};
  \node (A2) at (5,0) {$\gmod{A_{n,k+1}}.$};
  \draw[->,bend left=10] (A1) to node[above] {$\calF_k$} (A2);
  \draw[->,bend left=10] (A2) to node[below] {$\calE_k$} (A1);
\end{tikzpicture}
\end{equation}
They are important since they are the diagrammatic version of the corresponding functors from \cite{miopaperO} (see Section \ref{sec:category-cato} below).

Let us fix an integer $n$. For all $k=0,\ldots,n$ let us set in this section $A_k=A_{n,k}$.
Let $\Gamma_k^\down$ be the subset of weights of $\Gamma_k$ of maximal defect, and let $\Gamma_k^\up = \Gamma_k - \Gamma_k^\down$. Notice that given $\lambda \in \Gamma_k$ we have $\lambda \in \Gamma_k^\down$ if and only if the leftmost symbol of $\lambda$ is a $\down$, and conversely $\lambda \in \Gamma_k^\up$ if and only if the leftmost symbol of $\lambda$ is an $\up$. Let also
\begin{equation}
  \label{soerg:eq:139}
  e^\down_k= \sum_{\lambda \in \Gamma_k^\down} e_\lambda, \qquad   e^\up_k= \sum_{\lambda \in \Gamma_k^\up} e_\lambda.
\end{equation}
In the notation of the previous section, $e_k^{\down} = e_{\mathrm{def}}$.

Consider now $P^\down_k= A_k e^\down_k$, that is the sum of all indecomposable projective-injective $A_k$--modules. We want to describe a right $A_{k+1}$ action on it.

For any $\lambda \in \Gamma_k^\down$ let $\lambda^{(\up)}
\in \Gamma_{k+1}^\up$ be the weight obtained from $\lambda$
by substituting the leftmost symbol, which by assumption is
a $\down$, with an $\up$. Conversely, given $\mu \in
\Gamma_{k+1}^\up$ let $\mu^{(\down)} \in \Gamma_k^\down$ be
the weight obtained from $\mu$ after substituting the
leftmost symbol, which by assumption is an $\up$, with a
$\down$. Clearly the map $\lambda \mapsto
\lambda^{(\up)}$ defines a bijection $\Gamma_k^\down \mapto
\Gamma_{k+1}^\up$ with inverse $\mu \mapsto \mu^{(\down)}$.

\begin{lemma}
  \label{soerg:lem:37}
  Let $\lambda,\mu \in A_{k+1}^\up$. Then we have a natural
  $R$--modules isomorphism
  \begin{equation}
    \label{soerg:eq:140}
      \Hom_R(\sfC_\lambda, \sfC_\mu) \cong \Hom_R(\sfC_{\lambda^{(\down)}}, \sfC_{\mu^{(\down)}} )
  \end{equation}
  that induces a surjective map
  \begin{equation}
    \label{soerg:eq:141}
    e_\mu A_{k+1} e_\lambda \longrightarrow e_{\mu^{(\down)}} A_k e_{\lambda^{(\down)}}.
  \end{equation}
\end{lemma}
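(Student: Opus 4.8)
The plan is to reduce both assertions to the explicit descriptions of Soergel modules (Theorem~\ref{soerg:thm:1}) and of illicit morphisms (Theorem~\ref{soerg:thm:2}), the only genuine input being a comparison of $\boldb$--sequences.

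First I would check that $\boldb^\lambda = \boldb^{\lambda^{(\down)}}$ and $\boldb^\mu = \boldb^{\mu^{(\down)}}$. By \eqref{soerg:eq:171} the quantity $b^\lambda_i - 1$ counts the $\up$'s of $\lambda$ strictly to the right of position $i$; passing to $\lambda^{(\down)}$ only replaces the $\up$ in position $1$ by a $\down$, and that $\up$ is never strictly to the right of any position $i \geq 1$, so no entry of the $\boldb$--sequence changes. Applying Theorem~\ref{soerg:thm:1} to the block $\Gamma_{k+1}$ and to the block $\Gamma_k$ then gives $\sfC_\lambda \cong R_{\boldb^\lambda} = R_{\boldb^{\lambda^{(\down)}}} \cong \sfC_{\lambda^{(\down)}}$, and similarly for $\mu$; composing these yields \eqref{soerg:eq:140}. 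This identification is natural and compatible with composition of morphisms, since under it each homomorphism has the form $1 \mapsto p$ with $p \in R$ and composition is multiplication of the polynomials $p$, which does not depend on $k$.

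For \eqref{soerg:eq:141} I would use Proposition~\ref{soerg:prop:6} together with Corollary~\ref{soerg:cor:5} and Theorem~\ref{soerg:thm:2}, which give $e_\mu A_{k+1} e_\lambda \cong \Hom_R(\sfC_\lambda,\sfC_\mu)/\widetilde\sfW_{\lambda,\mu}$ (computed in $A_{k+1}$) and $e_{\mu^{(\down)}} A_k e_{\lambda^{(\down)}} \cong \Hom_R(\sfC_{\lambda^{(\down)}},\sfC_{\mu^{(\down)}})/\widetilde\sfW_{\lambda^{(\down)},\mu^{(\down)}}$ (computed in $A_k$). Under the identification of the two $\Hom$--spaces from the first step it then suffices to show $\widetilde\sfW_{\lambda,\mu} \subseteq \widetilde\sfW_{\lambda^{(\down)},\mu^{(\down)}}$; the map \eqref{soerg:eq:141} is the induced epimorphism of quotients. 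The combinatorial heart of the matter is that $\mu^{(\down)}$ has one more $\down$ than $\mu$, sitting in position $1$, so $\down^{\mu^{(\down)}}_1 = 1$ and $\down^{\mu^{(\down)}}_{j+1} = \down^\mu_j$, and likewise for $\lambda$; hence the index $j$ appearing in Theorem~\ref{soerg:thm:2} for $\Gamma_{k+1}$ corresponds to the index $j+1$ for $\Gamma_k$. Because $\lambda$ and $\mu$ start with an $\up$, one has $\down^\lambda_1, \down^\mu_1 \geq 2$, so the new index $j=1$ for $\Gamma_k$ never triggers the alternative in case~\ref{soerg:item:20} of Theorem~\ref{soerg:thm:2}; consequently that alternative holds for $(\lambda,\mu)$ in $\Gamma_{k+1}$ if and only if it holds for $(\lambda^{(\down)},\mu^{(\down)})$ in $\Gamma_k$, and in that case both $\widetilde\sfW$'s are the full $\Hom$--space. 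In case~\ref{soerg:item:21}, the exponents $c_i$ in the generating morphisms \eqref{soerg:eq:18} coincide on the two sides because the $\boldb$--sequences agree, and the shift $j \mapsto j+1$ identifies the $j$--th generator for $\Gamma_{k+1}$ with the $(j{+}1)$--st generator for $\Gamma_k$; the module for $\Gamma_k$ carries exactly one additional generator, the one indexed by $j=1$. This establishes the inclusion (in general strict, which is why \eqref{soerg:eq:141} is only surjective).

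The one point requiring care is this last step: one should read ``the subbimodule generated by'' the morphisms \eqref{soerg:eq:18} as their $R$--span inside the $\Hom$--space (well defined because all the $\sfC_\bullet$ are cyclic, by Proposition~\ref{soerg:prop:4}), and carry the shift $j \leftrightarrow j+1$ faithfully through the formula for $\beta(j)$ and through the index ranges $1 \le j \le n-k-1$ and $1 \le j \le n-k$. Everything else is a direct application of Theorems~\ref{soerg:thm:1} and~\ref{soerg:thm:2} and Proposition~\ref{soerg:prop:6}.
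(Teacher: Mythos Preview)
Your proposal is correct and follows essentially the same approach as the paper's proof: both argue that the $\boldb$--sequences of $\lambda$ and $\lambda^{(\down)}$ coincide (giving \eqref{soerg:eq:140} via Theorem~\ref{soerg:thm:1}), and then both use Theorem~\ref{soerg:thm:2} to show that $\sfW_{\lambda^{(\down)},\mu^{(\down)}}$ is generated by $\sfW_{\lambda,\mu}$ together with one additional generator coming from the new index $j=1$, whence the surjection \eqref{soerg:eq:141}. Your version is more detailed---you spell out the index shift $j\mapsto j+1$ and verify that case~\ref{soerg:item:20} is unaffected---while the paper condenses the same argument into two sentences.
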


\begin{proof}
  Since the $\boldb$--sequences of $\lambda$ and $\lambda^{(\down)}$ are the same, the first claim follows. By Theorem~\ref{soerg:thm:2} the bimodule $\sfW_{\lambda^{(\down)},\mu^{(\down)}}$ is generated by $\sfW_{\lambda,\mu}$ together with the morphism $1 \mapsto x_1,\ldots,x_j$ where $j= \min\{\down_1^\lambda,\down_1^\mu \}$. Hence $e_{\mu^{(\down)}} A_k e_{\lambda^{(\down)}}$ is a quotient of $e_\mu A_{k+1} e_\lambda$.
\end{proof}

\begin{corollary}
  \label{soerg:cor:3}
  We have a surjective algebra homomorphism
  \begin{equation}
    \Psi: e^\up_{k+1} A_{k+1} e^\up_{k+1} \mapto e^\down_k A_k e^\down_k.\label{soerg:eq:142}
  \end{equation}
\end{corollary}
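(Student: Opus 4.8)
The plan is to build $\Psi$ by gluing together the maps furnished by Lemma~\ref{soerg:lem:37}. Recall from \eqref{soerg:eq:71} that, $e^\up_{k+1}=\sum_{\lambda\in\Gamma_{k+1}^\up}e_\lambda$ and $e^\down_k=\sum_{\lambda\in\Gamma_k^\down}e_\lambda$ being idempotents, we have corner decompositions $e^\up_{k+1}A_{k+1}e^\up_{k+1}=\bigoplus_{\lambda,\mu\in\Gamma_{k+1}^\up}e_\mu A_{k+1}e_\lambda$ and $e^\down_k A_k e^\down_k=\bigoplus_{\lambda',\mu'\in\Gamma_k^\down}e_{\mu'}A_k e_{\lambda'}$; both are unital algebras, with units $e^\up_{k+1}$ and $e^\down_k$ respectively. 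Using the bijection $\lambda\mapsto\lambda^{(\down)}$ between $\Gamma_{k+1}^\up$ and $\Gamma_k^\down$, I would define $\Psi$ to be the direct sum of the surjections $e_\mu A_{k+1}e_\lambda\surto e_{\mu^{(\down)}}A_k e_{\lambda^{(\down)}}$ of \eqref{soerg:eq:141}. Surjectivity of $\Psi$ is then immediate, since each summand is onto and the bijection hits every corner of the target.

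The substantive point is multiplicativity, and the key remark I would isolate first is that the isomorphism \eqref{soerg:eq:140} is canonically an \emph{equality} of $\Hom$--spaces. Indeed, replacing the leftmost symbol of a weight by a $\down$ does not change the number of $\up$'s strictly to the right of any position $i\ge 1$, so $\boldb^\lambda=\boldb^{\lambda^{(\down)}}$ for all $\lambda\in\Gamma_{k+1}^\up$ by \eqref{soerg:eq:171}; hence $\sfC_\lambda\cong R_{\boldb^\lambda}=R_{\boldb^{\lambda^{(\down)}}}\cong\sfC_{\lambda^{(\down)}}$ by Theorem~\ref{soerg:thm:1}, and consequently $\Hom_R(\sfC_\lambda,\sfC_\mu)$ and $\Hom_R(\sfC_{\lambda^{(\down)}},\sfC_{\mu^{(\down)}})$ are identified in a way compatible with composition of morphisms. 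The proof of Lemma~\ref{soerg:lem:37} moreover exhibits $e_{\mu^{(\down)}}A_k e_{\lambda^{(\down)}}$ as a quotient of $e_\mu A_{k+1}e_\lambda$; under the above identification this means precisely that the space of illicit morphisms used to form the $A_k$--corner contains the one used for the $A_{k+1}$--corner, and $\Psi_{\mu,\lambda}$ is the induced projection between quotients.

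It then remains to check that these projections commute with composition. Here I would invoke Corollary~\ref{soerg:cor:5} and Remark~\ref{soerg:rem:2}: the product in $A_{k+1}$ (and likewise in $A_k$) on the corners in question is, by definition, composition of $R$--module morphisms between the relevant Soergel modules followed by passage to the quotient by illicit morphisms. Since illicit morphisms compose on either side with arbitrary morphisms to give illicit morphisms — this is exactly what makes those quotients into algebras in the first place — and since the composition pairings agree under the equality of $\Hom$--spaces above, the projection maps are ring maps; hence $\Psi$ is multiplicative. Finally $\Psi(e^\up_{k+1})=\sum_{\lambda\in\Gamma_{k+1}^\up}\Psi(e_\lambda)=\sum_{\lambda}e_{\lambda^{(\down)}}=e^\down_k$, so $\Psi$ is unital, and therefore a surjective homomorphism of unital algebras.

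I expect the only genuinely delicate point to be the equality-of-$\Hom$-spaces observation: once one knows that $\sfC_\lambda$ and $\sfC_{\lambda^{(\down)}}$ are literally the same $R$--module (not merely isomorphic by an unspecified isomorphism), composition of morphisms is unaffected by the identification of Lemma~\ref{soerg:lem:37} and everything else is formal. The pitfall to avoid is attempting a purely diagrammatic argument, which is not available since the multiplication of the algebras $A_{n,k}$ is not defined diagrammatically.
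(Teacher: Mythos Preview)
Your proposal is correct and is exactly the argument the paper has in mind: the corollary is stated without proof, as it follows immediately from Lemma~\ref{soerg:lem:37} together with the observation (made in that lemma's proof) that $\boldb^{\lambda}=\boldb^{\lambda^{(\down)}}$, so that the Soergel modules literally coincide and the map on corners is just the further quotient by a larger ideal of illicit morphisms. Your detailed verification of multiplicativity and unitality spells out precisely what the paper leaves implicit.
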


\begin{prop}
  \label{prop:2}
  We have a well-defined surjective algebra homomorphism
  \begin{equation}
    \label{eq:32}
    \begin{aligned}
      A_{k+1} / A_{k+1} e^\down_{k+1} A_{k+1}
      &\longrightarrow e_k^\down A_k e_k^\down\\
      [x] & \longmapsto \Psi(e_{k+1}^\up x e_{k+1}^\up)
    \end{aligned}
  \end{equation}
  for $x \in A_{k+1}$, where $\Psi$ is the homomorphism \eqref{soerg:eq:142}.
\end{prop}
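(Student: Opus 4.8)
The plan is to realise the quotient algebra $A_{k+1}/A_{k+1}e^\down_{k+1}A_{k+1}$ as a quotient of the subalgebra $e^\up_{k+1}A_{k+1}e^\up_{k+1}$ and then to check that $\Psi$, which is already an algebra homomorphism on $e^\up_{k+1}A_{k+1}e^\up_{k+1}$ by Corollary~\ref{soerg:cor:3}, annihilates the relevant ideal. Write $e=e^\up_{k+1}$ and $f=e^\down_{k+1}$, so that $e+f=1$ in $A_{k+1}$. First I would observe that, since $f\in A_{k+1}fA_{k+1}$ and $exf$, $fxe$, $fxf$ all lie in $A_{k+1}fA_{k+1}$ for every $x$, one has $A_{k+1}=eA_{k+1}e+A_{k+1}fA_{k+1}$ and $eA_{k+1}e\cap A_{k+1}fA_{k+1}=eA_{k+1}fA_{k+1}e$ (the latter because an element of the intersection is of the form $exe$ with $x\in A_{k+1}fA_{k+1}$, hence lies in $e(A_{k+1}fA_{k+1})e=eA_{k+1}fA_{k+1}e$). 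Therefore the inclusion $eA_{k+1}e\hookrightarrow A_{k+1}$ induces an isomorphism of algebras
\[ e A_{k+1}e \,/\, e A_{k+1} f A_{k+1} e \;\xrightarrow{\ \sim\ }\; A_{k+1}/A_{k+1}e^\down_{k+1}A_{k+1} \]
(note $e+A_{k+1}fA_{k+1}$ is the unit of the target because $1=e+f$). Under this identification the map in the statement is precisely the one induced by $\Psi\colon eA_{k+1}e\to e^\down_kA_ke^\down_k$; it is well defined and an algebra homomorphism as soon as $\Psi$ kills the two-sided ideal $eA_{k+1}fA_{k+1}e$, and it is automatically surjective since $\Psi$ is.

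So everything reduces to the claim $\Psi\bigl(e^\up_{k+1}A_{k+1}e^\down_{k+1}A_{k+1}e^\up_{k+1}\bigr)=0$. Decomposing by idempotents, it suffices to fix $\lambda\in\Gamma^\down_{k+1}$ and $\mu,\nu\in\Gamma^\up_{k+1}$ and to show $\Psi$ kills $(e_\mu A_{k+1}e_\lambda)(e_\lambda A_{k+1}e_\nu)\subseteq e_\mu A_{k+1}e_\nu$. By Corollary~\ref{soerg:cor:8} and Remark~\ref{soerg:rem:2} an element of this space is the class of a composition $\sfC_\nu\xrightarrow{\,1\mapsto q\,}\sfC_\lambda\xrightarrow{\,1\mapsto p\,}\sfC_\mu$ of morphisms of Soergel modules at level $k+1$, with $p,q$ monomials; since $\boldb^\mu=\boldb^{\mu^{(\down)}}$ and $\boldb^\nu=\boldb^{\nu^{(\down)}}$ (the label of position $1$ does not enter the $\boldb$--sequence), the two Hom spaces $\Hom_R(\sfC_\nu,\sfC_\mu)$ and $\Hom_R(\sfC_{\nu^{(\down)}},\sfC_{\mu^{(\down)}})$ coincide and $\Psi$ sends the class of $1\mapsto pq$ in $\Hom_R(\sfC_\nu,\sfC_\mu)/\sfW_{\nu,\mu}$ to its class in $\Hom_R(\sfC_{\nu^{(\down)}},\sfC_{\mu^{(\down)}})/\sfW_{\nu^{(\down)},\mu^{(\down)}}$.

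The heart of the argument is then a $\boldb$--sequence computation. Put $j'=\min\{\down^\nu_1,\down^\mu_1\}\ge 2$ and $c_i=\max\{b^\mu_i-b^\nu_i,0\}$. For every $i<j'$ the positions $1,\dots,i$ carry only $\up$'s in both $\nu$ and $\mu$, so $b^\nu_i=b^\mu_i=k+2-i$ and hence $c_i=0$; by contrast position $1$ carries a $\down$ in $\lambda$, so at most $i-1$ of the positions $1,\dots,i$ are $\up$'s and therefore $b^\lambda_i\ge k+3-i$. Consequently, by Corollary~\ref{soerg:cor:8}, the monomial $q$ (a morphism $\sfC_\nu\to\sfC_\lambda$) is divisible by $x_1x_2\cdots x_{j'-1}$, while $pq$ is divisible by $x_1^{c_1}\cdots x_{n-1}^{c_{n-1}}$ (each factor is divisible by the corresponding ``$c$--monomial'' and these exponents add up to at least $c_i$). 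Since $c_i=0$ for $i<j'$, it follows that $pq$ is divisible in $R$ by $P:=(x_1x_2\cdots x_{j'-1})(x_1^{c_1}\cdots x_{n-1}^{c_{n-1}})$. Now $P$ is exactly the polynomial attached to the $j=1$ generator \eqref{soerg:eq:18} of $\widetilde\sfW_{\nu^{(\down)},\mu^{(\down)}}$ in Theorem~\ref{soerg:thm:2}: indeed $\down^{\nu^{(\down)}}_1=1$ and $\beta(1)=\min\{\down^{\nu^{(\down)}}_2,\down^{\mu^{(\down)}}_2\}-1=j'-1$, and $\nu^{(\down)},\mu^{(\down)}$ fall in case~\ref{soerg:item:21} of Theorem~\ref{soerg:thm:2} unless $\widetilde\sfW_{\nu^{(\down)},\mu^{(\down)}}$ is already the whole Hom space (in which case there is nothing to prove). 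Since $\widetilde\sfW_{\nu^{(\down)},\mu^{(\down)}}$ is a sub-bimodule containing $1\mapsto P$, and the sub-bimodule generated by $1\mapsto P$ consists of all $1\mapsto (gP\bmod I_{\boldb^\mu})$, the morphism $1\mapsto pq$ lies in $\widetilde\sfW_{\nu^{(\down)},\mu^{(\down)}}=\sfW_{\nu^{(\down)},\mu^{(\down)}}$; hence $\Psi$ sends its class to $0$. This proves the claim and the proposition.

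The step I expect to be the main obstacle is the last one: controlling $\sfW_{\nu^{(\down)},\mu^{(\down)}}$ precisely enough to see that compositions routed through a maximal-defect $\sfC_\lambda$ become illicit after applying $\Psi$. The clean way to handle it is to isolate the single extra generator of $\sfW_{\nu^{(\down)},\mu^{(\down)}}$ over $\sfW_{\nu,\mu}$ — this is exactly what the proof of Lemma~\ref{soerg:lem:37} provides — and to notice, via the $\boldb$--sequence arithmetic above, that the divisibility of $q$ by $x_1\cdots x_{j'-1}$ is forced precisely because $\lambda$ has its first $\down$ in position $1$. A point to be careful about is to argue with the polynomial $pq\in R$ \emph{before} reducing modulo $I_{\boldb^\mu}$: because the sub-bimodule generated by $1\mapsto P$ already contains all $1\mapsto(gP\bmod I_{\boldb^\mu})$, divisibility of $pq$ by $P$ in $R$ is enough and no analysis of the reduction process (beyond what is in Lemma~\ref{soerg:lem:25}) is needed.
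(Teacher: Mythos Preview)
Your proof is correct and follows essentially the same route as the paper's. Both arguments reduce to showing that any morphism in $\Hom_R(\sfC_\nu,\sfC_\mu)$ which factors through some $\sfC_\lambda$ with $\lambda\in\Gamma^\down_{k+1}$ becomes illicit after passing to $\nu^{(\down)},\mu^{(\down)}$, and both establish this via the same $\boldb$--sequence computation showing divisibility by $x_1\cdots x_{j'-1}$. Your version is a bit more explicit in two places: you spell out the idempotent identification $A_{k+1}/A_{k+1}fA_{k+1}\cong eA_{k+1}e/eA_{k+1}fA_{k+1}e$, which is what actually makes $[x]\mapsto\Psi(e^\up_{k+1}xe^\up_{k+1})$ an \emph{algebra} homomorphism (the paper takes this for granted), and you track the $c_i$--factor in the $j=1$ generator of Theorem~\ref{soerg:thm:2} carefully rather than absorbing it into the citation of Lemma~\ref{soerg:lem:37}.
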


\begin{proof}
  We need to show that \eqref{eq:32} does not depend on the
  particular representative $x$ chosen, or equivalently that
  $\Psi (e_{k+1}^\up x e_{k+1}^\up)=0$ for all $x \in
  A_{k+1} e^\down_{k+1}A_{k+1}$. By linearity, it suffices
  to consider the case $x \in A_{k+1} e_\nu A_{k+1}$ for
  $\nu \in \Gamma^\down_{k+1}$. Pick such an $x$ and fix
  $\lambda, \mu \in \Gamma_{k+1}^\up$. Choose some morphism
  $f \in \Hom_R(\sfC_\lambda,\sfC_\mu)$ which corresponds to
  $e_\mu x e_\lambda$ in the quotient
  $\Hom_R(\sfC_\lambda,\sfC_\mu)/\sfW_{\lambda,\mu}$.  Since
  $x \in A_{k+1} e_\nu A_{k+1}$, we can write $f$ as a
  composition $f_2 \circ f_1$ with $f_1 \in
  \Hom_R(\sfC_\lambda,\sfC_\nu)$ and $f_2 \in
  \Hom_r(\sfC_\nu, \sfC_\mu)$. By
  Corollary~\ref{soerg:cor:8}, $f_1$ is
  divisible by $x_1\cdots x_{\down_1^\lambda}$, hence also $f$ is. By
  Theorem~\ref{soerg:thm:2} (cf.\ also the proof of
  Lemma~\ref{soerg:lem:37} above) we have $f \in
  \sfW_{\lambda^{(\down)},\mu^{(\down)}}$, and hence
  $\Psi(e_\mu x e_\lambda) = 0$. Since $\lambda$ and $\mu$
  were chosen arbitrarily in $\Gamma^\up_{k+1}$, it follows
  that $\Psi(e^\up_{k+1} x e^\up_{k+1}) = 0$.

  The surjectivity of \eqref{eq:32} is a direct consequence of the surjectivity of \eqref{soerg:eq:142}.
\end{proof}

\subsubsection{The functor \texorpdfstring{$\calF_k$}{F}}
\label{sec:bimodule-bff_k}

Let us now define $\bfF_{k}$ to be the $(A_k,A_{k+1})$--bimodule $P_k^\down$, where the right $A_{k+1}$--structure is induced by the quotient map $A_{k+1} \mapto A_{k+1}/A_{k+1} e^\down_{k+1} A_{k+1}$ composed with \eqref{eq:32}. The bimodule $\bfF_k$ defines a right-exact functor
\begin{equation}
  \label{soerg:eq:143}
   \calF_k : \gmod{A_{k+1}} \xrightarrow{\, \bfF_k \otimes_{A_{k+1}} \cdot \,} \gmod{A_k}.
\end{equation}
For each indecomposable projective module $P(\mu) = A_{k+1} e_\mu$ we have
\begin{equation}
  \label{soerg:eq:144}
  \bfF_k \otimes_{A_{k+1}} (A_{k+1} e_\mu) =
  \begin{cases}
    A_k e_\lambda & \text{ if } \lambda^{(\up)} = \mu \text{ for some }\lambda \in \Gamma_k,\\
    0 & \text{otherwise}.
  \end{cases}
\end{equation}

\subsubsection{The functor \texorpdfstring{$\calE_k$}{E}}
\label{sec:bimodule-bfe_k}

The usual hom-tensor adjunction gives a natural isomorphism
\begin{equation}
  \label{eq:148}
  \Hom_{A_k} (\bfF_k \otimes_{A_{k+1}} M , N) \cong \Hom_{A_{k+1}} (M, \Hom_{A_k}(\bfF_k,N))
\end{equation}
for all $M \in \gmod{A_{k+1}}$, $N\in \gmod{A_{k}}$. 
Notice that we have a natural isomorphism $    \Hom_{A_k}(\bfF_k,N) \cong \Hom_{A_k}(\bfF_k,A_k) \otimes_{A_k} N$,
where $\Hom_{A_k}(\bfF_k,A_k)$ is regarded as a $(A_{k+1},A_k)$--bimodule. Let us therefore define $\bfE_k $ to be the $(A_{k+1},A_k)$--bimodule $\Hom_{A_k}(\bfF_k,A_k)$, so that the functor 
\begin{equation}
  \label{eq:22}
   \calE_k : \gmod{A_{k}} \xrightarrow{\, \bfE_k \otimes_{A_{k}} \cdot \,} \gmod{A_{k+1}}.
\end{equation}
is right adjoint to $\calF_k$. Since $\bfF_k$ is a projective $A_k$--module the functor $\calE_k$ is exact.

\begin{remark}
  Since $\bfF_k = A_k e^{\down}_k$ as a left $A_k$--module, we have
  $\Hom_{A_k}(\bfF_k,A_k) \cong e^{\down}_k A_k$ as a right
  $A_k$--module. Hence $\bfE_k$ is the $(A_{k+1},A_k)$--bimodule
  obtained from $\bfF_k$ by turning the left $A_k$--action (resp.\ the right $A_{k+1}$--action) into a right (resp.\ left) one using the anti-isomorphism $\star$ \eqref{soerg:eq:92} of $A_k$ (resp.\ $A_{k+1}$). Specifically, the left action of $\alpha \in A_{k+1}$ on $y \in \bfE_k$ is given by $\alpha \cdot y = y\alpha^\star$ and the right action of $\beta \in A_k$ is given by $y \cdot \beta= \beta^\star y$.
  \label{rem:3}
\end{remark}

\section{Category \texorpdfstring{$\catO$}{O}}
\label{sec:category-cato}

We recall now the definition of the BGG category $\catO(\gl_n)$
and of its subquotient categories $\catO^{\frakp,\frakq\pres}_0$ from
\cite{miopaperO}.
We start with some general facts about Serre subcategories and Serre quotient categories.

\subsection{Serre subcategories and Serre quotients}
\label{sec:serre-quotients}

Let $\calA$ be some abelian category which is equivalent to the
category of finitely generated modules over some finite-dimensional
$\C$--algebra. Let $L(\lambda)$ for $\lambda \in \Lambda$ be the simple
objects of $\calA$ up to isomorphism. For all $\lambda \in \Lambda$
let $P(\lambda)$ be the projective cover of $L(\lambda)$. Let $P=
\bigoplus_{\lambda \in \Lambda} P(\lambda)$ be a minimal projective
generator and let $R= \End_{\calA}(P)$. Then we have an equivalence of
categories $\calA \cong \rmod{R}$
via the functor $\Hom_\calA(P,\blank)$.

\subsubsection{Serre subcategories}
\label{sec:serre-subcategories}

For each subset $\Gamma \subseteq \Lambda$ define
$\calS_\Gamma$ to be the Serre subcategory of $\calA$ consisting of the
modules with all composition factors of type $L(\gamma)$ for $\gamma
\in \Gamma$.
Let $I_\Gamma$ be the two-sided ideal of $R=\End_\calA(P)$
generated by all endomorphisms which factor through some $P(\eta)$ for
$\eta \notin \Gamma$.  Then
\begin{equation}
  \label{eq:13}
  \calS_\Gamma \cong \rmod{R/I_\Gamma}.
\end{equation}
Notice that if we let $e_\lambda$ for $\lambda
\in \Lambda$ be the idempotent projecting onto $\End_\calA(P(\lambda))
\subset R$ and $e^\perp_\Gamma=\sum_{\eta \notin \Gamma} e_\eta$ then
$I_\Gamma = R e_\Gamma^\perp R$.

A complete set of pairwise non-isomorphic simple objects in $\calS_\Gamma$ is given by the $L(\gamma)$'s for $\gamma \in \Gamma$ and each of them has a projective cover $P_{\calS_\Gamma}(\gamma)$ in $\calS_\Gamma$, which is the biggest quotient of $P(\gamma)$ which lies in $\calS_\Gamma$.

\subsubsection{Serre quotients}
\label{sec:serre-quotients-1}

Let us denote by $\calA/\calS_\Gamma$ the Serre quotient category.
Analogously as above, we have an equivalence of categories
  \begin{equation}
    \label{eq:201}
    \calA/\calS_\Gamma \cong \rmod{\End_\calA(P_\Gamma)},
  \end{equation}
where $P_\Gamma=\bigoplus_{\eta \in \Lambda - \Gamma} P(\eta)$ (see for example \cite[Prop.~33]{MR2774639}).   The quotient functor is $Q = \Hom_\calA (P_\Gamma, \blank)$.
Notice that $\End_\calA(P_\Gamma) = e_\Gamma R e_\Gamma$ where $e_\Gamma = \sum_{\gamma \in \Gamma} e_\gamma$.

A complete set of pairwise non-isomorphic simple objects in $\calA/\calS_\Gamma$ is given by the $L(\eta)$'s for $\eta \in \Lambda - \Gamma$, with projective covers $P(\eta)$.
 
\begin{remark}
  The category $\calA/\calS_\Gamma$ is also equivalent to the category
  of $\Add(P_\Gamma)$--presentable modules, that is the full
  subcategory of $\calA$ consisting of all modules $M \in \calA$
  having a presentation
  \begin{equation}
    \label{eq:202}
    Q_1 \longrightarrow Q_2 \twoheadlongrightarrow M
  \end{equation}
  with $Q_1,Q_2 \in \Add(P_\Gamma)$. Here $\Add(P_\Gamma)$ is the
  additive full subcategory of $\calA$ consisting of all objects
  which admit a direct sum decomposition with summands being direct
  summands of $P_\Gamma$\label{rem:1}.
\end{remark}

\subsection{Category \texorpdfstring{$\catO$}{O}}
\label{sec:category-cato-1}

Let $\gl_n$ be the general Lie algebra of complex $n \times n$
matrices. Denote by $\frakh$ the Cartan subalgebra of all diagonal
matrices and by $\frakb=\frakh \oplus \frakn^+$ the Borel subalgebra of all upper triangular
matrices. Let $\alpha_1,\ldots,\alpha_{n-1}$ be the simple roots. We identify the Weyl group with $\bbS_n$.

The BGG category $\catO=\catO(\gl_n)=\catO(\gl_n;\frakb)$, introduced in \cite{MR0407097}, is the full subcategory of all $\gl_n$--modules $M$ which are
\begin{enumerate}[label=($\catO$\arabic*)]
\item finitely generated;
\item direct sum of weight spaces for the action of $\frakh$;
\item locally $\frakn^+$--finite (that is, for every $x \in M$ the $\frakn^+$--submodule generated by $x$ is finite dimensional).
\end{enumerate}

The category $\catO$ decomposes into blocks. Let $\catO_0$
be the block containing the trivial representation
$L(0)$. Then $\catO_0$ is a highest weight category with
simple modules $L(w \cdot 0)$ for $w \in \bbS_n$. In
particular, each simple module $L(w \cdot 0)$ has a
projective cover $P(w \cdot 0)$. For each $w$ in $\bbS_n$
there is a universal highest weight module $M(w \cdot 0) \in \catO_0$
with highest weight $w \cdot 0$, usually called \emph{Verma
  module}. The projective module $P(w \cdot 0)$ is also the
projective cover of $M(w \cdot 0)$.

If we let $P=\bigoplus_{w \in \bbS_n} P(w \cdot 0)$ be a minimal projective generator then we have $\catO_0 \cong \rmod{R}$
where $R = \End_\catO(P)$. For more details about $\catO$ we refer to \cite{MR2428237}.

\subsubsection{Parabolic category \texorpdfstring{$\catO$}{O}}
\label{sec:parab-categ-cato}

Fix a standard parabolic subalgebra $\frakp \subseteq \gl_n$
corresponding to a standard parabolic subgroup $W_\frakp \subseteq
\bbS_n$. Let $W^\frakp$ denote the shortest coset representatives for
$W_\frakp \backslash \bbS_n$. Then the \emph{parabolic category}
$\catO^\frakp_0$ is the Serre subcategory of $\catO_0$ generated  by all
simple modules of the type $L(w \cdot 0)$ for $w \in W^\frakp$. As in \eqref{eq:13}, if we let $I_\frakp$ be the two-sided ideal of $\End_\catO(P)$ generated by all morphisms which factor through some $P(z \cdot 0)$ for $z \notin W^\frakp$ then 
\begin{equation}
\catO^\frakp_0 \cong \rmod{\End_\catO(P)/I_\frakp}.\label{eq:17}
\end{equation}

\subsubsection{\texorpdfstring{$\frakq$}{q}-Presentable category \texorpdfstring{$\catO$}{O}}
\label{sec:frakq-pres-categ}

Let $\frakq \subseteq \gl_n$ be also a standard parabolic subalgebra corresponding to a parabolic subgroup $W_\frakq \subseteq \bbS_n$. Let as before $W^\frakq$ denote the shortest coset representatives for $W_\frakq \backslash \bbS_n$, and let $w_\frakq \in W_\frakq$ be the longest element, so that $w_\frakq W^\frakq$ is the set of the longest coset representatives. Let $\calS^\frakq$ be the Serre subcategory of $\catO_0$ generated by all simple modules $L(z \cdot 0)$ for $z \notin w_\frakq W^\frakq$. Then the \emph{$\frakq$--presentable category} $\catO^{\frakq\pres}_0$ is the Serre quotient $\catO_0/\calS^\frakq$ (the choice of this name is motivated by Remark \ref{rem:1}). As in \eqref{eq:201}, if we let $P_\frakq = \bigoplus_{w \in w_\frakq W^\frakq} P(w \cdot 0)$ then we have an equivalence of categories
\begin{equation}
\catO^{\frakq\pres}_0 \cong \rmod{\End_{\catO}(P_\frakq)}.\label{eq:18}
\end{equation}

\begin{remark}\label{rem:2}
  Note that for $w,z \in w_\frakq W^{\frakq}$ we have
  $\Hom_{\catO}(P(w \cdot 0), P(z \cdot 0)) =
  \Hom_{\catO^{\frakq\pres}_0} (P(w \cdot 0), P(z \cdot 0 ))$, since
  both the head of $P(w \cdot 0)$ and the socle of $P(z \cdot 0)$ are
  simple modules which are not in $\calS^{\frakq}$. For this it is
  essential that we are working with longest coset representatives.
\end{remark}

\subsubsection{Mixed parabolic and \texorpdfstring{$\frakq$}{q}-presentable category \texorpdfstring{$\catO$}{O}}
\label{sec:mixed-parab-frakq}

It is possible to mix the two constructions above for two parabolic subalgebras $\frakp,\frakq \subseteq \gl_n$ which are orthogonal, in the sense that the corresponding sets of simple roots are disjoint and hence $W_\frakp \times W_\frakq$ is a subgroup of $\bbS_n$. One can consider the Serre subcategory $\calS$ of $\catO^\frakp_0$ generated by all simple modules of the type $L(z \cdot 0)$ for $z \in W^\frakp$, $z \notin w_\frakq W^\frakq$ and then define $\catO^{\frakp,\frakq\pres}_0$ to be the Serre quotient $\catO^\frakp_0 / \calS$. Alternatively, one can reverse the construction and define $\catO^{\frakp,\frakq\pres}_0$ to be the Serre subcategory of $\catO^{\frakq\pres}_0$ generated by all simple modules of the form $L(w \cdot 0)$ for $w \in w_q W^\frakq \cap W^\frakp$. As one expects, the two definitions agree (see \cite{miopaperO}).

To get an explicit description of $\catO^{\frakp,\frakq\pres}_0$, let $P^{\frakp}_\frakq = \bigoplus_{w \in w_qW^\frakq \cap W^{\frakp}} P^{\frakp} (w \cdot 0)$, where $P^{\frakp}(w \cdot 0)$ is the projective cover of $L(w \cdot 0)$ in $\catO^{\frakp}_0$. Let moreover $\overline I_\frakp$ be the two-sided ideal of $\End_\catO(P_\frakq)$ generated by all endomorphisms which factor through some $P(z \cdot 0)$ for $z \in w_\frakq W^\frakq$, $z \notin W^\frakp$. Then we have equivalences of categories
\begin{equation}
  \label{eq:14}
  \catO^{\frakp,\frakq\pres}_0 \cong \rmod{\End_\catO(P^\frakp_\frakq)} \cong \rmod{\End_\catO(P_\frakq)/\overline I_\frakp }.
\end{equation}

\subsection{Soergel's theorems}
\label{sec:soergels-theorems}
Fix a positive integer $n$. Using the notation of Section \ref{soerg:sec:soergel-modules}, we recall the main results from \cite{MR1029692}.

\begin{theorem}[{\cite{MR1029692}}]
  \label{thm:9}
  Let $w_n$ be the longest element of\/ $\bbS_n$. We have a canonical
  isomorphism $\End_\catO(P(w_n \cdot 0)) \cong B$. The functor
  $\V=\Hom_\catO (P(w_n \cdot 0), \blank) : \catO_0 \mapto \lmod{B}$
  is fully faithful on projective modules. Moreover, for each $z \in
  \bbS_n$ the $B$--module $\V P(z \cdot 0)$ is isomorphic to the
  Soergel module $\sfC_z$ defined in
  \textsection\ref{sec:soergels-modules}.
\end{theorem}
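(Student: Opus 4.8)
The plan is to follow Soergel's original argument from \cite{MR1029692}, which splits the statement into three parts: the \emph{Endomorphismensatz}, i.e.\ the ring isomorphism $\End_\catO(P(w_n\cdot 0))\cong B$; the \emph{Struktursatz}, i.e.\ full faithfulness of $\V=\Hom_\catO(P(w_n\cdot 0),\blank)$ on projectives; and the explicit identification $\V P(z\cdot 0)\cong\sfC_z$. Throughout write $P:=P(w_n\cdot 0)$ for the big projective of the principal block $\catO_0$, and recall that $P$ is simultaneously the injective hull of $L(w_n\cdot 0)$ and the big (self-dual) tilting module.

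For the Endomorphismensatz I would first pin down the dimension. By BGG reciprocity $(P:M(x\cdot 0))=[M(x\cdot 0):L(w_n\cdot 0)]=1$ for every $x\in\bbS_n$, since the antidominant simple lies in the socle of every Verma module with multiplicity one; dually $P$ carries a costandard filtration with each $M(x\cdot 0)^{\vee}$ occurring once. As $P$ is the projective cover of $L(w_n\cdot 0)$, $\dim_\C\Hom_\catO(P,M(x\cdot 0)^{\vee})=[M(x\cdot 0)^{\vee}:L(w_n\cdot 0)]=1$, and since $\Hom_\catO(P,\blank)$ is exact this gives $\dim_\C\End_\catO(P)=\sum_{x\in\bbS_n}1=n!=\dim_\C B$. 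The substantive point is then to produce a ring homomorphism $B\to\End_\catO(P)$ and show it is surjective (hence bijective). For this one realizes $P$ as a translate of the dominant projective Verma $M(0)=P(0)$ onto the most singular wall and back, $P\cong\theta^{\mathrm{out}}\theta^{\mathrm{in}}M(0)$, and uses that the polynomial ring $S(\frakh)$ acts on $M(0)$ (via the center of $U(\frakg)$ acting on a suitable deformation of $\catO_0$, i.e.\ through Casimir/Demazure-type endomorphisms); this action is transported along $\theta^{\mathrm{in}},\theta^{\mathrm{out}}$, annihilates the positive-degree invariants $S(\frakh)^W_+$, hence factors through the coinvariant algebra $B$, and a weight-space count shows the resulting map $B\to\End_\catO(P)$ is onto. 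Comparing dimensions closes this part. \textbf{This is the main obstacle}: the dimension count is formal, but the construction of the $B$-action and the surjectivity are exactly the delicate heart of Soergel's paper, and in practice I would cite it rather than reprove it.

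For the Struktursatz I would exploit the compatibility of $\V$ with translation functors. For a simple reflection $s=s_i$ the wall-crossing functor $\theta_s$ on $\catO_0$ is exact and self-biadjoint, and under the identification of part one there is a natural isomorphism $\V\circ\theta_s\cong (B\otimes_{B^s}B)\otimes_B\V(\blank)$ (up to a grading shift), because $\theta_s$ is, up to summands, induction–restriction between the $s$-wall block and the regular block, which $\V$ turns into $\blank\otimes_B B\otimes_{B^s}B$; note the functor $(B\otimes_{B^s}B)\otimes_B\blank$ is likewise self-biadjoint. Every indecomposable projective $P(z\cdot 0)$ is a direct summand of $\theta_{s_{i_r}}\cdots\theta_{s_{i_1}}P(0)$ for a reduced expression $z=s_{i_1}\cdots s_{i_r}$, so the additive category of projectives is generated by $P(0)$ under the $\theta_s$. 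Full faithfulness of $\V$ on projectives then follows, by induction on reduced-word length using the two biadjunctions, from the base case $\End_\catO(P(0))=\C=\End_B(\V P(0))$, which holds since $\V P(0)=\Hom_\catO(P,M(0))$ is one-dimensional ($[M(0):L(w_n\cdot 0)]=1$).

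Finally, the identification $\V P(z\cdot 0)\cong\sfC_z$ follows by matching the two recursive constructions. On the category side, $P(z\cdot 0)$ is the unique indecomposable summand of $\theta_{s_{i_r}}\cdots\theta_{s_{i_1}}P(0)$ not isomorphic to any $P(z'\cdot 0)$ with $z'\prec z$, namely the one containing the image of a fixed generator. Applying $\V$ and the isomorphism $\V\theta_s\cong(B\otimes_{B^s}B)\otimes_B\blank$ iteratively, and using $\V P(0)\cong\C=\sfC_e$, yields $\V\bigl(\theta_{s_{i_r}}\cdots\theta_{s_{i_1}}P(0)\bigr)\cong B_{i_r}\otimes\cdots\otimes B_{i_1}\otimes\C$ compatibly with the distinguished generators $1\otimes\cdots\otimes 1$; since $\V$ is additive and, being fully faithful on projectives, reflects indecomposable summands, it carries $P(z\cdot 0)$ to the distinguished summand of $B_{i_r}\otimes\cdots\otimes B_{i_1}\otimes\C$, which is by definition $\sfC_z$ (Theorem~\ref{thm:2}, cf.\ \eqref{eq:25}). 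The $z=w_n$ case recovers $\End_\catO(P)\cong\V P(w_n\cdot 0)\cong\sfC_{w_n}\cong B$, so the three assertions are proved together.
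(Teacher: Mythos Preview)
The paper does not give its own proof of this theorem: it is stated with the attribution \cite{MR1029692} and used as a black box (see \textsection\ref{sec:soergels-theorems}). Your sketch is a faithful outline of Soergel's original argument---the Endomorphismensatz via a dimension count plus the construction of the $B$--action, the Struktursatz via the intertwining $\V\circ\theta_s\cong (B\otimes_{B^s}B)\otimes_B\V$ and induction on length using biadjointness, and the identification $\V P(z\cdot 0)\cong\sfC_z$ by matching the recursive descriptions---so there is nothing to compare against here beyond noting that you correctly flag the hard part (surjectivity of $B\to\End_\catO(P)$) as something one cites rather than reproves.
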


In particular, it follows from Soergel's result and the discussion
in \textsection\ref{sec:category-cato-1} that $\catO_0 \cong
\rmod{\End_B(\bigoplus_{z \in \bbS_n} \sfC_z)}$. As explained in
\textsection\ref{soerg:sec:grading}, there is a natural way to
consider the Soergel modules as graded modules. It then makes sense to
define the graded version of the category $\catO$ to be
\begin{equation}
  \label{eq:19}
  \catOZ_0 = \rgmod{\End_B\big( \textstyle \bigoplus_{z \in \bbS_n} \sfC_z\big)}.
\end{equation}
This is the Koszul grading of $\catO_0$ \cite{MR1322847}.
Analogously one can define graded versions of the categories $\catO^{\frakp}_0$, $\catO^{\frakq\pres}_0$, $\catO^{\frakp,\frakq\pres}_0$.

We prove now a technical lemma we used in Section \ref{soerg:sec:soergel-modules}.

\begin{lemma}
  \label{lem:3}
  The module $\sfC_z$ is cyclic (generated by $1 \otimes \cdots
  \otimes 1$) if and only if $\calP_{e,z}=v^{\len(z)}$, i.e.\ if and
  only if $H_e$ appears exactly once with coefficient $v^{\len(z)}$ in
  the expression of the canonical basis element $C_z$.
\end{lemma}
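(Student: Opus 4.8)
The plan is to reinterpret ``$\sfC_z$ cyclic'' as a minimality‑of‑generators statement, push it through Soergel's functor into a $\Hom$‑space in category $\catO$, and there recognise $\calP_{e,z}$ via the Kazhdan--Lusztig conjecture. First I would reduce to counting generators. The ring $B$ is a finite‑dimensional commutative local $\C$‑algebra with $\rad B=B_{>0}$ and residue field $B/B_{>0}\cong\C$, so a finite‑dimensional $B$‑module $X$ is cyclic precisely when $\dim_\C X/\rad X=1$, equivalently $\dim_\C\Hom_B(X,\C)=1$, where $\C$ is the unique simple $B$‑module. Here one must check that if a single generator exists then $\underline 1$ can be taken as one, and this is where the grading enters: $\sfC_z$ is the summand of $M=B_{i_r}\otimes\cdots\otimes B_{i_1}\otimes\C$ containing $\underline 1$, and by the basis \eqref{soerg:eq:12} together with the conventions of \textsection\ref{soerg:sec:grading} the module $M$, hence $\sfC_z$, is concentrated in degrees $\geq-\len(z)$, with $(\sfC_z)_{-\len(z)}=\C\underline 1$ one‑dimensional. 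Since $B_{>0}$ strictly raises degrees, $\underline 1\notin B_{>0}\sfC_z=\rad\sfC_z$, so $\underline 1$ represents a nonzero class in $\sfC_z/\rad\sfC_z$; if that space is one‑dimensional it is spanned by this class and Nakayama gives $\sfC_z=B\underline 1$, while conversely $\sfC_z=B\underline 1$ forces $\dim_\C\sfC_z/\rad\sfC_z=1$. Thus $\sfC_z$ is cyclic, generated by $\underline 1$, if and only if $\dim_\C\Hom_B(\sfC_z,\C)=1$.

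Next I would move into category $\catO$. Because $\sfC_e=\C$ is one‑dimensional, $B$ acts on it through $B/\rad B$, so $\C\cong\sfC_e=\V P(e\cdot 0)$ as $B$‑modules. By Soergel's Theorem~\ref{thm:9} ($\V$ is fully faithful on projectives and $\V P(w\cdot 0)\cong\sfC_w$) we obtain
\begin{equation*}
  \Hom_B(\sfC_z,\C)\;\cong\;\Hom_B(\V P(z\cdot 0),\V P(e\cdot 0))\;\cong\;\Hom_\catO(P(z\cdot 0),P(e\cdot 0)).
\end{equation*}
Now $P(e\cdot 0)=M(e\cdot 0)$ is the dominant Verma module, which is projective, and $P(z\cdot 0)$ is projective with simple head $L(z\cdot 0)$, so $\dim_\C\Hom_\catO(P(z\cdot 0),P(e\cdot 0))=[M(e\cdot 0):L(z\cdot 0)]$. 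By BGG reciprocity this equals $(P(z\cdot 0):M(e\cdot 0))$, and by the Kazhdan--Lusztig conjecture (the input used in the proof of Theorem~\ref{thm:3}, cf.\ \cite{MR560412,2012arXiv1212.0791E}) this multiplicity is $\calP_{e,z}(1)$. Hence $\sfC_z$ is cyclic if and only if $\calP_{e,z}(1)=1$.

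Finally I would close the gap to the stated form of the condition. Writing $\calP_{e,z}(v)=v^{\len(z)}P_{e,z}(v^{-2})$ in terms of the ordinary Kazhdan--Lusztig polynomial, $P_{e,z}$ has non‑negative integer coefficients and constant term $1$, so $\calP_{e,z}(1)=P_{e,z}(1)\ge1$, with equality if and only if $P_{e,z}=1$, i.e.\ if and only if $\calP_{e,z}(v)=v^{\len(z)}$; since $C_z=\sum_{w'\preceq z}\calP_{w',z}(v)H_{w'}$, this is exactly the statement that $H_e$ occurs in $C_z$ with coefficient the single monomial $v^{\len(z)}$. Concatenating the equivalences gives the lemma. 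The one genuinely delicate point is the reduction in the first paragraph — the key idea is to compute the minimal number of $B$‑generators of $\sfC_z$ as $\dim_\C\Hom_\catO(P(z\cdot 0),P(e\cdot 0))$ (i.e.\ to pass to homomorphisms \emph{into} the small projective), and to verify via the grading that cyclicity really means ``generated by $\underline 1$''; once this is in place, the rest is a direct combination of Soergel's theorem, projectivity of the dominant Verma, and the Kazhdan--Lusztig multiplicity formula.
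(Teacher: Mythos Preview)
Your proof is correct and follows essentially the same route as the paper: both identify cyclicity with $(P(z\cdot 0):M(0))=1$ and then invoke the Kazhdan--Lusztig conjecture. The paper is terser, citing \cite[Lemma~7.3]{MR2017061} for the step ``minimal number of $B$--generators of $\sfC_z$ equals $(P(z\cdot 0):M(0))$'', which you unpack explicitly via Nakayama, Soergel's full faithfulness, and BGG reciprocity; you also spell out the final equivalence $\calP_{e,z}(1)=1\Leftrightarrow\calP_{e,z}=v^{\len(z)}$ using positivity of KL coefficients, which the paper leaves implicit.
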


\begin{proof}
  Let $\calP_{e,z}$ be the Kazhdan-Lusztig polynomial which gives the
  coefficient of $H_e$ in the expression of $C_z$ in the standard
  basis. Let $(P(z \cdot 0) : M(0))$ denote the multiplicity of the
  dominant Verma module $M(0)$ in some Verma flag of the
  indecomposable projective module $P(z \cdot 0)$ in the category
  $\catO(\gl_n)$. By the Kazhdan-Lusztig conjecture we have
  $\calP_{e,z}|_{v=1} = (P(z \cdot 0):M(0))$. By
  \cite[Lemma~7.3]{MR2017061}, $(P(z \cdot 0):M(0))$ is the
  cardinality of a minimal system of generators for $\sfC_z$.
\end{proof}

\subsection{The category corresponding to the algebra \texorpdfstring{$A_{n,k}$}{A}}
\label{sec:categ-corr-algebra}

For this section, fix two integers $n\geq 0$ and  $0 \leq k \leq n$. Let $W_k$, $W_k^\perp$ be the parabolic subgroups of $\bbS_n$ defined in \textsection\ref{soerg:sec:combinatorics}. Let $\frakq,\frakp \subseteq \gl_n$ be the standard parabolic subalgebras with $W_\frakq = W_k$ and $W_\frakp= W_k^\perp$ so that $W_\frakq \times W_\frakp = \bbS_k \times \bbS_{n-k} \subseteq \bbS_n$. We remark that the resulting category $\catOZ^{\frakp,\frakq\pres}_0$ is the category denoted by $\calQ_k(\compn)$ in \cite{miopaperO}.

As in Section \ref{soerg:sec:soergel-modules}, let $D$ be the set of shortest coset representatives for $\bbS_k \times \bbS_{n-k} \backslash \bbS_n$, that is $D = W^\frakq \cap W^\frakp$.
Recall the Definition \ref{soerg:def:2} of illicit morphisms.

\begin{prop}
  \label{prop:1}
  We have an equivalence of categories
  \begin{equation}
    \label{eq:15}
    \catO^{\frakp,\frakq\pres}_0 \cong \rmod{\bigg(\End \Big(\bigoplus_{z \in D} \sfC_{w_k z} \Big) \Big/ \{\text{illicit morphisms}\} \bigg)}.
  \end{equation}
\end{prop}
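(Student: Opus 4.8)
The plan is to combine Soergel's description of the category $\catO$ with the explicit computation of the relevant Soergel modules and their homomorphism spaces carried out in Section~\ref{soerg:sec:soergel-modules}. Recall from \textsection\ref{sec:mixed-parab-frakq} that, for our choice $W_\frakq = W_k$, $W_\frakp = W_k^\perp$, the category $\catO^{\frakp,\frakq\pres}_0$ is obtained from $\catO_0$ first by passing to the $\frakq$--presentable quotient and then to the parabolic (Serre) subcategory; equivalently (by the two descriptions in \eqref{eq:14}) it is $\rmod{\End_\catO(P_\frakq)/\overline I_\frakp}$, where $P_\frakq = \bigoplus_{w \in w_\frakq W^\frakq} P(w\cdot 0)$ is the sum of the indecomposable projectives labelled by the longest coset representatives for $W_\frakq\backslash\bbS_n$. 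Since $w_\frakq = w_k$ and the shortest representatives for $W_\frakq\backslash\bbS_n$ that also lie in $W^\frakp$ are exactly the elements of $D = D_{n,k}$ (by the discussion in \textsection\ref{soerg:sec:combinatorics}), the longest representatives that survive in $\catO^{\frakp}_0$ are precisely the $w_k z$ for $z \in D$, and $P^\frakp_\frakq = \bigoplus_{z\in D} P^\frakp(w_k z\cdot 0)$.

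First I would apply Soergel's functor $\V = \Hom_\catO(P(w_n\cdot 0),\blank)$ from Theorem~\ref{thm:9}. It is fully faithful on projectives, and it sends $P(w_k z \cdot 0)$ to the Soergel module $\sfC_{w_k z}$. Hence
\begin{equation}
  \label{eq:soergel-plan-1}
  \End_\catO(P_\frakq)\big|_{\text{$D$-part}} \cong \End_B\Big(\bigoplus_{z\in D}\sfC_{w_k z}\Big),
\end{equation}
and under this isomorphism the ideal $\overline I_\frakp$ — generated by endomorphisms factoring through some $P(y\cdot 0)$ with $y$ a longest coset representative for $W_\frakq\backslash\bbS_n$ but $y\notin W^\frakp$, i.e.\ $y$ a longest representative for $W_k\backslash\bbS_n$ with $y\notin D'$ (using $D' = W_k^\perp\backslash\bbS_n$ as in \textsection\ref{soerg:sec:morph-betw-soerg-1}) — corresponds exactly to the ideal of illicit morphisms in the sense of Definition~\ref{soerg:def:2}. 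This is the heart of the argument: one must match Brundan--Stroppel's/Soergel's ``wrong projectives'' with the combinatorial condition $y\notin D'$ used to define illicitness. The point is that $\V$ restricted to the projectives labelled by longest $W_k$-coset representatives is still fully faithful, so a morphism in $\catO$ factors through $P(y\cdot 0)$ if and only if the corresponding morphism of Soergel modules factors through $\sfC_y = \V P(y\cdot 0)$; applying this to generators of the ideal gives $\V(\overline I_\frakp) = \{\text{illicit morphisms}\}$.

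Then I would quotient: from \eqref{eq:14}, $\catO^{\frakp,\frakq\pres}_0 \cong \rmod{\End_\catO(P_\frakq)/\overline I_\frakp}$, but $\overline I_\frakp$ annihilates the summands $P(y\cdot 0)$ with $y\notin D'$ entirely (any endomorphism of $P_\frakq$ with image in such a summand is in $\overline I_\frakp$, and dually), so $\End_\catO(P_\frakq)/\overline I_\frakp \cong \End_\catO(\bigoplus_{z\in D}P(w_k z\cdot 0))/\overline I_\frakp'$, where $\overline I_\frakp'$ is the ideal of endomorphisms of $\bigoplus_{z\in D}P(w_k z\cdot 0)$ factoring through some $P(y\cdot 0)$, $y\notin D'$. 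Applying \eqref{eq:soergel-plan-1} and the identification of the previous paragraph yields the right-hand side of \eqref{eq:15}. The main obstacle, as indicated, is the careful bookkeeping of coset representatives: one must check that the ``wrong'' projectives appearing in $\overline I_\frakp$ are exactly the $P(y\cdot 0)$ with $y$ a longest coset representative for $W_k\backslash\bbS_n$ and $y\notin D'$, and that Soergel's functor remains fully faithful on the enlarged set of projectives $\{P(w_k z\cdot 0) : z\in D\}\cup\{P(y\cdot 0) : y\notin D'\}$ — this last point follows since $\V$ is fully faithful on all of the projective objects of $\catO_0$, but it should be stated explicitly. A secondary, purely notational, subtlety is the left/right module convention, which is handled by the anti-automorphism $\star$ and the self-duality of the Soergel modules established in \textsection\ref{soerg:sec:grading}; I would note this but not belabour it.
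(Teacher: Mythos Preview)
Your proposal is correct and follows essentially the same route as the paper: start from \eqref{eq:14}, discard the summands $P(y\cdot 0)$ with $y\in w_\frakq W^\frakq$ but $y\notin W^\frakp$ (since any morphism touching them lies in $\overline I_\frakp$), then apply Soergel's $\V$ and identify $\overline I_\frakp$ with the illicit morphisms of Definition~\ref{soerg:def:2}. The paper's own proof is more terse---it does not spell out the full faithfulness on the enlarged set of projectives or the left/right issue---but the logic is identical.
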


In particular, we can define the graded version $\catOZ^{\frakp,\frakq\pres}_0$ of $\catO^{\frakp,\frakq\pres}_0$ to be
\begin{equation}
  \label{eq:21}
      \catOZ^{\frakp,\frakq\pres}_0 = \rgmod{\bigg(\End \Big(\bigoplus_{z \in D} \sfC_{w_k z} \Big) \Big/ \{\text{illicit morphisms}\} \bigg)}.
\end{equation}

\begin{proof}
  By the discussion above we have $\catO^{\frakp,\frakq\pres}_0 \cong \rmod{\End_\catO(P_\frakq)/\overline I_\frakp}$. Here $P_\frakq = \bigoplus_{w \in w_\frakq W^{\frakq}} P(w \cdot 0)$. Since $\End_{\catO}(P_\frakq) = \bigoplus_{w,z \in w_\frakq W^{\frakq}} \Hom (P(w \cdot 0), P(z \cdot 0))$ and any morphisms with source or target some $P(y \cdot 0)$ for $y \notin W^\frakp$ lies in the ideal $\overline I_\frakp$, we have
  \begin{equation}
    \label{eq:16}
    \catO^{\frakp,\frakq\pres}_0 \cong \rmod{\End_\catO\Big(\bigoplus_{w \in D} P(w_\frakq w \cdot 0)\Big)\Big/\overline I_\frakp}.
  \end{equation}
  After applying the isomorphism induced by the functor
  $\funcV$, 
  the ideal $\overline I_\frakp$ becomes exactly the ideal generated
  by illicit morphisms. Hence the claim follows from Theorem
  \ref{thm:9}.
\end{proof}

If $Q(w_k z)$ is the projective cover of the simple module $L(w_k z \cdot 0)$ for all $z \in D$, it follows also that
\begin{equation}
  \label{eq:20}
  \Hom (Q(w_k z), Q (w_k z') )  \cong \Hom_R(\sfC_{w_k z},\sfC_{w_k z'}) / \sfW_{z,z'} = \sfZ_{z,z'}.
\end{equation}
We deduce then:

\begin{lemma}
  \label{lem:2}
  Let $z,z' \in D$, and let $\lambda,\mu \in \Gamma$ be the corresponding weights. The dimension of $ \sfZ_{z,z'}$ is $k!$ times the number of unenhanced weights $\eta$ such that $\underline \mu \eta \overline \lambda$ is oriented.
\end{lemma}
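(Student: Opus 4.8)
The plan is to transport the statement into category $\catO$, using the dictionary already set up in this section, and then reduce it to a composition--multiplicity computation. By \eqref{eq:20} we have $\sfZ_{z,z'} \cong \Hom_{\catOZ^{\frakp,\frakq\pres}_0}(Q(w_k z), Q(w_k z'))$, where $Q(w_k z)$ denotes the indecomposable projective cover of $L(w_k z\cdot 0)$. Since $Q(w_k z)$ is a projective cover, $\dim_\C \Hom(Q(w_k z), M) = [M:L(w_k z\cdot 0)]$ for every module $M$, so
\[
 \dim_\C \sfZ_{z,z'} = [\,Q(w_k z') : L(w_k z\cdot 0)\,].
\]
Everything thus comes down to evaluating this number.

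Next I would invoke the properly stratified structure of $\catO^{\frakp,\frakq\pres}_0$: this is one of the ``generalized parabolic'' categories of \cite{MR1921761}, \cite{MR2057398}, hence properly stratified with respect to the poset $(\Gamma,\prec)$, with standard modules $\Delta(\nu)$ and proper standard modules $\oDelta(\nu)$. The projective $Q(w_k z')$ has a standard filtration, and each $\Delta(\nu)$ has a filtration whose subquotients are (grading shifts of) $\oDelta(\nu)$ with total multiplicity $|W_k| = k!$ — this $k!$ is precisely the ``thickening'' produced by the $\frakq$--presentable construction for $W_\frakq = W_k$, and is the origin of the factor $k!$ in the statement. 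Combining this with BGG reciprocity for properly stratified algebras and the simple--preserving duality on $\catO^{\frakp,\frakq\pres}_0$, one obtains $(Q(w_k z'):\Delta(\nu)) = [\,\oDelta(\nu):L(w_k z'\cdot 0)\,]$, and therefore
\[
 \dim_\C \sfZ_{z,z'} = k! \sum_{\nu\in\Gamma} [\,\oDelta(\nu):L(w_k z\cdot 0)\,]\,[\,\oDelta(\nu):L(w_k z'\cdot 0)\,].
\]

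The last, and genuinely substantial, step is the combinatorial evaluation of this sum. The proper standard $\oDelta(\nu)$ is the image in $\catO^{\frakp,\frakq\pres}_0$ of a parabolic Verma module, so its composition multiplicities are values at $1$ of parabolic Kazhdan--Lusztig polynomials. The key point, which follows from the fact that all the Soergel modules in play are cyclic (equivalently, the corresponding Schubert varieties are rationally smooth, cf.\ Proposition~\ref{soerg:prop:4}), is that these multiplicities are all $0$ or $1$, with $[\oDelta(\nu):L(w_k z\cdot 0)] = 1$ exactly when the lower fork diagram $\underline{\lambda}\,\nu$ is oriented, i.e.\ $\down^\lambda_i \leq \down^\nu_i < \down^\lambda_{i+1}$ for all $i$ (Lemma~\ref{soerg:lem:21}), where $\lambda,\mu$ are the weights corresponding to $z,z'$. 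This $0/1$ behaviour, and the explicit orientation criterion, can be extracted from the canonical basis elements $C_{w_k z}$ computed in Proposition~\ref{soerg:prop:3} and Corollary~\ref{soerg:cor:1} through Soergel's translation between Kazhdan--Lusztig combinatorics and category $\catO$; alternatively, it is contained in the description of $\calQ_k(\compn) = \catOZ^{\frakp,\frakq\pres}_0$ obtained in \cite{miopaperO}. Granting it, the sum above counts the weights $\nu$ for which both $\underline\lambda\,\nu$ and $\underline\mu\,\nu$ are oriented; by Lemma~\ref{soerg:lem:21} this is exactly the number of $\eta$ such that $\underline\mu\,\eta\,\overline\lambda$ is an oriented fork diagram. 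Hence $\dim_\C\sfZ_{z,z'} = k!\cdot\#\{\eta \suchthat \underline\mu\,\eta\,\overline\lambda \text{ oriented}\}$, as claimed.

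I expect the main obstacle to be exactly that last identification: turning the parabolic Kazhdan--Lusztig data attached to $\oDelta(\nu)$ into the clean $0/1$ orientation condition. This is where rational smoothness and the explicit computations of Section~\ref{soerg:sec:some-canon-bases} are essential — it is the same circle of ideas already used to prove Theorem~\ref{soerg:thm:1} — and it is also the place where one might simply prefer to quote the companion paper \cite{miopaperO}, in which the homomorphism spaces of $\calQ_k(\compn)$ are computed.
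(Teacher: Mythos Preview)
Your proposal is correct and lands on essentially the same argument as the paper: both identify $\sfZ_{z,z'}$ with a $\Hom$-space in $\catO^{\frakp,\frakq\pres}_0$ via \eqref{eq:20} and then invoke the companion paper \cite{miopaperO} for the combinatorial count. The paper's own proof is in fact shorter than yours: it simply cites \cite[Lemma~7.13]{miopaperO}, which computes $\dim\Hom(Q(w_k z),Q(w_k z'))$ directly in terms of ``canonical basis diagrams labeled by standard basis diagrams'', and then observes that this diagrammatic language is exactly the language of oriented fork diagrams used here. Your version unpacks more of the mechanism---the properly stratified structure from \cite{MR1921761,MR2057398}, BGG reciprocity, and the $k!$ coming from the $\oDelta$-filtration of $\Delta$---before arriving at the same point where one must either quote \cite{miopaperO} or extract the $0/1$ multiplicities from the explicit $C_{w_k z}$ of Proposition~\ref{soerg:prop:3}. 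So your route is a legitimate and more self-contained expansion of what the paper does by citation; the paper simply chooses not to reprove what is already in \cite{miopaperO}.
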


\begin{proof}
We have $\sfZ_{z,z'} \cong \Hom_{\catO^{\frakp,\frakq\pres}_0} (Q(w_k z),Q(w_k z'))$. The dimension of this homomorphism space is computed in \cite[Lemma 7.13]{miopaperO} in terms of evaluation of canonical basis diagrams labeled by standard basis diagrams. This translates immediately in terms of oriented fork diagrams (notice that a canonical basis diagram of \cite{miopaperO} is the same as a lower fork diagram and a standard basis diagram of \cite{miopaperO} is the same as an unenhanced weight).
\end{proof}

\begin{theorem}
  \label{soerg:cor:4}
Let $Q^\frakp_\frakq=\bigoplus_{x \in D} Q(w_k x)$ be the sum of all indecomposable projective modules (up to isomorphism) in $\catOZ^{\frakp,\frakq\pres}_0$. Then we have an isomorphism of graded algebras $A_{n,k} \cong \End_{\catOZ^{\frakp,\frakq\pres}_0}(Q^{\frakq}_\frakp)$. In particular we have an equivalence of categories
  \begin{equation}
    \rgmod{A_{n,k}} \cong \catOZ^{\frakp,\frakq\pres}_0.\label{soerg:eq:58}
  \end{equation}
\end{theorem}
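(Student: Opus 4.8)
The plan is to chain together three identifications: first, that the graded endomorphism algebra of the projective generator of $\catOZ^{\frakp,\frakq\pres}_0$ is $\bigoplus_{z,z'\in D}\Hom_R(\sfC_{w_k z},\sfC_{w_k z'})/\sfW_{z,z'}$; second, that this graded algebra is precisely $A_{n,k}$ by Corollary~\ref{soerg:cor:5}; and third, the standard Morita equivalence for a projective generator, which upgrades the algebra isomorphism to the asserted equivalence of module categories.

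For the first identification I would unwind the definition \eqref{eq:21} of $\catOZ^{\frakp,\frakq\pres}_0$ together with Proposition~\ref{prop:1}. By Definition~\ref{soerg:def:2} the space of illicit morphisms $\sfC_{w_k z}\to\sfC_{w_k z'}$ is exactly $\sfW_{z,z'}$, so the quotient of $\End\bigl(\bigoplus_{z\in D}\sfC_{w_k z}\bigr)$ by the ideal of illicit morphisms equals $\bigoplus_{z,z'\in D}\Hom_R(\sfC_{w_k z},\sfC_{w_k z'})/\sfW_{z,z'}=\bigoplus_{z,z'}\sfZ_{z,z'}$, graded as in \textsection\ref{soerg:sec:grading}; thus \eqref{eq:21} reads $\catOZ^{\frakp,\frakq\pres}_0=\rgmod{\bigl(\bigoplus_{z,z'}\sfZ_{z,z'}\bigr)}$. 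On the other hand, by Theorem~\ref{thm:9} the functor $\funcV=\Hom_\catO(P(w_n\cdot 0),\blank)$ is additive and fully faithful on projectives, hence compatible with composition; together with \eqref{eq:20} this shows that, as a graded algebra under composition, $\End_{\catOZ^{\frakp,\frakq\pres}_0}(Q^\frakp_\frakq)\cong\bigoplus_{z,z'}\sfZ_{z,z'}$, since $Q^\frakp_\frakq=\bigoplus_{x\in D}Q(w_k x)$.

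Next I would invoke Corollary~\ref{soerg:cor:5} (equivalently the definition of the multiplication on $A$ via \eqref{soerg:eq:69}): the graded algebra structure on $A=A_\Gamma$ is defined exactly so that $A\cong\bigoplus_{z,z'\in D}\Hom_R(\sfC_{w_k z},\sfC_{w_k z'})/\sfW_{z,z'}$ as graded algebras, and $A_{n,k}=A_\Gamma$ for a block $\Gamma$ with $k$ $\up$'s and $n-k$ $\down$'s (Definition~\ref{def:1}). Combining the two preceding paragraphs yields the graded algebra isomorphism $A_{n,k}\cong\End_{\catOZ^{\frakp,\frakq\pres}_0}(Q^\frakp_\frakq)$. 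Finally, $Q^\frakp_\frakq$ is a minimal projective generator of $\catOZ^{\frakp,\frakq\pres}_0$ --- it is the direct sum of one copy, up to grading shift, of each indecomposable graded projective $Q(w_k x)$, $x\in D$ --- so the graded Morita theorem gives $\catOZ^{\frakp,\frakq\pres}_0\cong\rgmod{\End_{\catOZ^{\frakp,\frakq\pres}_0}(Q^\frakp_\frakq)}\cong\rgmod{A_{n,k}}$, which is \eqref{soerg:eq:58}.

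The hard part is not any new argument but the bookkeeping of gradings and degree shifts: one must make sure that the grading on $A$ coming from fork diagrams, the grading on $\bigoplus_{z,z'}\sfZ_{z,z'}$ coming from polynomial degrees as normalised in Proposition~\ref{soerg:prop:6}, and the Koszul grading on $\catOZ^{\frakp,\frakq\pres}_0$ mentioned after \eqref{eq:19} all agree --- in particular that the shift by $-\len(w_k z)$ on the cyclic generator of $\sfC_{w_k z}$ and the self-duality normalisation of \textsection\ref{soerg:sec:grading} are the ones built into \eqref{eq:21}. These compatibilities are all recorded in the earlier sections, so the proof only assembles them; one should also keep track of the convention, used throughout, that modules are right modules, consistently with the right $\bbS_n$-action.
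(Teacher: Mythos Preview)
Your proposal is correct and follows essentially the same approach as the paper: identify the algebra on the right-hand side of \eqref{eq:21} with $A_{n,k}$ via Corollary~\ref{soerg:cor:5}, and then invoke the Morita equivalence for a projective generator. The paper's proof is a two-sentence summary of exactly this, while you additionally spell out the intermediate identification $\End_{\catOZ^{\frakp,\frakq\pres}_0}(Q^\frakp_\frakq)\cong\bigoplus_{z,z'}\sfZ_{z,z'}$ (which the paper packages into the definition \eqref{eq:21} together with \eqref{eq:20}) and flag the grading compatibilities that are implicit in \textsection\ref{soerg:sec:grading} and Proposition~\ref{soerg:prop:6}.
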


\begin{proof}
 We just need to identify the quotient of the endomorphism algebra appearing in the r.h.s.\ of \eqref{eq:21} with $A_{n,k}$. This follows from Corollary \ref{soerg:cor:5}.
\end{proof}

In Section \ref{sec:diagr-algebra-calq_k} we focused on left $A_{n,k}$--modules, but the whole section could be rewritten for right modules. Alternatively, since the algebra $A_{n,k}$ has an anti-automorphism $\star$ \eqref{soerg:eq:92}, the categories of right and left graded $A_{n,k}$--modules are equivalent. Hence we actually have an equivalence
  \begin{equation}
    \gmod{A_{n,k}} \cong \catOZ^{\frakp,\frakq\pres}_0.\label{eq:23}
  \end{equation}
Although perhaps the equivalence \eqref{soerg:eq:58} is conceptually the right one, we personally prefer to work with left $A_{n,k}$--modules.

\subsection{The functors \texorpdfstring{$\calF_k$}{F} and \texorpdfstring{$\calE_k$}{E}}
\label{sec:funct-calf_k-cale_k}

We want now to relate the diagrammatic functors $\calF_k$ and $\calE_k$ defined in \textsection\ref{soerg:sec:functor-calf} with their Lie theoretical versions from \cite{miopaperO}, of which we briefly recall the definition.

Fix two integers $n>0$ and $0\leq k < n$. Let
$W_k,W_k^\perp,W_{k+1},W_{k+1}^\perp$ be the parabolic
subgroups of $\bbS_n$ defined in
\textsection\ref{soerg:sec:combinatorics}. Let $\frakq, \frakp \subseteq
\gl_n$ be the standard parabolic subalgebras with
$W_\frakq=W_k$ and $W_\frakp = W_k^\perp$, and let $\frakq',
\frakp' \subseteq \gl_n$ be the standard parabolic
subalgebras with $W_{\frakq'}=W_{k+1}$ and $W_{\frakp'} =
W_{k+1}^\perp$. Notice that $\frakp'\subset \frakp$ and
$\frakq \subset \frakq'$.

Let $H= \End_{\catO}(P^{\frakp'}_\frakq)$, where
$P^{\frakp'}_{\frakq}$ is a minimal projective generator of
$\catOZ^{\frakp',\frakq\pres}_0$. Let also
\begin{itemize}
\item $f_{\frakq'} \in H$ be the idempotent projecting onto
  the direct sum of the projective modules $P^{\frakp'}(x
  \cdot 0)$ for $x \in w_{\frakq'}W^{\frakq'} \cap
  W^{\frakp'}$,
\item $e^\frakp \in H$ be the idempotent
  projecting onto the direct sum of the indecomposable
  projective modules $P^{\frakp'}(x \cdot 0) \in
  \catOZ^{\frakp',\frakq\pres}_0$ for $x \in
  w_{\frakq'}W^{\frakq'} \cap W^{\frakp}$ but $x \notin
  w_{\frakq}W^{\frakq}$.
\end{itemize}
Then we have by the discussion in
\textsection\ref{sec:category-cato-1} (using the transitive
property of taking parabolic subcategories and presentable
quotient categories, see \cite{miopaperO}):
\begin{equation}
  \label{eq:146}
  A_k \cong H/H e^\frakp H \qquad \text{and} \qquad A_{k+1} \cong f_{\frakq'} H f_{\frakq'}.
\end{equation}

We have a diagram
\begin{equation}\label{eq:33}
\begin{tikzpicture}[baseline=(current bounding box.center)]
  \matrix (m) [matrix of math nodes, row sep=3em, column
  sep=2.5em, text height=1.5ex, text depth=0.25ex] {
    &  \gmod{H}&  \\
    \gmod{A_k} &  & \gmod{A_{k+1}}\\};
  \path[->] (m-2-1) edge[bend left=10] node[auto] {$ \frakj $} (m-1-2);
  \path[->] (m-1-2) edge[bend left=10] node[auto] {$ \frakz$} (m-2-1);
  \path[->] (m-2-3) edge[bend left=10] node[auto] {$ \fraki$} (m-1-2);
  \path[->] (m-1-2) edge[bend left=10] node[auto] {$ \frakQ$} (m-2-3);
\end{tikzpicture}
\end{equation}
where
\begin{itemize}
\item $\frakj = \Res^{A_k}_H$ is the restriction functor induced by the projection map $H \surto H/H e^\frakp H \cong A_k$;
\item $\frakz = (H/He^\frakp H) \otimes_H $ is the left adjoint of $\frakj$;
\item $\frakQ = \Res^{H}_{A_{k+1}}$ is the restriction functor induced by the inclusion $A_{k+1} \cong f_{\frakq'}H f_{\frakq'} \into H$;
\item $\fraki = H \otimes_{f_{\frakq'} H f_{\frakq'}} $ is the left adjoint of $\frakQ$.
\end{itemize}

Let us define $\calE_k= \frakQ \circ \frakj$ and $\calF_k = \frakz \circ \fraki$. We get then a pair of adjoint functors $(\calF_k,\calE_k)$:
\begin{equation}\label{eq:34}
\begin{tikzpicture}[baseline=(current bounding box.center)]
   \node (A) at (0,0) {$\gmod{A_k}$};
   \node (B) at (3,0) {$\gmod{A_{k+1}}$};
  \path[->] (A) edge[bend left=10] node[auto] {$\calE_k$} (B);
  \path[->] (B) edge[bend left=10] node[auto] {$\calF_k$} (A);
\end{tikzpicture}
\end{equation}

\begin{remark} \label{rem:5} Under the equivalences of
  categories 
  \begin{align}
    \gmod{A_k} &\cong \catOZ^{\frakp,\frakq\pres}_0, \\
    \gmod{H} &\cong \catOZ^{\frakp',\frakq\pres}_0 \\
    \gmod{A_{k+1}} &\cong
    \catOZ^{\frakp',\frakq'\pres}_0,\label{eq:35}
\end{align}
the functors just defined have a Lie theoretical
interpretation: $\fraki$ and $\frakj$ are inclusion of
subcategories, $\frakz$ is a Zuckermann's functor and
$\frakQ$ a coapproximation functor, see \cite{miopaperO}.
\end{remark}

\begin{prop}
  \label{soerg:prop:12}
The functor $\calF_k$ is naturally isomorphic to the functor $\bfF_k \otimes_{A_{k+1}}$.
\end{prop}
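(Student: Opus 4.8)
The statement to prove is Proposition~\ref{soerg:prop:12}: that the functor $\calF_k = \frakz \circ \fraki$ defined via category $\catO$ in \textsection\ref{sec:funct-calf_k-cale_k} agrees, under the equivalences of Remark~\ref{rem:5}, with the functor $\bfF_k \otimes_{A_{k+1}} \blank$ from \textsection\ref{sec:bimodule-bff_k}. Both functors are right-exact and are given by tensoring with a bimodule, so by the standard characterization of right-exact functors between module categories (Eilenberg--Watts) it suffices to identify the two $(A_k, A_{k+1})$--bimodules: that is, to show $\bfF_k \cong (H/He^\frakp H) \otimes_H H \otimes_{f_{\frakq'}Hf_{\frakq'}} A_{k+1}$ as $(A_k,A_{k+1})$--bimodules, where the right $A_{k+1} = f_{\frakq'}Hf_{\frakq'}$--module structure on the target is the obvious one. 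Using \eqref{eq:146}, the right-hand side simplifies: $H \otimes_{f_{\frakq'}Hf_{\frakq'}} f_{\frakq'}Hf_{\frakq'} \cong Hf_{\frakq'}$ as an $(H, f_{\frakq'}Hf_{\frakq'})$--bimodule, and then applying $(H/He^\frakp H)\otimes_H \blank$ gives $(H/He^\frakp H)f_{\frakq'} \cong A_k \bar f_{\frakq'}$, where $\bar f_{\frakq'}$ is the image of $f_{\frakq'}$ in $A_k = H/He^\frakp H$. So the whole problem reduces to showing $\bfF_k \cong A_k \bar f_{\frakq'}$ as $(A_k,A_{k+1})$--bimodules.

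\textbf{Key steps.} First I would identify $\bar f_{\frakq'}$ explicitly as an idempotent in $A_k$. By definition $f_{\frakq'}$ projects onto the $P^{\frakp'}(x\cdot 0)$ for $x \in w_{\frakq'}W^{\frakq'}\cap W^{\frakp'}$; tracing through the identifications, the relevant $x$ correspond (via $w_k z \mapsto$ weight) precisely to the weights of $\Gamma_k$ whose leftmost symbol is a $\down$ — i.e.\ the weights of maximal defect $\Gamma_k^\down$. Indeed, moving from $W^{\frakp'}$ to the $W^\frakp$-part and back is exactly the operation $\mu \mapsto \mu^{(\down)}$ of \textsection\ref{soerg:sec:functor-calf}, and the condition "$x \in w_{\frakq'}W^{\frakq'}$" against "$x \notin w_\frakq W^\frakq$" is what distinguishes the two cases. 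So $\bar f_{\frakq'} = e_k^\down = \sum_{\lambda \in \Gamma_k^\down} e_\lambda$. Therefore $A_k \bar f_{\frakq'} = A_k e_k^\down = P_k^\down = \bfF_k$ as left $A_k$--modules, which is the defining left-module structure of $\bfF_k$ in \textsection\ref{sec:bimodule-bff_k}.

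\textbf{The right module structure.} The main content is then to check that the two right $A_{k+1}$--actions on $P_k^\down$ coincide. On the $\bfF_k$ side the right action factors through the surjection $A_{k+1} \to A_{k+1}/A_{k+1}e_{k+1}^\down A_{k+1} \to e_k^\down A_k e_k^\down$ of Proposition~\ref{prop:2}, i.e.\ through the composite $\Psi$ of Corollary~\ref{soerg:cor:3} composed with the identification $A_{k+1}/A_{k+1}e_{k+1}^\down A_{k+1}\cong e_k^\down A_k e_k^\down$. On the $\frakz\circ\fraki$ side, $A_k\bar f_{\frakq'} = (H/He^\frakp H)f_{\frakq'}$ has a right $f_{\frakq'}Hf_{\frakq'} = A_{k+1}$--action by right multiplication in the quotient $H/He^\frakp H$; this is precisely the map $A_{k+1} = f_{\frakq'}Hf_{\frakq'} \to \bar f_{\frakq'}(H/He^\frakp H)\bar f_{\frakq'} = e_k^\down A_k e_k^\down$ induced by the quotient $H \to H/He^\frakp H = A_k$. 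I would show this Lie-theoretic map agrees with $\Psi$ from Corollary~\ref{soerg:cor:3}. Both are the natural ``forgetting of data'' maps: under $\V$, going from $\catOZ^{\frakp',\frakq'\pres}$ to the projective-injective part of $\catOZ^{\frakp,\frakq\pres}$ amounts to the Soergel-module isomorphism $\Hom_R(\sfC_\lambda,\sfC_\mu)\cong \Hom_R(\sfC_{\lambda^{(\down)}},\sfC_{\mu^{(\down)}})$ of Lemma~\ref{soerg:lem:37} followed by the quotient map of \eqref{soerg:eq:141}; and this is exactly how $\Psi$ was built in Corollary~\ref{soerg:cor:3}. Concretely, both maps $e_\mu A_{k+1}e_\lambda \to e_{\mu^{(\down)}}A_k e_{\lambda^{(\down)}}$ (for $\lambda,\mu \in \Gamma_{k+1}^\up$) are the canonical surjection of \eqref{soerg:eq:141}, and by additivity over $\Gamma^\up_{k+1}$ they give the same map $e_{k+1}^\up A_{k+1}e_{k+1}^\up \to e_k^\down A_k e_k^\down$.

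\textbf{Main obstacle.} The delicate point will be keeping the bookkeeping of idempotents and coset representatives straight, in particular verifying carefully that $\bar f_{\frakq'} = e_k^\down$ — i.e.\ that the set $w_{\frakq'}W^{\frakq'}\cap W^{\frakp'}$, after passing through the equivalence $\catOZ^{\frakp',\frakq\pres}_0 \cong \gmod H$ and then projecting to $A_k = H/He^\frakp H$, indexes exactly the projective-injective $A_k$--modules (equivalently, by Theorem~\ref{soerg:thm:7}, the weights of maximal defect). This requires matching the parabolic-versus-presentable combinatorics of \textsection\ref{sec:mixed-parab-frakq} with the diagrammatic notion of defect in \textsection\ref{soerg:sec:defect}, and a clean way to do it is to observe that the non-zero summands $e_{k+1}^\up A_{k+1}e_{k+1}^\up$ (equivalently the modules $P^{\frakp'}(x\cdot 0)$ surviving in the quotient $A_k$) are precisely those whose image in $A_k$ is non-zero, and that $\Psi$ in Corollary~\ref{soerg:cor:3} was constructed to be exactly this surjection. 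Once the idempotent identification is pinned down, the rest is a matter of unwinding the adjunctions $\frakz \dashv \frakj$ and $\fraki \dashv \frakQ$ into bimodule tensor products and using associativity of $\otimes$, which is routine.
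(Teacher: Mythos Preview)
Your proposal is correct and follows essentially the same strategy as the paper: simplify $\calF_k = \frakz\circ\fraki$ to tensoring with $(H/He^\frakp H)\bar f_{\frakq'}$, identify this with $P_k^\down$ as a left $A_k$--module, and then verify that the right $A_{k+1}$--actions match. The paper handles the last two steps very tersely (saying ``obviously'' for the left-module identification and ``it is easy to notice'' for the right action, appealing to the fact that both structures are induced from the big endomorphism algebra $\End_\catO(P)$), whereas you spell out the idempotent bookkeeping $\bar f_{\frakq'}=e_k^\down$ and trace the right action through $\Psi$ explicitly; this extra care is exactly what is needed to make the paper's sketch rigorous.
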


\begin{proof}
The functor $\calF_k$ is defined by
  \begin{equation}
    \label{soerg:eq:145}
    M \longmapsto (H/He^\frakp H) \otimes_{f_{\frakq'} H f_{\frakq'}} M,
  \end{equation}
that is the same as
  \begin{equation}
    \label{eq:147}
    M \longmapsto (H/He^\frakp H)\overline{f}_{\frakq'} \otimes_{f_{\frakq'} H f_{\frakq'}} M,
  \end{equation}
where $\overline f_{\frakq'}$ is the image of $f_{\frakq'}$ in $H/He^{\frakp} H$. Obviously $(H/He^\frakp H)\overline f_{\frakq'} = P_k^\down$ as a left $A_k$--module. It is easy to notice that also the right $A_{k+1}$--module structure is the same, since in both cases it is the natural structure induced by the bigger algebra $\End_{\catO}(P)$, where $P$ is a minimal projective generator of $\catOZ_0$.
\end{proof}

By the uniqueness of the adjoint functor we get also:
\begin{prop}
  \label{prop:17}
The functor $\calE_k$ is naturally isomorphic to the functor $\bfE_k \otimes_{A_k}$.
\end{prop}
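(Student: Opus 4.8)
The plan is to deduce this by the uniqueness of adjoint functors, using Proposition~\ref{soerg:prop:12} as the only input, so the argument will be short. First I would recall the setup of \textsection\ref{sec:funct-calf_k-cale_k}: there one has $\calF_k = \frakz \circ \fraki$ and $\calE_k = \frakQ \circ \frakj$, where $\frakz$ is left adjoint to $\frakj$ and $\fraki$ is left adjoint to $\frakQ$. Composing these two adjunctions shows that $\calF_k$ is left adjoint to $\calE_k$; concretely, for $M \in \gmod{A_{k+1}}$ and $N \in \gmod{A_k}$ one has natural isomorphisms $\Hom_{A_k}(\frakz\fraki M, N) \cong \Hom_H(\fraki M, \frakj N) \cong \Hom_{A_{k+1}}(M, \frakQ\frakj N)$.

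Next, I would observe that on the diagrammatic side the functor $\bfE_k \otimes_{A_k} (\blank)$ was constructed in \textsection\ref{sec:bimodule-bfe_k} precisely as a right adjoint of $\bfF_k \otimes_{A_{k+1}} (\blank)$: this is the content of the hom--tensor adjunction \eqref{eq:148} together with the identification $\Hom_{A_k}(\bfF_k, N) \cong \bfE_k \otimes_{A_k} N$. Now Proposition~\ref{soerg:prop:12} provides a natural isomorphism $\calF_k \cong \bfF_k \otimes_{A_{k+1}} (\blank)$ of functors $\gmod{A_{k+1}} \to \gmod{A_k}$. Therefore $\calE_k$ and $\bfE_k \otimes_{A_k} (\blank)$ are both right adjoints of naturally isomorphic functors, and since a right adjoint is determined up to canonical natural isomorphism, it follows that $\calE_k \cong \bfE_k \otimes_{A_k} (\blank)$, which is the assertion.

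I do not expect any real obstacle: all the work has already been done, in Proposition~\ref{soerg:prop:12} and in the constructions of \textsection\ref{soerg:sec:functor-calf} and \textsection\ref{sec:funct-calf_k-cale_k}. The only point requiring a line of care is to check that the two component adjunctions $\frakz \dashv \frakj$ and $\fraki \dashv \frakQ$ compose so that $\calF_k = \frakz\circ\fraki$ is the \emph{left} adjoint of $\calE_k = \frakQ\circ\frakj$ (and not conversely), but this is immediate from the chain of isomorphisms displayed in the first paragraph.
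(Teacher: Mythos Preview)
Your proposal is correct and follows exactly the same approach as the paper: the paper's entire proof is the single sentence ``By the uniqueness of the adjoint functor we get also,'' and your argument is just a careful unpacking of that line.
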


Consider now $\calE_k$ as an object in the category of
functors between $\gmod{A_k}$ and $\gmod{A_{k+1}}$, and
$\calF_k$ as an object in the category of functors between
$\gmod{A_{k+1}}$ and $\gmod{A_k}$. As such, we can compute
their endomorphism rings:

\begin{theorem}
  \label{thm:6}
  We have $\End (\calE_k)\cong \End(\calF_k) \cong \C[x_1,\ldots,x_n]/I_k$ where $I_k$ is the ideal generated by the complete symmetric functions
  \begin{equation}
    \label{soerg:eq:147}
    \begin{aligned}
      h_{k+1}(x_{i_1},\ldots,x_{i_m}) &\quad  \text{for all }& 1 & \leq m
      \leq n-k,\\
      h_{n-m+1}(x_{i_1},\ldots,x_{i_m}) &\quad  \text{for all }& n-k+1 & \leq m \leq n.
    \end{aligned}
  \end{equation}
  In particular, $\calE_k$ and $\calF_k$ are indecomposable functors.
\end{theorem}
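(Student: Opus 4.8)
The plan is to compute $\End(\calF_k)$ directly from the bimodule description $\bfF_k = A_k e_k^\down$ and then transfer the answer to $\End(\calE_k)$ by adjunction. Since $\calF_k \cong \bfF_k \otimes_{A_{k+1}} (\blank)$ with $\bfF_k$ an $(A_k,A_{k+1})$--bimodule that is projective as a left $A_k$--module, the endomorphism ring of the functor $\calF_k$ is canonically the endomorphism ring of the bimodule $\bfF_k$, i.e.\ $\End(\calF_k) \cong \End_{(A_k,A_{k+1})}(\bfF_k)$. Using $\bfF_k = A_k e_k^\down = P_k^\down$ as a left $A_k$--module, the left-module endomorphisms are $e_k^\down A_k e_k^\down$, and imposing right $A_{k+1}$--linearity (through the surjection $A_{k+1}/A_{k+1}e_{k+1}^\down A_{k+1} \surto e_k^\down A_k e_k^\down$ of Proposition~\ref{prop:2}) cuts this down to the centralizer of the image of that map inside $e_k^\down A_k e_k^\down$. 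Since $e_k^\down A_k e_k^\down$ acts on $P_k^\down$ on the right and the $A_{k+1}$--action factors through it, the bimodule endomorphisms are exactly the \emph{center} of $e_k^\down A_k e_k^\down$ (the centre acting by right multiplication), intersected with the subspace commuting with the $A_{k+1}$--action; but since the $A_{k+1}$--action is via an algebra surjection onto a subalgebra containing all the $e_\lambda$'s, one checks this is simply $Z(e_k^\down A_k e_k^\down)$, or rather the appropriate analogue. First I would make this identification precise.

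Next I would identify $Z(e_k^\down A_k e_k^\down)$ — or more precisely the ring of bimodule endomorphisms — with a quotient of a polynomial ring. Recall $e_\lambda A_k e_\lambda \cong \sfC_{w_k z} \cong R_{\boldb^\lambda}$ as commutative rings (Theorem~\ref{soerg:thm:1}), with $R = \C[x_1,\dots,x_n]$ acting. A central element of $e_k^\down A_k e_k^\down$ is a compatible family of elements, one in each $R_{\boldb^\lambda}$ for $\lambda \in \Gamma_k^\down$, that matches under all the homomorphism spaces $\sfZ_{z,z'}$; by Corollary~\ref{soerg:cor:8} and the explicit polynomial description in Proposition~\ref{soerg:prop:6}, such a family is precisely a single polynomial in $R$ modulo the intersection of the annihilators $\bigcap_{\lambda \in \Gamma_k^\down} I_{\boldb^\lambda}$ that in addition commutes with the off-diagonal morphisms — but since everything in sight is an $R$--module map and $R$ is commutative, the compatibility is automatic once the element comes from $R$, so the answer is $R \big/ \bigcap_{\lambda \in \Gamma_k^\down} \Ann_R \sfC_{w_k z_\lambda}$. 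So the key computation is to identify this intersection of ideals. I would show $\bigcap_{\lambda} I_{\boldb^\lambda} = I_k$, where $I_k$ is the ideal of \eqref{soerg:eq:147}. One inclusion: each generator $h_{k+1}(x_{i_1},\dots,x_{i_m})$ and $h_{n-m+1}(x_{i_1},\dots,x_{i_m})$ lies in every $I_{\boldb^\lambda}$ with $\lambda$ of maximal defect — this follows from Lemma~\ref{soerg:lem:1} together with the fact that weights of maximal defect start with a $\down$, so $b_i^\lambda$ is bounded by $k+1$ overall and drops appropriately, making these complete symmetric polynomials land in the ideal; one uses the symmetrisation identities \eqref{soerg:eq:2}, \eqref{soerg:eq:3} to reduce an arbitrary variable subset $\{x_{i_1},\dots,x_{i_m}\}$ to an initial segment. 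The reverse inclusion is a dimension/Gröbner count: by Lemma~\ref{soerg:lem:8}-type reasoning $R/I_k$ has an explicit monomial basis, and one checks its dimension equals $\dim_\C R/\bigcap_\lambda I_{\boldb^\lambda}$, which can be computed via inclusion–exclusion over $\Gamma_k^\down$, or more cleanly by exhibiting $R/I_k \hookrightarrow \prod_\lambda R_{\boldb^\lambda}$ and matching dimensions.

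Finally, for $\End(\calE_k)$: since $(\calF_k,\calE_k)$ is an adjoint pair (Section~\ref{sec:bimodule-bfe_k}), there is a canonical algebra isomorphism $\End(\calE_k) \cong \End(\calF_k)^{\op}$; but $\End(\calF_k)$ is commutative (being a quotient of a polynomial ring), so $\End(\calE_k) \cong \End(\calF_k)$. Indecomposability of the functors then follows because $R/I_k$ is a local ring: it is a connected graded $\C$--algebra (generated in positive even degree over $\C$ in degree $0$), hence has no idempotents other than $0,1$, so neither $\calE_k$ nor $\calF_k$ decomposes as a direct sum of functors. The main obstacle I anticipate is the exact computation $\bigcap_{\lambda \in \Gamma_k^\down} I_{\boldb^\lambda} = I_k$: getting the combinatorics of which $\boldb$--sequences occur among maximal-defect weights right, and then nailing the dimension count so that both inclusions are forced — this is where one must be careful with the two families of generators in \eqref{soerg:eq:147} (degree $k+1$ for small variable sets, degree $n-m+1$ for large ones) and verify they exactly carve out the intersection rather than a proper sub- or super-ideal. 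Everything else is a formal consequence of the bimodule formalism and results already established (Theorem~\ref{soerg:thm:1}, Corollary~\ref{soerg:cor:8}, Proposition~\ref{prop:2}, Proposition~\ref{soerg:prop:6}).
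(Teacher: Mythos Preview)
Your reduction to the center of $e_k^\down A_k e_k^\down$ matches the paper exactly. The divergence is in how that center is computed. The paper does \emph{not} attempt a direct calculation: it identifies $e_k^\down A_k e_k^\down$ with the endomorphism ring of the sum of indecomposable projective-injective modules in $\catO^{\frakp}_0$ (via Theorem~\ref{soerg:thm:7}, the equivalence of Theorem~\ref{soerg:cor:4}, and the fact that the projective-injectives of $\catO^{\frakp,\frakq\pres}_0$ coincide with those of $\catO^\frakp_0$), then uses the parabolic Soergel functor to identify the center of that ring with the center of $\catO^\frakp_0$, and finally cites Brundan's theorem \cite{MR2414744} (resting on Tanisaki's presentation of Springer-fibre cohomology) to obtain the presentation~\eqref{soerg:eq:147}.

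Your direct route has a genuine gap. You assert that a central family $(z_\lambda)_{\lambda \in \Gamma_k^\down}$ ``is precisely a single polynomial in $R$'', i.e.\ that the natural map $R \to Z(e_k^\down A_k e_k^\down)$ is surjective. This is not automatic: the off-diagonal pieces $e_\mu A_k e_\lambda = \sfZ_{z,z'}$ are \emph{quotients} of the $R$--module hom spaces by the illicit morphisms, and nothing you cite shows that the surviving constraints force every central tuple to arise from a single $r\in R$. Establishing this surjectivity is essentially the content of Brundan's result for the relevant parabolics; the paper imports it rather than proving it from the diagram algebra. A second, smaller issue: the kernel of $R \to e_\lambda A_k e_\lambda$ is not $I_{\boldb^\lambda}$ but the strictly larger ideal obtained by adding the monomials $x_{\down^\lambda_j}\cdots x_{\beta(j)}$ of Theorem~\ref{soerg:thm:2}, so even granting surjectivity your formula $R/\bigcap_\lambda I_{\boldb^\lambda}$ would need correction. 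The ``main obstacle'' you flag (computing the intersection) is downstream of both problems. Your adjunction argument for $\End(\calE_k)\cong\End(\calF_k)^{\op}$ via mates is correct and is a clean alternative to the paper's explicit bimodule duality of Remark~\ref{rem:3}.
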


\begin{proof}
  Let us first compute $\End(\calF_k)$. By
  Proposition~\ref{soerg:prop:12}, we have $\End(\calF_k)
  \cong \End_{{A_k \otimes A_{k+1}^\op}}(\bfF_k)$. Since the
  structure of right $A_{k+1}$--module is induced by the
  surjective map \eqref{eq:32}, this is the same as
  $\End_{{A_k \otimes (e^\down_k A_k e^\down_k)^\op}}(\bfF_k)$, that is
  the center of $e^\down_k A_k e^\down_k$. This algebra is
  the endomorphism algebra of the indecomposable
  projective-injective modules of
  $\catOZ^{\frakp,\frakq\pres}_0$, where as before
  $\frakp,\frakq\subseteq\gl_n$ are the parabolic
  subalgebras corresponding to $k$. Since the
  projective-injective modules of
  $\catOZ^{\frakp,\frakq\pres}_0$ are the same as the
  projective-injective modules of $\catOZ^{\frakp}_0$, it is
  also the endomorphism algebra of the indecomposable
  projective-injective modules of $\catO^{\frakp}_0$. By a
  standard argument using the parabolic version of Soergel's
  functor $\bbV$ (see \cite[Section 10]{MR2017061}) it
  follows that this endomorphism algebra is isomorphic to
  the center of $\catO^{\frakp}_0$.  Brundan \cite[Main
  Theorem]{MR2414744} showed that this center is canonically
  isomorphic to $\C[x_1,\ldots,x_n]/I_k$, where $I_k$ is the
  ideal generated by
  \begin{equation}
    \label{soerg:eq:36}
    \begin{aligned}
      h_{r}(x_{i_1},\ldots,x_{i_m}) &\quad  \text{for all }& 1 & \leq m
      \leq n-k, &&r>k\\
      h_{r}(x_{i_1},\ldots,x_{i_m}) &\quad  \text{for all }& n-k+1 & \leq m \leq n, &&r> n-m.
    \end{aligned}
  \end{equation}
Notice that this result builds on a conjecture of Khovanov \cite[Conjecture 3]{MR2078414} (proved in \cite[Main Theorem]{MR2414744}, \cite[Theorem 1]{MR2521250}), that the center of $\catO^\frakp_0$ agrees with the cohomology ring of a Springer fiber. Under this identification, the presentation \eqref{soerg:eq:36} can be deduced from Tanisaki presentation \cite{MR685425} of the cohomology of the Springer fiber.
  Using \eqref{soerg:eq:3} one can easily prove that the polynomials \eqref{soerg:eq:36} generate the same ideal as \eqref{soerg:eq:147}.

  For $\calE_k$, by Proposition~\ref{prop:17} we have $\End(\calE_k) \cong \End_{{A_{k+1} \otimes A_k^\op}}(\bfE_k)$.
By Remark~\ref{rem:3}, it follows that
\begin{equation}
\End(\calE_k)\cong \End_{ A_{k+1}\otimes A_k^{\op}} (\bfE_k) \cong \End_{A_k \otimes A_{k+1}^{\op}}(\bfF_k) \cong \End(\calF_k).\label{eq:150}
\end{equation}
The middle isomorphism can be explained as follows: $\bfE_k$
and $\bfF_k$ have the same underlying vector space $V$;
since the action of $A_{k+1} \otimes A_k^\op$ on $\bfE_k$ is
just the action of $A_k \otimes A_{k+1}^\op$ on $\bfF_k$
twisted (see Remark \ref{rem:3}), a $\C$--linear endomorphism of
$V$ is $A_{k+1} \otimes A_{k}^\op$--equivariant (i.e.\ it is an
endomorphism of $\bfE_k$ as a $(A_{k+1},A_k)$--bimodule)
exactly when it is $A_{k} \otimes A_{k+1}^\op$--equivariant
(i.e.\ it is an endomorphism of $\bfF_k$ as a
$(A_k,A_{k+1})$--bimodule).

  The fact that the functors $\calE_k$ and $\calF_k$ are indecomposable follows since $\End(\calE_k) \cong \End(\calF_k)$ is a graded local ring.
\end{proof}

\nocite{MR573434,MR1029692,MR1857082,MR581584}
\providecommand{\bysame}{\leavevmode\hbox to3em{\hrulefill}\thinspace}
\providecommand{\MR}{\relax\ifhmode\unskip\space\fi MR }
\providecommand{\MRhref}[2]{%
  \href{http://www.ams.org/mathscinet-getitem?mr=#1}{#2}
}
\providecommand{\href}[2]{#2}

\end{document}